\newtheorem{thm}{Theorem}[section]
\newtheorem{crl}[thm]{Corollary}
\newtheorem{lmm}[thm]{Lemma}
\newtheorem{prp}[thm]{Proposition}
\theoremstyle{definition}
\newtheorem{dfn}[thm]{Definition}
\newtheorem{exa}[thm]{Example}
\theoremstyle{remark}
\newtheorem*{rem}{Remark}
\title{Duality between box-ball systems of finite box and/or carrier capacity}
\author{David A.\ \textsc{Croydon}\footnote{Department of Advanced Mathematical Sciences, Graduate School of Informatics, Kyoto University, Sakyo-ku, Kyoto 606--8501, Japan.\newline e-mail: \texttt{croydon@acs.i.kyoto-u.ac.jp}}
          ~and Makiko \textsc{Sasada}\footnote{Graduate School of Mathematical Sciences, University of Tokyo, 3-8-1, Komaba, Meguro-ku, Tokyo, 153--8914, Japan. \endgraf e-mail: \texttt{sasada@ms.u-tokyo.ac.jp}}}
\keywords{\textit{box-ball system, invariant measures}}         %optional
\begin{document}

\maketitle

\begin{abstract}
We construct the dynamics of the box-ball system with box capacity $J$ and carrier capacity $K$, which we abbreviate to BBS($J$,$K$), in the case of infinite initial configurations, and show that this system is dual to the analogous BBS($K$,$J$) model. Towards this end, we build on previous work for the original box-ball system, that is BBS($1$,$\infty$), to show that when the box capacity $J$ and carrier capacity $K$ satisfy $J<K$ the dynamics can be represented by a Pitman-type transformation. These ideas are applied in the case of random initial configurations to show that the distributional properties of spatial stationarity and invariance under the BBS dynamics are dual. Moreover, for independent and identically distributed configurations, we derive a characterisation of invariant measures in terms of a detailed balance equation, which captures the duality of the system locally; this is used to find all invariant measures in this class. Finally, we deduce the speed of a tagged particle, and show that this also satisfies a natural duality relation.
\end{abstract}

\section{Introduction}

The box-ball system (BBS) with box capacity $J\in\mathbb{N}\cup\{\infty\}$ and carrier capacity $K\in\mathbb{N}\cup\{\infty\}$, which we will henceforth abbreviate to BBS($J$,$K$), was introduced in \cite{TakaMatsu}, where for $J<K$ it was shown to arise via a limiting procedure from the discrete modified Korteweg-de Vries equation. The dynamics of the system can be described as follows. The initial configuration is represented as a sequence $\eta=(\eta_n)_{n\in\mathbb{Z}}$ in the configuration space $\mathcal{C}_{J,K}:=\{0,1,\dots,J\}^\mathbb{Z}$ (where we interpret $\{0,1,\dots,J\}$ as $\mathbb{Z}_+$ for $J=\infty$), with $\eta_n$ denoting the number of balls in the $n$th box. To begin our exposition, we consider the case when there is a finite number of balls in the system, i.e.\ $\sum_{n\in\mathbb{Z}}\eta_n<\infty$. The evolution of the BBS($J$,$K$) is then given by an operator $T\equiv T_{J,K}$ that maps $\eta$ to another configuration $T\eta$ in $\{0,1,\dots,J\}^\mathbb{Z}$ that is characterised by setting:
\begin{eqnarray}
(T\eta)_n&=&\eta_n+\min\left\{\sum_{m=-\infty}^{n-1}\left(\eta_m-(T\eta)_m\right),J-\eta_n\right\}\nonumber\\
&&\hspace{100pt}-\min\left\{\eta_n,K-\sum_{m=-\infty}^{n-1}\left(\eta_m-(T\eta)_m\right)\right\},\label{tndef}
\end{eqnarray}
where we fix $(T\eta)_n=0$ for $n<\inf\{m:\:\eta_m\neq 0\}$ (with the convention that $\inf\emptyset=\infty$), so that the sums in the above formula are well-defined. In more transparent terms, this means that, considering one particle at a time from the left to the right (the choice of order within each box is unimportant), each particle moves to the nearest empty space on its right, unless this would mean more than $K$ particles cross from one location to the next, in which case the move is denied and the particle stays in its current location. See Figure \ref{dynamicsfig} for an example.

\begin{figure}[t]
\begin{center}
{\includegraphics[width=0.9\textwidth]{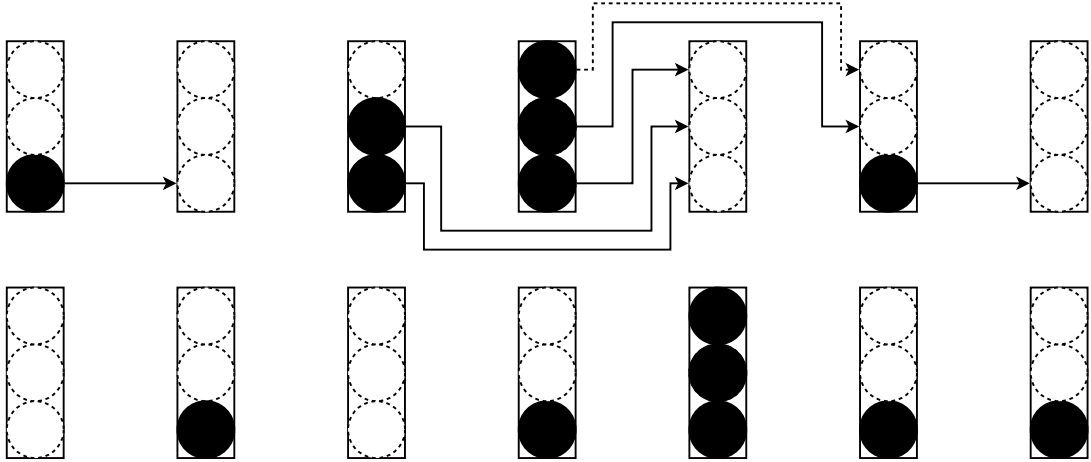}}
\vspace{-15pt}
\end{center}
\caption{Dynamics of the BBS($3$,$4$). The top diagram shows the initial configuration and attempted moves, with the move shown by a dashed line being denied. The bottom diagram shows the new configuration.}\label{dynamicsfig}
\end{figure}

Importantly in what follows, one has an alternative intuitive description of the system in terms of a ‘carrier’. The carrier, which is initially not carrying any balls, moves along $\mathbb{Z}$ from left to right (that is, from negative to positive). When it reaches a box containing $a\in\{0,\dots,J\}$ balls and is carrying $b\in\{0,\dots,K\}$ balls, it picks up as many balls as it has spare capacity for (i.e.\ $\min\{a,K-b\}$), and places down as many balls as fit in the initially empty space in the box (i.e.\ $\min\{b,J-a\}$). It then proceeds to repeat this procedure at the next box to the right, and so on. (Since we are assuming the finiteness of the initial configuration, it is apparent that we only need consider the action of the carrier from the first non-empty box.) If we associate with the carrier a process $W=(W_n)_{n\in\mathbb{Z}}$ by setting $W_n$ to be the number of balls that the carrier holds after having visited box $n$, then we can rewrite the equation at \eqref{tndef} more concisely:
\begin{equation}\label{carrierdef}
(T\eta)_n=\eta_n+\min\left\{W_{n-1},J-\eta_n\right\}-
\min\left\{\eta_n,K-W_{n-1}\right\}.
\end{equation}
Figure \ref{bbs75} illustrates the evolution at one site of the BBS($J$,$K$) in terms of the associated carrier, as well as some of the symmetries of the model that will feature in this article.

\begin{figure}[t]
\begin{center}
\scalebox{0.6}{
\includegraphics[width=0.9\textwidth]{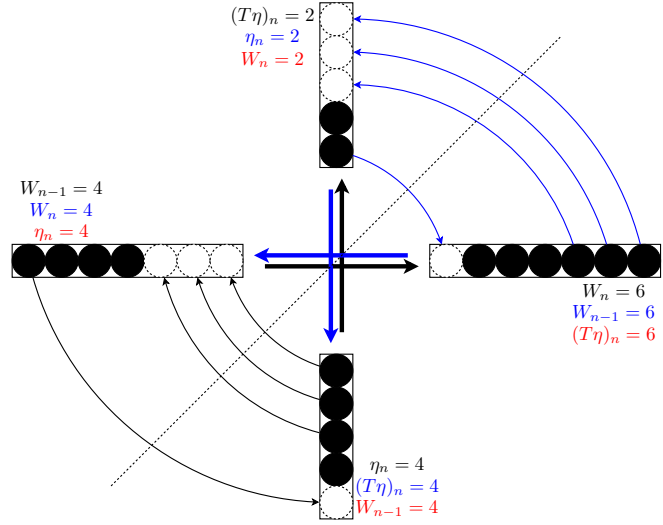}
\rput(-5,1){$\eta_n=4$}
\rput(-5,0.6){\color{blue}$(T\eta)_{n}=4$}
\rput(-5,0.2){\color{red}$W_{n-1}=4$}
\rput(-11.2,6.1){$W_{n-1}=4$}
\rput(-11.2,5.7){\color{blue}$W_n=4$}
\rput(-11.2,5.3){\color{red}$\eta_n=4$}
\rput(-1.0,4.2){$W_{n}=6$}
\rput(-1.0,3.8){\color{blue}$W_{n-1}=6$}
\rput(-1.0,3.4){\color{red}$(T\eta)_n=6$}
\rput(-7.3,9.3){$(T\eta)_n=2$}
\rput(-7.3,8.9){\color{blue}$\eta_{n}=2$}
\rput(-7.3,8.5){\color{red}$W_{n}=2$}}
\vspace{-15pt}
\end{center}
\caption{Black arrows and annotations show an example evolution of the BBS($J$,$K$) with $J=5$ and $K=7$, in terms of the carrier. Blue arrows and annotations illustrate the involutive property of the dynamics. Reflecting in the dotted line yields the corresponding picture for the dual model BBS($K$,$J$), which is described by the black arrows and red annotations. (There is also an involutive property for the latter model, which is not annotated.)
}\label{bbs75}
\end{figure}

A fundamental question is whether the above definition of the BBS($J$,$K$) can be extended from finite initial configurations to a larger subset of $\mathcal{C}_{J,K}$ in a natural way. Of course, as soon as $\sum_{n=-\infty}^0\eta_n=\infty$, then the original formulation at \eqref{tndef} is no longer well-defined. However, it is still reasonable to ask if one can find a carrier $(W_n)_{n\in \mathbb{Z}}$ that takes values in $\{0,1,\dots,K\}^\mathbb{Z}$ and is consistent with $(\eta_n)_{n\in\mathbb{Z}}$ in the sense that
\begin{equation}\label{wdyn}
W_n=W_{n-1}-\min\left\{W_{n-1},J-\eta_n\right\}+
\min\left\{\eta_n,K-W_{n-1}\right\},
\end{equation}
as is required by the dynamics; this can then be used to define $T\eta$ via \eqref{carrierdef}. For future reference, we note that summing \eqref{carrierdef} with \eqref{wdyn} gives the conservation of mass formula:
\begin{equation}\label{consmass}
(T\eta)_n+W_n=\eta_n+W_{n-1}.
\end{equation}

As it transpires, a carrier does not necessarily exist, and, if it does, then it is not necessarily unique (Section \ref{carriersec} contains a detailed exploration of this issue). However, in Definition \ref{cancarr} below, we introduce the notion of a `canonical carrier', which is unique if it exists. We argue from basic physical considerations that this choice is natural. Indeed, similarly to \cite[Remark 2.11]{CKST}, which covered  BBS($1$,$\infty$) and explained how the choice of carrier of that article ruled out the transport into the system of particles from $-\infty$, our choice means that a canonical carrier $W$ can always be determined endogenously, i.e.\ from the current state of the system -- as we demonstrate in Lemma \ref{unique} below, it is actually the case that $W_n$ is a function of $(\eta_m)_{m\leq n}$. Additionally, as Lemma \ref{keylemma}, we show that non-canonical carriers lead to a certain type of degenerate behaviour. In the case $J<K$, we show this means that if a random configuration has a distribution that is invariant under the dynamics induced by a non-canonical carrier, then the resulting dynamics are almost-surely trivial, see Proposition \ref{p33}. (NB. The distinction between configurations admitting a canonical carrier or only non-canonical carriers parallels that between the sub-critical and critical classes of configurations considered in \cite{CKST}.) We are able to completely characterise the set of configurations that admit a canonical carrier, $\mathcal{C}^{can}_{J,K}$, for all choices of $J$ and $K$, see Propositions \ref{ch1}, \ref{ch2}, \ref{ch3} and \ref{ch4}. Thus we arrive at our extension of the BBS($J$,$K$) to infinite initial configurations.

A significant motivation for considering infinite configurations comes in the search for random configurations that are invariant in distribution under the box-ball system dynamics. Indeed, beyond the trivial case of there being no particles, the natural transience of the system means that invariant random configurations must comprise an infinite number of particles. (Another motivation for studying infinite configurations is that it allows us to treat a periodic version of the model, cf.\ \cite[Remark 1.13]{CKST} and \cite{CS}, though we do not pursue this issue here.) Furthermore, in the study of the invariant random configurations, one is naturally led to look for configurations for which one can extend the dynamics to all times, both forwards and backwards.
For the original box-ball system introduced in \cite{takahashi1990}, that is BBS($1$,$\infty$) in our notation, the set of such configurations was completely characterised in \cite{CKST}. Here, we do not attempt to repeat this program for BBS($J$,$K$) for general $J$ and $K$, but nonetheless it will still be important to give an abstract description of the invariant set, and be able to describe a suitably rich subset of it. To set-out the relevant aspects of this discussion more precisely, we first introduce the spatially reversed configuration $R\eta=((R\eta)_n)_{n\in\mathbb{Z}}$, as given by
\begin{equation}\label{rdef}
(R\eta)_n=\eta_{1-n},
\end{equation}
and define the set
\begin{equation}\label{crevdef}
\mathcal{C}_{J,K}^{rev}:=\left\{\eta\in\mathcal{C}^{can}_{J,K}:\:R\eta\in\mathcal{C}^{can}_{J,K}\right\}
\end{equation}
of configurations such that both $\eta$ and $R\eta$ admit a canonical carrier; the `rev' here is a contraction of `reversible', which we use in a dynamical systems sense, since the dynamics have a natural inverse on this set. Indeed, as we will demonstrate in Proposition \ref{tinvprp} below, for $\eta\in \mathcal{C}^{rev}_{J,K}$, we have that $RTRT\eta=TRTR\eta=\eta$, and so we can consider $T^{-1}:=RTR$ to be the inverse for $T$ on the set in question. This characterisation can be interpreted as meaning that the inverse of $T$ is given by running the carrier in the reverse direction, from right to left, which is a familiar description in the finite particle case. Moreover, we note that it is a natural global consequence of the involutive property of the local dynamics given by the map $(\eta_n,W_{n-1})\mapsto ((T\eta)_n,W_{n})$, as is illustrated in Figure \ref{bbs75}. Given the set $\mathcal{C}_{J,K}^{rev}$, we then define an invariant set
\begin{equation}\label{cinvdef}
\mathcal{C}_{J,K}^{inv}:=\left\{\eta\in\mathcal{C}_{J,K}:\:T^t\eta\in\mathcal{C}^{rev}_{J,K},\:\forall t\in\mathbb{Z}\right\},
\end{equation}
upon which all the laws of invariant random configurations studied here are supported.

We now come to the presentation of our main results concerning duality and invariant measures. To begin with the first of these issues, observe that if Figure \ref{bbs75}, which shows the dynamics of a BBS($J$,$K$) system, is reflected in the dotted line, then it shows the dynamics of a BBS($K$,$J$) system; this is a simple consequence of the symmetry with respect to reversing the roles of $J$ and $K$, and the roles of $(\eta_n,(T\eta)_n)$ and $(W_{n-1},W_n)$, in the equations at \eqref{carrierdef} and \eqref{wdyn}. (For a more formal description of this relation between BBS($J$,$K$) and BBS($K$,$J$), see \eqref{dualityrel} below.) The following result can be seen as an extension of this local picture to a global one.  Specifically it shows a duality between the initial particle configurations $\eta=(\eta_n)_{n\in\mathbb{Z}}\in\mathcal{C}^{inv}_{J,K}$ and the corresponding current of particles crossing the origin, as described by $((T^tW)_0)_{t\in\mathbb{Z}}$, where $T^tW$ is the canonical carrier of $T^t\eta$. For its statement, we define a duality map $\mathcal{D}_{J,K}: \mathcal{C}_{J,K}^{inv} \to \mathcal{C}_{K,J}$ by setting
\begin{equation}\label{dualitymap}
\mathcal{D}_{J,K}(\eta)=((T^{t}W)_0)_{t \in \mathbb{Z}}.
\end{equation}

\begin{thm}\label{mainthm1} (a) Fix $J,K\in\mathbb{N}\cup\{\infty\}$ such that either $J\geq K$ or $J<K<\infty$. For any $\eta \in \mathcal{C}_{J,K}^{inv}$, it is the case that $((T^{t}W)_0)_{t \in \mathbb{Z}} \in \mathcal{C}_{K,J}^{inv}$. Moreover, for each $n\in\mathbb{Z}$, $((T^{t+1}\eta)_{n+1})_{t \in \mathbb{Z}}$ is the canonical carrier for $(T^{t}W_n)_{t \in \mathbb{Z}}$ with respect to the BBS($K$,$J$) dynamics.\\
(b) Fix $J,K\in\mathbb{N}\cup\{\infty\}$. If we define
\[\tilde{\mathcal{C}}_{J,K}^{inv}:=\left\{
                           \begin{array}{ll}
                             \mathcal{D}_{K,J}(\mathcal{C}_{K,J}^{inv}), & \hbox{when }J<K=\infty,\\
                             \mathcal{C}_{J,K}^{inv}, & \hbox{otherwise},                           \end{array}
                         \right.\]
then $\mathcal{D}_{J,K}$ is a bijection between $\tilde{\mathcal{C}}_{J,K}^{inv}$ and $\tilde{\mathcal{C}}_{K,J}^{inv}$, with inverse given by
\begin{equation}
\mathcal{D}_{J,K}^{-1}:=\theta^{-1}\circ\mathcal{D}_{K,J}\circ\theta^{-1},\label{dualinv}
\end{equation}
where $\theta$ is the usual left-shift on doubly infinite sequences, i.e.\ $\theta((x_n)_{n\in\mathbb{Z}})=(x_{n+1})_{n\in\mathbb{Z}}$.
\end{thm}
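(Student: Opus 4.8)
The plan is to reduce the whole statement to a single composition identity and then to finish by elementary algebra. Since interchanging $J$ and $K$ simultaneously swaps the two maps $\mathcal{D}_{J,K}$, $\mathcal{D}_{K,J}$ and the two sets $\tilde{\mathcal{C}}^{inv}$, and since for every pair $(J,K)$ at least one of the two directions satisfies the hypotheses of part (a), I may assume throughout that $\mathcal{D}_{J,K}$ is the direction to which part (a) applies, i.e.\ that $J\geq K$ or $J<K<\infty$. The identity I am aiming for is
\[
\mathcal{D}_{K,J}\circ\mathcal{D}_{J,K}=\theta\circ T\qquad\text{on }\mathcal{C}^{inv}_{J,K}.
\]
Alongside it I will use the elementary intertwining relation $\mathcal{D}_{J,K}\circ T=\theta\circ\mathcal{D}_{J,K}$, which is read off directly from \eqref{dualitymap}: the $t$-th coordinate of $\mathcal{D}_{J,K}(T\eta)$ is the canonical carrier of $T^{t+1}\eta$ evaluated at the origin, namely $(T^{t+1}W)_0$, which is exactly the $(t+1)$-th coordinate of $\mathcal{D}_{J,K}(\eta)$.

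The core of the argument, and where part (a) does the real work, is the proof of the composition identity. Writing $\xi=\mathcal{D}_{J,K}(\eta)$ and letting $T_{K,J}$ denote the BBS($K$,$J$) evolution, I would prove by induction on $u\in\mathbb{Z}$ that $T_{K,J}^{u}\xi=\big((T^{t}W)_{u}\big)_{t\in\mathbb{Z}}$; in words, the dual dynamics regenerate the successive ``columns'' of the space--time array of the original carrier. The case $u=0$ is the definition of $\xi$. For the inductive step, part (a) identifies the canonical BBS($K$,$J$) carrier of $\big((T^{t}W)_{u}\big)_{t}$ as $\big((T^{t+1}\eta)_{u+1}\big)_{t}$; substituting this carrier into the BBS($K$,$J$) update and invoking the conservation-of-mass relation \eqref{consmass}---which, under the symmetry \eqref{dualityrel} interchanging $J\leftrightarrow K$ and the pairs $(\eta_n,(T\eta)_n)\leftrightarrow(W_{n-1},W_n)$, is precisely the dual conservation law---shows that the next dual configuration is $\big((T^{t}W)_{u+1}\big)_{t}$, the carrier just used being canonical (again by part (a)), so that this update really is $T_{K,J}$. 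Negative $u$ are handled by running the reversible dynamics backwards, which is legitimate because part (a) also guarantees $\xi\in\mathcal{C}^{inv}_{K,J}$. Reading off the origin coordinate of the canonical carrier of $T_{K,J}^{u}\xi$, which by part (a) is $(T^{0+1}\eta)_{u+1}=(T\eta)_{u+1}$, then yields $\mathcal{D}_{K,J}(\xi)=\theta(T\eta)$, as required.

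With the composition identity in hand, I would conclude by algebra. Using the intertwining relation in the form $\theta^{-1}\circ\mathcal{D}_{J,K}=\mathcal{D}_{J,K}\circ T^{-1}$ (legitimate since $T$ is invertible on $\mathcal{C}^{inv}_{J,K}$ by Proposition~\ref{tinvprp}),
\[
\theta^{-1}\circ\mathcal{D}_{K,J}\circ\theta^{-1}\circ\mathcal{D}_{J,K}
=\theta^{-1}\circ\mathcal{D}_{K,J}\circ\mathcal{D}_{J,K}\circ T^{-1}
=\theta^{-1}\circ\theta\circ T\circ T^{-1}=\mathrm{id},
\]
so $\theta^{-1}\circ\mathcal{D}_{K,J}\circ\theta^{-1}$ is a left inverse of $\mathcal{D}_{J,K}$; in particular $\mathcal{D}_{J,K}$ is injective. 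When both directions fall under part (a) one has $\tilde{\mathcal{C}}^{inv}=\mathcal{C}^{inv}$ on both sides, and the symmetric composition identity $\mathcal{D}_{J,K}\circ\mathcal{D}_{K,J}=\theta\circ T_{K,J}$ provides a right inverse, giving bijectivity and the formula \eqref{dualinv} at once. In the one remaining configuration of parameters (only $\mathcal{D}_{J,K}$ good, i.e.\ $K<J=\infty$), the codomain $\tilde{\mathcal{C}}^{inv}_{K,J}$ is by definition the image $\mathcal{D}_{J,K}(\mathcal{C}^{inv}_{J,K})$, so surjectivity onto it is automatic and the left inverse is then the genuine two-sided inverse; one checks that $\theta^{-1}\circ\mathcal{D}_{K,J}\circ\theta^{-1}$ is well defined there because $\theta^{-1}$ preserves $\mathcal{C}^{inv}_{K,J}$, the domain of $\mathcal{D}_{K,J}$.

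I expect the main obstacle to be the inductive identity in the second paragraph: one must verify that the intrinsically defined BBS($K$,$J$) dynamics---built from \emph{its own} canonical carrier---coincides with the transpose of the original space--time array. The delicate points are the careful book-keeping of indices across the space/time swap, the propagation of the canonical (rather than merely consistent) carrier at every dual time step, which is exactly what part (a) supplies, and the extension to negative times using reversibility on $\mathcal{C}^{inv}$.
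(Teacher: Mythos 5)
Your proposal does not prove the theorem: it proves part (b) \emph{assuming} part (a), and part (a) is itself part of the statement to be established. Every key step of your induction invokes part (a) as a black box --- ``part (a) identifies the canonical BBS($K$,$J$) carrier of $((T^{t}W)_u)_{t}$ as $((T^{t+1}\eta)_{u+1})_{t}$'', ``the carrier just used being canonical (again by part (a))'', ``part (a) also guarantees $\xi\in\mathcal{C}^{inv}_{K,J}$''. As a proof of the full statement this is circular. Part (a) is where essentially all of the work lies: the paper's own proof is devoted almost entirely to it, and then disposes of (b) with the single line ``this follows easily from part (a)''. To be fair, your treatment of (b) given (a) --- the composition identity $\mathcal{D}_{K,J}\circ\mathcal{D}_{J,K}=\theta\circ T_{J,K}$ combined with the intertwining \eqref{djkident}, the reduction by $J\leftrightarrow K$ symmetry, and the separate handling of the case where only one direction satisfies the hypotheses of (a) --- is correct and is a reasonable rendering of what the paper leaves implicit; but it is the easy half.

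What is missing, concretely, is: (i) the case $J=K$, which needs separate treatment since there $T^tW_n=T^t\eta_n=\eta_{n-t}$, so $\mathcal{D}_{J,K}(\eta)=\theta^{-1}R\eta$ and membership of $\mathcal{C}^{inv}_{K,J}$ must be checked by hand; (ii) for $J\neq K$, the verification via the configuration-carrier duality \eqref{dualityrel} that $((T^{t+1}\eta)_{n+1})_{t\in\mathbb{Z}}$ is a BBS($K$,$J$) carrier for $((T^{t}W)_n)_{t\in\mathbb{Z}}$ and that the update with this carrier produces the next column $((T^{t}W)_{n+1})_{t\in\mathbb{Z}}$ (this part is straightforward, and you do gesture at it inside your induction); and (iii) --- the genuinely delicate point --- that this carrier is \emph{canonical}, and likewise for the spatially reversed dual configurations, so that $\mathcal{D}_{J,K}(\eta)\in\mathcal{C}^{inv}_{K,J}$. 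The paper proves (iii) by contradiction using Lemma \ref{keylemma}(b): if the dual carrier at some column were not canonical, then $\inf_{m\ge n}B_{(T^{t+1}\eta_{m+1})_{t\in\mathbb{Z}}}>-\infty$, which forces $\min\{J,K\}\le T^{t}\eta_{m+1}+T^{t}W_m\le\max\{J,K\}$ for all $m\ge n$ and all sufficiently negative $t$, contradicting the fact that, since every $T^t\eta$ and $RT^t\eta$ admit canonical carriers, no such half-infinite degenerate window exists (the failure of \eqref{e1} and \eqref{e2}). This step is also the only place where the hypothesis ``$J\geq K$ or $J<K<\infty$'' enters --- it is exactly the condition Lemma \ref{keylemma}(b) requires for the dual pair $(K,J)$ --- so a proof that never confronts canonicity cannot explain why that hypothesis, and hence the asymmetric definition of $\tilde{\mathcal{C}}^{inv}_{J,K}$, appears in the statement at all.
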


\begin{rem} We show in Example \ref{notbijection} below that in the case $J<\infty$ there exists a configuration $\eta\in\mathcal{C}_{J,\infty}^{inv}\backslash\tilde{\mathcal{C}}_{J,\infty}^{inv}$ such that $\mathcal{D}_{J,\infty}(\eta)\not\in \mathcal{C}_{\infty,J}^{inv}$. Hence $\mathcal{D}_{J,\infty}$ is not a bijection between $\mathcal{C}^{inv}_{J,\infty}$ and $\mathcal{C}^{inv}_{\infty,J}$.
\end{rem}

\begin{rem} The inclusion of the shift map in the result at \eqref{dualinv} is simply by convention. Indeed, suppose that we instead define the carrier process to be given by $\tilde{W}=(\tilde{W}_n)_{n\in\mathbb{Z}}$, where $\tilde{W}_n$ represents the number of balls being carried by the carrier when it arrives at location $n$, i.e. $\tilde{W}_n=W_{n-1}$, and define the duality map by setting $\tilde{\mathcal{D}}_{J,K}(\eta)=((T^{t}\tilde{W})_0)_{t \in \mathbb{Z}}$, then we have the rather more elegant statement that, for each $n\in\mathbb{Z}$, $((T^{t}\eta)_{n})_{t \in \mathbb{Z}}$ is the canonical carrier for $(T^{t}\tilde{W}_n)_{t \in \mathbb{Z}}$, and \eqref{dualinv} becomes $\tilde{\mathcal{D}}_{J,K}^{-1}=\tilde{\mathcal{D}}_{K,J}$. However, we choose the index of the carrier $W$ as we do because it aligns with that chosen in the earlier work \cite{CKST}, and also because it will be convenient when it comes to our description of the dynamics in terms of certain path encodings (see Subsection \ref{J<K}).
\end{rem}

\begin{rem}
Box-ball systems with finite capacity are known to arise from quantum integrable systems by a procedure called crystallization. Precisely, these models form a class of two-dimensional multi-state vertex models that can be constructed from the six-vertex model by a fusion procedure. The duality of the box-ball system can be understood as a certain symmetry of these two-dimensional multi-state vertex models. For more detailed background, see \cite{IKT}.
\end{rem}

The strong link between the initial configuration and particle current was already applied in \cite{CKST}, where the properties of invariance and ergodicity of a random configuration $\eta$ under $T$ were shown to be equivalent to the corresponding properties for the current $((T^{t}W)_0)_{t \in \mathbb{Z}}$ under the spatial shift $\theta$. In the first main probabilistic result of this paper, we apply the deterministic relation of Theorem \ref{mainthm1} to extend such parallels to more general box-ball systems. At the heart of the proof is the observation that
\begin{equation}\label{djkident}
\mathcal{D}_{J,K}\circ T_{J,K}=\theta\circ\mathcal{D}_{J,K}
\end{equation}
on $\mathcal{C}_{J,K}^{inv}$, which is straightforward to check from the definition of $\mathcal{D}_{J,K}$. Towards stating the result, we introduce $\mathcal{P}_{J,K}, \mathcal{P}_{J,K}^{rev}, \mathcal{P}_{J,K}^{inv}, \tilde{\mathcal{P}}_{J,K}^{inv}$ for the collections of probability measures supported on $\mathcal{C}_{J,K}, \mathcal{C}_{J,K}^{rev}, \mathcal{C}_{J,K}^{inv}, \tilde{\mathcal{C}}_{J,K}^{inv}$, respectively. We moreover note that the map $\mathcal{D}^{P}_{J,K}:\mathcal{P}_{J,K}^{inv}\rightarrow\mathcal{P}_{K,J}$ given by
\begin{equation}\label{djkpdef}
\mathcal{D}^{P}_{J,K}\left(P_{J,K}\right):=P_{J,K} \circ \mathcal{D}_{J,K}^{-1}\circ\theta
\end{equation}
is a bijection between $\tilde{\mathcal{P}}_{J,K}^{inv}$ and $\tilde{\mathcal{P}}_{K,J}^{inv}$, with inverse given by $\mathcal{D}^{P}_{K,J}$.

\begin{thm}\label{dualthm} Fix $J,K\in\mathbb{N}\cup\{\infty\}$.\\
(a) If $P_{J,K} \in \mathcal{P}_{J,K}^{rev}$ and $P_{J,K} \circ  T_{J,K}^{-1} = P_{J,K}$, then  $P_{J,K} \in \tilde{\mathcal{P}}_{J,K}^{inv}$.\\
(b) If $P_{J,K} \in \mathcal{P}_{J,K}^{inv}$ and $P_{K,J}:= \mathcal{D}^{P}_{J,K}(P_{J,K})$, then $P_{J,K} \circ T_{J,K}^{-1} = P_{J,K}$ if and only if $P_{K,J} \circ \theta^{-1} = P_{K,J}$.\\
(c) If the transform that appears in one of the sides of (b) is ergodic for the relevant measure, then so is the transform that appears in the other side.
\end{thm}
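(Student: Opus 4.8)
The entire proof is driven by the intertwining relation \eqref{djkident}, that is $\mathcal{D}_{J,K}\circ T_{J,K}=\theta\circ\mathcal{D}_{J,K}$ on $\mathcal{C}_{J,K}^{inv}$, combined with two structural facts: the injectivity of $\mathcal{D}_{J,K}$ on $\mathcal{C}_{J,K}^{inv}$ (which I would read off from the reconstruction of a configuration from its current afforded by Theorem~\ref{mainthm1}(a)), and the bijection property recorded after Theorem~\ref{mainthm1}. The plan is to regard $\Phi:=\theta^{-1}\circ\mathcal{D}_{J,K}$ as a measurable conjugacy carrying the dynamical system $(T_{J,K},P_{J,K})$ to the shift system $(\theta,P_{K,J})$; indeed, unravelling \eqref{djkpdef} shows that $P_{K,J}=\mathcal{D}^{P}_{J,K}(P_{J,K})$ is exactly the pushforward $P_{J,K}\circ\Phi^{-1}$. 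Since $\Phi\circ T_{J,K}=\theta^{-1}\circ\mathcal{D}_{J,K}\circ T_{J,K}=\theta^{-1}\circ\theta\circ\mathcal{D}_{J,K}=\mathcal{D}_{J,K}=\theta\circ\Phi$ by \eqref{djkident}, I obtain the clean conjugacy identity $\theta\circ\Phi=\Phi\circ T_{J,K}$, valid $P_{J,K}$-almost everywhere, from which every assertion is deduced.

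For part (b) I would compute directly with pushforwards. Using $P_{K,J}=P_{J,K}\circ\Phi^{-1}$, the conjugacy identity, and functoriality of pushforward,
\[
P_{K,J}\circ\theta^{-1}
=P_{J,K}\circ(\theta\circ\Phi)^{-1}
=P_{J,K}\circ(\Phi\circ T_{J,K})^{-1}
=\bigl(P_{J,K}\circ T_{J,K}^{-1}\bigr)\circ\Phi^{-1}.
\]
The forward implication is then immediate and needs no injectivity: if $P_{J,K}\circ T_{J,K}^{-1}=P_{J,K}$, the right-hand side equals $P_{J,K}\circ\Phi^{-1}=P_{K,J}$, so $P_{K,J}$ is $\theta$-invariant. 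For the converse, the displayed equality shows that $P_{K,J}\circ\theta^{-1}=P_{K,J}$ is equivalent to $\bigl(P_{J,K}\circ T_{J,K}^{-1}\bigr)\circ\Phi^{-1}=P_{J,K}\circ\Phi^{-1}$, i.e. the measures $P_{J,K}\circ T_{J,K}^{-1}$ and $P_{J,K}$ have the same image under $\Phi$; since $\Phi$ is injective $P_{J,K}$-a.e.\ on $\mathcal{C}_{J,K}^{inv}$, pushforward by $\Phi$ is injective on $\mathcal{P}_{J,K}^{inv}$ and I may cancel to conclude $P_{J,K}\circ T_{J,K}^{-1}=P_{J,K}$.

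For part (a) I would first treat the generic case. Given $P_{J,K}\in\mathcal{P}_{J,K}^{rev}$ with $P_{J,K}\circ T_{J,K}^{-1}=P_{J,K}$, invariance together with the a.s.\ invertibility of $T_{J,K}$ on $\mathcal{C}_{J,K}^{rev}$ (with inverse $RTR$, as in Proposition~\ref{tinvprp}) makes $P_{J,K}$ invariant under $T_{J,K}^{-1}$ as well; hence $P_{J,K}(T_{J,K}^{t}\eta\in\mathcal{C}_{J,K}^{rev})=1$ for every fixed $t\in\mathbb{Z}$, and a countable intersection gives $P_{J,K}(\mathcal{C}_{J,K}^{inv})=1$. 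In every case except $J<K=\infty$ one has $\tilde{\mathcal{C}}_{J,K}^{inv}=\mathcal{C}_{J,K}^{inv}$, so this already yields $P_{J,K}\in\tilde{\mathcal{P}}_{J,K}^{inv}$. In the exceptional case $J<K=\infty$ I must further show that $P_{J,\infty}$ charges only the image $\mathcal{D}_{\infty,J}(\mathcal{C}_{\infty,J}^{inv})=\tilde{\mathcal{C}}_{J,\infty}^{inv}$. Here I would pass to the dual current measure $P_{\infty,J}=\mathcal{D}^{P}_{J,\infty}(P_{J,\infty})$, which is $\theta$-invariant by the forward half of (b), and argue that reversibility of $\eta$ transfers to the current so that $P_{\infty,J}$ is supported on $\mathcal{C}_{\infty,J}^{rev}$; the same easy argument (now for the shift) then places it on $\mathcal{C}_{\infty,J}^{inv}$, whence $P_{J,\infty}=\mathcal{D}^{P}_{\infty,J}(P_{\infty,J})$ is supported on $\mathcal{D}_{\infty,J}(\mathcal{C}_{\infty,J}^{inv})$. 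I expect this transfer of reversibility to the dual current---precisely the phenomenon obstructing bijectivity in Example~\ref{notbijection}---to be the main obstacle, and to require the explicit characterisations of $\mathcal{C}_{J,K}^{can}$ from Propositions~\ref{ch1}--\ref{ch4}.

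Finally, part (c) is a formal consequence of the conjugacy $\theta\circ\Phi=\Phi\circ T_{J,K}$ once invariance holds on both sides, which is the content of (b). Under invariance, $\Phi$ is a measure-preserving isomorphism of $(\mathcal{C}_{J,K}^{inv},P_{J,K},T_{J,K})$ onto $(\mathcal{C}_{K,J},P_{K,J},\theta)$: it is injective $P_{J,K}$-a.e., its image carries full $P_{K,J}$-measure by construction, and it intertwines the two maps. Since ergodicity is a conjugacy invariant, I would simply note that $\theta$-invariant sets $A$ correspond bijectively (mod null) to $T_{J,K}$-invariant sets $\Phi^{-1}(A)$ of equal measure, via $T_{J,K}^{-1}(\Phi^{-1}A)=\Phi^{-1}(\theta^{-1}A)$, so that the two invariant $\sigma$-algebras are isomorphic and triviality of one forces triviality of the other.
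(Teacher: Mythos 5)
Your reduction to the intertwining identity \eqref{djkident} and the pushforward calculus is sound, and in the cases $J\geq K$ and $J<K<\infty$ your argument is correct and essentially the paper's proof in different notation: there $\tilde{\mathcal{C}}_{J,K}^{inv}=\mathcal{C}_{J,K}^{inv}$, so Theorem \ref{mainthm1}(b) does make $\mathcal{D}_{J,K}$ injective on the full invariant set, and your cancellation in (b) and conjugacy argument in (c) go through. The genuine gap is the exceptional case $J<K=\infty$, and it affects both part (a) and the converse half of part (b).

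Concretely: in the converse of (b) you cancel the pushforward by $\Phi$ using ``injectivity of $\mathcal{D}_{J,K}$ on $\mathcal{C}_{J,K}^{inv}$, read off from Theorem \ref{mainthm1}(a)''. But Theorem \ref{mainthm1}(a) is stated only for $J\geq K$ or $J<K<\infty$; in the case $J<K=\infty$ no reconstruction of $\eta$ from its current is available on the full set $\mathcal{C}_{J,\infty}^{inv}$ --- Theorem \ref{mainthm1}(b) gives bijectivity only on the strictly smaller set $\tilde{\mathcal{C}}_{J,\infty}^{inv}$, and the remark following Theorem \ref{mainthm1} together with Example \ref{notbijection} shows that the duality structure genuinely degenerates on $\mathcal{C}_{J,\infty}^{inv}\setminus\tilde{\mathcal{C}}_{J,\infty}^{inv}$ (the current of such a configuration need not even lie in $\mathcal{C}_{\infty,J}^{inv}$). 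Nothing in the paper establishes injectivity on this larger set, and your measure supported on $\mathcal{C}_{J,\infty}^{inv}$ may a priori charge the bad part, so the cancellation step is unjustified exactly where it is needed. The same problem resurfaces in your part (a): the step ``reversibility of $\eta$ transfers to the current'' is \emph{false} as a deterministic statement --- Example \ref{notbijection} is a configuration in $\mathcal{C}_{1,\infty}^{inv}$ whose current lies outside $\mathcal{C}_{\infty,1}^{rev}$ --- so it cannot be argued configuration-by-configuration from Propositions \ref{ch1}--\ref{ch4}, as you propose; you correctly flag this as the main obstacle, but it is left unresolved.

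The missing ingredient is precisely the paper's Lemma \ref{pinvlmm}, whose proof is probabilistic rather than pointwise: if $P_{J,\infty}\in\mathcal{P}_{J,\infty}^{inv}$ and its dual law is $\theta$-invariant, then $P_{J,\infty}\in\tilde{\mathcal{P}}_{J,\infty}^{inv}$. This follows because, by Lemma \ref{lmmB} and Proposition \ref{ch1}, any configuration in the bad set produces a current (or reversed current) for which $B_{\cdot}=-\infty$ while some $B_{q,i,\cdot}>-\infty$, whereas under a $\theta$-invariant law the quantities $B^{alt}_{c,i,\cdot}$ of Subsection \ref{case1} are almost surely $\pm\infty$, making such events null. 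Once this lemma is in hand, all measures in play are carried by the tilde sets, where the bijection of Theorem \ref{mainthm1}(b) is available, and then your pushforward cancellation in (b) and your conjugacy argument in (c) (which are how the paper argues as well) are valid. So your outline needs this one lemma inserted; without it, parts (a) and (b) are not proved in the case $J<K=\infty$.
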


We next turn our attention to measures in $\mathcal{P}_{J,K}$ of product form, or in other words, random configurations such that the elements of $(\eta_n)_{n\in\mathbb{Z}}$ are independent and identically distributed (i.i.d.). The possible marginals of such measures will be denoted by $\mathcal{M}_{J,K}$, i.e.\ this is the collection of probability measures $\mu_{J,K}$ supported on $\{0,1,2,\dots,J\}$. We will moreover write
\begin{align}
\mathcal{M}_{J,K}^{rev}&:=\left\{\mu_{J,K} \in \mathcal{M}_{J,K}:\:\mu_{J,K}^{\otimes \mathbb{Z}}\in\mathcal{P}_{J,K}^{rev}\right\},\label{mrevdef}\\
\mathcal{M}_{J,K}^{inv}&:=\left\{\mu_{J,K} \in \mathcal{M}_{J,K}:\:\mu_{J,K}^{\otimes \mathbb{Z}}\in\mathcal{P}_{J,K}^{inv}\right\};\label{minvdef}
\end{align}
actually, we will show in Proposition \ref{mrevprp} that these two sets are the same. Now, from Theorem \ref{dualthm}, we immediately observe that arbitrary elements of $\mathcal{M}_{J,K}^{rev}$ yield invariant measures for the dual model BBS($K$,$J$), as we make precise in the following corollary.

\begin{crl} Fix $J,K\in\mathbb{N}\cup\{\infty\}$. Let $\mu_{J,K} \in \mathcal{M}_{J,K}^{rev}$, and define $P_{K,J}:=  \mathcal{D}^{P}_{J,K}(\mu_{J,K}^{\otimes \mathbb{Z}})$. It then holds that $P_{K,J} \circ T_{K,J}^{-1}= P_{K,J}$. Moreover, $T_{K,J}$ is ergodic for $P_{K,J}$.
\end{crl}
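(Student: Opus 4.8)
The plan is to deduce both assertions from Theorem \ref{dualthm}, read with the roles of $J$ and $K$ interchanged, exploiting the fact that the product measure $Q:=\mu_{J,K}^{\otimes\mathbb{Z}}$ is automatically stationary and ergodic under the spatial shift $\theta$. Indeed, since $Q$ has i.i.d.\ coordinates we have $Q\circ\theta^{-1}=Q$, and $\theta$ is ergodic (in fact mixing) for $Q$ by a standard zero--one law. The idea is then to run Theorem \ref{dualthm}(b) in the direction ``spatial stationarity of the dual configuration $\Longleftrightarrow$ temporal invariance of $P_{K,J}$'', so that the trivial shift-stationarity of $Q$ yields the $T_{K,J}$-invariance of $P_{K,J}$, with ergodicity then following from part (c).

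Concretely, I would first record that $\mu_{J,K}\in\mathcal{M}_{J,K}^{rev}=\mathcal{M}_{J,K}^{inv}$ (using Proposition \ref{mrevprp}), so that $Q\in\mathcal{P}_{J,K}^{inv}$ and $P_{K,J}=\mathcal{D}^{P}_{J,K}(Q)$ is well defined. Next I would verify that $Q\in\tilde{\mathcal{P}}_{J,K}^{inv}$; granting this, the bijectivity of $\mathcal{D}^{P}_{J,K}$ between $\tilde{\mathcal{P}}_{J,K}^{inv}$ and $\tilde{\mathcal{P}}_{K,J}^{inv}$ (with inverse $\mathcal{D}^{P}_{K,J}$) ensures both that $P_{K,J}\in\tilde{\mathcal{P}}_{K,J}^{inv}\subseteq\mathcal{P}_{K,J}^{inv}$ (the inclusion being an equality except when the dual pair is exceptional, in which case it follows from Theorem \ref{mainthm1}(a)) and that $\mathcal{D}^{P}_{K,J}(P_{K,J})=Q$. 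Applying Theorem \ref{dualthm}(b) with $(J,K)$ replaced by $(K,J)$ to the measure $P_{K,J}$, whose dual is $Q$, then shows that $P_{K,J}\circ T_{K,J}^{-1}=P_{K,J}$ if and only if $Q\circ\theta^{-1}=Q$; as the latter holds, the claimed invariance follows. Finally, applying Theorem \ref{dualthm}(c) in the same configuration, the $\theta$-ergodicity of $Q$ transfers to give that $T_{K,J}$ is ergodic for $P_{K,J}$.

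The step requiring genuine care, and the one I expect to be the main obstacle, is the verification that $Q\in\tilde{\mathcal{P}}_{J,K}^{inv}$, i.e.\ that $Q(\tilde{\mathcal{C}}_{J,K}^{inv})=1$; this is exactly what makes the inverse $\mathcal{D}^{P}_{K,J}$ recover $Q$, since a short pushforward computation using \eqref{dualinv} shows $\mathcal{D}^{P}_{K,J}\circ\mathcal{D}^{P}_{J,K}$ acts as the identity on $Q$ precisely when $\mathcal{D}_{J,K}^{-1}\circ\mathcal{D}_{J,K}$ is the identity $Q$-almost everywhere. Whenever $\tilde{\mathcal{C}}_{J,K}^{inv}=\mathcal{C}_{J,K}^{inv}$, that is in every case other than $J<K=\infty$, this is immediate from $Q\in\mathcal{P}_{J,K}^{inv}$. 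In the remaining case $J<K=\infty$ the two sets genuinely differ (cf.\ Example \ref{notbijection}), and one must show that the reversible i.i.d.\ law $Q$ assigns full mass to $\tilde{\mathcal{C}}_{J,\infty}^{inv}=\mathcal{D}_{\infty,J}(\mathcal{C}_{\infty,J}^{inv})$; equivalently, that for $Q$-a.e.\ $\eta$ one has $\mathcal{D}_{J,\infty}(\eta)\in\mathcal{C}_{\infty,J}^{inv}$, so that $\eta$ is realised as the current-at-the-origin process of some $(\infty,J)$-configuration. The degenerate configurations in $\mathcal{C}_{J,\infty}^{inv}\setminus\tilde{\mathcal{C}}_{J,\infty}^{inv}$ carry the specific defect exhibited in Example \ref{notbijection}, which a genuinely random i.i.d.\ configuration should avoid almost surely, and I would discharge this final point by a null-set argument resting on the characterisation of $\tilde{\mathcal{C}}^{inv}$ together with the shift-ergodicity of $Q$.
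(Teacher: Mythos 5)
Your main-line derivation is exactly the argument the paper intends (the paper offers no separate proof, asserting the corollary follows ``immediately'' from Theorem \ref{dualthm}): Proposition \ref{mrevprp} places $Q:=\mu_{J,K}^{\otimes\mathbb{Z}}$ in $\mathcal{P}_{J,K}^{inv}$; once $Q\in\tilde{\mathcal{P}}_{J,K}^{inv}$ is known, the bijectivity of $\mathcal{D}^{P}_{J,K}$ gives both $P_{K,J}\in\tilde{\mathcal{P}}_{K,J}^{inv}\subseteq\mathcal{P}_{K,J}^{inv}$ and $\mathcal{D}^{P}_{K,J}(P_{K,J})=Q$, and Theorem \ref{dualthm}(b),(c) applied with the roles of $J$ and $K$ interchanged converts the $\theta$-invariance and $\theta$-ergodicity of the i.i.d.\ measure $Q$ into the $T_{K,J}$-invariance and $T_{K,J}$-ergodicity of $P_{K,J}$. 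This is complete whenever $J\geq K$ or $J<K<\infty$, since then $\tilde{\mathcal{C}}_{J,K}^{inv}=\mathcal{C}_{J,K}^{inv}$; and you correctly isolate the one remaining issue, namely that $Q(\tilde{\mathcal{C}}_{J,\infty}^{inv})=1$ must be verified when $J<K=\infty$ --- a point the paper itself passes over in silence.

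However, your plan for discharging that case is a genuine gap: the tools you name do not suffice. First, the paper's only device for entering $\tilde{\mathcal{P}}^{inv}$, Lemma \ref{pinvlmm}, is unavailable here, because its hypothesis is the $\theta$-invariance of the dual law $P_{K,J}$, which by \eqref{djkident} amounts to $T_{J,K}$-invariance of $Q$ --- precisely what the corollary does not assume (and what makes the corollary interesting). Second, ``shift-ergodicity plus a null-set argument'' yields only a zero--one law: since $\theta\tilde{\mathcal{C}}_{J,\infty}^{inv}=\tilde{\mathcal{C}}_{J,\infty}^{inv}$ (by \eqref{djkident} for the pair $(\infty,J)$), ergodicity gives $Q(\tilde{\mathcal{C}}_{J,\infty}^{inv})\in\{0,1\}$, but nothing in your sketch excludes the value $0$. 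Nor can it be excluded on soft structural grounds: the paper has no intrinsic characterisation of $\tilde{\mathcal{C}}_{J,\infty}^{inv}$ beyond the necessary condition of Lemma \ref{lmmB}, and Example \ref{notbijection} exhibits a configuration satisfying the strong law with strictly positive drift in both directions --- that is, every almost-sure path property the paper establishes for subcritical i.i.d.\ configurations --- which nonetheless lies outside $\tilde{\mathcal{C}}_{J,\infty}^{inv}$. Any correct argument must therefore use the subcriticality $2m(\mu_{J,K})<J$ quantitatively, which your proposal never does.

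For completeness, here is one way to close the gap. Suppose $Q(\tilde{\mathcal{C}}_{J,\infty}^{inv})=0$. By the definition of $\tilde{\mathcal{C}}_{J,\infty}^{inv}$ together with Lemma \ref{12345}, and then the independence of Lemma \ref{indep} and the reflection invariance of $Q$, one deduces that $Q$-a.s.\ the current $((T^tW)_0)_{t\in\mathbb{Z}}$ admits no canonical BBS($\infty$,$J$) carrier. Since $((T^{t+1}\eta)_1)_{t\in\mathbb{Z}}$ is always a carrier for it (as shown in the proof of Theorem \ref{mainthm1} via \eqref{dualityrel}), that carrier must be non-canonical, which by \eqref{essbound} means that a.s.\ $T^t\eta_1+(T^tW)_0\geq J$, equivalently $T^{t+1}\eta_1=J-T^t\eta_1$, for all sufficiently negative $t$. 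By shift-covariance the same holds a.s.\ at every site, so with $E_t:=\{T^{s+1}\eta_0=J-T^s\eta_0\ \forall s\leq t\}$ we have $Q(E_t)\rightarrow1$ as $t\rightarrow-\infty$, whence $\mathbf{E}[T^t\eta_0+T^{t+1}\eta_0]\geq JQ(E_t)\rightarrow J$. On the other hand, \eqref{consmass} and spatial stationarity give conservation of density, $\mathbf{E}[T^t\eta_0+T^{t+1}\eta_0]=2m(\mu_{J,K})$ for every $t$, contradicting $2m(\mu_{J,K})<J$, which is exactly the condition characterising $\mathcal{M}_{J,\infty}^{rev}$ in Proposition \ref{mrevprp}. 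With this (or an equivalent) argument supplied, your proof is correct; without it, both the invariance claim and even the $P_{K,J}$-a.s.\ well-definedness of $T_{K,J}$ remain unproved in the case $J<K=\infty$.
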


Whilst it is interesting to consider what invariant measures arise in this way, our main focus in this part of the article will be on another basic problem: for what choices of $\mu_{J,K}\in\mathcal{M}_{J,K}^{rev}$ is the corresponding product measure $\mu_{J,K}^{\otimes \mathbb{Z}}$ invariant under $T_{J,K}$? And, in Theorem \ref{iidclass} below, we give a complete answer to this question. Whilst we postpone the statement of this result to avoid setting out the necessary technical preparations here, we will introduce the key idea for its proof, which is a certain local duality property that, by analogy with Markov chain terminology, we will call detailed balance, see \eqref{dbalance} below. As above, we appeal to the relation between the configuration and the current, now defining a map $\mathcal{D}^\mu_{J,K}:\mathcal{M}_{J,K}^{rev}\rightarrow \mathcal{M}_{K,J}$ by setting
\begin{equation}\label{dmudef}
\mathcal{D}^\mu_{J,K}\left(\mu_{J,K}\right):=\mu_{J,K}^{\otimes \mathbb{Z}}\circ W_0^{-1}.
\end{equation}
So, if $\eta\sim \mu_{J,K}^{\otimes \mathbb{Z}}$ and we set $\mu_{K,J}:=\mathcal{D}^\mu_{J,K}(\mu_{J,K})$, then, since $W_{n-1}$ is $(\eta_{m})_{m\leq n-1}$ measurable (by Lemma \ref{unique}), this means that
\begin{equation}\label{prodmeasure}
(\eta_n,W_{n-1})\sim\mu_{J,K}\times\mu_{K,J}.
\end{equation}
And, as is made precise in the following theorem, we will show that the invariance of $\mu_{J,K}^{\otimes \mathbb{Z}}$ under $T_{J,K}$ is equivalent to the invariance of the above law under the map $(\eta_n,W_{n-1})\mapsto((T\eta)_n,W_{n})$ given by \eqref{carrierdef} and \eqref{wdyn}, together with a certain consistency condition between the ranges of the support of the measures $\mu_{J,K}$ and $\mu_{K,J}$ holding. To state the result in a concise way and for later use,  we set
\begin{equation}\label{iinvdef}
\mathcal{I}_{J,K}^{inv}:=\left\{\mu_{J,K} \in \mathcal{M}^{rev}_{J,K}:\:\mu_{J,K}^{\otimes \mathbb{Z}} \circ T_{J,K}^{-1} = \mu_{J,K}^{\otimes \mathbb{Z}}\right\}.
\end{equation}
We also define a map $(a,b)\mapsto F_{J,K}(a,b):=(F^{(1)}_{J,K}(a,b),F^{(2)}_{J,K}(a,b))$ from $\{0,1,\dots,J\}\times \{0,1,\dots,K\}$ to itself by setting
\begin{align}
F^{(1)}_{J,K}(a,b) &=a+\min\{b,J-a\}-\min\{a,K-b\},\nonumber\\
F^{(2)}_{J,K}(a,b) &=b-\min\{b,J-a\}+\min\{a,K-b\},\label{fdef}
\end{align}
so that $((T\eta)_n,W_{n})=F_{J,K}(\eta_n,W_{n-1})$. Moreover, for a measure $\mu_{J,K} \in \mathcal{M}_{J,K}$, we set
\[\underline{r}(\mu_{J,K}):=\min_{a:\:\mu_{J,K}(a)>0} a,\]
\begin{equation}\label{rjkdef}
r(\mu_{J,K}):=\min_{a:\:\mu_{J,K}(a)>0}\min\left\{a,\sigma_J(a)\right\},
\end{equation}
where $\sigma_J(a):=J-a$. In practice, we will use the detailed balance equation \eqref{dbalance} to identify dual pairs of invariant i.i.d.\ measures.

\begin{thm}\label{iidthm} Fix $J,K\in\mathbb{N}\cup\{\infty\}$.  Let $\mu_{J,K}\in\mathcal{M}^{rev}_{J,K}$. It is then the case that $\mu_{J,K}\in\mathcal{I}_{J,K}^{inv}$ if and only if there exists a $\mu_{K,J} \in \mathcal{M}_{K,J}$ such that
\begin{equation}\label{dbalance}
\mu_{J,K}\times\mu_{K,J}\circ F_{J,K}^{-1}=\mu_{J,K}\times\mu_{K,J},
\end{equation}
$r(\mu_{J,K})=r(\mu_{K,J})$, and also $\underline{r}(\mu_{J,K})=\underline{r}(\mu_{K,J})$ when either $J=\infty$ or $K=\infty$. Moreover, if the above conditions hold, then $\mu_{K,J}=\mathcal{D}^\mu_{J,K}(\mu_{J,K})$ and $\mu_{K,J} \in \mathcal{M}_{K,J}^{rev}$.
\end{thm}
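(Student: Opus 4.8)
The plan is to treat the two implications separately, in both cases working site by site with the pair $(\eta_n,W_{n-1})$ and its image $((T\eta)_n,W_n)=F_{J,K}(\eta_n,W_{n-1})$, and exploiting that, by Lemma \ref{unique}, $W_{n-1}$ is a function of $(\eta_m)_{m\le n-1}$ and hence independent of $\eta_n$ under an i.i.d.\ law. Throughout I write $\eta\sim\mu_{J,K}^{\otimes\mathbb Z}$ and let $\nu:=\mathcal{D}^\mu_{J,K}(\mu_{J,K})$ be the law of $W_0$, so that $(\eta_n,W_{n-1})\sim\mu_{J,K}\times\nu$ by \eqref{prodmeasure}, and $(W_n)_{n\in\mathbb Z}$ is a stationary Markov chain with marginal $\nu$ and transition $b\mapsto F^{(2)}_{J,K}(a,b)$ averaged over $a\sim\mu_{J,K}$. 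Note that taking the second marginal of any detailed-balance relation \eqref{dbalance} shows that the measure $\mu_{K,J}$ appearing there is necessarily a stationary distribution of this carrier chain.

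For the forward implication, suppose $\mu_{J,K}\in\mathcal{I}^{inv}_{J,K}$ and take $\mu_{K,J}:=\nu$. Then $(T\eta)_n\sim\mu_{J,K}$ by invariance and $W_n\sim\nu$ by stationarity of the carrier, so the only point needing proof for \eqref{dbalance} is that $(T\eta)_n$ and $W_n$ are independent. Here I would use reversibility: since $\mu_{J,K}\in\mathcal M^{rev}_{J,K}$ and (by invariance) $T\eta\sim\mu_{J,K}^{\otimes\mathbb Z}$, the configuration $T\eta$ admits a canonical carrier read from the right. Concretely, the local map $(\eta_n,W_{n-1})\mapsto((T\eta)_n,W_n)$ is involutive (the blue arrows in Figure \ref{bbs75}), so its inverse recovers $(\eta_{n+1},W_n)$ from $((T\eta)_{n+1},W_{n+1})$; iterating and using that the rightward reconstruction is endogenous for $T\eta\in\mathcal C^{rev}_{J,K}$ shows that $W_n$ is $\sigma((T\eta)_m:m\ge n+1)$-measurable. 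As $T\eta$ is i.i.d., $(T\eta)_n$ is independent of this $\sigma$-algebra, giving $(T\eta)_n\perp W_n$ and hence \eqref{dbalance}. The equalities $r(\mu_{J,K})=r(\nu)$, and $\underline r(\mu_{J,K})=\underline r(\nu)$ in the unbounded cases, are conserved boundary quantities of $F_{J,K}$, read off from the conservation law \eqref{consmass} together with the support of $\mu_{J,K}\times\nu$ under $F_{J,K}$.

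For the reverse implication, the first task is to identify any $\mu_{K,J}$ satisfying \eqref{dbalance} and the range conditions with the genuine carrier law $\nu$. Since such a $\mu_{K,J}$ is a stationary distribution of the carrier chain, the role of $r(\mu_{J,K})=r(\mu_{K,J})$ (and $\underline r(\mu_{J,K})=\underline r(\mu_{K,J})$ when $J$ or $K$ is infinite) is to force $\mu_{K,J}$ onto the same recurrent class as $\nu$, upon which the stationary distribution is unique, so $\mu_{K,J}=\nu$ and \eqref{prodmeasure} applies with this measure. Granting this, invariance follows by an induction establishing, for each $N\le n$, that $(T\eta)_N,\dots,(T\eta)_n$ and $W_n$ are jointly independent with the correct marginals, the base case $n=N$ being \eqref{dbalance} itself. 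In the inductive step one uses that $\eta_{n+1}\sim\mu_{J,K}$ is independent of $(\eta_m)_{m\le n}$, hence of $\big((T\eta)_N,\dots,(T\eta)_n,W_n\big)$, so that this enlarged collection is jointly independent; since $((T\eta)_{n+1},W_{n+1})=F_{J,K}(\eta_{n+1},W_n)$ depends only on $(\eta_{n+1},W_n)$, the relation \eqref{dbalance} makes it a $\mu_{J,K}\times\nu$-distributed pair independent of $(T\eta)_N,\dots,(T\eta)_n$, closing the induction. Letting $N\to-\infty$ shows $T\eta$ is i.i.d.\ $\mu_{J,K}$, i.e.\ $\mu_{J,K}\in\mathcal{I}^{inv}_{J,K}$.

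The two `moreover' claims come from the same analysis: $\mu_{K,J}=\mathcal D^\mu_{J,K}(\mu_{J,K})$ is exactly the identification $\mu_{K,J}=\nu$ used above, while $\mu_{K,J}\in\mathcal M^{rev}_{K,J}$ follows from the coordinate-swapping symmetry between $F_{J,K}$ and $F_{K,J}$ underlying the reflection in Figure \ref{bbs75}: this symmetry, together with the symmetry of the range conditions in $(J,K)$ and in the two coordinates, shows that $\mu_{K,J}$ satisfies the hypotheses of the theorem for the dual model, placing it in $\mathcal M^{rev}_{K,J}$. I expect the principal obstacle to be the identification step in the reverse direction, namely proving that the range conditions single out the correct stationary measure of the carrier chain among the possibly several supported on disjoint closed classes; this is precisely where the distinction between the finite and infinite cases, and hence the extra $\underline r$ condition, genuinely enters. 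The measurability-from-the-right claim used in the forward direction is the other delicate point, resting on the reversibility afforded by $\mu_{J,K}\in\mathcal M^{rev}_{J,K}$.
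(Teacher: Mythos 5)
Your outline reproduces the paper's architecture faithfully: the forward-direction independence argument (reading $W_n$ from the right of $T\eta$ via the involution and reversibility) and the reverse-direction induction are, up to bookkeeping, exactly the two implications of the paper's Proposition \ref{dbalance1}, and your plan to identify $\mu_{K,J}$ with the carrier law through stationarity for the carrier chain is also the paper's strategy. However, the two steps you leave unresolved are genuine gaps, and they are where the actual content of the theorem sits. First, in the forward direction, the equalities $r(\mu_{J,K})=r(\mu_{K,J})$ and $\underline{r}(\mu_{J,K})=\underline{r}(\mu_{K,J})$ are not consequences of the conservation law \eqref{consmass}: that identity constrains individual transitions but says nothing about the support of the stationary carrier law. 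The paper proves them for $J>K$ from the structure theory of canonical carriers (Lemma \ref{>2r}, in particular \eqref{equalsr} and \eqref{equalsr2}), and for $J<K$ no direct analogue is available, so it instead uses the bijectivity of $\mathcal{D}^P_{J,K}$ (Theorem \ref{dualthm} together with Proposition \ref{dbalance1}) to realise $\mu_{J,K}$ as the carrier law of the dual BBS($K$,$J$) and reduce to the case already treated. Second, in the reverse direction, the identification $\mu_{K,J}=\nu$ that you flag as ``the principal obstacle'' is the crux of the whole proof: one must show that the true carrier law is concentrated on $\{r,\dots,K-r\}$ (Lemmas \ref{le2r} and \ref{>2r} when $J>K$; the path-encoding results when $J<K$), and that the carrier chain on this set has at most one stationary law -- an accessibility argument in which the hypothesis $\underline{r}(\mu_{J,K})=\underline{r}(\mu_{K,J})$ and the bound $r<\frac{J}{2}$ from Proposition \ref{mrevprp} enter essentially when $K=\infty$ (a long run of boxes containing exactly $r$ balls forces the carrier down to $r$). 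A proposal that defers both of these points has not engaged with the part of the theorem that is hard.

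In addition, your argument for the final claim $\mu_{K,J}\in\mathcal{M}^{rev}_{K,J}$ is circular: invoking ``the hypotheses of the theorem for the dual model'' presupposes $\mu_{K,J}\in\mathcal{M}^{rev}_{K,J}$, which is precisely what is to be shown, and symmetry of the range conditions alone cannot repair this. For instance, when $J=\infty>K$, membership in $\mathcal{M}^{rev}_{K,\infty}$ is the mean condition $2m(\mu_{K,\infty})<K$ of Proposition \ref{mrevprp}, which the range conditions do not control. The paper instead obtains this claim from the probabilistic duality: by Theorem \ref{dualthm}(a) the current law $\mathcal{D}^P_{J,K}(\mu_{J,K}^{\otimes\mathbb{Z}})$ lies in $\tilde{\mathcal{P}}^{inv}_{K,J}$, hence is supported on $\mathcal{C}^{rev}_{K,J}$, and Lemma \ref{indep} identifies it with $\mu_{K,J}^{\otimes\mathbb{Z}}$.
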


As an immediate corollary of this result, we find that there is a bijection between invariant i.i.d.\ measures under the duality map  $\mathcal{D}^{\mu}_{J,K}$.  We note that it is easy to check that $\mathcal{D}^\mu_{J,K}$ is not a bijection between $\mathcal{M}_{J,K}^{rev}$ and $\mathcal{M}_{K,J}^{rev}$ in general.

\begin{crl} The map $\mathcal{D}^{\mu}_{J,K}$ is a bijection between $\mathcal{I}_{J,K}^{inv}$ and $\mathcal{I}_{K,J}^{inv}$.
\end{crl}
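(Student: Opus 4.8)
The plan is to read this off Theorem~\ref{iidthm} by exploiting the evident $J\leftrightarrow K$ symmetry of the local update map $F_{J,K}$. Since $\mathcal{D}^\mu_{J,K}$ and $\mathcal{D}^\mu_{K,J}$ are candidate inverses, I would establish four facts: that $\mathcal{D}^\mu_{J,K}$ sends $\mathcal{I}_{J,K}^{inv}$ into $\mathcal{I}_{K,J}^{inv}$, that $\mathcal{D}^\mu_{K,J}$ does the reverse, and that the two compositions are the respective identities; together these give the asserted bijection. The algebraic heart of the matter is the identity
\[
F_{K,J}\circ S = S\circ F_{J,K},
\]
where $S$ is the coordinate swap $S(a,b)=(b,a)$, regarded as a bijection from $\{0,\dots,J\}\times\{0,\dots,K\}$ onto $\{0,\dots,K\}\times\{0,\dots,J\}$. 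This is verified directly from \eqref{fdef}: substituting $(b,a)$ into $F_{K,J}$ (which itself is obtained from $F_{J,K}$ by interchanging $J$ and $K$) reproduces exactly the two coordinates of $F_{J,K}(a,b)$ in reversed order. It is the formal counterpart of the symmetry between BBS($J$,$K$) and BBS($K$,$J$) recorded in the discussion of \eqref{carrierdef}--\eqref{wdyn}.

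Given $\mu_{J,K}\in\mathcal{I}_{J,K}^{inv}$, Theorem~\ref{iidthm} furnishes the dual marginal $\mu_{K,J}:=\mathcal{D}^\mu_{J,K}(\mu_{J,K})\in\mathcal{M}^{rev}_{K,J}$, satisfying \eqref{dbalance} together with $r(\mu_{J,K})=r(\mu_{K,J})$ and, when $J=\infty$ or $K=\infty$, $\underline{r}(\mu_{J,K})=\underline{r}(\mu_{K,J})$. To place $\mu_{K,J}$ in $\mathcal{I}_{K,J}^{inv}$ I would apply Theorem~\ref{iidthm} to the BBS($K$,$J$) dynamics with candidate dual $\mu_{J,K}$. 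Writing $\mu_{K,J}\times\mu_{J,K}=(\mu_{J,K}\times\mu_{K,J})\circ S^{-1}$ and using the swap identity,
\[
\mu_{K,J}\times\mu_{J,K}\circ F_{K,J}^{-1}
=\mu_{J,K}\times\mu_{K,J}\circ (F_{K,J}\circ S)^{-1}
=\mu_{J,K}\times\mu_{K,J}\circ F_{J,K}^{-1}\circ S^{-1}
=\mu_{K,J}\times\mu_{J,K},
\]
the final equality being \eqref{dbalance} for $\mu_{J,K}$ followed by $\circ S^{-1}$. The conditions on $r$ and $\underline{r}$ are symmetric in their two arguments and so transfer verbatim. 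Theorem~\ref{iidthm} thus gives $\mu_{K,J}\in\mathcal{I}_{K,J}^{inv}$, and its ``moreover'' clause, which pins down the dual uniquely, forces $\mu_{J,K}=\mathcal{D}^\mu_{K,J}(\mu_{K,J})$; that is, $\mathcal{D}^\mu_{K,J}\circ\mathcal{D}^\mu_{J,K}$ is the identity on $\mathcal{I}_{J,K}^{inv}$. Re-running the argument with $J$ and $K$ exchanged yields the reverse inclusion and $\mathcal{D}^\mu_{J,K}\circ\mathcal{D}^\mu_{K,J}=\mathrm{id}$ on $\mathcal{I}_{K,J}^{inv}$, completing the proof.

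I do not expect a serious obstacle: the substantive work is already contained in Theorem~\ref{iidthm}, and the only genuine computation is the swap identity for $F$, which is routine. The points requiring care are bookkeeping ones: keeping the pushforward manipulation $\nu\circ S^{-1}\circ F_{K,J}^{-1}=\nu\circ(F_{K,J}\circ S)^{-1}$ correct, noting that $r(\mu_{K,J})$ is computed with capacity $K$ rather than $J$ so that the symmetric conditions in Theorem~\ref{iidthm} are exactly those needed for the reverse application, and relying on the uniqueness in the ``moreover'' clause to upgrade the two inclusions into a genuine two-sided inverse rather than merely a pair of maps between the two sets.
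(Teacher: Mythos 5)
Your proposal is correct and follows essentially the route the paper intends: the corollary is stated as an immediate consequence of Theorem \ref{iidthm}, and your argument---apply the theorem forwards, transfer the detailed balance equation \eqref{dbalance} to the BBS($K$,$J$) side via the coordinate swap, apply the theorem backwards with candidate dual $\mu_{J,K}$, and invoke the ``moreover'' clause to identify $\mathcal{D}^{\mu}_{K,J}$ as a two-sided inverse---is exactly that derivation. The only remark worth making is that your swap identity $F_{K,J}\circ S = S\circ F_{J,K}$ is already recorded in the paper as the configuration-carrier duality \eqref{dualityrel}, so the one computation you flag as needing verification is available off the shelf.
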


As a simple consequence of our general arguments, we further obtain the ergodicity of invariant i.i.d.\ measures. For BBS(1,$\infty$), the result was previously derived in \cite{CKST}.

\begin{crl}\label{iidcor} Fix $J,K\in\mathbb{N}\cup\{\infty\}$. If $\mu_{J,K}\in\mathcal{I}_{J,K}^{inv}$, then $\mu_{J,K}^{\otimes \mathbb{Z}}$ is ergodic for $T_{J,K}$.
\end{crl}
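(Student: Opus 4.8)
The plan is to transfer the question, via the duality of Theorem~\ref{dualthm}, to the ergodicity of the spatial shift acting on the dual (current) measure, and then to show that this dual measure is an i.i.d.\ product measure. Write $P_{J,K}:=\mu_{J,K}^{\otimes\mathbb{Z}}$. Since $\mu_{J,K}\in\mathcal{I}_{J,K}^{inv}\subseteq\mathcal{M}_{J,K}^{rev}$, we have $P_{J,K}\in\mathcal{P}_{J,K}^{rev}$ and $P_{J,K}\circ T_{J,K}^{-1}=P_{J,K}$, so Theorem~\ref{dualthm}(a) gives $P_{J,K}\in\tilde{\mathcal{P}}_{J,K}^{inv}$; in particular the dual measure $P_{K,J}:=\mathcal{D}^P_{J,K}(P_{J,K})$ is well defined and, by Theorem~\ref{dualthm}(b), is invariant under $\theta$. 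By Theorem~\ref{dualthm}(c) it therefore suffices to prove that $\theta$ is ergodic for $P_{K,J}$.

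The key step is to identify $P_{K,J}$ as the product measure $\mu_{K,J}^{\otimes\mathbb{Z}}$, with $\mu_{K,J}=\mathcal{D}^\mu_{J,K}(\mu_{J,K})$. Unwinding \eqref{djkpdef} and \eqref{dualitymap}, this is equivalent to the assertion that, for $\eta\sim\mu_{J,K}^{\otimes\mathbb{Z}}$, the current process $\mathcal{D}_{J,K}(\eta)=((T^{t}W)_0)_{t\in\mathbb{Z}}$ is i.i.d.\ with marginal $\mu_{K,J}$. The marginal is immediate: by the $T_{J,K}$-invariance of $P_{J,K}$ (and its consequent $T_{J,K}^{-1}$-invariance on $\mathcal{C}_{J,K}^{inv}$), each $T^{t}\eta$ has law $\mu_{J,K}^{\otimes\mathbb{Z}}$, so $(T^{t}W)_0$ has the same law as $W_0$, namely $\mu_{K,J}$ by \eqref{dmudef}, for every $t\in\mathbb{Z}$. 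The real content lies entirely in the independence across $t$.

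For the independence I would exploit the space-time graphical representation in which each vertex $(n,t)$ performs the local update $((T^{t+1}\eta)_n,(T^{t}W)_n)=F_{J,K}((T^{t}\eta)_n,(T^{t}W)_{n-1})$, so that configuration values sit on vertical edges and carrier values on horizontal edges. The detailed balance relation \eqref{dbalance} is exactly the statement that $F_{J,K}$ sends the product law $\mu_{J,K}\times\mu_{K,J}$ on an incoming (configuration, carrier) pair to the same product law on the outgoing pair; together with \eqref{prodmeasure} and the stationarity of the spatial carrier chain at a fixed time, this lets one run a Burke-type induction, advancing a monotone space-time staircase one vertex at a time and using \eqref{dbalance} (plus the fact that the newly produced edges are deterministic functions of the replaced ones, so they remain independent of the untouched part of the staircase) to preserve mutual independence of the edges the staircase crosses. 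The family $((T^{t}W)_0)_{t\in\mathbb{Z}}$ is precisely the set of horizontal edges crossing the line between spatial sites $0$ and $1$, so in the limit of this procedure it is independent with marginal $\mu_{K,J}$. I expect this to be the main obstacle: the induction must be organised so that the base data it is fed is genuinely independent, which in the bi-infinite stationary setting is delicate, and it is here that the reversibility encoded in $\mu_{J,K}\in\mathcal{M}_{J,K}^{rev}$ and the coordinate symmetry relating $F_{J,K}$ and $F_{K,J}$ (cf.\ Theorem~\ref{mainthm1}) enter. In practice, this i.i.d.\ property of the current should already be available from the analysis underlying Theorem~\ref{iidthm}, where \eqref{dbalance} was introduced precisely to capture the local duality.

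Granting that $P_{K,J}=\mu_{K,J}^{\otimes\mathbb{Z}}$, the remainder is routine: an i.i.d.\ product measure is mixing, and hence ergodic, for the shift $\theta$. Applying Theorem~\ref{dualthm}(c) once more, the ergodicity of $\theta$ for $P_{K,J}$ yields the ergodicity of $T_{J,K}$ for $P_{J,K}=\mu_{J,K}^{\otimes\mathbb{Z}}$, which is the desired conclusion.
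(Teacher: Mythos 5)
Your overall architecture is exactly that of the paper: pass to the dual current measure $P_{K,J}$ via Theorem \ref{dualthm}, identify it as the product measure $\mu_{K,J}^{\otimes\mathbb{Z}}$, note that an i.i.d.\ measure is mixing and hence ergodic for $\theta$, and transfer back with Theorem \ref{dualthm}(c). The reduction steps and the marginal computation are all correct. The one substantive point, however --- that $((T^tW)_0)_{t\in\mathbb{Z}}$ is an \emph{independent} sequence --- is not actually proved in your write-up. The Burke-type space-time induction you outline grows a staircase forwards in time from the time-zero data $(W_n,(\eta_m)_{m\ge n+1})$, whose independence is guaranteed by \eqref{prodmeasure}; but this forward procedure only reaches the currents $(T^tW)_0$ with $t\ge 0$. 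The past currents $(T^tW)_0$, $t\le -1$, are not produced by any forward induction from data whose independence you know a priori, and this is precisely the bi-infinite difficulty you flag yourself without resolving it.

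The paper closes exactly this point with a much softer argument, and the tool is already available to you: Lemma \ref{indep}. Since $\mu_{J,K}^{\otimes\mathbb{Z}}$ is a product measure, $(\eta_n)_{n\le 0}$ and $(\eta_n)_{n\ge 1}$ are independent, so Lemma \ref{indep} (whose proof uses the reversed canonical carrier, i.e.\ the reversibility you allude to) gives that $((T^tW)_0)_{t\ge 0}$ and $((T^tW)_0)_{t\le -1}$ are independent. Combined with the $\theta$-stationarity of the current sequence, which you have already derived from Theorem \ref{dualthm}(b), this half-line independence upgrades to full independence: by stationarity the split can be made at any time $s$, so for $t_1<\dots<t_k$ the coordinate $X_{t_k}$ is independent of $(X_{t_1},\dots,X_{t_{k-1}})$, and one concludes by induction on $k$. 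This is how the proof of Proposition \ref{dbalance1} --- to which the paper's proof of the corollary simply refers --- establishes $\mathcal{D}^P_{J,K}(\mu_{J,K}^{\otimes\mathbb{Z}})=\mu_{K,J}^{\otimes\mathbb{Z}}$. Your closing remark that the i.i.d.\ property ``should already be available from the analysis underlying Theorem \ref{iidthm}'' is therefore correct, but it is this half-line independence argument, not a Burke-type induction via \eqref{dbalance}, that does the work there; substituting it for your sketched induction makes your proof complete and essentially identical to the paper's.
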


As an application of our results concerning i.i.d.\ invariant measures we are able to derive the asymptotic speed of a tagged particle. To state our theorem in this direction precisely, we first need to assign an order to particles in a configuration $\eta$; we do this by assigning an order (left to right) to particles in each box, and then using the natural order of $\mathbb{Z}$ to induce an order on all particles. Moreover, we use a slightly different description of the local dynamics to that given above in that we suppose that the carrier collects and deposits particles in such a way that the particle order is preserved by the dynamics. In particular, to achieve this, when the carrier passes a location, it leaves the same number of particles as determined by \eqref{carrierdef}, but does so in a way to ensure that those left behind have a lower index in the order than those it transports onwards. In the case of the BBS($J$,$\infty$), to use the terminology of queueing theory, this is simply a first-in-first-out scheme. However, when $K<\infty$, the carrier might also swap balls in the box with balls it is carrying. Nonetheless, although the action on individual balls is different to that illustrated by Figure \ref{dynamicsfig}, the final configuration is the same; see Figure \ref{dynamicsfig1} for an example of the algorithm considered here. With this viewpoint, we then consider the progress of the particle that initially is the left-most particle located at a spatial location in $\mathbb{N}$; we write $X=(X_{J,K}(t))_{t\in\mathbb{Z}}$, where $X_{J,K}(t)$ is the location of the particle in question after $t$ evolutions of the BBS($J$,$K$). In particular, we are able to prove the following strong law of large numbers. Again, for BBS(1,$\infty$), the result was already known \cite{CKST}.

\begin{figure}[t]
\begin{center}
{\includegraphics[width=0.9\textwidth]{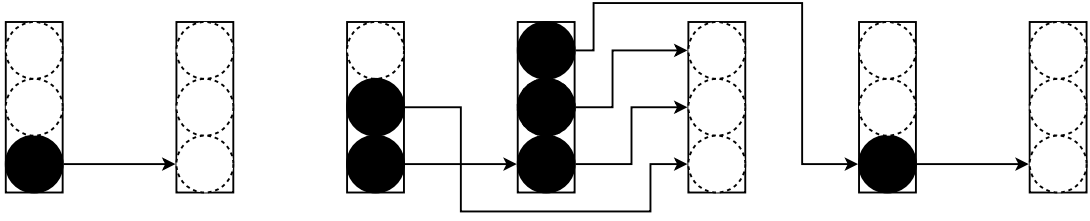}}
\vspace{-15pt}
\end{center}
\caption{Dynamics of the BBS($3$,$4$) under the algorithm used to study the tagged particle, starting from the same initial configuration as in Figure \ref{dynamicsfig}. NB. Within each box, balls are ordered from bottom to top. Note that, though the motion of individual balls is different, the resulting configuration is the same as the lower diagram in the latter figure.}\label{dynamicsfig1}
\end{figure}

\begin{thm}\label{speedthm} Fix $J,K\in\mathbb{N}\cup\{\infty\}$ with $J\neq K$. If $\mu_{J,K}\in\mathcal{I}_{J,K}^{inv}$ is such that $\mu_{J,K}(0)\neq 1$, then
\begin{equation}\label{speedconv}
\frac{X_{J,K}(t)}{t}\rightarrow \frac{m_{K,J}}{m_{J,K}},\qquad \mbox{$\mu_{J,K}^{\otimes \mathbb{Z}}$-a.s.},
\end{equation}
as $t\rightarrow\infty$, where $m_{J,K}:=\sum_xx\mu_{J,K}(x)$ and $m_{K,J}:=\sum_xx\mu_{K,J}(x)$, with $\mu_{K,J}:=\mathcal{D}^\mu_{J,K}(\mu_{J,K})$, are both constants taking values in $(0,\infty)$.
\end{thm}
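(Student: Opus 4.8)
The plan is to express the tagged-particle position in terms of an integrated particle current, and then to evaluate the two sides of the resulting identity by a spatial and a temporal ergodic theorem respectively. I would first record the two law-of-large-numbers inputs. On the spatial side, since $\mu_{J,K}^{\otimes\mathbb Z}$ is a product measure, the strong law gives $n^{-1}\sum_{m=1}^n\eta_m\to m_{J,K}$ almost surely, where $m_{J,K}=\sum_x x\mu_{J,K}(x)$; one checks $m_{J,K}\in(0,\infty)$ using $\mu_{J,K}(0)\neq1$ together with the finite-mean property of measures in $\mathcal{I}_{J,K}^{inv}$. On the temporal side, by \eqref{prodmeasure} the current satisfies $W_0\sim\mu_{K,J}$, and by Theorem \ref{mainthm1} and \eqref{djkident} the time sequence $((T^sW)_0)_{s\in\mathbb Z}=\mathcal{D}_{J,K}(\eta)$ has a $\theta$-invariant law on $\mathcal{C}_{K,J}$ which, by Corollary \ref{iidcor} and Theorem \ref{dualthm}(c), is ergodic for $\theta$; hence Birkhoff's theorem yields $t^{-1}\sum_{s=0}^{t-1}(T^sW)_0\to m_{K,J}$ almost surely, with $m_{K,J}=\sum_x x\mu_{K,J}(x)\in(0,\infty)$. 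Here the hypothesis $J\neq K$ serves to exclude degenerate cases and to guarantee that both means are finite and strictly positive.

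The link between these quantities comes from the order-preserving description of the dynamics. Write $\eta^{(t)}=T^t\eta$, and recall that every ball moves only to the right and that the prescribed algorithm preserves the particle order, so that no ball ever crosses the trajectory of the tagged particle. Since $\mu_{J,K}(0)\neq1$, almost surely the tagged particle is well defined, its initial position satisfies $X(0)\geq1$, and $X(\cdot)$ is non-decreasing. Let $\mathcal B$ denote the (fixed) set of balls lying behind or at the tagged particle, and set $G(t):=\#\{\text{balls of }\mathcal B\text{ at positions}\geq1\text{ at time }t\}$. Because the tagged particle starts at the first occupied site in $\mathbb N$ we have $G(0)=1$; because any ball at a position $\leq0$ at any time lies behind the tagged particle, every ball crossing the bond $(0,1)$ belongs to $\mathcal B$, and each such ball crosses exactly once. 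Recalling that $(T^sW)_0$ counts the balls crossing $(0,1)$ during the $s$-th step, this gives the exact identity $G(t)=1+\sum_{s=0}^{t-1}(T^sW)_0$. On the other hand, at time $t$ all balls of $\mathcal B$ occupy sites $\leq X(t)$, so $\sum_{m=1}^{X(t)}\eta^{(t)}_m=G(t)+u(t)$, where $u(t)$ is the number of balls sitting above the tagged particle in its own box; thus
\[\sum_{m=1}^{X(t)}\eta^{(t)}_m=1+u(t)+\sum_{s=0}^{t-1}(T^sW)_0.\]
For $J<\infty$ one has $0\le u(t)<J$, while for $J=\infty$ (so $K<\infty$) a maximal estimate shows $u(t)\le\eta^{(t)}_{X(t)}=o(t)$; in either case the temporal law of large numbers gives $t^{-1}\sum_{m=1}^{X(t)}\eta^{(t)}_m\to m_{K,J}$ almost surely.

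It then remains to convert the left-hand spatial count into $m_{J,K}\,X(t)$. After establishing $X(t)\to\infty$ (which follows because otherwise a positive current would force infinitely many balls into a fixed finite region, contradicting stationarity) and the a priori bounds $0<\liminf X(t)/t\le\limsup X(t)/t<\infty$ (the upper bound coming from the finite-mean, stationary one-step displacements, the lower bound from $m_{K,J}>0$), I would argue along any convergent subsequence $X(t_k)/t_k\to c$ that $t_k^{-1}\sum_{m=1}^{X(t_k)}\eta^{(t_k)}_m\to c\,m_{J,K}$, which combined with the previous display forces $c=m_{K,J}/m_{J,K}$ and hence \eqref{speedconv}.

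The main obstacle is precisely this last step: the evaluation point $X(t)$ is random and, more seriously, the configuration $\eta^{(t)}$ being summed changes with $t$, so the ordinary spatial strong law cannot be applied verbatim. I would overcome this by exploiting that, by $T$-invariance, every $\eta^{(t)}$ has the same product law $\mu_{J,K}^{\otimes\mathbb Z}$, and applying a maximal (uniform) spatial ergodic theorem to control $\sup_{\delta t\le n\le Ct}|n^{-1}\sum_{m=1}^n\eta^{(t)}_m-m_{J,K}|$ simultaneously over the relevant range of $t$, together with the monotonicity of $X(\cdot)$ to reduce to deterministic scales. This adapts the corresponding argument carried out for BBS$(1,\infty)$ in \cite{CKST}.
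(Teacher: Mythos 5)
Your overall strategy coincides with the paper's: express the tagged particle's displacement through the integrated current $\sum_{s=0}^{t-1}(T^sW)_0$, prove a temporal law of large numbers for this current (your Birkhoff route via Corollary \ref{iidcor} and Theorem \ref{dualthm}(c) is a valid alternative to the paper's observation, via Lemma \ref{indep}, that $((T^sW)_0)_{s\in\mathbb{Z}}$ is an i.i.d.\ sequence with marginal $\mu_{K,J}$), sandwich $X_{J,K}(t)$ between hitting levels of the spatial sums of $T^t\eta$, and finally prove a spatial law of large numbers for the \emph{changing} configurations $T^t\eta$. You also correctly isolate the crux: the configuration being summed moves with $t$, so the ordinary strong law for a single fixed configuration cannot be applied.

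However, your resolution of that crux is where the proposal has a genuine gap. A maximal ergodic theorem (or any maximal inequality) controls the spatial averages of \emph{one} configuration; the fact that all the $T^t\eta$ share the law $\mu_{J,K}^{\otimes\mathbb{Z}}$ does not let you control the corresponding events ``simultaneously over $t$''---you must make the individual deviation probabilities summable in $t$ so that Borel--Cantelli applies, and that requires a quantitative moment estimate which you never establish (your only moment input is finiteness of the means). This is exactly where the paper invokes its classification result, Theorem \ref{iidclass}: for $J\neq K$, every $\mu_{J,K}\in\mathcal{I}_{J,K}^{inv}$ is of scaled truncated bipartite geometric type, hence $\sum_x x^4\mu_{J,K}(x)<\infty$, and a standard fourth-moment estimate gives, for each rational $c>0$ and $\varepsilon>0$,
\[
\mu_{J,K}^{\otimes\mathbb{Z}}\left(\left|\sum_{m=1}^{ct}(T^t\eta)_m - ctm_{J,K}\right|>\varepsilon t\right)\le Ct^{-2},
\]
which is summable over integer $t$; Borel--Cantelli and countability then yield $t^{-1}\sum_{m=1}^{ct}(T^t\eta)_m\to cm_{J,K}$ a.s.\ for all rational $c$, which closes the sandwich. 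When $J<\infty$ the boundedness of $\eta_m$ makes such an estimate automatic, but when $J=\infty$ (so $K<\infty$) higher moments genuinely require the explicit form of the invariant measures, and the same remark applies to your side claim that $u(t)=\eta^{(t)}_{X(t)}=o(t)$. Your alternative device---monotonicity of $X(\cdot)$ plus a reduction to geometrically growing time scales, along which even a Kolmogorov-type $O(1/t)$ bound becomes summable---could also be made to work, but it still needs second moments (again supplied by Theorem \ref{iidclass} when $J=\infty$) and is not developed in your sketch.
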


\begin{rem} When $J=K$, the triviality of the dynamics mean that $X_{J,K}(t)-X_{J,K}(0)=t$ for each $t$, and so the same result is true with limit equal to 1 (even if $m_{J,K}=m_{K,J}$ is no longer finite).
\end{rem}

\begin{rem} Under the weaker assumptions that $P_{J,K}\in\mathcal{P}^{rev}_{J,K}$ is invariant and ergodic under both $\theta$ and $T_{J,K}$, then the obvious adaptation of the proof of Theorem \ref{speedthm} yields that the convergence of \eqref{speedconv} holds in probability under $P_{J,K}$, where $m_{J,K}:=\sum_xxP_{J,K}(\eta_0=x)$ and $m_{K,J}:=\sum_xxP_{J,K}(W_0=x)$ are both non-zero. (Since we are supposing that $J\neq K$, at most one of the moments is infinite, and so the limit is well-defined in $[0,\infty]$ in this case.)
\end{rem}

Before concluding our introduction, we highlight one further noteworthy aspect of our study, which is the description of the BBS($J$,$K$) dynamics when $J<K$ in terms of a Pitman-type transformation (cf. \cite{Pitman}) of a certain path-encoding of the particle configuration. Whilst such a viewpoint will not be as central to this study as it was in \cite{CKST}, it is still useful for providing an explicit description of the carrier, and identifying a subset of $\mathcal{C}^{inv}_{J,K}$ upon which the i.i.d.\ measures we consider are supported. See Subsection \ref{J<K} for details; a graphical depiction of the path encoding and its role in characterising the dynamics is presented below as Figure \ref{bbs35}.

\begin{rem}
BBS $(J,K)$ can be generalized to a model where the configuration takes values in $[0,J]^\mathbb{Z}$, and the carrier takes values in $[0,K]^\mathbb{Z}$, where $J,K\in(0,\infty]$, with the dynamics again being determined by \eqref{carrierdef} and \eqref{wdyn}. We expect that a number of the arguments of this article will readily extend to this setting. In particular, for $J<K$, it should be possible to describe the dynamics in terms of the path encoding, and use this to study the system in a similar way. It would be an interesting future project to explore to what extent the results we prove here can be adapted to the more general setting.
\end{rem}

The remainder of the article is organised as follows. Section \ref{carriersec} contains the deterministic part of the article, which is where we introduce the notion of a canonical carrier, and prove Theorem \ref{mainthm1}. In Section \ref{probdualsec} we study duality in a probabilistic sense, establishing Theorem \ref{dualthm} in particular. This is followed in Section \ref{iidsec} by an investigation of i.i.d.\ measures that are invariant for the box-ball system, which is where Theorem \ref{iidthm} and Corollary \ref{iidcor} are proved. Finally, we establish the speed theorem for the tagged particle, Theorem \ref{speedthm}, in Section \ref{speedsec}.

\section{Existence and uniqueness of carrier}\label{carriersec}

In this section we investigate the issues of whether a given configuration $\eta\in\mathcal{C}_{J,K}$ admits a carrier, and, if so, whether it is unique. We also define what it means for a carrier to be `canonical', which is a concept that will be crucial to our study. Considering the cases $J>K$, $J=K$, $J<K=\infty$ and $J<K<\infty$ separately, we completely characterise the sets of configurations for which a canonical carrier exists, see Propositions \ref{ch1}, \ref{ch2}, \ref{ch3} and \ref{ch4}, respectively. Furthermore, as the main conclusion of this section, we prove Theorem \ref{mainthm1}.

To begin with, we list some key basic properties of the map $(a,b)\mapsto F_{J,K}(a,b):=(F^{(1)}_{J,K}(a,b),F^{(2)}_{J,K}(a,b))$ that are easily checked from the definition at \eqref{fdef}, and which will be applied later. Again, the reader might find it helpful to refer to Figure \ref{bbs75} for an illustration of the various symmetries. Moreover, a reader wishing to check the properties, and certain steps in the subsequent arguments, might find Figure \ref{diagrams} useful.

\begin{figure}[t]
\begin{multicols}{2}
Diagrams are of form:
\[\xymatrix@C-15pt@R-15pt{ & F^{(1)}_{J,K}(a,b) & \\
            b \ar[rr]& & F^{(2)}_{J,K}(a,b)\\
             & a \ar[uu]&}\]

Case 1: $0\leq a+b\leq \min\{K,J\}$
\[\xymatrix@C-15pt@R-15pt{ & b & \\
            b \ar[rr]& & a\\
             & a \ar[uu]&}\]
\end{multicols}

\begin{multicols}{2}
Case 2(a): $J\leq a+b\leq K$
\[\xymatrix@C-15pt@R-15pt{ & J-a & \\
            b \ar[rr]& & b+2a-J\\
             & a \ar[uu]&}\]

Case 2(b): $K\leq a+b\leq J$
\[\xymatrix@C-15pt@R-15pt{ & 2b+a-K & \\
            b \ar[rr]& & K-b\\
             & a \ar[uu]&}\]
\end{multicols}

Case 3: $a+b\geq\max\{K,J\}$
\[\xymatrix@C-15pt@R-15pt{ & b+J-K & \\
            b\ar[rr]& & a+K-J\\
             & a \ar[uu]&}\]
\vspace{-20pt}
\caption{Summary of the output of $F_{J,K}$, as defined at \eqref{fdef}.}\label{diagrams}
\end{figure}

\begin{description}
  \item[Involution] For any $(a,b)\in \{0,1,\dots,J\}\times \{0,1,\dots,K\}$, it holds that
  \begin{equation}\label{involution}
  F_{J,K}\circ F_{J,K}(a,b)=(a,b).
  \end{equation}
  \item[Configuration-carrier duality] For any $(a,b)\in \{0,1,\dots,J\}\times \{0,1,\dots,K\}$, it holds that  $F^{(1)}_{J,K}(a,b)=F^{(2)}_{K,J}(b,a)$ and  $F^{(2)}_{J,K}(a,b)=F^{(1)}_{K,J}(b,a)$. Equivalently,
      \begin{equation}\label{dualityrel}
        \pi\circ F_{J,K}= F_{K,J}\circ\pi,
      \end{equation}
      where $\pi$ is the permutation map given by $\pi(a,b)=(b,a)$.
\end{description}

\begin{rem} Similarly to the remark following Theorem \ref{mainthm1}, there is an alternative presentation of the previous properties that also has its advantages (and which is used in the integrable systems literature). Indeed, suppose we had chosen to state the properties in terms of the map $\tilde{F}_{J,K}:=\pi\circ F_{J,K}$, then \eqref{involution} would become $\tilde{F}_{J,K}^{-1}=\pi\circ \tilde{F}_{J,K}\circ\pi$, and \eqref{dualityrel} is given by $\pi\circ \tilde{F}_{J,K}= \tilde{F}_{K,J}\circ\pi$; combining these yields $\tilde{F}_{J,K}^{-1}=\tilde{F}_{K,J}$.
\end{rem}

\begin{description}
  \item[Reducibility] If $\min\{J,K\}> 2r$
   for some $r\in\mathbb{N}$, then for any $(a,b) \in \{r,\dots,J-r\}\times\{r,\dots,K-r\}$ it holds that
      \begin{equation}\label{reducibility}
F_{J-2r,K-2r}(a-r,b-r)=\left(F^{(1)}_{J,K}(a,b)-r,F^{(2)}_{J,K}(a,b)-r\right).
\end{equation}
  \item[Empty box-ball duality] If $J,K<\infty$, then it holds that
  \begin{equation}\label{spaceball}
  (\sigma_J\times\sigma_K)\circ F_{J,K} = F_{J,K}\circ (\sigma_J\times\sigma_K),
  \end{equation}
    where $(\sigma_J\times\sigma_K)(a,b):=(\sigma_J(a),\sigma_K(b))$ with $\sigma_J(a):=J-a$ and $\sigma_K(b):=K-b$.
\end{description}

We next introduce formally a carrier, and what it means for this to be canonical.

\begin{dfn} For $\eta=(\eta_n)_{n\in\mathbb{Z}} \in \mathcal{C}_{J,K}$, we say $Y=(Y_n)_{n\in\mathbb{Z}} \in \{0,\dots,K\}^{\mathbb{Z}}$ is a \emph{BBS($J$,$K$) carrier} for $\eta$ if
\begin{equation}\label{carrier}
Y_n=F^{(2)}_{J,K}(\eta_n,Y_{n-1}),\qquad \forall n \in \mathbb{Z}.
\end{equation}
NB. We will simply say $Y$ is a \emph{carrier} for $\eta$ when it is clear which particular model is being considered.
\end{dfn}

As we will see below, a BBS($J$,$K$) carrier does not necessarily exist, nor is it unique if it does. Hence we introduce subsets of the configuration space $\mathcal{C}_{J,K}$ as follows:
\begin{align*}
\mathcal{C}_{J,K}^{\exists}&:=\left\{ \eta \in \mathcal{C}_{J,K}:\: \mbox{there exists a BBS($J$,$K$) carrier for $\eta$}\right\};\\
\mathcal{C}_{J,K}^{\exists !}&:=\left\{ \eta \in \mathcal{C}_{J,K}:\: \mbox{there exists a unique BBS($J$,$K$) carrier for $\eta$}\right\}.
\end{align*}
We will characterize these subsets in the following subsections (in Propositions \ref{ch1}, \ref{ch2}, \ref{ch3}, \ref{ch4}). Note that, for a given $\eta \in \mathcal{C}_{J,K}$ and $N \in \mathbb{Z}$, if $Y=(Y_n)_{-\infty}^N \in \{0,\dots,K\}^{\mathbb{Z}_{\le N}}$ satisfies $Y_n=F^{(2)}_{J,K}(\eta_n,Y_{n-1})$ for all $n\le N$, then $Y$ is uniquely extended to a carrier $Y=(Y_n) \in \{0,\dots,K\}^{\mathbb{Z}}$. Hence, the existence and the uniqueness of the carrier is a `tail' problem.

The subset $\mathcal{C}_{J,K}^{\exists !}$ seems a natural domain for the BBS($J$,$K$) dynamics; indeed, for $\eta\in\mathcal{C}_{J,K}^{\exists}$ and an associated carrier $Y$, one could define the related dynamics by setting
\[T^{Y}\eta_n:=F^{(1)}_{J,K}(\eta_n,Y_{n-1}),\]
and for $\eta\in\mathcal{C}_{J,K}^{\exists !}$, this uniquely determines the updated configuration. However, as is a consequence of Lemma \ref{keylemma} and our characterisation of $\mathcal{C}_{J,K}^{\exists!}$, there can exist configurations in $\mathcal{C}_{J,K}^{\exists!}$ that admit a particular form of degenerate behaviour that persists for all time. Moreover, the set $\mathcal{C}_{J,K}^{\exists !}$ excludes certain configurations for which a natural choice of carrier still exists. Specifically, as we show in Proposition \ref{ch3}, $\mathcal{C}_{J,\infty}^{\exists !}=\emptyset$ for any $J<\infty$, yet for a class of configurations in $\mathcal{C}_{J,\infty}$ one can still make sense of the dynamics by choosing a carrier appropriately. As described in \cite[Remark 2.11]{CKST} with regard to the BBS($1$,$\infty$) model in particular, our choice excludes the possibility of transporting particles into the system from $-\infty$.

Towards presenting our choice of domain for the dynamics, we first introduce an essential boundary of the carrier $Y$ for $\eta$ by setting
\begin{equation}\label{essbound}
B_Y\equiv B_{Y,\eta}:=\sup\left\{N \in \mathbb{Z}:\: \min\{J,K\}  \le \eta_{n}+Y_{n-1} \le \max\{J,K\},\:\forall n \le N\right\},
\end{equation}
with the convention that $\sup\emptyset =-\infty$. Note that if $\eta \in \mathcal{C}_{J,K}^{\exists}$ and $Y$ is a carrier for $\eta$, then $N \le B_Y$ is equivalent to
\begin{equation}\label{condeq0}
T^{Y}\eta_n=J-\eta_n,\qquad\forall n\le N,
\end{equation}
when $J<K$, and to
\begin{equation}\label{condeq}
Y_n=K-Y_{n-1},\qquad\forall n\le N,
\end{equation}
when $J>K$. We are now ready to introduce the notion of a canonical carrier for the BBS($J$,$K$), that excludes the behaviour described by \eqref{condeq0} and \eqref{condeq}.

\begin{dfn}\label{cancarr}
For $\eta=(\eta_n)_{n\in\mathbb{Z}} \in \mathcal{C}_{J,K}$, we say $Y=(Y_n)_{n\in\mathbb{Z}} \in \{0,\dots,K\}^{\mathbb{Z}}$ is a \emph{canonical BBS($J$,$K$) carrier} for $\eta$ if one of the following conditions hold:\\
(a) $J=K$ and $Y$ is a BBS($J$,$K$) carrier for $\eta$;\\
(b) $J \neq K$ and $Y$ is a BBS($J$,$K$) carrier for $\eta$ with $B_Y=-\infty$.
\end{dfn}

Given this definition, we introduce a corresponding subset of configurations, which we argue is a natural domain of the BBS($J$,$K$) dynamics by setting:
\[\mathcal{C}_{J,K}^{can}:=\left\{ \eta \in \mathcal{C}_{J,K}:\: \mbox{there exists a canonical BBS($J$,$K$) carrier for $\eta$}\right\}.\]
As we will see below in Propositions \ref{ch1}, \ref{ch3} and \ref{ch4}, neither $\mathcal{C}_{J,K}^{can} \subseteq \mathcal{C}_{J,K}^{\exists !}$ nor $\mathcal{C}_{J,K}^{can} \supseteq \mathcal{C}_{J,K}^{\exists !}$ hold in general. However, it is possible to check the following.

\begin{lmm}\label{unique}
For any $\eta \in \mathcal{C}_{J,K}^{can}$, a canonical BBS($J$,$K$) carrier exists uniquely. Moreover, if $W$ is the canonical carrier, then $W_{n}$ is a (measurable) function of $(\eta_{m})_{m\leq n}$.
\end{lmm}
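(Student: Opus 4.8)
The plan is to base everything on the elementary fact, read off from \eqref{fdef} (and visible in Figure \ref{diagrams}), that for each fixed $a$ the map $b\mapsto F^{(2)}_{J,K}(a,b)$ is monotone and $1$-Lipschitz: it is non-decreasing when $J<K$ and non-increasing when $J>K$, being affine of slope $\pm1$ on the band $\{b:\ \min\{J,K\}\le a+b\le\max\{J,K\}\}$ and constant in $b$ on either side of it. Since $\mathcal{C}^{can}_{J,K}$ is \emph{defined} as the set of configurations admitting a canonical carrier, existence is immediate, and the content is the uniqueness and measurability statements. When $J=K$ the band is empty and in fact $F^{(2)}_{J,K}(a,b)=a$, so the defining relation \eqref{carrier} forces the unique carrier $W_n=\eta_n$, settling everything; I therefore assume $J\neq K$, so that by Definition \ref{cancarr} canonicity of $W$ means $B_W=-\infty$.

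\emph{Uniqueness.} Let $Y\neq Y'$ be two carriers for $\eta$. By the $1$-Lipschitz property, $|Y_n-Y'_n|$ is non-increasing in $n$; being a bounded sequence of non-negative integers, it stabilises to some $d\geq1$ on a left half-line $(-\infty,M]$. On this half-line the bound $|Y_n-Y'_n|\leq|Y_{n-1}-Y'_{n-1}|$ holds with equality, which is possible only if the whole integer segment between $Y_{n-1}$ and $Y'_{n-1}$ lies in the slope-$\pm1$ band, i.e. $\min\{J,K\}\leq\eta_n+\min\{Y_{n-1},Y'_{n-1}\}$ and $\eta_n+\max\{Y_{n-1},Y'_{n-1}\}\leq\max\{J,K\}$. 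As $Y_{n-1}$ lies between these two values, I obtain $\min\{J,K\}\leq\eta_n+Y_{n-1}\leq\max\{J,K\}$ for all $n\leq M$, and likewise for $Y'$; comparing with \eqref{essbound} gives $B_Y\geq M>-\infty$ and $B_{Y'}\geq M>-\infty$. Thus two distinct carriers can never both be canonical, which proves uniqueness.

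\emph{Measurability.} Fix $n$, and for $N\leq n$ and $c\in\{0,\dots,K\}$ let $\Phi_{N,n}(c)$ be the value at site $n$ obtained by iterating \eqref{carrier} forward from the value $c$ imposed at site $N-1$; this is a measurable function of $(\eta_m)_{N\leq m\leq n}$ and, as a composition of monotone $1$-Lipschitz maps, it is monotone and $1$-Lipschitz in $c$. Every carrier satisfies $Y_n=\Phi_{N,n}(Y_{N-1})$, so $Y_n$ lies between $\Phi_{N,n}(0)$ and $\Phi_{N,n}(K)$. When $K<\infty$, set $g_N:=|\Phi_{N,n}(K)-\Phi_{N,n}(0)|$; since a monotone map sends a sub-interval of $[0,K]$ into a sub-interval of the image of $[0,K]$, one checks that $g_{N-1}\leq g_N$, so $g_N\downarrow g_\infty\geq0$ as $N\to-\infty$. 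Granting the claim $g_\infty=0$ for $\eta\in\mathcal{C}^{can}_{J,K}$, integrality forces $g_N=0$ for all small enough $N$, whence $W_n=\Phi_{N,n}(W_{N-1})=\Phi_{N,n}(0)$ for all such $N$; thus $W_n=\lim_{N\to-\infty}\Phi_{N,n}(0)$ is an eventually-constant limit of functions of $(\eta_m)_{m\leq n}$, hence $(\eta_m)_{m\leq n}$-measurable. (When $K=\infty$, necessarily $J<K$ and $\Phi_{N,n}$ is order-preserving, so one instead uses that $\Phi_{N,n}(0)$ is non-decreasing as $N\to-\infty$ and bounded above by $W_n$, identifying its limit with $W_n$ directly.)

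\emph{The claim $g_\infty=0$, and the main obstacle.} This is where canonicity enters and is the delicate step. If $g_\infty=d\geq1$, I would produce two \emph{distinct} carriers $Y^0,Y^K$ with $|Y^0_m-Y^K_m|=d$ on a left half-line: for $J<K<\infty$ these are the monotone limits $\Phi_{\cdot,m}(0)\uparrow Y^0_m$ and $\Phi_{\cdot,m}(K)\downarrow Y^K_m$, while for $J>K$ (so $K<\infty$) they come from a diagonal extraction along a subsequence $N_j\to-\infty$, using finiteness of $\{0,\dots,K\}$ to make each coordinate eventually constant. By the uniqueness computation, the segment between $Y^0_{m-1}$ and $Y^K_{m-1}$ lies in the band for all $m$ below some $M$; and since $W_m=\Phi_{N_j,m}(W_{N_j-1})$ is squeezed between $Y^0_m$ and $Y^K_m$ at every site, this forces $\min\{J,K\}\leq\eta_m+W_{m-1}\leq\max\{J,K\}$ for all $m\leq M$, i.e. $B_W\geq M>-\infty$, contradicting canonicity. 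The principal difficulty is exactly the orientation reversal of $b\mapsto F^{(2)}_{J,K}(a,b)$ when $J>K$, compounded by the unbounded range when $K=\infty$, which obstructs a naive passage to monotone limits of the individual extreme trajectories; the device that circumvents it is to track the gap $g_N$, which is monotone in $N$ irrespective of the orientation of the map, thereby reducing both regimes to the same band-trapping dichotomy driving the uniqueness argument (and expressible alternatively via \eqref{condeq0} and \eqref{condeq}).
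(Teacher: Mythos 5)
Your overall strategy---exploiting that $b\mapsto F^{(2)}_{J,K}(a,b)$ is monotone and $1$-Lipschitz, with slope $\pm 1$ exactly on the band $\min\{J,K\}\le a+b\le\max\{J,K\}$, and tracking either the difference of two carriers or the gap $g_N$---is genuinely different from the paper's proof (which proceeds case by case through Propositions \ref{ch1}, \ref{ch2}, \ref{ch3}, \ref{ch4}, using the reduction to $\mathcal{C}^0_{J,K}$ for $J>K$ and the path-encoding recursion for $J<K$), and for $K<\infty$ your argument is complete. However, there is a genuine gap in the case $J<K=\infty$, located exactly at the clause you pass over without justification: the assertion in the uniqueness paragraph that $|Y_n-Y'_n|$ is ``a bounded sequence of non-negative integers.'' This is false when $K=\infty$. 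Indeed, for $\eta\in\mathcal{C}^{can}_{J,\infty}$ the canonical carrier $W$ corresponds to $M_n=\tilde{M}_n:=\max_{m\le n}\tilde{S}_m\to-\infty$, while for any admissible constant $A$ the process $M'_n:=\max\{A,\tilde{M}_n\}$ solves \eqref{pathminf} and yields another carrier $Y'$; then $|Y'_n-W_n|=\max\{A-\tilde{M}_n,0\}\to\infty$ as $n\to-\infty$, so no stabilisation occurs. Worse, the intermediate conclusion your computation would deliver---that two distinct carriers are automatically both non-canonical---is false in this regime: by Proposition \ref{ch3}(b), $\mathcal{C}_{J,\infty}^{\exists !}=\emptyset$, so every $\eta\in\mathcal{C}^{can}_{J,\infty}$ admits several carriers, one of which \emph{is} canonical. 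The same gap then infects your measurability step, since the parenthetical ``identifying its limit with $W_n$ directly'' is precisely the point at issue: a priori $Y^0_n:=\lim_{N\to-\infty}\Phi_{N,n}(0)$ is only the \emph{minimal} carrier, and you must rule out $Y^0\neq W$.

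The missing ingredient is short, but it is a different mechanism from both of your devices (difference stabilisation, gap monotonicity). Suppose $J<K=\infty$ and two carriers satisfy $Y^0_m<Y'_m$ on a left half-line (strict ordering propagates backwards because $F^{(2)}_{J,K}(a,\cdot)$ is non-decreasing). Writing $M^0,M'$ for the associated solutions of \eqref{pathminf}, any solution satisfies $M^0_n\ge\tilde{S}_n$, so $M'_n\ge M^0_n+1>\tilde{S}_n$, and then $M'_n=\max\{M'_{n-1},\tilde{S}_n\}$ forces $M'_n=M'_{n-1}$ on that half-line; hence the \emph{larger} carrier has eventually constant $M'$ and, by Lemma \ref{pathlemma}, is not canonical. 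This is essentially the paper's own argument in its proof of Lemma \ref{unique} for $J<K=\infty$, and it simultaneously repairs uniqueness (two distinct canonical carriers are impossible, as the larger one would be non-canonical) and gives the identification $Y^0=W$ in your measurability step (if $Y^0\neq W$, the larger carrier $W$ would be non-canonical). With this inserted, your proof is correct; for $K<\infty$ it stands as an attractive unified alternative to the paper's case analysis.
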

\begin{proof} See Propositions \ref{ch1}, \ref{ch2}, \ref{ch3}, \ref{ch4} (and their proofs) below.
\end{proof}

Typically, for $\eta \in \mathcal{C}_{J,K}^{can}$, we will denote the unique associated canonical BBS($J$,$K$) carrier by $W=(W_n)_{n\in\mathbb{Z}}$, and define the BBS($J$,$K$) operator $T=T_{J,K}$ by setting
\[T\eta_n=F^{(1)}_{J,K}(\eta_n,W_{n-1}).\]
In our study of this map, the following property of non-canonical carriers will be central. The result makes precise the claim made above that the degenerate dynamics allowed by non-canonical carriers, as given by \eqref{condeq0} and \eqref{condeq}, persists for all time.

\begin{lmm}\label{keylemma}
If $\eta \in \mathcal{C}_{J,K}^{\exists}$ and $Y$ is a carrier for $\eta$ but not canonical, then the following statements hold. \\
(a) $T^{Y}\eta \notin \mathcal{C}_{J,K}^{can}$.\\
(b) Suppose either $J\leq K$ or $\infty>J>K$. If $(\eta^{(i)})_{i \in \mathbb{N}}$ and $(Y^{(i)})_{i \in \mathbb{N}}$ is a sequence such that $\eta^{(1)}=\eta$, $Y^{(1)}=Y$, and, for each $i$, $Y^{(i)}$ is a carrier for $\eta^{(i)}$ and $T^{Y^{(i)}}\eta^{(i)}=\eta^{(i+1)}$ for all $i$, then $\inf_{i \in \mathbb{N}}B_{Y^{(i)}} >-\infty$.
\end{lmm}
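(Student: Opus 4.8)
The plan is to treat parts (a) and (b) together, after two reductions. First, since every BBS($J$,$K$) carrier is canonical when $J=K$, the hypothesis that $Y$ is non-canonical forces $J\neq K$, so I assume this throughout; I also recall that $\{n\le B_Y\}$ is characterised by the degenerate relation \eqref{condeq0}, namely $T^Y\eta_n=J-\eta_n$, when $J<K$, and by \eqref{condeq}, namely $Y_n=K-Y_{n-1}$, when $J>K$. Second, I exploit the symmetries of $F_{J,K}$ to cut down the cases: the configuration--carrier duality \eqref{dualityrel} swaps the roles of $J$ and $K$, while the empty box--ball duality \eqref{spaceball} conjugates $F_{J,K}$ by $\sigma_J\times\sigma_K$ when $J,K<\infty$. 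A key running tool is the monotonicity of $b\mapsto F^{(2)}_{J,K}(a,b)$, which one reads off Figure \ref{diagrams}: it is non-decreasing when $J<K$ and non-increasing when $J>K$. This makes the carriers of a fixed configuration a pointwise-ordered family and identifies the canonical carrier as the extremal (minimal, for $J<K$) one.

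For part (a), write $\eta'=T^Y\eta$ and take first $J<K<\infty$. The idea is to display a non-canonical carrier for $\eta'$ explicitly. Define $V$ by setting $V_n:=K-Y_n$ for $n\le B_Y$ and extending forward by the carrier recursion. On $n\le B_Y$ one has $\eta'_n=J-\eta_n=J-(J-\eta'_n)$, so applying \eqref{spaceball} to $((T^Y\eta)_n,Y_n)=F_{J,K}(\eta_n,Y_{n-1})$ shows that $V_n=F^{(2)}_{J,K}(\eta'_n,V_{n-1})$ at those sites; hence $V$ is a genuine BBS($J$,$K$) carrier for $\eta'$. Moreover, on $n\le B_Y$ we get $\eta'_n+V_{n-1}=J+K-(\eta_n+Y_{n-1})\in[J,K]$, so that $B_V\ge B_Y>-\infty$. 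Thus $\eta'$ admits a non-canonical carrier, and I upgrade this to the non-existence of a canonical carrier using the uniqueness in Lemma \ref{unique} together with the extremal/monotone description above, ruling out $B=-\infty$ for every carrier of $\eta'$ on the degenerate tail. The case $J<K=\infty$ is handled by the same scheme, but with the monotonicity of $B_{\cdot}$ in the carrier used directly (the upper bound in \eqref{condeq0} being vacuous, so the $\sigma_K$-reflection is unnecessary), and the case $J>K$ is dual via \eqref{involution}, \eqref{dualityrel} and the relation \eqref{condeq}. Alternatively, part (a) can be read off from the explicit characterisations of $\mathcal{C}^{can}_{J,K}$ in Propositions \ref{ch1}, \ref{ch3} and \ref{ch4}, by checking that the degenerate tail of $\eta'$ fails the relevant membership criterion.

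For part (b), iterating part (a) along the sequence gives $\eta^{(i)}\notin\mathcal{C}^{can}_{J,K}$, hence $B_{Y^{(i)}}>-\infty$, for every $i$; the genuine content is the \emph{uniformity} of this lower bound. I will establish the monotonicity $B_{Y^{(i+1)}}\ge B_{Y^{(i)}}$, from which $\inf_i B_{Y^{(i)}}\ge B_{Y^{(1)}}=B_Y>-\infty$. Consider $J<K$ and set $M=B_{Y^{(i)}}$, so $\eta^{(i+1)}_n=J-\eta^{(i)}_n$ for $n\le M$. By \eqref{condeq0}, $B_{Y^{(i+1)}}\ge M$ is equivalent to the window condition $Y^{(i+1)}_{n-1}\in[\eta^{(i)}_n,\eta^{(i)}_n+K-J]$ for all $n\le M$. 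The reflected carrier $V^{(i)}=K-Y^{(i)}$ from part (a) satisfies exactly this on $n\le M$, since $Y^{(i)}_{n-1}\in[J-\eta^{(i)}_n,K-\eta^{(i)}_n]$ there; I then confine the given carrier $Y^{(i+1)}$ to the same window by sandwiching it between the extremal carriers of $\eta^{(i+1)}$ and comparing with $V^{(i)}$ through the monotonicity of $F^{(2)}_{J,K}(a,\cdot)$. The case $K<J<\infty$ is dual, run through \eqref{condeq} and \eqref{spaceball}; this is precisely where the exclusion of $J=\infty>K$ enters, since the $\sigma_K$-reflection underpinning the window control is then unavailable.

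The main obstacle is exactly this uniform, choice-independent control of the intermediate carriers in part (b): part (a) yields only that each $B_{Y^{(i)}}$ is finite, which does not bound the infimum, and distinct admissible carriers of a given $\eta^{(i)}$ take different values, so one cannot merely track a single carrier. Overcoming it relies on combining the monotonicity of $b\mapsto F^{(2)}_{J,K}(a,b)$ with the explicit reflected carrier $V^{(i)}=K-Y^{(i)}$ to trap every carrier of $\eta^{(i+1)}$ in the Case-2 window on the half-line $\{n\le B_{Y^{(i)}}\}$, thereby propagating the degenerate behaviour forward without erosion of the boundary.
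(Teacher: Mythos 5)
The decisive step of your part (b), the carrier-level monotonicity $B_{Y^{(i+1)}}\ge B_{Y^{(i)}}$, is false, so the sandwiching argument cannot be repaired. Counterexample with $J=1$, $K=3$: let $\eta^{(1)}_n=1$ for even $n\le 0$ and $\eta^{(1)}_n=0$ otherwise, and let $Y^{(1)}$ be the carrier with $Y^{(1)}_n=3$ for even $n\le 0$, $Y^{(1)}_n=2$ for odd $n\le 0$, so that forwards $Y^{(1)}_1=2$, $Y^{(1)}_2=1$, $Y^{(1)}_3=0$ and $Y^{(1)}_n=0$ for $n\ge4$. The window $J\le \eta^{(1)}_n+Y^{(1)}_{n-1}\le K$ holds exactly for $n\le 3$, so $Y^{(1)}$ is non-canonical with $B_{Y^{(1)}}=3$. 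Then $\eta^{(2)}:=T^{Y^{(1)}}\eta^{(1)}$ satisfies $\eta^{(2)}_n=1-\eta^{(1)}_n$ for $n\le 3$ and $\eta^{(2)}_n=0$ for $n\ge4$, and it admits the carrier $Y^{(2)}$ with $Y^{(2)}_n=2$ for even $n\le0$, $Y^{(2)}_n=3$ for odd $n\le 0$, whence $Y^{(2)}_1=3$; since $\eta^{(2)}_2+Y^{(2)}_1=4>K$, we get $B_{Y^{(2)}}=1<3=B_{Y^{(1)}}$. Note that both carriers here are the maximal ones in your pointwise order, so sandwiching between extremal carriers gives nothing. The lemma survives because the correct uniform bound is attached to the configuration, not the carrier, and this is exactly what the paper uses: for $J<K$, let $B_{\eta}$ be the largest $N$ such that $\sup_{m\le n}\tilde{S}_m$ and $\inf_{m\le n}\tilde{S}_m$ equal $\overline{\tilde{S}}_{-\infty}$ and $\underline{\tilde{S}}_{-\infty}$ for all $n\le N$; this is finite because both limits are real (for $K=\infty$ one needs the extra remark that $\underline{S}_{-\infty}=-\infty$ would force $T^{Y^{(1)}}\eta^{(1)}\notin\mathcal{C}^{\exists}_{J,\infty}$, contradicting the existence of $Y^{(2)}$). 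Every carrier $Y'$ of $\eta^{(1)}$ has its associated path $M'$ constant on $n\le B_{\eta^{(1)}}$, hence $B_{Y'}\ge B_{\eta^{(1)}}$, and by Lemma \ref{pitman} the increments of $T^{Y^{(1)}}S$ are negated on the degenerate tail, so the same $B_{\eta^{(1)}}$ serves for $\eta^{(2)}$; iterating gives $\inf_i B_{Y^{(i)}}\ge B_{\eta^{(1)}}>-\infty$. In the example above $B_{\eta^{(1)}}=1$, which is the true uniform bound; your proposed bound $B_{Y^{(1)}}=3$ is not available.

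There are two further gaps. First, in part (a) the inference ``$T^Y\eta$ admits a non-canonical carrier, hence $T^Y\eta\notin\mathcal{C}^{can}_{J,K}$'' is valid for $J<K<\infty$ only via the trichotomy in the proof of Proposition \ref{ch4} — not via Lemma \ref{unique}, which asserts uniqueness of the canonical carrier and does not exclude coexistence of canonical and non-canonical carriers — and it is simply false for $K=\infty$: by Proposition \ref{ch3}(b), $\mathcal{C}^{\exists !}_{J,\infty}=\emptyset$, so every configuration possessing any carrier, in particular every element of $\mathcal{C}^{can}_{J,\infty}$, also possesses non-canonical carriers. For $K=\infty$ (where moreover your reflection $V=K-Y$ does not exist) one must show directly that $T^Y\eta$ violates the criterion of Proposition \ref{ch3}(c); the paper does this with Lemma \ref{pitman}, obtaining $\limsup_{n\to-\infty}T^YS_n\ge-\overline{S}_{-\infty}+C>-\infty$. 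Second, the case $J>K$ cannot be dispatched ``by duality'': the relation \eqref{dualityrel} exchanges the configuration and carrier variables (space and time in the global picture of Theorem \ref{mainthm1}), and \eqref{spaceball} maps BBS($J$,$K$) to itself rather than to BBS($K$,$J$), so neither converts statements about the spatial tail $n\to-\infty$ of BBS($J$,$K$) carriers into the corresponding statements for BBS($K$,$J$). The paper treats $J>K$ by a separate combinatorial argument (alternation between $q$ and $K-q$, the sets $\mathcal{C}^{r,alt}_{J,K}$, and the quantity $B_{\eta}$ of \eqref{beta}), and it is through the finiteness of the index set $c\in\{0,1,\dots,J\}$ in \eqref{beta} — not through the availability of a $\sigma$-reflection — that the hypothesis $J<\infty$ enters part (b).
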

\begin{proof}
See proofs in the following subsections.
\end{proof}

With the above preparations in place, we are now in a position to study the reversibility of the BBS($J$,$K$) dynamics, as well as the duality between this system and BBS($K$,$J$). To this end, recall the notation for the spatially reversed configuration $R\eta$, where $\eta \in \mathcal{C}_{J,K}$, from \eqref{rdef}, and also the subset $\mathcal{C}_{J,K}^{rev}$ of configurations for which both $\eta$ and $R\eta$ admit a canonical carrier from \eqref{crevdef}. The following proposition establishes the claim from the introduction that if we define
\begin{equation}\label{tinv}
T^{-1}\eta:=RTR\eta
\end{equation}
for configurations $\eta$ such that $R\eta\in \mathcal{C}_{J,K}^{can}$, then $T^{-1}$ is the inverse of $T$ on $\mathcal{C}_{J,K}^{rev}$.

\begin{prp}\label{tinvprp} Suppose $\eta \in \mathcal{C}_{J,K}^{rev}$. It is then the case that $T^{-1}\eta, RT\eta \in \mathcal{C}_{J,K}^{can}$, and moreover $TT^{-1}\eta=T^{-1}T\eta=\eta$.
\end{prp}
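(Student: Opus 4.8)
The plan is to reduce everything to a single \emph{reversal identity} that is a global manifestation of the local involution \eqref{involution}, and then to upgrade the carrier it produces to a canonical one using Lemma \ref{keylemma}. Specifically, I would first establish the following claim: if $\zeta\in\mathcal{C}_{J,K}^{can}$ has canonical carrier $W$, then $V=(V_n)_{n\in\mathbb{Z}}$ defined by $V_n:=W_{-n}$ is a BBS($J$,$K$) carrier for $RT\zeta$, and moreover $T^{V}(RT\zeta)=R\zeta$. To see this, note that by definition $((T\zeta)_m,W_m)=F_{J,K}(\zeta_m,W_{m-1})$, so the involution \eqref{involution} gives $F^{(1)}_{J,K}((T\zeta)_m,W_m)=\zeta_m$ and $F^{(2)}_{J,K}((T\zeta)_m,W_m)=W_{m-1}$. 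Writing $\xi:=RT\zeta$, so that $\xi_n=(T\zeta)_{1-n}$ and $V_{n-1}=W_{1-n}$, the carrier relation \eqref{carrier} for $\xi$ reads $F^{(2)}_{J,K}(\xi_n,V_{n-1})=F^{(2)}_{J,K}((T\zeta)_{1-n},W_{1-n})=W_{-n}=V_n$, which is exactly what is required, while $T^{V}\xi_n=F^{(1)}_{J,K}((T\zeta)_{1-n},W_{1-n})=\zeta_{1-n}=(R\zeta)_n$. This is a routine substitution once the two involution identities are in hand.

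The main obstacle is then to show that the exhibited carrier $V$ is in fact the \emph{canonical} carrier for $RT\zeta$, which is what promotes the conclusion from $\mathcal{C}_{J,K}^{\exists}$ membership to $\mathcal{C}_{J,K}^{can}$ membership. Here I would argue by contradiction and invoke Lemma \ref{keylemma}. When $J=K$ there is nothing to prove, since by Definition \ref{cancarr}(a) every carrier is canonical. When $J\neq K$, suppose instead that $V$ were a non-canonical carrier for $\xi=RT\zeta$; since we have produced $V$ explicitly we have $\xi\in\mathcal{C}_{J,K}^{\exists}$, so Lemma \ref{keylemma}(a) applies and yields $T^{V}(RT\zeta)\notin\mathcal{C}_{J,K}^{can}$. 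But the reversal identity gives $T^{V}(RT\zeta)=R\zeta$, so this forces $R\zeta\notin\mathcal{C}_{J,K}^{can}$. Hence, provided we assume $R\zeta\in\mathcal{C}_{J,K}^{can}$, the carrier $V$ must be canonical; consequently $RT\zeta\in\mathcal{C}_{J,K}^{can}$ and $T(RT\zeta)=T^{V}(RT\zeta)=R\zeta$. I expect the only delicate point to be confirming that the hypotheses of Lemma \ref{keylemma}(a) are genuinely met in the generality required (in particular that no further restriction on $J,K$ is hidden in part (a) of that lemma), but no new computation should be needed.

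Finally, I would assemble the proposition from this strengthened claim, applied twice. Fix $\eta\in\mathcal{C}_{J,K}^{rev}$, so that both $\eta$ and $R\eta$ lie in $\mathcal{C}_{J,K}^{can}$. Taking $\zeta=\eta$ (whose reversal $R\eta$ is in $\mathcal{C}_{J,K}^{can}$) gives $RT\eta\in\mathcal{C}_{J,K}^{can}$ together with $T(RT\eta)=R\eta$; in particular $T^{-1}(T\eta)=RTR(T\eta)=RT(RT\eta)=R(R\eta)=\eta$. Taking instead $\zeta=R\eta$ (whose reversal $R(R\eta)=\eta$ is in $\mathcal{C}_{J,K}^{can}$) gives $T^{-1}\eta=RTR\eta=RT(R\eta)\in\mathcal{C}_{J,K}^{can}$ together with $T(RT(R\eta))=R(R\eta)=\eta$, i.e.\ $T(T^{-1}\eta)=\eta$. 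Since $R^2$ is the identity and each reversal above lands in $\mathcal{C}_{J,K}^{can}$ (which guarantees that every occurrence of $T$ and $T^{-1}$ is well-defined), this establishes $T^{-1}\eta,RT\eta\in\mathcal{C}_{J,K}^{can}$ and $TT^{-1}\eta=T^{-1}T\eta=\eta$, as required.
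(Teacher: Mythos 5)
Your proposal is correct and follows essentially the same route as the paper's proof: both use the involution property \eqref{involution} to show that the time-reversed carrier $V_n=W_{-n}$ is a carrier for $RT\eta$ satisfying $T^{V}(RT\eta)=R\eta$, then invoke Lemma \ref{keylemma}(a) together with $R\eta\in\mathcal{C}_{J,K}^{can}$ to conclude that $V$ is canonical, and finally repeat the argument with $R\eta$ in place of $\eta$ to handle $T^{-1}\eta$. Your extra care about the $J=K$ case and about the hypotheses of Lemma \ref{keylemma}(a) (which indeed carries no restriction on $J,K$ in part (a)) is sound but does not change the argument.
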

\begin{proof} Let $\bar{W}_n:=W_{-n}$
where $W$ is the canonical carrier for $\eta$. From the involutive property of $F_{J,K}$, i.e.\ \eqref{involution}, it follows that $F^{(2)}_{J,K}(RT\eta_n,\bar{W}_{n-1})=\bar{W}_n$. In particular, this implies that $\bar{W}$ is a carrier for $RT\eta$. The same property further implies that $T^{\bar{W}}(RT\eta)=R\eta$. Hence, since $R\eta \in \mathcal{C}_{J,K}^{can}$, Lemma \ref{keylemma}(a) yields that $\bar{W}$ must in fact be a (and hence the) canonical carrier for $RT\eta$. Thus we have established $RT\eta \in \mathcal{C}_{J,K}^{can}$, and moreover $T^{-1}T\eta=RTRT\eta=R^2\eta=\eta$. In the same way, it is possible to check that if $V$ is the canonical carrier for $R\eta$ and we set $\bar{V}_n:=V_{-n}$, then $\bar{V}$ is the canonical carrier for $T^{-1}\eta$. So, $T^{-1}\eta \in \mathcal{C}_{J,K}^{can}$, and $TT^{-1}\eta =T^{\bar{V}}T^{-1}\eta=\eta$.
\end{proof}

We next prove Theorem \ref{mainthm1}. We recall the invariant set of configurations $\mathcal{C}_{J,K}^{inv}$ from \eqref{cinvdef}, and note that on $\mathcal{C}_{J,K}^{inv}$, $(T^tW_n)_{n \in \mathbb{Z}, t \in \mathbb{Z}}$ is well-defined, where $T^tW$ is the canonical carrier for $T^t \eta$. We also recall the duality map $\eta\mapsto\mathcal{D}_{J,K}(\eta)=((T^{t}W)_0)_{t \in \mathbb{Z}}$ that is defined on $\mathcal{C}_{J,K}^{inv}$ from \eqref{dualitymap}.

\begin{proof}[Proof of Theorem \ref{mainthm1}] (a) We first consider the case $J=K$. One then has that $F_{J,K}(a,b)=(b,a)$, from which it follows that $T^tW_{n}=T^t\eta_n=\eta_{n-t}$. (Clearly the carrier exists and is unique, and it is canonical by Definition \ref{cancarr}.) Thus we obtain that $\mathcal{D}_{J,K}(\eta)=\theta^{-1} R\eta$, where $\theta$ is the left-shift, as defined in the statement of the theorem. Since $\theta^{-1}R=R\theta$, $\theta T=T\theta$ and $TR=RT^{-1}$, we thus have that $T^t\mathcal{D}_{J,K}(\eta)=\theta^{-1}RT^{-t}\eta$ and $RT^t\mathcal{D}_{J,K}(\eta)=\theta T^{-t}\eta$, which are both in $\mathcal{C}_{K,J}^{can}$ by assumption (and the fact that the latter set is invariant under spatial shifts), i.e.\ $\mathcal{D}_{J,K}(\eta)\in\mathcal{C}_{K,J}^{inv}$. Moreover, it is clear from the identity $F_{J,K}(a,b)=(b,a)$ that the carrier for $(T^tW_n)_{t\in\mathbb{Z}}=(\eta_{n-t})_{t\in\mathbb{Z}}$ is $(\eta_{n-t})_{t\in\mathbb{Z}}=(T^{t+1}\eta_{n+1})_{t\in\mathbb{Z}}$, as desired.

Now suppose $J>K$ or $J<K<\infty$. By the duality of the model function, as stated at \eqref{dualityrel}, it holds that $F^{(2)}_{K,J}(T^tW_{n},T^t\eta_{n+1})=F^{(1)}_{J,K}(T^t\eta_{n+1},T^tW_{n})=T^{t+1}\eta_{n+1}$. Hence, for each $n$, $(T^{t+1}\eta_{n+1})_{t \in  \mathbb{Z}}$ is a BBS($K$,$J$) carrier for $(T^{t}W_{n})_{t \in \mathbb{Z}}$, and moreover, it holds that $T_{K,J}^{(T^{t+1}\eta_{n+1})_{t\in\mathbb{Z}}}\left((T^tW_{n})_{t\in\mathbb{Z}}\right)=(T^tW_{n+1})_{t\in\mathbb{Z}}$. We need to show these are canonical carriers. To this end, observe that by assumption, for each $t$, it is not possible to find an $N\in\mathbb{Z}$ such that
\begin{equation}\label{e1}
\min\{J,K\} \le T^t\eta_n+T^tW_{n-1} = T^{t+1}\eta_n+T^tW_n  \le \max\{J,K\},\qquad \forall n \le N.
\end{equation}
(NB. The central equality is an application of \eqref{consmass}.) Similarly, by considering the carrier of the reversed configuration $RT^t\eta$ as in the proof of Proposition \ref{tinvprp}, for each $t$, it is not possible to find an $N\in\mathbb{Z}$ such that
\begin{equation}\label{e2}
\min\{J,K\} \le T^{t+1}\eta_n+T^tW_n   =T^t\eta_n+T^tW_{n-1} \le \max\{J,K\},\qquad \forall n \ge N.
\end{equation}
Moreover, note that if for some $n \in \mathbb{Z}$, $(T^{t+1}\eta_{n+1})_{t \in  \mathbb{Z}}$ is not a canonical carrier, then Lemma \ref{keylemma}(b) implies that $A:=\inf_{m \ge n} B_{(T^{t+1}\eta_{m+1})_{t\in\mathbb{Z}}} >-\infty$. However, a consequence of this is that $\min\{J,K\} \le T^{t}\eta_{m+1}+T^tW_m \le \max\{J,K\}$ for all $t \le A$, $m \ge n$, which contradicts \eqref{e2}. Hence $(T^{t+1}\eta_{n+1})_{t \in \mathbb{Z}}$ is the canonical BBS($K$,$J$) carrier for $(T^{t}W_{n})_{t \in \mathbb{Z}}$ for each $n$. In the same way, but appealing to \eqref{e1} in place of \eqref{e2}, $(T^{1-t}\eta_n)_{t \in \mathbb{Z}}$ is shown to be the canonical BBS($K$,$J$) carrier for $R((T^{t}W_n)_{t \in \mathbb{Z}})=(T^{1-t}W_n)_{t \in \mathbb{Z}}$. Hence $\mathcal{D}_{J,K}(\eta)\in\mathcal{C}_{K,J}^{inv}$, as is required to complete the proof of part (a).

(b) This follows easily from part (a).
\end{proof}

To conclude this part of our discussion, we present a result concerning the duality of carriers and canonical carriers under swapping the roles of empty boxes and balls, i.e.\ \eqref{spaceball}, in the case when both box and carrier capacities are finite. Since the proof of the result is straightforward, we omit this. NB. For a configuration $\eta\in\mathcal{C}_{J,K}$, we define $\sigma_J\eta:=(\sigma_J\eta_n)_{n\in\mathbb{Z}}$ (where, as defined above, $\sigma_J(a):=J-a$), and for a subset $\mathcal{C}\subseteq\mathcal{C}_{J,K}$, we write $\sigma_J\mathcal{C}:=\{\sigma_J\eta:\:\eta\in\mathcal{C}\}$. For a carrier $Y$, we similarly define $\sigma_KY$.

\begin{prp}\label{bsdual} Suppose $J,K < \infty$.\\
(a) For $\eta  \in \mathcal{C}_{J,K}$, $Y$ is a carrier for $\eta$ if and only if $\sigma_K Y$ is a carrier for $\sigma_J\eta$.\\
(b) For $\eta  \in \mathcal{C}_{J,K}$, $Y$ is a canonical carrier for $\eta$ if and only if $\sigma_K Y$ is a canonical carrier for $\sigma_J\eta$.\\
(c) It holds that $\mathcal{C}_{J,K}^{\exists}= \sigma_J \mathcal{C}_{J,K}^{\exists}$, $\mathcal{C}_{J,K}^{\exists !} = \sigma_J \mathcal{C}_{J,K}^{\exists !}$, and also $\mathcal{C}_{J,K}^{can}= \sigma_J \mathcal{C}_{J,K}^{can}$.
\end{prp}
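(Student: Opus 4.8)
The plan is to deduce the entire proposition from the empty box–ball duality \eqref{spaceball}, together with the elementary fact that $\sigma_J$ and $\sigma_K$ are involutions (indeed $\sigma_J(\sigma_J(a))=J-(J-a)=a$, and likewise for $\sigma_K$). First I would record the consequence of \eqref{spaceball} that will do all the work: reading off the second coordinate of the identity $(\sigma_J\times\sigma_K)\circ F_{J,K}=F_{J,K}\circ(\sigma_J\times\sigma_K)$ gives $\sigma_K(F^{(2)}_{J,K}(a,b))=F^{(2)}_{J,K}(\sigma_J(a),\sigma_K(b))$ for all admissible $(a,b)$.

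For part (a), recall from \eqref{carrier} that $Y$ is a carrier for $\eta$ precisely when $Y_n=F^{(2)}_{J,K}(\eta_n,Y_{n-1})$ for every $n$. Applying $\sigma_K$ to both sides and using the displayed coordinate identity yields $(\sigma_K Y)_n=F^{(2)}_{J,K}((\sigma_J\eta)_n,(\sigma_K Y)_{n-1})$, which is exactly the assertion that $\sigma_K Y$ is a carrier for $\sigma_J\eta$. The converse implication follows by running the same argument on the pair $(\sigma_J\eta,\sigma_K Y)$ and invoking $\sigma_J\circ\sigma_J=\mathrm{id}$ and $\sigma_K\circ\sigma_K=\mathrm{id}$.

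For part (b), the case $J=K$ is immediate from part (a), since by Definition \ref{cancarr}(a) canonicity then coincides with being a carrier. When $J\neq K$, canonicity additionally demands $B_Y=-\infty$, so the crux is to show that the essential boundary is preserved under the duality, namely $B_{\sigma_K Y,\sigma_J\eta}=B_{Y,\eta}$. Setting $s_n:=\eta_n+Y_{n-1}$, the dual sum satisfies $(\sigma_J\eta)_n+(\sigma_K Y)_{n-1}=(J+K)-s_n$. Because the interval $[\min\{J,K\},\max\{J,K\}]$ is symmetric about its midpoint $(J+K)/2$, equivalently because $(J+K)-\min\{J,K\}=\max\{J,K\}$, the condition $\min\{J,K\}\le (J+K)-s_n\le\max\{J,K\}$ holds if and only if $\min\{J,K\}\le s_n\le\max\{J,K\}$. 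Hence the defining condition in \eqref{essbound} is literally the same for the two pairs, so the two essential boundaries agree; in particular $B_{\sigma_K Y}=-\infty$ exactly when $B_Y=-\infty$, and combining this with part (a) gives (b).

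Finally, part (c) is a formal consequence. By part (a), the map $Y\mapsto\sigma_K Y$ is a bijection (its own inverse up to the involution property) from the carriers of $\eta$ onto the carriers of $\sigma_J\eta$, so $\eta$ admits a carrier, respectively a unique carrier, if and only if $\sigma_J\eta$ does; using that $\sigma_J$ is an involution this yields $\sigma_J\mathcal{C}_{J,K}^{\exists}=\mathcal{C}_{J,K}^{\exists}$ and $\sigma_J\mathcal{C}_{J,K}^{\exists!}=\mathcal{C}_{J,K}^{\exists!}$, and the same argument with part (b) in place of part (a) gives $\sigma_J\mathcal{C}_{J,K}^{can}=\mathcal{C}_{J,K}^{can}$. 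There is no serious obstacle here: all the content sits in the symmetry \eqref{spaceball} and the reflection-invariance of $[\min\{J,K\},\max\{J,K\}]$ about $(J+K)/2$, and the only point requiring a moment's care is the preservation of the essential boundary in part (b), which is precisely where that reflection-invariance is used.
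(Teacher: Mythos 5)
Your proof is correct. The paper itself omits the proof of this proposition, stating only that it is straightforward, and your argument is precisely the intended verification: part (a) follows by applying $\sigma_K$ to \eqref{carrier} and using the second coordinate of the empty box-ball duality \eqref{spaceball} together with the involutivity of $\sigma_J$ and $\sigma_K$; part (b) follows because $\eta_n+Y_{n-1}\mapsto (J+K)-(\eta_n+Y_{n-1})$ preserves membership in $[\min\{J,K\},\max\{J,K\}]$, so the defining condition in \eqref{essbound} is unchanged and $B_{\sigma_K Y,\sigma_J\eta}=B_{Y,\eta}$; and part (c) is then a formal consequence of the bijection $Y\mapsto\sigma_K Y$ between carriers of $\eta$ and of $\sigma_J\eta$.
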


In the following subsections, we characterise $\mathcal{C}_{J,K}^{\exists}$, $\mathcal{C}_{J,K}^{\exists !}$ and $\mathcal{C}_{J,K}^{can}$ in the cases $J>K$ (Subsection \ref{case1}), $J=K$ (Subsection \ref{case2}), $J<K=\infty$ (Subsection \ref{case3a}) and $J<K<\infty$ (Subsection \ref{case3b}). For each case, we also give the proofs of Lemmas \ref{unique} and \ref{keylemma}, and either characterise or describe a certain subset of $\mathcal{C}^{inv}_{J,K}$.

\subsection{Case 1: $J>K$}\label{case1}

In this subsection we assume $J>K$, and start by making some simple observations about the possible outcomes of the model. The following lemma is easily deduced from the definition of $F_{J,K}$, see also Figure \ref{diagrams}.

\begin{lmm}\label{0lmm} Suppose $J>K$. If $\eta \in \mathcal{C}_{J,K}^{\exists}$, and $Y$ is a carrier for $\eta$, then $\eta_n=0$ implies $Y_n=0$, and $\eta_n=J$ implies $Y_n=K$.
\end{lmm}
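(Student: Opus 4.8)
The plan is to work directly from the explicit formula for $F_{J,K}$ in the regime $J>K$, checking the two implications separately by reading off the value of the second coordinate $F^{(2)}_{J,K}(\eta_n,Y_{n-1})$ that defines the carrier via \eqref{carrier}. Recall that $Y_n=F^{(2)}_{J,K}(\eta_n,Y_{n-1})$, where
\[
F^{(2)}_{J,K}(a,b)=b-\min\{b,J-a\}+\min\{a,K-b\}.
\]
So for both claims I only need to evaluate this expression for the relevant value of $a=\eta_n$, using the constraint $J>K$ and the admissible ranges $a\in\{0,\dots,J\}$, $b\in\{0,\dots,K\}$.

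First I would treat the case $\eta_n=0$. Setting $a=0$ gives $\min\{b,J-0\}=\min\{b,J\}=b$, since $b\le K<J$, and $\min\{0,K-b\}=0$. Hence $F^{(2)}_{J,K}(0,b)=b-b+0=0$, so $Y_n=0$ regardless of the incoming carrier value $Y_{n-1}$; this is simply the statement that an empty box forces the carrier to deposit everything it can and pick up nothing. Next I would treat $\eta_n=J$. Setting $a=J$ gives $\min\{b,J-J\}=\min\{b,0\}=0$ and $\min\{J,K-b\}=K-b$, since $K-b\le K\le J$. Hence $F^{(2)}_{J,K}(J,b)=b-0+(K-b)=K$, so $Y_n=K$ regardless of $Y_{n-1}$; the box is full, so the carrier picks up as much as its spare capacity allows, filling itself to its maximum $K$.

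Both computations are immediate once the inequalities $b\le K<J$ (and hence $K-b\le K\le J$) are invoked, and these are exactly the hypotheses $J>K$ together with the admissible range of the carrier. One can equally read the two outcomes off the diagrams in Figure \ref{diagrams}: the input $a=0$ falls in Case 1 (where $a+b=b\le K=\min\{J,K\}$), giving output second coordinate $a=0$; the input $a=J$ falls in Case 3 (where $a+b=J+b\ge J=\max\{J,K\}$), giving output second coordinate $a+K-J=J+K-J=K$. There is no genuine obstacle here: the only thing to be careful about is ensuring the correct case of $F^{(2)}_{J,K}$ is selected, which is why it is cleanest to substitute the boundary values $a=0$ and $a=J$ directly into the defining formula rather than relying on the case split. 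The existence of a carrier $Y$ is assumed in the hypothesis $\eta\in\mathcal{C}_{J,K}^{\exists}$, so the identity \eqref{carrier} holds and the conclusion follows at once.
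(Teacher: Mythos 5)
Your computation is correct, and it is essentially the paper's own argument: the paper simply asserts that the lemma "is easily deduced from the definition of $F_{J,K}$" (with reference to Figure \ref{diagrams}), and your substitution of $a=0$ and $a=J$ into $F^{(2)}_{J,K}(a,b)=b-\min\{b,J-a\}+\min\{a,K-b\}$, using $b\le K<J$, is precisely that deduction carried out in full.
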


From the preceding lemma, we can readily show that on the following set a unique carrier exists: $\mathcal{C}_{J,K}^{0}:=\{\eta \in \mathcal{C}_{J,K}:\:\liminf_{n \to -\infty} \min\left\{ \eta_n, J-\eta_n\right\}=0\}$.

\begin{lmm}\label{c0lem} If $J>K$, then $ \mathcal{C}_{J,K}^{0} \subseteq \mathcal{C}_{J,K}^{\exists !}$.
\end{lmm}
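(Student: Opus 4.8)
The plan is to exploit the fact that, for $J>K$, the sites at which $\eta_n\in\{0,J\}$ act as \emph{resets} for the carrier, in the sense that the recursion output there does not depend on the incoming carrier value. Concretely, a direct computation from \eqref{fdef}, which is also the content of Lemma \ref{0lmm}, shows that for every $b\in\{0,\dots,K\}$ one has $F^{(2)}_{J,K}(0,b)=0$ and $F^{(2)}_{J,K}(J,b)=K$ (using $b\le K<J$). I would first record the standard observation that the hypothesis $\liminf_{n\to-\infty}\min\{\eta_n,J-\eta_n\}=0$ is, since the quantities involved are nonnegative integers, equivalent to the set of reset sites $\{n:\eta_n\in\{0,J\}\}$ being unbounded below.

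Next I would establish uniqueness. Fix $n\in\mathbb{Z}$. Because the reset sites are unbounded below, the set of reset sites $\le n$ is nonempty and bounded above, so it has a maximum $m^*=m^*(n)$. If $Y$ is any carrier for $\eta$, then $Y_{m^*}$ is forced (equal to $0$ or $K$ according to whether $\eta_{m^*}=0$ or $\eta_{m^*}=J$) by the reset property, and the remaining values $Y_{m^*+1},\dots,Y_n$ are then completely determined by iterating $Y_k=F^{(2)}_{J,K}(\eta_k,Y_{k-1})$ forward from $m^*$. Hence any two carriers must agree at $n$; as $n$ was arbitrary, a carrier is unique if it exists.

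For existence I would turn the same recipe into a definition: set $Y_n$ to be the value obtained by starting from the forced value at $m^*(n)$ and iterating the recursion forward to $n$ (with $Y_n$ equal to the forced value itself when $m^*(n)=n$). It remains to check that the sequence $(Y_n)_{n\in\mathbb{Z}}$ so defined really satisfies $Y_n=F^{(2)}_{J,K}(\eta_n,Y_{n-1})$ at every site. If $\eta_n\notin\{0,J\}$, then $m^*(n)=m^*(n-1)$, so by construction $Y_n$ is exactly the single forward step $F^{(2)}_{J,K}(\eta_n,Y_{n-1})$; if $\eta_n\in\{0,J\}$, then $n$ is itself the maximal reset site $\le n$, and the reset property yields $F^{(2)}_{J,K}(\eta_n,Y_{n-1})$ equal to the forced value $Y_n$, irrespective of $Y_{n-1}$. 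Thus $(Y_n)_{n\in\mathbb{Z}}$ is a carrier, and together with the uniqueness step this gives $\eta\in\mathcal{C}_{J,K}^{\exists !}$, as required.

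I do not expect a serious obstacle: the entire argument hinges on the single structural observation that $F^{(2)}_{J,K}(\cdot,b)$ is insensitive to the incoming value $b$ at reset sites, which decouples the otherwise delicate tail behaviour of the carrier. The only point needing a little care is the well-definedness of $m^*(n)$ as a maximum, and this is precisely where the defining hypothesis of $\mathcal{C}_{J,K}^{0}$ — that reset sites occur arbitrarily far to the left — is used.
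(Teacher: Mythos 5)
Your proposal is correct and follows essentially the same route as the paper: the paper's proof also uses the forced carrier values at sites where $\eta_n\in\{0,J\}$ (via Lemma \ref{0lmm}, which is exactly your ``reset'' observation that $F^{(2)}_{J,K}(0,b)=0$ and $F^{(2)}_{J,K}(J,b)=K$ for all $b$) and then defines the carrier inductively between such sites. Your write-up merely spells out the consistency check at the reset sites themselves, which the paper leaves implicit.
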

\begin{proof} If $\eta \in \mathcal{C}_{J,K}^{0}$, then there exists a decreasing sequence $(N_k)_{k\in\mathbb{N}}$ of integers with $\eta_{N_k}\in\{0,J\}$. Thus one can uniquely define a carrier by applying Lemma \ref{0lmm} to deduce the values of $(Y_{N_k})_{k\in\mathbb{N}}$, and defining the values of $Y_n$ for $n\neq N_k$ inductively by \eqref{carrier}. Hence a carrier exists and it is unique.
\end{proof}

Towards understanding the general situation, for $r \in \mathbb{Z}_+$ satisfying $r \le \frac{J}{2}$, we let
\begin{equation}\label{crdef}
\mathcal{C}_{J,K}^{r}:=\left\{ \eta \in \mathcal{C}_{J,K}:\:\liminf_{n \to -\infty} \min\left\{ \eta_n,J-\eta_n \right\}=r \right\}.
\end{equation}
Note that $\mathcal{C}_{J,K}=(\cup_{r=0}^{\lfloor\frac{J}{2}\rfloor}\mathcal{C}_{J,K}^{r})\cup \mathcal{C}^\infty_{J,K}$, where $\mathcal{C}^\infty_{J,K}:=\{ \eta \in \mathcal{C}_{J,K}:\:\liminf_{n \to -\infty} \eta_n=\infty\}$. In the main result of the section, Proposition \ref{ch1}, these sets will appear in our characterisation of $\mathcal{C}_{J,K}^{\exists}$, $\mathcal{C}_{J,K}^{\exists !}$ and $\mathcal{C}_{J,K}^{can}$. Prior to this, we proceed to present a sequence of preparatory lemmas.

\begin{lmm}\label{basiclmm} Suppose $J>K$. Let $\eta \in \mathcal{C}_{J,K}^{\exists}$ and $Y$ be a carrier for $\eta$. If $\eta_n \in \{r,r+1,\dots,J-r\}$ for some $r \in \{0,1,\dots,\lfloor\frac{J}{2}\rfloor\}$, and $\min\{Y_n,K-Y_n\} <r$, then $\min\{Y_{n-1},K-Y_{n-1}\} \le \min\{Y_n,K-Y_n\}$. Moreover, $Y_{n-1} = Y_n$ holds if and only if $Y_{n-1}=Y_{n}=\frac{K}{2}$.
\end{lmm}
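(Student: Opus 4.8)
The plan is to reduce everything to the explicit formula $Y_n=F^{(2)}_{J,K}(\eta_n,Y_{n-1})$ coming from \eqref{carrier} and \eqref{fdef}, and then to run a short case analysis on the value of $\eta_n+Y_{n-1}$. Writing $a:=\eta_n$, $b:=Y_{n-1}$ and $c:=Y_n$ for brevity, and recalling that $J>K$ forces $\min\{J,K\}=K$ and $\max\{J,K\}=J$, the table in Figure~\ref{diagrams} leaves exactly three possibilities (Case~2(a) being empty when $J>K$): if $a+b\le K$ then $c=a$; if $K\le a+b\le J$ then $c=K-b$; and if $a+b\ge J$ then $c=a+K-J$. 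I would treat these three regimes in turn, in each case computing $\min\{c,K-c\}$ explicitly and comparing it with $\min\{b,K-b\}$, while invoking the two hypotheses $a\in\{r,\dots,J-r\}$ (i.e.\ $r\le a\le J-r$) and $\min\{c,K-c\}<r$ to determine which regime is actually feasible and to locate $b$ and $c$ relative to $K/2$.

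The middle regime $K\le a+b\le J$ is the cleanest: there $c=K-b$, so $\min\{c,K-c\}=\min\{K-b,b\}=\min\{b,K-b\}$, which gives the asserted inequality as an equality, and moreover $b=c$ is equivalent to $b=K-b$, i.e.\ to $b=c=K/2$. In the regime $a+b\le K$ one has $c=a$; here the hypothesis $\min\{a,K-a\}<r$ together with $a\ge r$ forces $a>K/2$ and $K-a<r$, after which $b\le K-a<K/2$ yields $\min\{b,K-b\}=b\le K-a=\min\{c,K-c\}$, with $b<c$ strictly. The regime $a+b\ge J$ is dual: $c=a+K-J$ and $K-c=J-a\ge r$, so $\min\{c,K-c\}<r$ forces $c<r\le K-c$; then $b\ge J-a$ gives $K-b\le c$, and hence $\min\{b,K-b\}\le K-b\le c=\min\{c,K-c\}$, with $b>c$ strictly.

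Combining the three regimes gives the inequality $\min\{Y_{n-1},K-Y_{n-1}\}\le\min\{Y_n,K-Y_n\}$ in every feasible case. For the ``moreover'' statement, I would simply observe that the two outer regimes produce strict inequalities $b<c$ and $b>c$, so $b=c$ can occur only in the middle regime, where we have already seen that it is equivalent to $b=c=K/2$; the reverse implication is immediate. I do not anticipate a genuine obstacle here, since the argument is a finite check; the only point requiring care is the bookkeeping of strict versus non-strict inequalities in the two outer regimes, as this is exactly what underlies the equality characterisation and rules out any boundary coincidence with $b=c\neq K/2$.
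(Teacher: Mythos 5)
Your proof is correct, but it is organised around a different decomposition than the paper's. The paper never splits on the regimes of $\eta_n+Y_{n-1}$: it works directly with the min-form of the update, $Y_n=Y_{n-1}-\min\{Y_{n-1},J-\eta_n\}+\min\{\eta_n,K-Y_{n-1}\}$, and considers only which of the two quantities $Y_n$, $K-Y_n$ lies below $r$. If $Y_n<r$ (so the hypothesis $\eta_n\ge r$ is relevant), the chain $r>Y_n\ge\min\{\eta_n,K-Y_{n-1}\}\ge\min\{r,K-Y_{n-1}\}$ forces $K-Y_{n-1}\le Y_n$; symmetrically, $K-Y_n<r$ forces $Y_{n-1}\le K-Y_n$; the inequality of minima follows by invoking whichever case realises $\min\{Y_n,K-Y_n\}$, and the equality statement is read off from the same identity via $Y_n=\min\{Y_n,J-\eta_n\}=\min\{\eta_n,K-Y_n\}=K-Y_n$. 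Your route instead uses the explicit piecewise form of $F^{(2)}_{J,K}$ from Figure \ref{diagrams} (correctly noting that Case 2(a) is void when $J>K$) and checks each of the three regimes separately. This costs a feasibility analysis in the two outer regimes that the paper avoids, but it buys sharper information: you obtain the strict inequality $Y_{n-1}\neq Y_n$ in both outer regimes, so coincidence of consecutive carrier values can occur only in the middle regime $K\le\eta_n+Y_{n-1}\le J$, where the update is the flip $Y_n=K-Y_{n-1}$. That localisation is precisely the mechanism behind the degeneracy conditions \eqref{condeq} and \eqref{essbound}, and it is what the paper extracts from this lemma in the following step (Lemma \ref{chcan}), so your version makes that link visible already at this stage.
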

\begin{proof} Suppose $Y_n <r$ and $\eta_n \ge r$. From \eqref{fdef}, we then have that
\[r>Y_n=Y_{n-1}-\min\{Y_{n-1},J-\eta_n\}+\min\{\eta_n,K-Y_{n-1}\}\geq \min\{r,K-Y_{n-1}\},\]
which implies $K-Y_{n-1}\leq Y_n$. In the same way, if $K-Y_n <r$ and $\eta_n \le J-r$, then $Y_{n-1} \le  K-Y_n$. The first part of the lemma follows.

Suppose $Y_n <r$ and $Y_n=Y_{n-1}$. It then follows from the assumptions and \eqref{fdef} that $Y_n=\min\{Y_n,J-\eta_n\}=\min\{\eta_n,K-Y_n\}=K-Y_n$, and so $Y_n=\frac{K}{2}$. The result follows similarly if $K-Y_n <r $ and $Y_n=Y_{n-1}$.
\end{proof}

\begin{lmm}\label{chcan}
Suppose $J>K$. For $\eta \in \mathcal{C}_{J,K}^{\exists}$, suppose that there exist $r \in \{0,1,\dots,\lfloor\frac{J}{2}\rfloor\}$ and $N \in \mathbb{Z}$ such that $\eta_n \in \{r,r+1,\dots,J-r\}$ for all $n \le N$. If $Y$ is a carrier for $\eta$ such that
\begin{equation}\label{ccc1}
\min\{Y_{n_0}, K-Y_{n_0}\} <r
\end{equation}
for some $n_0 \le N$, then $Y$ is not canonical.
\end{lmm}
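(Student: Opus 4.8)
The plan is to prove the contrapositive-free statement directly by showing that the essential boundary satisfies $B_Y > -\infty$; since we are in the regime $J>K$, Definition \ref{cancarr}(b) then immediately gives that $Y$ cannot be canonical. By the equivalence recorded just after \eqref{essbound}, namely that for $J>K$ the condition $N\le B_Y$ is the same as \eqref{condeq}, it suffices to produce some $N_1\in\mathbb{Z}$ for which $Y_n=K-Y_{n-1}$ for all $n\le N_1$. To organise the argument I would track the single quantity $d_n:=\min\{Y_n,K-Y_n\}$, which takes values in $\{0,1,\dots,\lfloor K/2\rfloor\}$.

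The first step is a backward induction powered by Lemma \ref{basiclmm}. Starting from the hypothesis $d_{n_0}<r$ and using that $\eta_n\in\{r,r+1,\dots,J-r\}$ for every $n\le N$ (in particular for every $n\le n_0$), the lemma yields $d_{n-1}\le d_n$ whenever $d_n<r$. Hence, inducting on decreasing $n$, one obtains $d_n<r$ for all $n\le n_0$, together with the monotonicity $d_{n-1}\le d_n$ throughout this range. Since $(d_n)_{n\le n_0}$ is then a sequence of non-negative integers with $d_{n-1}\le d_n$, it can decrease only finitely often as $n\to-\infty$, so it is eventually constant: there exist $N_1\le n_0$ and a constant $c$ with $d_n=c<r$ for all $n\le N_1$.

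The second step extracts the required identity on the stabilised range. For each $n\le N_1$ we have $d_{n-1}=d_n=c$, so each of $Y_{n-1}$ and $Y_n$ lies in $\{c,K-c\}$. If $Y_{n-1}=Y_n$, then (as the hypotheses $\eta_n\in\{r,\dots,J-r\}$ and $d_n=c<r$ hold) the `moreover' clause of Lemma \ref{basiclmm} forces $Y_{n-1}=Y_n=K/2$, whence trivially $Y_n=K-Y_{n-1}$; if instead $Y_{n-1}\neq Y_n$, then necessarily $\{Y_{n-1},Y_n\}=\{c,K-c\}$ and again $Y_n=K-Y_{n-1}$. In either case $Y_n=K-Y_{n-1}$ for all $n\le N_1$, which by \eqref{condeq} gives $N_1\le B_Y$, so that $B_Y>-\infty$ and $Y$ is not canonical.

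The only delicate point is the second step: one must check that both sub-cases arising from $d_{n-1}=d_n$ collapse to the single relation $Y_n=K-Y_{n-1}$. This is precisely where the equality characterisation in Lemma \ref{basiclmm} is indispensable, since it excludes the one configuration ($Y_{n-1}=Y_n\neq K/2$) that would otherwise break the deduction. The backward induction and the stabilisation of a bounded monotone integer sequence are routine by comparison.
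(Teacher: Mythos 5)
Your proof is correct and follows essentially the same route as the paper's: backward induction via Lemma \ref{basiclmm} to obtain that $\min\{Y_n,K-Y_n\}$ is non-increasing as $n$ decreases and hence eventually constant, followed by the equality characterisation in that lemma to force $Y_n=K-Y_{n-1}$ for all sufficiently small $n$, so that $B_Y>-\infty$ by \eqref{condeq} and $Y$ is not canonical. The only cosmetic difference is that you run the dichotomy $Y_{n-1}=Y_n$ versus $Y_{n-1}\neq Y_n$ locally for each $n$, whereas the paper splits globally over $n\le N_1$; the content is identical.
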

\begin{proof}
From Lemma \ref{basiclmm}, \eqref{ccc1} implies that $\min\{Y_{n_0-n},K-Y_{n_0-n}\}$ is non-increasing for $n\geq 0$, and so $\lim_{n \to -\infty}\min\{Y_n,K-Y_n\}=q$ for some $q\in\mathbb{Z}_+$. Namely, there exists $N_1 \le N$ such that for all $n \le N_1$, $Y_n=q$ or $Y_n=K-q$. Now, if $Y_n=Y_{n-1}$ for some $n \le N_1$, then Lemma \ref{basiclmm} tells us that $Y_n=Y_{n-1}=\frac{K}{2}$, and so $q=\frac{K}{2}$. Hence $Y_n=\frac{K}{2}=K-Y_{n-1}$ for all $n \le N_1$, and so $Y$ is not canonical (recall \eqref{condeq}). Similarly, if $Y_n \neq Y_{n-1}$ for all $n \le N_1$, then $Y_n=K-Y_{n-1}$ for all $n \le N_1$, and so $Y$ is not canonical (again by \ref{condeq}).
\end{proof}

\begin{lmm}\label{le2r}
Suppose $J>K$, and let $r\in\{0,1,\dots,\lfloor\frac{J}{2}\rfloor\}\cup\{\infty\}$.\\
(a) If $K \le 2r$, then $\mathcal{C}_{J,K}^{r} \subseteq \mathcal{C}_{J,K}^{\exists}$ and $\mathcal{C}_{J,K}^{r} \cap \mathcal{C}_{J,K}^{can} = \emptyset$.\\
(b) If $K<2r$, then $\mathcal{C}_{J,K}^{r} \cap \mathcal{C}_{J,K}^{\exists !} = \emptyset$.
\end{lmm}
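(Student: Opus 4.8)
The plan is to reduce everything to a single structural observation about the local map $F_{J,K}$. Writing $a=\eta_n$ and reading off \eqref{fdef} (equivalently Case~2(b) of Figure~\ref{diagrams}), one sees that whenever $K\le a+b\le J$ the carrier update is simply the reflection $F^{(2)}_{J,K}(a,b)=K-b$. Now, for $\eta\in\mathcal{C}_{J,K}^{r}$ the definition \eqref{crdef} provides an $N$ with $\eta_n\in\{r,\dots,J-r\}$ for all $n\le N$ (in the case $r=\infty$, which forces $J=\infty$, one instead has $\eta_n\to\infty$ as $n\to-\infty$). Set $L^*:=\{v:\max\{0,K-r\}\le v\le\min\{K,r\}\}$. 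The key claim is that for every $a\in\{r,\dots,J-r\}$ and every $v\in L^*$ one has $K\le a+v\le J$, so that $b\mapsto F^{(2)}_{J,K}(a,b)$ restricts on $L^*$ to the involution $v\mapsto K-v$; this is a short interval computation using $a\ge r$, $a\le J-r$ and $r\le\lfloor J/2\rfloor$, together with the symmetry of $L^*$ about $K/2$. Consequently, given any $v_0\in L^*$, the sequence on $\{n\le N\}$ that alternates $v_0,K-v_0,v_0,\dots$ (well-defined since the reflection is an involution of $L^*$), extended forward by \eqref{carrier}, is a genuine BBS($J$,$K$) carrier for $\eta$. Since $L^*\ne\emptyset$ exactly when $K\le 2r$, this already yields $\mathcal{C}_{J,K}^{r}\subseteq\mathcal{C}_{J,K}^{\exists}$ in part (a).

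For the non-canonical claim in (a), I would show that \emph{every} carrier $Y$ is non-canonical. As $Y_n\in\{0,\dots,K\}$, we always have $\min\{Y_n,K-Y_n\}\le\lfloor K/2\rfloor\le r$ under the hypothesis $K\le 2r$. If $K<2r$, then $\min\{Y_{n_0},K-Y_{n_0}\}\le\lfloor K/2\rfloor<r$ for some (indeed every) $n_0\le N$, so Lemma~\ref{chcan} applies and $Y$ is not canonical. In the boundary case $K=2r$, either $\min\{Y_{n_0},K-Y_{n_0}\}<r$ for some $n_0\le N$, and Lemma~\ref{chcan} again applies, or else $Y_n=r=K/2$ for all $n\le N$, in which case $Y_n=K-Y_{n-1}$ holds for all $n\le N$, so $B_Y\ge N>-\infty$ and $Y$ is non-canonical by Definition~\ref{cancarr}(b) (recall \eqref{condeq}). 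The case $r=\infty$ is reduced to the finite case by invoking Lemma~\ref{chcan} with $r'=\lfloor K/2\rfloor+1$, which is legitimate because $\eta_n\to\infty$ guarantees $\eta_n\ge r'$ for all sufficiently negative $n$.

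For (b), the same reflection picture gives non-uniqueness. When $K<2r$ and $K\ge 1$, the interval $L^*$ satisfies $|L^*|=\min\{K,r\}-\max\{0,K-r\}+1\ge 2$: this equals $2r-K+1\ge2$ when $r\le K$, and $K+1\ge2$ when $r>K$. Choosing two distinct points $v_0\ne v_0'$ of $L^*$ and running the two associated alternating orbits on $\{n\le N\}$, then extending both forward by \eqref{carrier}, produces two BBS($J$,$K$) carriers for $\eta$ that already differ at $n=N$; hence $\eta\notin\mathcal{C}_{J,K}^{\exists !}$, giving $\mathcal{C}_{J,K}^{r}\cap\mathcal{C}_{J,K}^{\exists !}=\emptyset$. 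The infinite case $r=\infty$, $J=\infty$ is handled identically once one notes $F^{(2)}_{\infty,K}(a,b)=\min\{a,K-b\}=K-b$ for $a\ge K$, so the reflection acts on all of $\{0,\dots,K\}$ at every $n$ with $\eta_n\ge K$.

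The routine steps are the non-canonical bound in (a) (immediate from $\min\{Y_n,K-Y_n\}\le\lfloor K/2\rfloor$ and Lemma~\ref{chcan}) and the extension-by-forward-iteration of orbits. The main work, and the step I expect to be the most delicate, is isolating the reflection-invariant set $L^*$ and verifying both the inclusion $K\le a+v\le J$ for all admissible $(a,v)$ and the sharp count $|L^*|\ge2\iff K<2r$: this is exactly where the dichotomy between (a) and (b) is located. Particular care is needed at the boundary $K=2r$, where $L^*$ collapses to the single fixed point $K/2$, so the constant carrier $Y_n\equiv K/2$ is forced and one gets non-canonicity but not non-uniqueness, and in the infinite cases $J=\infty$ or $r=\infty$, where the relevant inequalities must be re-derived from $F^{(2)}_{\infty,K}(a,b)=\min\{a,K-b\}$.
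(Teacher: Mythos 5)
Your proof is correct and follows essentially the same route as the paper's: both exploit the Case 2(b) reflection $F^{(2)}_{J,K}(a,b)=K-b$ (valid when $K\le a+b\le J$, guaranteed here by $\eta_n\in\{r,\dots,J-r\}$) to build two-periodic carriers on $\{n\le N\}$ extended forward by \eqref{carrier}, and both deduce non-canonicity of every carrier from Lemma \ref{chcan} together with the boundary analysis at $K=2r$ via \eqref{condeq}. The only cosmetic difference is that you parametrize the alternating carriers by the interval $L^*$ and obtain non-uniqueness from $|L^*|\ge 2$ when $K<2r$, whereas the paper uses the single pair $\min\{r,K\}$, $K-\min\{r,K\}$ and its phase swap.
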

\begin{proof} Throughout the proof, we suppose that $K\leq 2r$. Moreover, we start by considering the case when $r\in\{0,1,\dots,\lfloor\frac{J}{2}\rfloor\}$. For $\eta \in \mathcal{C}_{J,K}^{r}$, there exists an $N$ such that for all $n \le N$, $r\leq \eta_n \leq J-r$. Hence, taking $Y_{2n}=\min\{r,K\}$ and $Y_{2n+1}=K-\min\{r,K\}$ for $n$ satisfying $2n,2n+1 \le N$,
one can check from the definition at \eqref{fdef} (see also Figure \ref{diagrams}) that $F^{(2)}_{J,K}(Y_{n-1}, \eta_n)=Y_{n}$ holds for all $n \le N$. Indeed, for this choice of $Y$, one has that $K \le Y_{n-1}+\eta_n \le J$ for all $n \le N$. Since $Y$ can be extended to a carrier by applying \eqref{carrier}, it follows that $\eta\in\mathcal{C}_{J,K}^{\exists}$. Furthermore, if $K <2r$, then by taking $Y_{2n}=K-\min\{r,K\}$ and $Y_{2n+1}=\min\{r,K\}$ for all $n$ satisfying $2n,2n+1 \le N$, we have another carrier. Hence, in this case, $\eta \notin \mathcal{C}_{J,K}^{\exists !}$. To complete the proof when $r\in\{0,1,\dots,\lfloor\frac{J}{2}\rfloor\}$, we need to check that when $K\leq 2r$, any $\eta \in  \mathcal{C}_{J,K}^{r}$ is not an element of $\mathcal{C}_{J,K}^{can}$. From Lemma \ref{chcan}, if $Y$ is a canonical carrier for $\eta$, then $\min\{Y_n,K-Y_n\} \ge r$ for all $n \le N$. However, since $K \le 2r$, this holds if and only if $K=2r$ and $Y_n=r$ for all $n \le N$. Given this implies $K \le Y_{n-1}+\eta_n \le J$ for all $n \le N$, we have reached a contradiction. Thus $\eta \notin \mathcal{C}_{J,K}^{can}$.

The result for $r=\infty$ is proved in a similar fashion. The only difference is that we now start by noting that for $\eta\in C_{\infty,K}^\infty$, there exists an $N$ such that for all $n \le N$, $\eta_n \geq K$. (Also, $C_{J,K}^\infty=\emptyset$ for $J<\infty$.)
\end{proof}

\begin{lmm}\label{>2r} Suppose $J>K>2r$ for some $r\in\{0,1,\dots,\lfloor\frac{J}{2}\rfloor\}$.\\
(a) For each $\eta \in  \mathcal{C}_{J,K}^{r}$, there exists a unique carrier $Y$ such that
\begin{equation}\label{geqr}
\liminf_{n \to -\infty}\min\{Y_n,K-Y_n\} \ge r,
\end{equation}
and hence $\mathcal{C}_{J,K}^{r} \subseteq \mathcal{C}_{J,K}^{\exists}$. Moreover, this carrier satisfies
\begin{equation}\label{equalsr}
\liminf_{n \to -\infty}\min\{Y_n,K-Y_n\} = r,
\end{equation}
and if $J=\infty$, then
\begin{equation}\label{equalsr2}
\liminf_{n \to -\infty}Y_n = r.
\end{equation}
(b) For $\eta \in  \mathcal{C}_{J,K}^{r}$, it holds that $\eta \in \mathcal{C}_{J,K}^{can}$ if and only if the carrier of part (a) is canonical. Moreover, the canonical carrier is unique if it exists.
\end{lmm}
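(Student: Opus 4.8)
The plan is to reduce everything to the already-understood case $r=0$ via the reducibility property \eqref{reducibility}. Since $J>K>2r$, the reduced capacities $J':=J-2r$ and $K':=K-2r$ are positive and still satisfy $J'>K'$, so the reduced model is of exactly the type treated in this subsection. Given $\eta\in\mathcal{C}_{J,K}^r$, I first fix $N$ with $\eta_n\in\{r,\dots,J-r\}$ for all $n\le N$, which is possible because $\liminf_{n\to-\infty}\min\{\eta_n,J-\eta_n\}=r$. I then define a configuration $\tilde\eta\in\mathcal{C}_{J',K'}$ by $\tilde\eta_n:=\eta_n-r$ for $n\le N$ and, say, $\tilde\eta_n:=0$ for $n>N$. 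Since the existence and uniqueness of a carrier is a tail problem, the choice for $n>N$ is immaterial, and we have $\liminf_{n\to-\infty}\min\{\tilde\eta_n,J'-\tilde\eta_n\}=0$, so $\tilde\eta\in\mathcal{C}_{J',K'}^0\subseteq\mathcal{C}_{J',K'}^{\exists !}$ by Lemma \ref{c0lem}. Write $\tilde Y$ for its unique carrier.

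For part (a), the heart of the matter is the correspondence $Y_n=\tilde Y_n+r$ between carriers. Reducibility \eqref{reducibility} shows that $Y_n:=\tilde Y_n+r$ satisfies $Y_n=F^{(2)}_{J,K}(\eta_n,Y_{n-1})$ for all $n\le N$ (the hypotheses of \eqref{reducibility} hold since $\eta_n\in\{r,\dots,J-r\}$ and $Y_{n-1}=\tilde Y_{n-1}+r\in\{r,\dots,K-r\}$); extending forward by \eqref{carrier} yields a genuine carrier $Y$ for $\eta$ with $Y_n\in\{r,\dots,K-r\}$, hence $\min\{Y_n,K-Y_n\}\ge r$, for $n\le N$. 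This establishes existence of a carrier satisfying \eqref{geqr}, and so $\mathcal{C}_{J,K}^r\subseteq\mathcal{C}_{J,K}^{\exists}$. Conversely, any carrier $Y$ of $\eta$ satisfying \eqref{geqr} has $Y_n\in\{r,\dots,K-r\}$ on some left tail, on which $\tilde Y_n:=Y_n-r$ is a carrier for $\tilde\eta$ (again by \eqref{reducibility}); since $\tilde\eta\in\mathcal{C}_{J',K'}^{\exists !}$, this forces $Y$ to agree with the carrier just constructed, giving uniqueness. For the equalities \eqref{equalsr} and \eqref{equalsr2}, I would invoke $\tilde\eta\in\mathcal{C}_{J',K'}^0$ together with Lemma \ref{0lmm}: along the infinitely many $n\to-\infty$ with $\tilde\eta_n\in\{0,J'\}$ (respectively $\tilde\eta_n=0$ when $J=\infty$, since then $J'=\infty$ and the condition reads $\liminf\tilde\eta_n=0$), Lemma \ref{0lmm} forces $\tilde Y_n\in\{0,K'\}$ (respectively $\tilde Y_n=0$), whence $\liminf_{n\to-\infty}\min\{\tilde Y_n,K'-\tilde Y_n\}=0$ (respectively $\liminf\tilde Y_n=0$); translating back by $+r$ yields \eqref{equalsr} and \eqref{equalsr2}.

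For part (b), the key tool is Lemma \ref{chcan}. One implication is immediate: if the carrier $Y$ from part (a) is canonical, then $\eta\in\mathcal{C}_{J,K}^{can}$ by definition. For the converse, suppose $Y^*$ is any canonical carrier for $\eta$. Applying Lemma \ref{chcan} with the same $N$ as above, canonicity of $Y^*$ precludes $\min\{Y^*_{n_0},K-Y^*_{n_0}\}<r$ for any $n_0\le N$; thus $\min\{Y^*_n,K-Y^*_n\}\ge r$ for all $n\le N$, so $Y^*$ satisfies \eqref{geqr}. By the uniqueness established in part (a), $Y^*$ equals the carrier $Y$ of part (a), so that carrier is canonical. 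The very same argument shows that every canonical carrier coincides with $Y$, which gives uniqueness of the canonical carrier.

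The main obstacle I anticipate is the bookkeeping around the reduction rather than any deep difficulty: one must verify that passing to $\tilde\eta$ is legitimate (using the tail-only dependence of the carrier), and, crucially, that \eqref{reducibility} genuinely applies at each step, which requires $\eta_n\in\{r,\dots,J-r\}$ and the relevant carrier value lying in $\{r,\dots,K-r\}$ \emph{simultaneously}. This simultaneity is exactly what $\eta\in\mathcal{C}^r_{J,K}$ and the defining property \eqref{geqr} guarantee on a common left tail. Once this is in place, the argument is a faithful transcription of the $r=0$ results (Lemmas \ref{c0lem} and \ref{0lmm}) and of Lemma \ref{chcan}.
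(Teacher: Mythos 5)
Your proof is correct and follows essentially the same route as the paper's: shift the configuration down by $r$ on a left tail, apply Lemma \ref{c0lem} to the reduced BBS($J-2r$,$K-2r$) model, transfer the carrier back and forth via the reducibility property \eqref{reducibility} to get existence and uniqueness subject to \eqref{geqr}, and deduce part (b) from part (a) together with Lemma \ref{chcan}. The only difference is that you spell out details the paper leaves implicit, namely the use of Lemma \ref{0lmm} to obtain \eqref{equalsr} and \eqref{equalsr2}, and the explicit argument for part (b).
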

\begin{proof} Let $K > 2r$ and $\eta \in  \mathcal{C}_{J,K}^{r}$. There then exists an $N \in \mathbb{Z}$ such that $\eta_n \in \{r,\dots, J-r\}$ for all $n \le N$. Hence $(\tilde{\eta}_n)_{n \le N}:=(\eta_n-r)_{n \le N}$ satisfies $\tilde{\eta}_n \in \{0,\dots,J-2r\}$ for all $n \le N$ and $\liminf_{n \to -\infty}\min\{\tilde{\eta}_n,J-2r-\tilde{\eta}_n\} =0$. Therefore, by Lemma \ref{c0lem}, there exists a unique BBS($J-2r,K-2r$) carrier $\tilde{Y} \in \{0,\dots, K-2r\}$ for $\tilde{\eta}$ such that $F^{(2)}_{J-2r,K-2r}(\tilde{\eta}_n,\tilde{Y}_{n-1})=\tilde{Y}_n$ for all $n \le N$. (So that $\tilde{Y}$ is well-defined for all $n$, one may assume we have extended $\tilde{\eta}_n$ to an element of $\mathcal{C}_{J-2r,K-2r}$, though how this is done is unimportant for this proof.) By the reducibility property \eqref{reducibility}, it follows that if we define $Y_n=\tilde{Y}_n+r$ for $n\leq N$, then $F^{(2)}_{J,K}(\eta_n,Y_{n-1})=Y_n$ for all $n \le N$. By way of construction, $Y_n$ satisfies \eqref{equalsr} and \eqref{equalsr2}. Moreover, if there is another carrier $Y'$ for $\eta$ satisfying \eqref{geqr} (with $Y$ replaced by $Y'$), then $\tilde{Y}':=Y'-r$ satisfies $F^{(2)}_{J-2r,K-2r}(\tilde{\eta}_n,\tilde{Y}'_{n-1})=\tilde{Y}'_n$ for enough small $n$. However, the uniqueness of the carrier for $\tilde{\eta}$ implies $\tilde{Y}=\tilde{Y}'$, and so $Y=Y'$. Thus we have proved (a). Part (b) follows obviously from part (a) and Lemma \ref{chcan}.
\end{proof}

For our characterization of the sets $\mathcal{C}_{J,K}^{\exists !}$ and $\mathcal{C}_{J,K}^{can}$ in Proposition \ref{ch1}, we introduce a further family of subsets of the configuration space, as given by
\begin{equation}\label{cralt}
\mathcal{C}_{J,K}^{r,alt}:=\left\{\eta \in \mathcal{C}_{J,K}:\:\max_{i\in\{0,1\}}  \liminf_{n \to -\infty}  \mathbf{1}_{\{ \eta_{2n+i} \ge K-r,\:\eta_{2n+1+i}\le J-K+r \}}=1\right\}
\end{equation}
for $r\in \{0,1,\dots,K\}$. Note that $\mathcal{C}_{J,K}^{0,alt} \subseteq \mathcal{C}_{J,K}^{1,alt} \subseteq \cdots \subseteq \mathcal{C}_{J,K}^{K,alt}=\mathcal{C}_{J,K}$.

\begin{lmm}\label{chunique}
Suppose $J>K\geq 2r$, where $r\in\{0,1,\dots,\lfloor\frac{J}{2}\rfloor\}$. Let $\eta \in \mathcal{C}_{J,K}^{r}$.\\
(a) It holds that $\eta \notin \mathcal{C}_{J,K}^{can}$ if and only if $\eta \in \mathcal{C}_{J,K}^{r,alt}$.\\
(b) If $r \ge 1$, then $\eta \notin \mathcal{C}_{J,K}^{\exists !}$ if and only if $\eta \in \mathcal{C}_{J,K}^{r-1,alt}$.
\end{lmm}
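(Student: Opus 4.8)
The crux of both parts is a single identity, read off from Case 2(b) of Figure \ref{diagrams} together with the fact that Case 2(a) is vacuous when $J>K$: for a carrier $Y$ of $\eta$ one has $Y_n=K-Y_{n-1}$ if and only if $K\le \eta_n+Y_{n-1}\le J$. By \eqref{condeq}, this means a carrier fails to be canonical precisely when it is eventually (as $n\to-\infty$) strictly alternating between two values $c$ and $K-c$. I would combine this with the monotonicity of Lemma \ref{basiclmm}: since $\eta_n\in\{r,\dots,J-r\}$ for all small enough $n$, once $\min\{Y_n,K-Y_n\}$ drops below $r$ it is non-increasing as $n\to-\infty$, hence, being integer-valued, is eventually equal to some constant $q$; and the second part of Lemma \ref{basiclmm} forces strict alternation $Y_n\in\{q,K-q\}$ whenever $q<K/2$. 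This yields the key dichotomy: every carrier either keeps $\min\{Y_n,K-Y_n\}\ge r$ throughout its left tail (so, for $K>2r$, it is the unique carrier of Lemma \ref{>2r}(a), and for $K=2r$ it is the constant carrier $Y_n\equiv r$), or it settles into a strictly alternating tail of some constant value $q<r$.

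For part (a) I would analyse the carrier of tail value $r$. The boundary case $K=2r$ is immediate: there $\mathcal{C}_{J,K}^r\cap\mathcal{C}_{J,K}^{can}=\emptyset$ by Lemma \ref{le2r}(a), while the defining inequalities of $\mathcal{C}_{J,K}^{r,alt}$ in \eqref{cralt} reduce to $r\le\eta_n\le J-r$, which hold on the whole left tail because $\eta\in\mathcal{C}_{J,K}^r$; so both sides of the equivalence always hold. For $K>2r$, Lemma \ref{>2r}(b) reduces the claim to whether the unique tail-$r$ carrier is canonical. If it is not, it alternates with $Y_n\in\{r,K-r\}$, and feeding $Y_{n-1}\in\{r,K-r\}$ into the identity $Y_n=K-Y_{n-1}\Leftrightarrow K\le\eta_n+Y_{n-1}\le J$ produces exactly $\eta_n\ge K-r$ and $\eta_n\le J-K+r$ on the two parities, i.e.\ $\eta\in\mathcal{C}_{J,K}^{r,alt}$. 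Conversely, given $\eta\in\mathcal{C}_{J,K}^{r,alt}$, I would construct the alternating tail-$r$ carrier explicitly and verify consistency via Case 2(b); as it has $\liminf\min\{Y_n,K-Y_n\}=r$ it coincides with the carrier of Lemma \ref{>2r}(a) by uniqueness, and being non-canonical it gives $\eta\notin\mathcal{C}_{J,K}^{can}$.

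For part (b), with $r\ge 1$, the tail-$r$ carrier always exists (by Lemma \ref{>2r}(a) when $K>2r$, and as the constant carrier $Y_n\equiv r$ when $K=2r$), so non-uniqueness amounts to the existence of a \emph{second} carrier, which by the dichotomy must have an alternating tail of constant value $q\in\{0,\dots,r-1\}$. Running the same computation with $q$ in place of $r$, such a carrier exists if and only if $\eta_n\ge K-q$ and $\eta_n\le J-K+q$ hold on the two parities; the complementary bounds $\eta_n\le J-q$ and $\eta_n\ge q$ are automatic from $\eta\in\mathcal{C}_{J,K}^r$ and $q\le r$. Thus a tail-$q$ carrier exists iff $\eta\in\mathcal{C}_{J,K}^{q,alt}$, and using the nesting $\mathcal{C}_{J,K}^{0,alt}\subseteq\cdots\subseteq\mathcal{C}_{J,K}^{r-1,alt}$ the existence of some such $q$ is equivalent to $\eta\in\mathcal{C}_{J,K}^{r-1,alt}$, as required.

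I expect the genuine work to lie in the dichotomy of the first paragraph, namely in arguing rigorously that an \emph{arbitrary} carrier (not just the distinguished one) must either stay at height $\ge r$ or descend into a clean strictly-alternating tail, and then in the bookkeeping that follows: pinning down the correct parity shift $i$ in \eqref{cralt}, and checking that exactly the two ``outer'' inequalities survive as genuine constraints while the remaining two are automatic. The delicate points, where the generic alternation picture must be supplemented by a direct argument, are the degenerate value $q=K/2$ (when $K$ is even and $q=K-q$, so strict alternation need not hold) and the boundary $K=2r$, where Lemma \ref{>2r} is unavailable and one falls back on Lemma \ref{le2r} and the explicit constant carrier.
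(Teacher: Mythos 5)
Your proposal is correct and takes essentially the same route as the paper: the paper likewise reduces non-canonicity to the eventually-alternating condition $Y_n=K-Y_{n-1}$ via \eqref{condeq}, splits into the cases $K=2r$ (handled through Lemma \ref{le2r} and the constant carrier) and $K>2r$ (handled through Lemma \ref{>2r}), translates the alternation into the two surviving inequalities defining $\mathcal{C}_{J,K}^{q,alt}$ using $\eta\in\mathcal{C}_{J,K}^{r}$ to discard the automatic ones, and concludes part (b) from the nesting of the sets $\mathcal{C}_{J,K}^{q,alt}$. The dichotomy you re-derive from Lemma \ref{basiclmm} (including the degenerate value $K/2$) is precisely the content of the paper's Lemma \ref{chcan}, which the paper cites at the corresponding points.
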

\begin{proof}
Let $\eta \in \mathcal{C}_{J,K}^{r}$ and $K \ge 2r$. For the case $K=2r$, we already showed in Lemma \ref{le2r} that $\eta \notin \mathcal{C}_{J,K}^{can}$. On the other hand, since for suitably small $n$ we have that $K-r = r\leq \eta_n\leq J-r=J-K+r$, we have that $\eta \in \mathcal{C}_{J,K}^{r,alt}$. Hence part (a) holds. Next, suppose $K >2r$. By Lemma \ref{>2r}, $\eta \notin \mathcal{C}_{J,K}^{can}$ if and only if the carrier satisfying \eqref{equalsr} is not canonical. Moreover, by \eqref{condeq}, $\eta \notin \mathcal{C}_{J,K}^{can}$ is equivalent to $Y_n=K-Y_{n-1}$ for small $n$. Hence, $\eta \notin \mathcal{C}_{J,K}^{can}$ if and only if the carrier satisfying \eqref{equalsr} is given by, for some $i\in\{0,1\}$, $Y_{2n+i}=r$, $Y_{2n+1+i}=K-r$ for all small $n$. Since $Y$ is a non-canonical carrier for $\eta$ if and only if $K \le \eta_{n}+Y_{n-1} \le J$ for small $n$, we obtain that the condition $\eta \notin \mathcal{C}_{J,K}^{can}$ is equivalent to there existing an $i\in\{0,1\}$ such that $K-r \le \eta_{2n+i} \le J-r$ and $r \le \eta_{2n+1+i} \le J-K+r$ for small $n$. Since we are assuming $\eta \in \mathcal{C}_{J,K}^r$, the latter condition holds if and only if $\eta \in \mathcal{C}_{J,K}^{r,alt}$. Hence we have completed the proof of part (a).

We now look to prove (b). In the case $K=2r$, from the proof of Lemma \ref{le2r}, there exists a carrier satisfying $Y_n=r$ for small $n$. If there exists another carrier, then it must satisfy $\liminf_{n \to -\infty}\min\{Y_n,K-Y_n\} <r$. Furthermore, in the case $K >2r$, by Lemma \ref{>2r}, if $\eta \notin \mathcal{C}_{J,K}^{\exists !}$, then there must be a carrier such that $\liminf_{n \to -\infty}\min\{Y_n,K-Y_n\} <r$. Hence, in both cases $\eta \notin \mathcal{C}_{J,K}^{\exists !}$ is equivalent to there existing a carrier $Y$ such that $\liminf_{n \to -\infty}\min\{Y_n,K-Y_n\} <r$. If the latter condition holds for some carrier $Y$, then Lemma \ref{chcan} (and \eqref{condeq}) yields that $Y_n=K-Y_{n-1}$ for small $n$, and in particular there exists $q\in\{0,1,\dots,r-1\}$ such that,  for some $i\in\{0,1\}$, $Y_{2n+i}=q$ and $Y_{2n+1+i}=K-q$ hold for all small $n$. Such a $Y$ is a carrier for $\eta$ if and only if $K \le \eta_{n}+Y_{n-1} \le J$ for small $n$. Thus we arrive at the conclusion that $\eta \notin \mathcal{C}_{J,K}^{\exists !}$ if and only if there exists an $i\in\{0,1\}$ such that $K-q \le \eta_{2n+i} \le J-q$ and $q \le \eta_{2n+1+i} \le J-K+q$ for small $n$. Since $\eta \in \mathcal{C}_{J,K}^r$, this holds if and only if $\eta \in \mathcal{C}_{J,K}^{q,alt}$. Hence, $\eta \notin \mathcal{C}_{J,K}^{\exists !}$ is equivalent to $\eta \in \cup_{q=0}^{r-1} \mathcal{C}_{J,K}^{q,alt}$, but $\mathcal{C}_{J,K}^{q,alt}$ is increasing in $q$, and so this is equivalent to $\eta \in \mathcal{C}_{J,K}^{r-1,alt}$.
\end{proof}

\begin{prp}\label{ch1}
Suppose $J>K$. It then holds that:\\
(a) $\mathcal{C}_{J,K}^{\exists} =\mathcal{C}_{J,K}$;\\
(b) $\mathcal{C}_{J,K}^{\exists !}= \cup_{r =0}^{\lfloor\frac{K}{2}\rfloor} (\mathcal{C}_{J,K}^r \cap (\mathcal{C}_{J,K}^{r-1,alt})^c)$;\\
(c) $\mathcal{C}_{J,K}^{can}= \cup_{r =0}^{\lfloor\frac{K}{2}\rfloor} (\mathcal{C}_{J,K}^r \cap (\mathcal{C}_{J,K}^{r,alt})^c)$,\\
with convention that $\mathcal{C}_{J,K}^{-1,alt}=\emptyset$. In particular, $\mathcal{C}_{J,K}^{can} \subseteq  \mathcal{C}_{J,K}^{\exists !}$.
\end{prp}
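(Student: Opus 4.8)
The plan is to establish all three identities by intersecting the relevant set with each piece of the partition $\mathcal{C}_{J,K}=\left(\cup_{r=0}^{\lfloor J/2\rfloor}\mathcal{C}_{J,K}^{r}\right)\cup\mathcal{C}_{J,K}^{\infty}$, determining the contribution of each piece from the preparatory lemmas, and then reading off the concluding inclusion by comparing the formulas in (b) and (c). All the real work has already been done in Lemmas \ref{c0lem}, \ref{le2r}, \ref{>2r} and \ref{chunique}; the proposition is essentially their assembly, so the task is to organise the case analysis in $r$ correctly.

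For (a), I would observe that $\mathcal{C}_{J,K}^{\exists}\subseteq\mathcal{C}_{J,K}$ trivially, and for the reverse inclusion split each piece according to whether $K\le 2r$ or $K>2r$. In the first case Lemma \ref{le2r}(a) gives $\mathcal{C}_{J,K}^{r}\subseteq\mathcal{C}_{J,K}^{\exists}$ (this also covers the piece $\mathcal{C}_{J,K}^{\infty}$, taking $r=\infty$), and in the second case Lemma \ref{>2r}(a) gives the same. Since these exhaust the partition, $\mathcal{C}_{J,K}^{\exists}=\mathcal{C}_{J,K}$.

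For (b) and (c) I would intersect $\mathcal{C}_{J,K}^{\exists !}$, respectively $\mathcal{C}_{J,K}^{can}$, with each piece of the partition. First note that $r>\lfloor K/2\rfloor$ is equivalent to $K<2r$, and $r=\infty$ likewise gives $K<2r$; so for all such $r$ Lemma \ref{le2r}(b) yields $\mathcal{C}_{J,K}^{r}\cap\mathcal{C}_{J,K}^{\exists !}=\emptyset$ and Lemma \ref{le2r}(a) yields $\mathcal{C}_{J,K}^{r}\cap\mathcal{C}_{J,K}^{can}=\emptyset$, so only the pieces with $r\le\lfloor K/2\rfloor$ survive. These satisfy $K\ge 2r$, so Lemma \ref{chunique} applies: part (a) of that lemma gives $\mathcal{C}_{J,K}^{r}\cap\mathcal{C}_{J,K}^{can}=\mathcal{C}_{J,K}^{r}\cap(\mathcal{C}_{J,K}^{r,alt})^{c}$, proving (c), while for $r\ge 1$ part (b) gives $\mathcal{C}_{J,K}^{r}\cap\mathcal{C}_{J,K}^{\exists !}=\mathcal{C}_{J,K}^{r}\cap(\mathcal{C}_{J,K}^{r-1,alt})^{c}$. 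The remaining $r=0$ term of (b) is handled separately by Lemma \ref{c0lem}, which gives $\mathcal{C}_{J,K}^{0}\subseteq\mathcal{C}_{J,K}^{\exists !}$, matching $\mathcal{C}_{J,K}^{0}\cap(\mathcal{C}_{J,K}^{-1,alt})^{c}=\mathcal{C}_{J,K}^{0}$ under the stated convention $\mathcal{C}_{J,K}^{-1,alt}=\emptyset$. Taking the union over the surviving $r$ then yields (b).

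Finally, the inclusion $\mathcal{C}_{J,K}^{can}\subseteq\mathcal{C}_{J,K}^{\exists !}$ drops out by comparing (b) and (c) term by term: since $\mathcal{C}_{J,K}^{0,alt}\subseteq\cdots\subseteq\mathcal{C}_{J,K}^{K,alt}$, we have $(\mathcal{C}_{J,K}^{r,alt})^{c}\subseteq(\mathcal{C}_{J,K}^{r-1,alt})^{c}$, so each summand of the union in (c) is contained in the corresponding summand of (b). I expect the only delicate point to be the boundary $K=2r$ (which arises when $K$ is even and $r=\lfloor K/2\rfloor$): here Lemma \ref{chunique}(a) still applies since $K\ge 2r$, and one must check that this is consistent with Lemma \ref{le2r}(a), i.e.\ that every $\eta\in\mathcal{C}_{J,K}^{r}$ lies in $\mathcal{C}_{J,K}^{r,alt}$ so that this term of (c) is empty. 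Keeping the index ranges $\{K\le 2r\}$ and $\{K>2r\}$ aligned with the cut-off $\lfloor K/2\rfloor$ used in the unions is the main bookkeeping concern rather than any genuine difficulty.
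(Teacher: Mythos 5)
Your proposal is correct and follows essentially the same route as the paper: decompose over the partition $\mathcal{C}_{J,K}=\left(\cup_{r=0}^{\lfloor J/2\rfloor}\mathcal{C}_{J,K}^{r}\right)\cup\mathcal{C}_{J,K}^{\infty}$, handle $r=0$ via Lemma \ref{c0lem}, $1\le r\le\lfloor\frac{K}{2}\rfloor$ via Lemma \ref{chunique}, and $K<2r$ (including the $\infty$ piece) via Lemma \ref{le2r}, with part (a) coming from Lemmas \ref{le2r} and \ref{>2r}. Your term-by-term comparison for the final inclusion, using the monotonicity of $\mathcal{C}_{J,K}^{r,alt}$ in $r$, and your flagged consistency check at $K=2r$ are both sound (the latter is already verified inside the proof of Lemma \ref{chunique}).
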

\begin{proof} Part (a) follows from Lemmas \ref{le2r} and \ref{>2r}. For part (b), first note that
\[\mathcal{C}_{J,K}^{\exists !}=\left(\cup_{r=0}^{\lfloor\frac{J}{2}\rfloor}
\left(\mathcal{C}_{J,K}^{\exists !}\cap
\mathcal{C}_{J,K}^{r}\right)\right)\cup\left(\mathcal{C}_{J,K}^{\exists !}\cap \mathcal{C}^\infty_{J,K}\right).\]
From Lemma \ref{c0lem}, $\mathcal{C}_{J,K}^{\exists !} \cap \mathcal{C}_{J,K}^0=\mathcal{C}_{J,K}^0$; from Lemma \ref{chunique},
$\mathcal{C}_{J,K}^{\exists !} \cap \mathcal{C}_{J,K}^r=\mathcal{C}_{J,K}^r \cap (\mathcal{C}_{J,K}^{r-1,alt})^c$ for $1\leq r \leq \lfloor\frac{K}{2}\rfloor$; and from Lemma \ref{le2r}, $\mathcal{C}_{J,K}^{\exists !} \cap \mathcal{C}_{J,K}^r=\emptyset$ for $r \ge \lfloor\frac{K}{2}\rfloor+1$. This establishes (b), and part (c) is shown in a similar way.
\end{proof}

\begin{proof}[Proof of Lemma \ref{unique} in the case $J>K$.]
Since $\mathcal{C}_{J,K}^{can} \subseteq  \mathcal{C}_{J,K}^{\exists !}$, the result is clear.
\end{proof}

\begin{proof}[Proof of Lemma \ref{keylemma} in the case $J>K$.]
(a) Suppose $\eta \in \mathcal{C}_{J,K}^{\exists}$ and $Y$ is a carrier for $\eta$ but not canonical. By \eqref{condeq}, there exists a $q\in\{0,1,\dots,\lfloor\frac{K}{2}\rfloor\}$ and $i\in\{0,1\}$ such that $Y_{2n+i}=q$ and $Y_{2n+1+i}=K-q$ for small $n$. Moreover, $K \le \eta_{n}+Y_{n-1} \le J$ for small $n$, and, by \eqref{consmass}, this implies $K \le T^Y\eta_{n}+Y_{n} \le J$. Hence $K-q \le T^Y\eta_{2n+i} \le J-q$ and $q \le T^Y\eta_{2n+1+i} \le J-K+q$ for small $n$. Since $q \le K-q$ and $J-K+q \le J-q$ by assumption, this implies $T^Y\eta \in \mathcal{C}_{J,K}^{q} \cap \mathcal{C}_{J,K}^{q,alt}$. Thus, by Lemma \ref{chunique}, $T^Y\eta \notin \mathcal{C}_{J,K}^{can}$.

(b) For $\eta \in \mathcal{C}_{J,K}$, $q\in\{0,1,\dots,\lfloor\frac{K}{2}\rfloor\}$, $i\in\{0,1\}$, let
\[B_{q,i}\equiv B_{q,i,\eta}:=\sup\left\{N \in \mathbb{Z}:\:\begin{array}{c}
                                                              \eta_{2n+i}\in[q,J-K+q],\:  \eta_{2n+1+i}\in[K-q, J-q], \\
                                                              \mbox{whenever }2n+i,2n+1+i\le N
                                                            \end{array}\right\}.\]
Note that $B_{q,i} >-\infty$ if and only if $\eta$ has a carrier such that $Y_{2n+i}=q$, $Y_{2n+1+i}=K-q$ for $2n+i,2n+1+i \le B_{q,i}$. We further introduce, for any $c\in\{0,1,\dots,J\}$ and $i\in\{0,1\}$,
\[B_{c,i}^{alt}\equiv B_{c,i,\eta}^{alt}:=\sup\left\{N \in \mathbb{Z}:\: \eta_{2n+i} \le J-c,\:c \le \eta_{2n+1+i},\:\forall 2n+i,2n+1+i \le N\right\}.\]
Note that $B_{q,i}=\min\{B_{K-q,i}^{alt}, B_{q,1-i}^{alt}\}$ for  $q\in\{0,1,\dots,\lfloor\frac{K}{2}\rfloor\}$ and $i\in\{0,1\}$.

Suppose now $\eta\in\mathcal{C}_{J,K}$ has a carrier $Y$ that is not canonical. There then exists some $q \in\{0,1,\dots,\lfloor\frac{K}{2}\rfloor\}$ and $i \in \{0,1\}$ such that $B_{q,i} >-\infty$. For such $\eta$, let
\begin{equation}\label{beta}
B_{\eta}:=\inf\left\{ B_{c,i}^{alt}:\:c\in\{0,1,\dots,J\},\: i\in\{0,1\},\:B_{c,i}^{alt} > -\infty\right\}.
\end{equation}
Since we are assuming $J<\infty$, we have that $B_{\eta}>-\infty$. Moreover, note that $B_{Y,\eta} \ge B_{\eta}$ for any carrier $Y$ for $\eta$. Thus to establish the result it will be sufficient to check that $B_{T^Y\eta} \ge B_{\eta}$; indeed, it then holds that $\inf_{i\in\mathbb{N}}B_{Y^{(i)}}\ge \inf_{i\in\mathbb{N}}B_{\eta^{(i)}} \ge B_{\eta}>-\infty$. To prove $B_{T^Y\eta} \ge B_{\eta}$, we let $\tilde{B}_{c,i}^{alt}:=B_{c,i,T^Y\eta}^{alt}$, and will show that either $\tilde{B}_{c,i}^{alt}=-\infty$ or $\tilde{B}_{c,i}^{alt} \ge B_{\eta}$ for any $c,i$. (NB.\ Since we assume $T^Y\eta$ has a carrier and, by (a), it is not a canonical carrier, we know that there exists some $q\in\{0,1,\dots,\lfloor \frac{K}{2}\rfloor\}$ and $i \in \{0,1\}$ such that $\tilde{B}_{q,i} >-\infty$.) Now, since $Y$ is not canonical, there exists $q\in\{0,1,\dots,\lfloor\frac{K}{2}\rfloor\}$ and $i\in\{0,1\}$ such that $Y_{2n+i}=q$, $Y_{2n+1+i}=K-q$ for $2n+i,2n+1+i\le B_{q,i}$. Then, $T^{Y}\eta_{2n+i}=\eta_{2n+i}+K-2q$ and $T^{Y}\eta_{2n+1+i}=\eta_{2n+1+i}-K+2q$. Hence
\begin{align*}
\lefteqn{\min\left\{\tilde{B}_{c,i}^{alt},B_{q,i}\right\} =\min\left\{\sup\left\{N \in \mathbb{Z}:\:\begin{array}{c}
T^Y\eta_{2n+i} \le J-c,\: c \le T^Y\eta_{2n+1+i},\\ \forall 2n+i,2n+1+i\le N\end{array}\right\},B_{q,i}\right\}}\\
&=\min\left\{\sup\left\{N \in \mathbb{Z}:\:\begin{array}{c}
                                   \eta_{2n+i} \le J-(c+K-2q),\:c+K-2q \le \eta_{2n+1+i},\\
                                   \forall 2n+i,2n+1+i \le N
                                \end{array}\right\},B_{q,i}\right\},
\end{align*}
i.e.\ $\min\{\tilde{B}_{c,i}^{alt},B_{q,i}\}=\min\{B_{c+K-2q,i}^{alt},B_{q,i}\}$. It follows that if $c+K-2q \le J$ and $B_{c+K-2q,i}^{alt}>-\infty$, then $\tilde{B}_{c,i}^{alt}\geq B_{\eta}$, and otherwise, $\tilde{B}_{c,i}^{alt}=-\infty$. Thus the proof is complete.
\end{proof}

We next show that in this case $\mathcal{C}_{J,K}^{rev}=\mathcal{C}_{J,K}^{inv}$.

\begin{lmm}\label{12345} Suppose $J>K$. For $\eta \in \mathcal{C}_{J,K}^{can}$, $T\eta \in \mathcal{C}_{J,K}^{can}$. Moreover, $\mathcal{C}_{J,K}^{rev}=\mathcal{C}_{J,K}^{inv}$.
\end{lmm}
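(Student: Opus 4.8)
The plan is to deduce the second assertion, $\mathcal{C}_{J,K}^{rev}=\mathcal{C}_{J,K}^{inv}$, from the first, $T(\mathcal{C}_{J,K}^{can})\subseteq\mathcal{C}_{J,K}^{can}$, together with Proposition \ref{tinvprp}; the first assertion is the genuine content. For the second, the inclusion $\mathcal{C}_{J,K}^{inv}\subseteq\mathcal{C}_{J,K}^{rev}$ is immediate from \eqref{cinvdef} (take $t=0$), so it suffices to show that $\mathcal{C}_{J,K}^{rev}$ is invariant under both $T$ and $T^{-1}$ and then induct. Given $\eta\in\mathcal{C}_{J,K}^{rev}$, one has $\eta,R\eta\in\mathcal{C}_{J,K}^{can}$; the first assertion gives $T\eta\in\mathcal{C}_{J,K}^{can}$, while Proposition \ref{tinvprp} gives $RT\eta\in\mathcal{C}_{J,K}^{can}$, so $T\eta\in\mathcal{C}_{J,K}^{rev}$. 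Similarly, Proposition \ref{tinvprp} gives $T^{-1}\eta\in\mathcal{C}_{J,K}^{can}$, and, using $R^2=\mathrm{id}$ and $T^{-1}=RTR$ from \eqref{tinv}, we have $RT^{-1}\eta=TR\eta$, which lies in $\mathcal{C}_{J,K}^{can}$ by applying the first assertion to $R\eta\in\mathcal{C}_{J,K}^{can}$; hence $T^{-1}\eta\in\mathcal{C}_{J,K}^{rev}$. Iterating yields $T^t\eta\in\mathcal{C}_{J,K}^{rev}$ for all $t\in\mathbb{Z}$, i.e.\ $\eta\in\mathcal{C}_{J,K}^{inv}$, completing the second assertion.

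It therefore remains to prove $T(\mathcal{C}_{J,K}^{can})\subseteq\mathcal{C}_{J,K}^{can}$. Fix $\eta\in\mathcal{C}_{J,K}^{can}$ with its unique (by Lemma \ref{unique}) canonical carrier $W$, so that $T\eta_n=F^{(1)}_{J,K}(\eta_n,W_{n-1})$. By Proposition \ref{ch1}(a), $\mathcal{C}_{J,K}^{\exists}=\mathcal{C}_{J,K}$, so $T\eta$ certainly admits \emph{some} carrier; the issue is purely whether it admits a \emph{canonical} one, i.e.\ (recalling Definition \ref{cancarr}(b) and \eqref{condeq}) a carrier $V$ with $B_V=-\infty$. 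The plan is to reduce to the class-zero case via the reducibility property \eqref{reducibility}: Proposition \ref{ch1}(c) shows $\eta\in\mathcal{C}_{J,K}^r$ for some $r\leq\lfloor K/2\rfloor$, and Lemma \ref{le2r} forces $K>2r$; then, for $n$ sufficiently negative, $\eta_n\in\{r,\dots,J-r\}$ and $W_n\in\{r,\dots,K-r\}$ (the latter by \eqref{equalsr}), so $(\eta_n-r,W_n-r)$ defines a BBS($J-2r,K-2r$) configuration and carrier in class zero, with $T\eta_n-r=F^{(1)}_{J-2r,K-2r}(\eta_n-r,W_{n-1}-r)$. Since the degeneracy condition \eqref{condeq} is invariant under this shift, canonicity is preserved, and we are reduced to proving the claim for $\eta\in\mathcal{C}_{J,K}^{0}\cap\mathcal{C}_{J,K}^{can}=\mathcal{C}_{J,K}^{0}\cap(\mathcal{C}_{J,K}^{0,alt})^c$. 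In this case $\eta_n\in\{0,J\}$ for infinitely many $n\to-\infty$, where Lemma \ref{0lmm} pins $W_n\in\{0,K\}$ and hence determines $T\eta_n$ explicitly; combined with $B_W=-\infty$ and the conservation law \eqref{consmass}, $T\eta_n+W_n=\eta_n+W_{n-1}$, this locates infinitely many sites at which the update is non-degenerate. One then constructs a carrier $V$ for $T\eta$ (in class zero it is forced and unique, by Lemma \ref{c0lem}), and the goal is to verify $B_V=-\infty$.

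The main obstacle is precisely this final verification, equivalently the statement that $T\eta\notin\mathcal{C}_{J,K}^{r',alt}$ for its class $r'$: that is, that the alternating degeneracy \eqref{condeq}, absent for $\eta$ because $B_W=-\infty$, cannot be created by a single application of $T$. The subtlety is that the forward carrier $V$ of $T\eta$ is a different object from $W$ (which plays the role of the \emph{backward} carrier of $T\eta$), so the non-degeneracy of $\eta$ does not transfer to $V$ by any formal manipulation; in particular one cannot simply invoke Proposition \ref{tinvprp}, since $\eta$ is only assumed to lie in $\mathcal{C}_{J,K}^{can}$ and not in $\mathcal{C}_{J,K}^{rev}$. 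I would first show that $T$ preserves the class, so that $r'=r\leq\lfloor K/2\rfloor$, and then argue by contradiction: using the characterisation \eqref{cralt} and the explicit case analysis of $F_{J,K}$ summarised in Figure \ref{diagrams}, I would show that a tail on which $T\eta$ exhibits the alternating pattern of $\mathcal{C}_{J,K}^{r,alt}$ forces, through $T\eta_n=F^{(1)}_{J,K}(\eta_n,W_{n-1})$ and the carrier recursion for $W$, the configuration $\eta$ itself onto such a tail, i.e.\ $\eta\in\mathcal{C}_{J,K}^{r,alt}$, contradicting $\eta\in\mathcal{C}_{J,K}^{can}$ via Lemma \ref{chunique}. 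For $J<\infty$ the case $\eta_n=J$ occurring infinitely often can be folded into the case $\eta_n=0$ by the empty box-ball duality \eqref{spaceball} (Proposition \ref{bsdual}), while for $J=\infty$ only the value $0$ arises; note that when $\infty>J>K$ the same contradiction can instead be organised through Lemma \ref{keylemma}(a),(b) applied to the forward orbit of $T\eta$, whereas the case $J=\infty$, in which Lemma \ref{keylemma}(b) is unavailable, must be handled through the explicit characterisation of $\mathcal{C}_{J,K}^{can}$ in Proposition \ref{ch1}.
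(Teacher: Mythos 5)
Your skeleton matches the paper's exactly: the second assertion does follow from the first together with Proposition \ref{tinvprp} just as you argue, and the reduction to the class-zero case via \eqref{reducibility} (with Proposition \ref{ch1}(c) and Lemma \ref{le2r} forcing $K>2r$) is also how the paper proceeds. The problem is that the entire content of the lemma lives in the step you leave as ``I would show that\dots'': namely that for $\eta\in\mathcal{C}_{J,K}^{0}\cap(\mathcal{C}_{J,K}^{0,alt})^c$ one has both $T\eta\in\mathcal{C}_{J,K}^{0}$ and $T\eta\notin\mathcal{C}_{J,K}^{0,alt}$, and neither half is a routine case-check of Figure \ref{diagrams}. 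For class preservation, the only sites where one can guarantee $T\eta_n\in\{0,J\}$ are ``break'' sites at which strict Case 1 (resp.\ strict Case 3) occurs with incoming carrier $0$ (resp.\ $K$) — in strict Case 1 one only gets $T\eta_n=W_{n-1}<K$, not $T\eta_n=0$ — so one must anchor at a site with $\eta_n\in\{0,J\}$ (where $W_n\in\{0,K\}$ by Lemma \ref{0lmm}) and track the alternating evolution \eqref{sequence} until canonicity forces a break. Your backward-propagation claim then runs into the same issue: the pattern $T\eta_{2n+i}\ge K$, $T\eta_{2n+1+i}\le J-K$ only rules out, site by site, strict Case 1 at even-type sites and strict Case 3 at odd-type sites; since $F^{(1)}_{J,K}$ is far from injective on the relevant ranges (in Case 2(b), $T\eta_n=\eta_n+2W_{n-1}-K$), this does not pin $\eta$ onto an alternating tail without exactly the same anchoring-and-parity bookkeeping, including what is in fact the hardest sub-case of the paper's proof (consecutive zeros of $\eta$ at even distances), which requires analysing the sequence \eqref{sequence} reversed via the involution \eqref{involution}. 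So what you have is a correct outer reduction plus a restatement of the remaining goal, not an argument for it.

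A secondary point: your parenthetical suggestion that for $\infty>J>K$ the contradiction ``can instead be organised through Lemma \ref{keylemma}(a),(b) applied to the forward orbit of $T\eta$'' does not work as stated. If $T\eta\notin\mathcal{C}_{J,K}^{can}$, Lemma \ref{keylemma} controls the forward iterates of $T\eta$ and yields $\inf_i B_{Y^{(i)}}>-\infty$ along that orbit, but this is a statement about $T\eta, T^{Y^{(1)}}T\eta,\dots$ and places no constraint on $\eta$ itself: the lemma propagates degeneracy forwards, never backwards. In the paper it produces contradictions only when combined with extra two-sided or stationarity structure — condition \eqref{e2} in the proof of Theorem \ref{mainthm1}, or shift-invariance in Proposition \ref{p33} — which is unavailable here, since $\eta$ is assumed only to lie in $\mathcal{C}_{J,K}^{can}$ and not in $\mathcal{C}_{J,K}^{rev}$. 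So that route cannot substitute for the combinatorial argument either.
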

\begin{proof} We will start by showing that if $\eta \in \mathcal{C}_{J,K}^{can} \cap \mathcal{C}_{J,K}^0$, then $T\eta \in \mathcal{C}_{J,K}^{can} \cap \mathcal{C}_{J,K}^0$. Suppose $\eta \in \mathcal{C}_{J,K}^{can} \cap \mathcal{C}_{J,K}^0$. It is then the case that $\eta_n \in \{0,J\}$ infinitely often as $n \to -\infty$. We will assume that $\eta_n=0$ infinitely often as $n \to -\infty$, and we simply note that the case $\eta_n=J$ infinitely often can be obtained from this by considering the empty box-ball duality of \eqref{spaceball}. If there exists a decreasing subsequence $(n_k)_{k \in \mathbb{N}}$ of integers such that $n_k \to -\infty$, and $n_k-n_{k+1}$ is odd and $\eta_{n_k}=0$ for all $k$, then, for each $k$, there exists $n_{k+1} < \ell_k  \le n_k$ such that either $T\eta_{\ell_k}=0$ and $\ell_k-n_{k+1}$ is odd, or $T\eta_{\ell_k}=J$ and $\ell_k-n_{k+1}$ is even. Indeed, from $n_{k+1}$, the carrier alternates between $0$ and $K$ as follows (drawing the dynamics as in Figure \ref{diagrams}):
\begin{equation}\label{sequence}
\xymatrix@C-15pt@R-15pt{          & b                       &         &a_1-K           &   &a_2+K&\\
                           b \ar[rr]&                         & 0\ar[rr]&                & K\ar[rr] &&      \dots,\\
                                    & \eta_{n_{k+1}}=0 \ar[uu]&         &a_1\geq K\ar[uu]&   &a_2\leq J-K\ar[uu]& }
\end{equation}
with the sequence being ended by one of the following possibilities:
\[\xymatrix@C-15pt@R-15pt{ & 0 & &&  & J & \\
            0 \ar[rr]& & a,&&K \ar[rr]& & a+K-J.\\
             & a<K \ar[uu]&&&& a>J-K \ar[uu]&}\]
Observe that $\ell_k-\ell_{k+1}$ is odd if $\eta_{\ell_{k+1}}=\eta_{\ell_k}$, and $\ell_k-\ell_{k+1}$ is even if $\eta_{\ell_{k+1}} \neq \eta_{\ell_k}$. Hence $T\eta$ satisfies $\eta_n \in \{0,J\}$ infinitely often as $n\to-\infty$, and  $T\eta_n \notin \mathcal{C}_{J,K}^{0,alt}$. Thus, by Proposition \ref{ch1}, $T\eta \in \mathcal{C}_{J,K}^{can} \cap \mathcal{C}_{J,K}^0$. Next, if such a subsequence $(n_k)_{k \in \mathbb{N}}$ does not exist, then there exists $N$ such that for $n,m \le N$, $\eta_n=\eta_m=0$ implies $n-m$ is even. We can therefore find a decreasing sequence $(n_k)_{k \in \mathbb{N}}$ of integers such that $n_k \to -\infty$, and, for all $k$, $n_k-n_{k+1}$ is even, $\eta_{n_k}=0$, and $\eta_{\ell} \neq 0$ for any $n_{k+1} < \ell <n_k$. Since $\eta \notin \mathcal{C}_{J,K}^{0,alt}$, we know the sequence sketched at \eqref{sequence} terminates prior to $n_k$ infinitely often as $k\to\infty$, and so the following happens infinitely often as $k \to \infty$: there exists $1 \le m \le \frac{n_k-n_{k+1}}{2}$ such that either $\eta_{2m-1+n_{k+1}} < K$, or $\eta_{2m+n_{k+1}} > J-K$. For such $k$, there exists an integer $\ell_k\in(n_{k+1},n_k]$ such that $T\eta_{\ell}\not\in \{0,J\}$ for all $n_{k+1} <\ell<\ell_k$, and for which $T\eta_{\ell_k}=0$ if $\ell_k-n_{k+1}$ is odd, and $T\eta_{\ell_k}=J$ if $\ell_k-n_{k+1}$ is even. Moreover, if $T\eta_{n_k}=K$, then by considering the picture at \eqref{sequence} reversed using the involution property of \eqref{involution} we see that the relevant part of the system looks like:
\[\xymatrix@C-15pt@R-15pt{          & a_2\geq K                      &         &a_1\leq J-K          &   &K&\\
                           \dots \ar[rr]&                         & 0\ar[rr]&                & K\ar[rr] &&      0.\\
                                    & a_2-K \ar[uu]&         &a_1+K\ar[uu]&   &\eta_{n_{k}}=0\ar[uu]& }\]
Since $T\eta_{n_k}\leq K$, it follows that, if $T\eta_{\ell_k}=0$, then there exists $\tilde{\ell}_k\in(\ell_k,n_k]$ such that either $T\eta_{\tilde{\ell}_k} <K$ and $\tilde{\ell}_k-\ell_k$ is odd, or $T\eta_{\tilde{\ell}_k} > J-K$ and $\tilde{\ell}_k-\ell_k$ is even. Similarly, if $T\eta_{\ell_k}=J$, then there exists $\tilde{\ell}_k\in(\ell_k,n_k]$ such that either $T\eta_{\tilde{\ell}_k} >J-K$ and $\tilde{\ell}_k-\ell_k$ is odd, or $T\eta_{\tilde{\ell}_k} < K$ and $\tilde{\ell}_k-\ell_k$ is even. Therefore $T\eta \notin \mathcal{C}_{J,K}^{0,alt}$, and so $T\eta \in \mathcal{C}_{J,K}^{can}$.

Now, suppose $\eta \in \mathcal{C}_{J,K}^{can} \cap \mathcal{C}_{J,K}^r$ for some $r<\frac{K}{2}$. From the reducibility property \eqref{reducibility}, it is then easily checked that $\tilde{\eta}:=\min\{\max\{0,\eta-r\},J-2r\}$ satisfies $\tilde{\eta}\in \mathcal{C}_{J-2r,K-2r}^{can} \cap \mathcal{C}_{J-2r,K-2r}^0$. Hence $T_{J-2r,K-2r}\tilde{\eta} \in \mathcal{C}_{J-2r,K-2r}^{can} \cap \mathcal{C}_{J-2r,K-2r}^0$ by the first part of the proof. Since $\tilde{\eta}_n=\eta_n$ for small $n$, it follows that $T_{J,K}\eta=T_{J-2r,K-2r}\tilde{\eta}+r$ satisfies $T\eta \in \mathcal{C}_{J,K}^{can} \cap \mathcal{C}_{J,K}^r$. Furthermore, recall Lemma \ref{le2r} yields that for $r=\frac{K}{2}$, $\mathcal{C}_{J,K}^{can} \cap \mathcal{C}_{J,K}^r=\emptyset$. Hence no configuration satisfying $\eta\in \mathcal{C}_{J,K}^{can} \cap \mathcal{C}_{J,K}^r$ exists in this case, and so the proof of the first claim is complete.

The claim that $\mathcal{C}_{J,K}^{rev}=\mathcal{C}_{J,K}^{inv}$ follows by applying the first part of the lemma in conjunction with Proposition \ref{tinvprp}.
\end{proof}

To conclude the subsection, we check a basic property that holds for elements of $\mathcal{C}^{inv}_{J,\infty}\setminus \tilde{\mathcal{C}}^{inv}_{J,\infty}$, which will be applied in the proof of Lemma \ref{pinvlmm} below. The notation $B_{\cdot}$ in the statement is defined at \eqref{beta}.

\begin{lmm}\label{lmmB}
Suppose $J <\infty$. If $\eta \in \mathcal{C}^{inv}_{J,\infty}\setminus \tilde{\mathcal{C}}^{inv}_{J,\infty}$, then either $B_{\mathcal{D}_{J,\infty}(\eta)}=-\infty$ or $B_{R \mathcal{D}_{J,\infty}(\eta)}=-\infty$ holds.
\end{lmm}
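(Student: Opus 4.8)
The plan is to transfer everything to the dual model BBS($\infty,J$), in which $\zeta:=\mathcal{D}_{J,\infty}(\eta)$ is a configuration with box capacity $\infty$ and carrier capacity $J$, so that $\zeta\in\mathcal{C}_{\infty,J}$ falls under the present Case 1 ($J>K$) and all the machinery of this subsection — in particular Proposition \ref{ch1}, Lemma \ref{12345}, and the boundary $B_{\cdot}$ of \eqref{beta} — becomes available. First I would record, exactly as in the proof of Theorem \ref{mainthm1}(a), that the duality relation \eqref{dualityrel} forces $V^{(n)}:=(T^{t+1}\eta_{n+1})_{t\in\mathbb{Z}}$ (which takes values in $\{0,\dots,J\}$) to be a BBS($\infty,J$) carrier for $\zeta^{(n)}:=(T^tW_n)_{t\in\mathbb{Z}}$, with $T_{\infty,J}^{V^{(n)}}\zeta^{(n)}=\zeta^{(n+1)}$, and that the canonicity of the carriers $T^tW$ of $T^t\eta$ and of the reversed configurations (available since $\eta\in\mathcal{C}_{J,\infty}^{inv}\subseteq\mathcal{C}_{J,\infty}^{rev}$) yields \eqref{e1} and \eqref{e2} for every $t$. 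The only reason the argument of Theorem \ref{mainthm1}(a) does not close in the present regime is that Lemma \ref{keylemma}(b) is unavailable for BBS($\infty,J$) — its hypotheses ask for $\infty>J>K$, not $\infty=J'>K'=J$ — and the whole content of the present lemma is to quantify this gap.

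Second, I would reduce the statement to a question about canonical carriers in the dual model. Since $\infty>J$, Lemma \ref{12345} gives $\mathcal{C}_{\infty,J}^{rev}=\mathcal{C}_{\infty,J}^{inv}$, while the bijection of Theorem \ref{mainthm1}(b), together with \eqref{djkident} and the identity $\mathcal{D}_{\infty,J}\circ\mathcal{D}_{J,\infty}=\theta\circ T$ (valid whenever $\zeta\in\mathcal{C}^{can}_{\infty,J}$), shows that $\eta\in\tilde{\mathcal{C}}_{J,\infty}^{inv}$ precisely when $\zeta\in\mathcal{C}_{\infty,J}^{inv}$. Hence $\eta\notin\tilde{\mathcal{C}}_{J,\infty}^{inv}$ is equivalent to $\zeta\notin\mathcal{C}_{\infty,J}^{can}$ or $R\zeta\notin\mathcal{C}_{\infty,J}^{can}$; by the reversal symmetry it suffices to treat the first alternative and show it implies $B_\zeta=-\infty$ (the second being identical and giving $B_{R\zeta}=-\infty$). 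Because $\mathcal{C}^{can}_{\infty,J}\subseteq\mathcal{C}^{\exists !}_{\infty,J}$ (Proposition \ref{ch1}), the carrier $V^{(0)}$ already in hand is canonical exactly when $\zeta\in\mathcal{C}^{can}_{\infty,J}$; so in the case at hand $V^{(0)}$ is non-canonical, i.e.\ $\zeta_t+V^{(0)}_{t-1}\geq J$ for all sufficiently negative $t$, and Lemma \ref{keylemma}(a) propagates this spatially to make every $V^{(n)}$, $n\geq0$, non-canonical.

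The main obstacle, and the heart of the matter, is the final step: showing that this persistent carrier degeneracy forces the current through the origin to be unbounded, i.e.\ $B_\zeta=-\infty$. Writing out \eqref{beta} for $\zeta\in\mathcal{C}_{\infty,J}$, one checks that $B_\zeta>-\infty$ is equivalent to $\liminf_{t\to-\infty}\zeta_t<\infty$ along \emph{both} parity sublattices, so the target is to produce a parity class along which $\zeta_t=T^tW_0\to\infty$. The degeneracy $V^{(n)}_t=J-V^{(n)}_{t-1}$ combined with the conservation law \eqref{consmass}, which here reads $\zeta^{(n+1)}_t=\zeta^{(n)}_t+2V^{(n)}_{t-1}-J$, shows that each non-canonical site traps a fixed surplus of mass; the difficulty — genuinely new for $K=\infty$ — is that, without Lemma \ref{keylemma}(b), the thresholds below which $V^{(n)}$ is degenerate need not be uniform in $n$, so this surplus cannot simply be read off at a single time. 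The plan is therefore to play the absence of a spatial degeneracy window against \eqref{e1}–\eqref{e2}: since the trapped surplus can neither be transported in from spatial infinity nor dissipated consistently with the canonicity of $T^tW$ for every $t$, it must accumulate without bound along a parity class as $t\to-\infty$ (or, in the reversed alternative, as $t\to+\infty$), which is exactly $B_\zeta=-\infty$ (respectively $B_{R\zeta}=-\infty$). Making this accumulation estimate precise — in particular controlling how the degeneracy thresholds recede as $n\to\infty$ — is where the real work lies.
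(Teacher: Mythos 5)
Your first two paragraphs correctly reproduce the paper's reduction: by the definition of $\tilde{\mathcal{C}}^{inv}_{J,\infty}$ and Lemma \ref{12345}, the hypothesis gives $\zeta:=\mathcal{D}_{J,\infty}(\eta)\notin\mathcal{C}^{rev}_{\infty,J}$, so either $\zeta$ or $R\zeta$ admits no canonical BBS($\infty$,$J$) carrier, and by symmetry it suffices to show that the first alternative forces $B_\zeta=-\infty$. The problem is your third paragraph: the implication ``$\zeta\notin\mathcal{C}^{can}_{\infty,J}$ implies $B_\zeta=-\infty$'' \emph{is} the entire content of the lemma, and you do not prove it --- you explicitly defer it (``making this accumulation estimate precise \dots is where the real work lies''). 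Moreover, the mechanism you sketch (a trapped mass surplus accumulating without bound along a parity class) is not the one that establishes the implication, and it is unclear it can be pushed through directly, precisely because of the non-uniformity in $n$ of the degeneracy thresholds that you yourself identify as the obstacle.

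The missing idea is to argue by contradiction and observe that the hypothesis $B_\zeta>-\infty$ is exactly what replaces the finiteness of the box capacity in Lemma \ref{keylemma}(b). Inspecting that proof (Case 1), the assumption that the box capacity is finite enters \emph{only} to guarantee $B_\eta>-\infty$; the induction $B_{T^Y\eta}\geq B_\eta$, and hence $\inf_i B_{Y^{(i)}}\geq B_\eta$, is valid verbatim for BBS($\infty$,$J$). So suppose $B_\zeta>-\infty$. Since every $V^{(n)}=(T^{t+1}\eta_{n+1})_{t\in\mathbb{Z}}$ is non-canonical (your Lemma \ref{keylemma}(a) step), this yields a \emph{uniform} threshold $A:=\inf_{n\geq 0}B_{V^{(n)}}\geq B_\zeta>-\infty$, whence $J\leq T^t\eta_{n+1}+T^tW_n$ for all $n\geq 0$ and all $t\leq A$. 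This contradicts \eqref{e2}, which holds for every $t$ because $RT^t\eta\in\mathcal{C}^{can}_{J,\infty}$ (as $\eta\in\mathcal{C}^{inv}_{J,\infty}$). The contradiction shows $B_\zeta=-\infty$, with no accumulation estimate needed. In short: rather than fighting the non-uniformity, the contradiction hypothesis restores it for free, and the canonicity of the reversed carriers --- i.e.\ \eqref{e2}, which you set up in your first paragraph but then never use --- is what delivers the conclusion.
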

\begin{proof} By definition, $\mathcal{D}_{J,\infty}(\eta) \notin \mathcal{C}_{\infty,J}^{inv}$, and so Lemma \ref{12345} yields further that $\mathcal{D}_{J,\infty}(\eta) \notin \mathcal{C}_{\infty,J}^{rev}$. In particular, either $\mathcal{D}_{J,\infty}(\eta) \notin \mathcal{C}_{\infty,J}^{can}$ or $R\mathcal{D}_{J,\infty}(\eta) \notin \mathcal{C}_{\infty,J}^{can}$ holds. Suppose $\mathcal{D}_{J,\infty}(\eta) \notin \mathcal{C}_{\infty,J}^{can}$. If $B_{\mathcal{D}_{J,\infty}(\eta)}>-\infty$, then by the argument applied in the proof of Lemma \ref{keylemma}(b) and the proof of Theorem \ref{mainthm1}, we can conclude that $\eta \notin \mathcal{C}_{J,\infty}^{inv}$, which contradicts the assumption. Thus $\mathcal{D}_{J,\infty}(\eta) \notin \mathcal{C}_{\infty,J}^{can}$ implies $B_{\mathcal{D}_{J,\infty}(\eta)}=-\infty$. In the same way, one can also show that if $R\mathcal{D}_{J,\infty}(\eta) \notin \mathcal{C}_{\infty,J}^{can}$, then $B_{R\mathcal{D}_{J,\infty}(\eta)}=-\infty$.
\end{proof}

\subsection{Case 2: $J=K$}\label{case2}

In this subsection, we assume $J=K$, which is the easiest case to deal with.

\begin{prp}\label{ch2} Suppose $J=K$.\\
(a) It holds that $\mathcal{C}_{J,K}=\mathcal{C}_{J,K}^{\exists}=\mathcal{C}_{J,K}^{\exists !}= \mathcal{C}_{J,K}^{can}=\mathcal{C}_{J,K}^{rev}=\mathcal{C}_{J,K}^{inv}$.\\
(b) For any $\eta\in\mathcal{C}_{J,K}$, $T\eta=\theta^{-1}\eta$.
\end{prp}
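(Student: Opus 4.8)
The plan is to reduce everything to a single elementary observation about the local map $F_{J,K}$ in the diagonal case. First I would verify directly from the definition \eqref{fdef} that when $J=K$ one has $F_{J,K}(a,b)=(b,a)$ for every $(a,b)\in\{0,1,\dots,J\}^2$. The cleanest way to see this is to split according to whether $a+b\le J$ or $a+b\ge J$: in the first case $\min\{b,J-a\}=b$ and $\min\{a,K-b\}=a$, while in the second case $\min\{b,J-a\}=J-a$ and $\min\{a,K-b\}=K-b=J-b$; substituting into \eqref{fdef} gives $F^{(1)}_{J,K}(a,b)=b$ and $F^{(2)}_{J,K}(a,b)=a$ in both cases. (This is already recorded in the relevant entries of Figure \ref{diagrams}, and the boundary value $a+b=J$ is covered consistently by either computation.)

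With this identity in hand, the carrier equation \eqref{carrier} collapses to $Y_n=F^{(2)}_{J,K}(\eta_n,Y_{n-1})=\eta_n$ for all $n$. Thus the sequence $Y=\eta$ is a carrier, and it is the only one, which immediately yields $\mathcal{C}_{J,K}=\mathcal{C}_{J,K}^{\exists}=\mathcal{C}_{J,K}^{\exists !}$. Moreover, Definition \ref{cancarr}(a) declares any BBS($J$,$J$) carrier to be canonical, so $\mathcal{C}_{J,K}^{can}=\mathcal{C}_{J,K}$ as well; in particular the unique canonical carrier is $W_n=\eta_n$, a (trivial) function of $(\eta_m)_{m\le n}$, which also discharges Lemma \ref{unique} in this case. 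I note that Lemma \ref{keylemma} holds vacuously here, since no non-canonical carrier exists.

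I would then prove part (b) next, as it feeds back into the remaining set identities. Using $W_{n-1}=\eta_{n-1}$ together with $F^{(1)}_{J,K}(a,b)=b$, the definition of the dynamics gives $(T\eta)_n=F^{(1)}_{J,K}(\eta_n,W_{n-1})=\eta_{n-1}$, i.e.\ $T\eta=\theta^{-1}\eta$, recalling that $(\theta^{-1}\eta)_n=\eta_{n-1}$. For the two remaining sets: since the spatial reversal $R$ maps $\mathcal{C}_{J,K}$ into itself and $\mathcal{C}_{J,K}^{can}=\mathcal{C}_{J,K}$, the defining condition \eqref{crevdef} is automatic, so $\mathcal{C}_{J,K}^{rev}=\mathcal{C}_{J,K}$. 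Finally, because $T=\theta^{-1}$ preserves $\mathcal{C}_{J,K}$, we have $T^t\eta=\theta^{-t}\eta\in\mathcal{C}_{J,K}=\mathcal{C}_{J,K}^{rev}$ for every $t\in\mathbb{Z}$, so the definition \eqref{cinvdef} gives $\mathcal{C}_{J,K}^{inv}=\mathcal{C}_{J,K}$, completing (a).

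There is essentially no genuine obstacle here — this is by design the easiest of the four cases — so the only points requiring care are bookkeeping ones: confirming the case split for $F_{J,K}$ exhausts $\{0,\dots,J\}^2$ with the two branches agreeing on the boundary $a+b=J$, and keeping the direction of the shift straight, since $\theta$ is the left-shift while the dynamics produce the right-shift $\theta^{-1}$.
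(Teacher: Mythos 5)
Your proposal is correct and follows essentially the same route as the paper: observe that $F_{J,K}(a,b)=(b,a)$ when $J=K$, deduce that $Y=\eta$ is the unique (and, by Definition \ref{cancarr}(a), canonical) carrier, and conclude $T\eta=\theta^{-1}\eta$. The only difference is that you spell out the trailing identities $\mathcal{C}_{J,K}^{rev}=\mathcal{C}_{J,K}^{inv}=\mathcal{C}_{J,K}$ explicitly, which the paper leaves implicit after establishing $\mathcal{C}_{J,K}^{can}=\mathcal{C}_{J,K}$.
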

\begin{proof} Since $F^{(2)}_{J,K}(a,b)=a$ for any $a,b\in\{0,1,\dots,J=K\}$, we immediately see from \eqref{carrier} that $Y$ is a BBS($J$,$K$) carrier if and only if $Y_n=\eta_n$ for all $n\in\mathbb{Z}$. In particular, the carrier always exists and is unique. Moreover, by Definition \ref{cancarr}, this carrier is canonical. Thus we have proved part (a). For part (b), we simply observe that $T\eta_n=F^{(1)}_{J,K}(\eta_n,W_{n-1})=W_{n-1}=\eta_{n-1}$, and so the dynamics are given by the right-shift map $\theta^{-1}$.
\end{proof}

\begin{proof}[Proof of Lemma \ref{unique} in the case $J=K$]
This follows from Proposition \ref{ch2}.
\end{proof}

\begin{proof}[Proof of Lemma \ref{keylemma} in the case $J=K$]
Since any carrier is a canonical carrier, no configuration satisfies the assumption.
\end{proof}

\subsection{Case 3: $J<K$}\label{J<K}

In this subsection, we assume $J<K$, and will later subdivide into the cases when $K=\infty$, and when $K<\infty$. As in the study of the original box-ball system, BBS(1,$\infty$), in \cite{CKST}, it turns out that when $J<K$ it is useful to describe the configuration and dynamics in terms of a path encoding and an appropriate operation on this. Specifically, for a configuration $\eta \in \mathcal{C}_{J,K}$, the associated path encoding $S=(S_n)_{n\in\mathbb{Z}}$ will be given by setting $S_0=0$ and
\[S_n-S_{n-1}=J-2\eta_n.\]
Clearly the map $\eta\mapsto S$ gives a bijection between the configuration space $\mathcal{C}_{J,K}$ and the path space $\mathcal{S}_{J,K}:=\{S : \mathbb{Z} \to \mathbb{Z}:\: S_0=0,\:S_n-S_{n-1} \in \{-J,-J+2,\dots, J-2,J\}\}$. In the following lemma, we translate the criteria for the existence of a carrier or a canonical carrier to ones involving the path encoding. For the statement of the result, it will be convenient to introduce the two-point running average of $S$, which we will denote $\tilde{S}=(\tilde{S}_n)_{n\in\mathbb{Z}}$ and define by setting
\[\tilde{S}_n:=\frac{S_{n-1}+S_n}{2}\]

\begin{lmm}\label{pathlemma} Suppose $J<K$. For $Y \in \{0,\dots,K\}^{\mathbb{Z}}$, $Y$ is a carrier for $\eta\in\mathcal{C}_{J,K}$ with path encoding $S$ if and only if the path $M=(M_n)_{n\in \mathbb{Z}}$ given by $M_n:=Y_n+S_n-\frac{J}{2}$ satisfies
\begin{equation}\label{pathm}
M_n=\min\left\{\max\left\{M_{n-1},\tilde{S}_n\right\}, \tilde{S}_n+K-J\right\},\qquad\forall n \in \mathbb{Z}.
\end{equation}
Moreover, $Y$ is a canonical carrier for $\eta$ with path encoding $S$ if and only if $Y$ is a carrier and $M_n$ does not converge in $\mathbb{R}$ as $n\rightarrow-\infty$.
\end{lmm}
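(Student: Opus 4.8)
The plan is to treat the two assertions separately, reducing the first (the characterisation of carriers) to a single pointwise identity, and then reading off the second (the characterisation of canonical carriers) from the geometry of the recursion \eqref{pathm}. To begin, I would compute the two running differences $M_{n-1}-\tilde S_n$ and $M_n-\tilde S_n$ in closed form. Using $\tilde S_n = S_{n-1}+\tfrac{J}{2}-\eta_n = S_n-\tfrac{J}{2}+\eta_n$ (immediate from $S_n-S_{n-1}=J-2\eta_n$ and the definition of $\tilde S$), together with $M_n=Y_n+S_n-\tfrac{J}{2}$, one finds
\[ M_{n-1}-\tilde S_n=\eta_n+Y_{n-1}-J,\qquad M_n-\tilde S_n=Y_n-\eta_n. \]
Subtracting $\tilde S_n$ from both sides of \eqref{pathm} therefore turns the path recursion into the scalar equation $Y_n-\eta_n=\min\{\max\{\eta_n+Y_{n-1}-J,\,0\},\,K-J\}$, with the crucial feature that $S$ and $\tilde S$ have been eliminated. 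It then remains to verify that this is precisely the carrier relation \eqref{carrier}, i.e.\ that $F^{(2)}_{J,K}(a,b)-a=\min\{\max\{a+b-J,0\},K-J\}$ for all admissible $(a,b)$.

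The verification of this last identity is the main computational step, but it is short. Writing $v:=a+b$ and using $b-\min\{b,J-a\}=\max\{0,v-J\}$ and $\min\{a,K-b\}-a=-\max\{0,v-K\}$ in the definition \eqref{fdef}, the left-hand side becomes $\max\{0,v-J\}-\max\{0,v-K\}$. Comparing this with the right-hand side $\min\{\max\{0,v-J\},K-J\}$ over the three ranges $v\le J$, $J\le v\le K$ and $v\ge K$ (recalling that $J<K$) shows that both expressions equal $0$, $v-J$ and $K-J$ respectively, so they agree. This establishes the equivalence asserted in the first part of the lemma.

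For the second part, I would exploit the same computation to describe when the recursion keeps $M$ fixed. Since $M_{n-1}-\tilde S_n=\eta_n+Y_{n-1}-J$, the condition $J\le\eta_n+Y_{n-1}\le K$ is exactly $\tilde S_n\le M_{n-1}\le\tilde S_n+K-J$; and in precisely this case \eqref{pathm} gives $M_n=M_{n-1}$, whereas if $M_{n-1}$ lies strictly outside this interval then $M_n$ is pinned to one of the endpoints $\tilde S_n$ or $\tilde S_n+K-J$, so that $M_n\neq M_{n-1}$. Hence $M_n=M_{n-1}$ if and only if $J\le\eta_n+Y_{n-1}\le K$. Recalling the definition of the essential boundary $B_Y$ at \eqref{essbound} (with $\min\{J,K\}=J$ and $\max\{J,K\}=K$ here), this means that $N\le B_Y$ if and only if $M_n=M_{n-1}$ for all $n\le N$, i.e.\ $M$ is constant on $(-\infty,N]$; consequently $B_Y>-\infty$ if and only if $(M_n)$ is eventually constant as $n\to-\infty$. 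Finally, since $M_n=Y_n+S_n-\tfrac{J}{2}$ takes values in the discrete set $\mathbb{Z}-\tfrac{J}{2}$, which has no finite accumulation points, $(M_n)$ converges in $\mathbb{R}$ as $n\to-\infty$ if and only if it is eventually constant. Combining these equivalences with Definition \ref{cancarr}(b), the carrier $Y$ is canonical (that is, $B_Y=-\infty$) if and only if $M_n$ does not converge in $\mathbb{R}$ as $n\to-\infty$.

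The only genuine obstacle is the pointwise identity $F^{(2)}_{J,K}(a,b)-a=\min\{\max\{a+b-J,0\},K-J\}$; everything else is bookkeeping once the two differences $M_{n-1}-\tilde S_n$ and $M_n-\tilde S_n$ are in hand. The one conceptual point worth emphasising is the discreteness observation, which is what upgrades ``eventually constant'' — the direct translation of $B_Y>-\infty$ via \eqref{essbound} — to ``convergent'', thereby matching the precise wording of the lemma.
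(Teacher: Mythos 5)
Your proof is correct and follows essentially the same route as the paper: a change of variables from $(\eta,Y)$ to $(S,M)$ reducing \eqref{pathm} to the carrier relation, then the observation that $J\le\eta_n+Y_{n-1}\le K$ is equivalent to $\tilde S_n\le M_{n-1}\le\tilde S_n+K-J$ (hence to $M_n=M_{n-1}$), so that $B_Y=-\infty$ corresponds to $M$ not being eventually constant, with discreteness upgrading this to non-convergence in $\mathbb{R}$. The only cosmetic difference is that you organise the first part as a single pointwise identity $F^{(2)}_{J,K}(a,b)-a=\min\left\{\max\{a+b-J,0\},K-J\right\}$ checked over the ranges of $a+b$, which yields both implications at once, whereas the paper substitutes into the recursion and case-splits on $M_{n-1}$ versus $\tilde S_n$ before noting the steps reverse.
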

\begin{proof}
Suppose that $Y$ is a carrier for $\eta$. Then, by definition,
\[Y_n=Y_{n-1}-\min\{Y_{n-1},J-\eta_n\}+\min\{\eta_n,K-Y_{n-1}\}.\]
Changing variables from $(\eta,Y)$ to $(S,M)$, we have
\begin{eqnarray*}
M_n-S_n +\frac{J}{2}& =&M_{n-1}-S_{n-1}+\frac{J}{2} -\min\left\{M_{n-1}-S_{n-1}+\frac{J}{2}, \frac{J+S_n-S_{n-1}}{2}\right\}\\
 &&+\min\left\{ \frac{J-S_n+S_{n-1}}{2},K-M_{n-1}+S_{n-1}-\frac{J}{2}\right\},
\end{eqnarray*}
from which some elementary rearrangement yields
\[M_n = \max\left\{ \tilde{S}_n, M_{n-1}\right\}+\min\left\{ 0,K-J-M_{n-1}+\tilde{S}_n\right\}.\]
If $M_{n-1} \ge  \tilde{S}_n$, then
\[M_n =\min\left\{ M_{n-1},K-J+\tilde{S}_n\right\} =\min\left\{\max\left\{M_{n-1},\tilde{S}_n\right\}, \tilde{S}_n+K-J\right\}.\]
On the other hand, if $M_{n-1} \le \tilde{S}_n$, then $ 0 \le K-J-M_{n-1}+\tilde{S}_n$, and so we have
\[M_n=\tilde{S}_n=\min\left\{\max\left\{M_{n-1},\tilde{S}_n\right\}, \tilde{S}_n+K-J\right\}.\]
Hence, in both cases, \eqref{pathm} holds. Reversing the steps of the argument, one obtains that the condition \eqref{pathm} is also necessary for $Y$ to be a carrier.

Next we consider when a carrier $Y$ is canonical. For this, we first claim the equivalence of the conditions $J \le \eta_n+Y_{n-1} \le K$ and $M_{n-1}=M_n$. Indeed, this readily follows from \eqref{pathm} and the observation that
\begin{equation}\label{iff}
J \le \eta_n+Y_{n-1} \le K \qquad \Leftrightarrow \qquad \tilde{S}_n \le M_{n-1} \le \tilde{S}_n+K-J,
\end{equation}
which one can deduce by applying the definition of $M$ and rearranging. Now, recall that $Y$ is a canonical carrier if and only if $B_Y=-\infty$, where $B_Y$ was defined at \eqref{essbound}. From the preceding argument, we see that $B_Y \neq -\infty$ implies $M_n$ is eventually constant as $n\rightarrow-\infty$, and moreover, $B_Y= -\infty$ implies $M_n$ does not converge as $n \to -\infty$ (since $|M_n-M_{n-1}| \ge 1$ if $M_n \neq M_{n-1}$). This completes the proof of the second part.
\end{proof}

We now describe how the BBS($J$,$K$) dynamics can be expressed in terms of the path encoding via a Pitman-type transformation. We recall that the original Pitman transformation of a path involves reflection in the past maximum \cite{Pitman}; up to a shift, our path transformation is also a reflection, but in the path $M$, as defined in the statement of Lemma \ref{pathlemma}. For the model BBS(1,$\infty$), this reduces to Pitman's original definition. Figure \ref{bbs35} shows an example of the transformation when $J=3$ and $K=5$.

\begin{figure}[t]
\begin{center}
\scalebox{0.7}{
\includegraphics[width=0.6\textwidth]{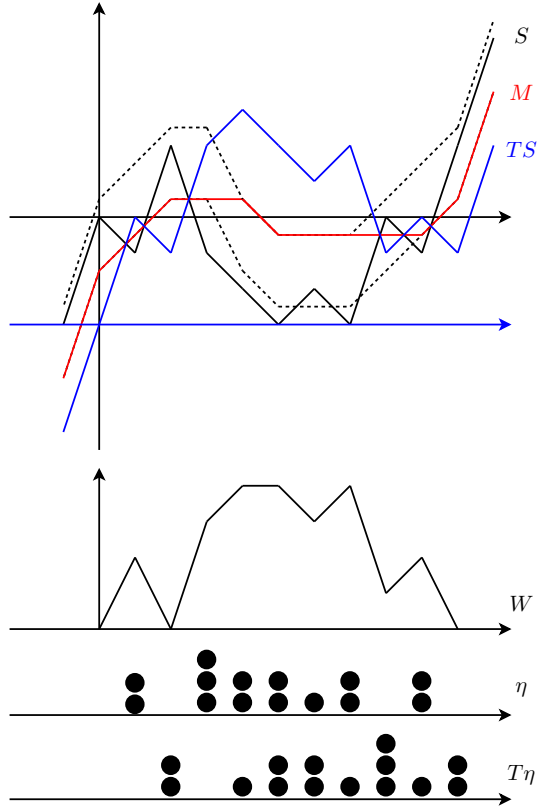}
\rput(0,12.2){$S$}
\rput(0,11.3){\color{red} $M$}
\rput(0,10.4){\color{blue} $TS$}
\rput(0,3.2){$W$}
\rput(0,1.85){$\eta$}
\rput(0,0.5){$T\eta$}}
\vspace{-15pt}
\end{center}
\caption{Path encodings for the BBS(3,5). The top graph shows the path encoding of the original configuration $S$ (black), $M$ (red), and the path encoding of the updated configuration $TS$ (blue). The dotted lines show the two-point running average $\tilde{S}$, and the version of this shifted upwards by $2$. The second graph shows the canonical carrier process $W$ corresponding to the initial configuration. The final two graphs show the initial configuration $\eta$ and the updated one $T\eta$, respectively. We acknowledge Satoshi Tsujimoto as providing the inspiration for this depiction.}\label{bbs35}
\end{figure}

\begin{lmm}\label{pitman}
Suppose $J<K$. If $\eta \in \mathcal{C}_{J,K}^{\exists}$ and $Y$ is an associated carrier, then $T^YS=2M-S-2M_0$, where $T^YS$ is the path encoding of $T^Y\eta$, and $M$ is defined as in Lemma \ref{pathlemma}.
\end{lmm}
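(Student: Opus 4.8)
The plan is to verify the claimed identity by a direct increment-by-increment comparison, exploiting the fact that any element of $\mathcal{S}_{J,K}$ is uniquely determined by its value at $0$ together with its sequence of increments. First I would check the anchoring: since $S_0=0$ by the normalization of the path encoding, the right-hand side evaluated at $n=0$ equals $2M_0-S_0-2M_0=0$, which agrees with $(T^YS)_0=0$. It therefore suffices to show that the two sides have identical increments, i.e.\ that for every $n\in\mathbb{Z}$,
\[
\bigl(2M_n-S_n\bigr)-\bigl(2M_{n-1}-S_{n-1}\bigr)=(T^YS)_n-(T^YS)_{n-1},
\]
the constant shift $-2M_0$ cancelling in the difference.

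The main computation is then to rewrite both sides using the definitions. On the left, substituting $M_n=Y_n+S_n-\frac{J}{2}$ gives $2(M_n-M_{n-1})-(S_n-S_{n-1})=2(Y_n-Y_{n-1})+(S_n-S_{n-1})$. On the right, by definition of the path encoding of $T^Y\eta$, the increment equals $J-2\,T^Y\eta_n=J-2F^{(1)}_{J,K}(\eta_n,Y_{n-1})$. Using $S_n-S_{n-1}=J-2\eta_n$ together with the explicit forms from \eqref{fdef} — in particular $Y_n-Y_{n-1}=F^{(2)}_{J,K}(\eta_n,Y_{n-1})-Y_{n-1}=-\min\{Y_{n-1},J-\eta_n\}+\min\{\eta_n,K-Y_{n-1}\}$ — one reduces the left side to
\[
J-2\bigl(\eta_n+\min\{Y_{n-1},J-\eta_n\}-\min\{\eta_n,K-Y_{n-1}\}\bigr)=J-2F^{(1)}_{J,K}(\eta_n,Y_{n-1}),
\]
so the two increments coincide, completing the verification.

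Conceptually, the identity says that $T^YS$ is obtained by reflecting the path $S$ in the path $M$ (the reflection of the level $S_n$ across $M_n$ being $2M_n-S_n$), followed by the vertical shift $-2M_0$ needed to restore the normalization $(T^YS)_0=0$; this is precisely the Pitman-type transformation advertised before the statement. I do not anticipate a genuine obstacle here, as the result is a purely algebraic consequence of the local update rule and requires no appeal to existence or canonicity of the carrier beyond the standing assumption that $Y$ is a carrier (so that both $T^Y\eta_n=F^{(1)}_{J,K}(\eta_n,Y_{n-1})$ and $Y_n=F^{(2)}_{J,K}(\eta_n,Y_{n-1})$ are in force). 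The only point requiring mild care is the bookkeeping with the minima in $F^{(1)}_{J,K}$ and $F^{(2)}_{J,K}$, but these combine cleanly because the two minima appearing in $T^Y\eta_n$ are exactly the negatives of those appearing in $Y_n-Y_{n-1}$.
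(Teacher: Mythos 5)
Your proposal is correct and follows essentially the same route as the paper: anchor both sides at $n=0$ using $S_0=0$, then verify the increments agree by substituting $M_n=Y_n+S_n-\frac{J}{2}$. The only cosmetic difference is that the paper invokes the conservation-of-mass identity \eqref{consmass} to rewrite $T^Y\eta_n=Y_{n-1}+\eta_n-Y_n$, whereas you re-derive that same cancellation of the two minima directly from \eqref{fdef}.
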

\begin{proof}
Since $T^YS_0=0=2M_0-S_0-2M_0$, we only need to show that the increments of $T^YS$ and $2M-S-2M_0$ coincide. By definition and \eqref{consmass},
\[T^YS_n-T^YS_{n-1}=J-2T^{Y}\eta_n=J-2(Y_{n-1}+\eta_n-Y_n)=2Y_n-2Y_{n-1}+S_n-S_{n-1}.\]
Moreover,
\begin{align*}
2M_n-S_n-(2M_{n-1}-S_{n-1})&=2Y_n+2S_n-J-S_n-(2Y_{n-1}+2S_{n-1}-J)+S_{n-1}\\
 &=2Y_n-2Y_{n-1}+S_n-S_{n-1},
\end{align*}
and so the proof is complete.
\end{proof}

\subsubsection{Case 3(a): $J<K=\infty$}\label{case3a}

We now focus on the case that $J<K=\infty$. In this case, the condition \eqref{pathm} simplifies to
\begin{equation}\label{pathminf}
M_n=\max\{M_{n-1},\tilde{S}_n\},\qquad \forall n\in\mathbb{Z},
\end{equation}
one solution of which is given by taking $M$ to be the past maximum of $\tilde{S}$, if this exists. This observation will be key in the following arguments. For these, we also let
\begin{equation}\label{overlineS}
\overline{S}_{-\infty}:=\limsup_{n \to -\infty}S_n, \qquad \underline{S}_{-\infty}:=\liminf_{n \to -\infty}S_n,
\end{equation}
and define $\overline{\tilde{S}}_{-\infty}$ and $\underline{\tilde{S}}_{-\infty}$ from $\tilde{S}$ similarly. Note that, since $|S_n-\tilde{S}_n|  \le \frac{J}{2}$, $\overline{S}_{-\infty}=\pm \infty$ if and only if $\overline{\tilde{S}}_{-\infty}=\pm \infty$, and the same holds for $\underline{S}_{-\infty}$ and $ \underline{\tilde{S}}_{-\infty}$. We continue by describing $\mathcal{C}_{J,\infty}^{\exists}$, $\mathcal{C}_{J,\infty}^{\exists !}$ and $\mathcal{C}_{J,\infty}^{can}$.

\begin{prp}\label{ch3} Suppose $J<K=\infty$. It then holds that:\\
(a) $\mathcal{C}_{J,\infty}^{\exists}=\{\eta \in \mathcal{C}_{J,\infty}:\:\overline{S}_{-\infty} < \infty \}$;\\
(b) $\mathcal{C}_{J,\infty}^{\exists !}= \emptyset$;\\
(c) $\mathcal{C}_{J,\infty}^{can}=\{\eta \in \mathcal{C}_{J,\infty}:\:\overline{S}_{-\infty}=-\infty\}$.
\end{prp}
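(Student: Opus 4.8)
My plan is to push everything through the path encoding, using Lemma \ref{pathlemma} to translate the three assertions into statements about the solutions $M=(M_n)_{n\in\mathbb{Z}}$ of the recursion \eqref{pathminf}, i.e.\ $M_n=\max\{M_{n-1},\tilde{S}_n\}$ for all $n$. The two structural observations I would extract first are: any solution $M$ is non-decreasing in $n$ and satisfies $M_n\ge \tilde{S}_n$; and consequently, since $M$ is non-decreasing with $M_n\ge\tilde{S}_m$ for all $m\le n$, every solution dominates the \emph{past maximum} $M^{\ast}_n:=\sup_{m\le n}\tilde{S}_m$. Moreover, $M^{\ast}$ is itself a solution of \eqref{pathminf} whenever it is finite (as $\max\{M^{\ast}_{n-1},\tilde{S}_n\}=M^{\ast}_n$), so it is the \emph{minimal} solution. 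Via the correspondence $Y_n=M_n-S_n+\tfrac{J}{2}$ of Lemma \ref{pathlemma}, carriers are exactly the solutions lying above $M^{\ast}$; here the admissibility $Y_n\in\mathbb{Z}_+$ is automatic, nonnegativity following from $M_n\ge\tilde{S}_n\ge S_n-\tfrac{J}{2}$ and integrality from a short parity check (the lattice-valued $\tilde{S}$ and the recursion preserve the correct residue class).

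For part (a), existence of a carrier is equivalent to existence of a finite solution $M$. If such an $M$ exists then, fixing $n_0$, monotonicity gives $\tilde{S}_n\le M_n\le M_{n_0}<\infty$ for all $n\le n_0$, so $\overline{\tilde{S}}_{-\infty}<\infty$, which is equivalent to $\overline{S}_{-\infty}<\infty$ by the bound $|S_n-\tilde{S}_n|\le\tfrac{J}{2}$. Conversely, if $\overline{S}_{-\infty}<\infty$ then $\overline{\tilde{S}}_{-\infty}<\infty$, so $\tilde{S}_n$ is bounded above near $-\infty$ and only finitely many terms can exceed that bound; hence $M^{\ast}_n$ is finite for every $n$ and furnishes a carrier. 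This gives $\mathcal{C}_{J,\infty}^{\exists}=\{\eta:\overline{S}_{-\infty}<\infty\}$.

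For part (b), I would show that existence forces non-uniqueness. Given the minimal solution $M^{\ast}$, for any constant $c$ the path $M^{(c)}_n:=\max\{M^{\ast}_n,c\}$ again solves \eqref{pathminf}, since $\max\{\max\{M^{\ast}_{n-1},c\},\tilde{S}_n\}=\max\{M^{\ast}_n,c\}$; choosing $c>M^{\ast}_0$ of the appropriate parity makes $M^{(c)}_0\neq M^{\ast}_0$, so the two associated carriers differ. Thus whenever a carrier exists there are at least two, and where none exists the uniqueness set is vacuous, whence $\mathcal{C}_{J,\infty}^{\exists!}=\emptyset$.

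For part (c), Lemma \ref{pathlemma} says a carrier $Y$ is canonical iff its $M$ does not converge in $\mathbb{R}$ as $n\to-\infty$; since $M$ is non-decreasing, the backward limit always exists in $[-\infty,\infty)$ and non-convergence is exactly $M_n\to-\infty$. Because every solution dominates $M^{\ast}$, a solution with $M_n\to-\infty$ exists iff $M^{\ast}$ itself diverges to $-\infty$. Finally $\lim_{n\to-\infty}M^{\ast}_n=\inf_n\sup_{m\le n}\tilde{S}_m=\overline{\tilde{S}}_{-\infty}$, which equals $-\infty$ iff $\overline{S}_{-\infty}=-\infty$; this yields $\mathcal{C}_{J,\infty}^{can}=\{\eta:\overline{S}_{-\infty}=-\infty\}$. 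I expect the main obstacle to be the conceptual step of identifying $M^{\ast}$ as the minimal solution and channelling all three parts through it together with monotonicity; the only genuinely fiddly points are confirming that a solution of \eqref{pathminf} corresponds to an admissible integer-valued carrier (the parity check above) and that the past maximum is attained, which holds because $\tilde{S}$ is lattice-valued and bounded above near $-\infty$.
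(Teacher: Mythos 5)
Your proposal is correct and follows essentially the same route as the paper's proof: both translate everything through Lemma \ref{pathlemma}, take the past maximum $\tilde{M}_n=\max_{m\le n}\tilde{S}_m$ as the (minimal) solution of \eqref{pathminf} to settle existence (with the same parity check for integrality), produce non-uniqueness by lifting it with suitably chosen constants, and characterise canonicity via the monotonicity of solutions and the lower bound $M_n\ge\max_{m\le n}\tilde{S}_m$. There are no gaps worth flagging.
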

\begin{proof}
(a) We first show that if $\overline{S}_{-\infty} < \infty$, then there exists a carrier. Under the latter condition, we also have that $\overline{\tilde{S}}_{-\infty} < \infty$. Hence $\tilde{M}_n:=\max_{m\le n}\tilde{S}_m$ is finite for $n\in\mathbb{Z}$, and, as per the remark preceding the proposition, $\tilde{M}=(\tilde{M}_n)_{n\in\mathbb{Z}}$ satisfies \eqref{pathminf}. For this $\tilde{M}$, define $Y=(Y_n)_{n\in\mathbb{Z}}$ by setting $Y_n:=\tilde{M}_n-S_n+\frac{J}{2}$. To establish that this $Y$ is a carrier for $\eta$ with path encoding $S$, it will be enough to check that $Y_n\in\mathbb{Z}_+$ for each $n\in\mathbb{Z}_+$. Since $\tilde{M}_n \ge \tilde{S}_n$, we have that $Y_n \ge 0$. Also, if $J$ is even, then $S_n \in 2\mathbb{Z}$ for all $n$, and it readily follows that $Y_n \in \mathbb{Z}$. If $J$ is odd, then $S_n$ is odd for odd $n$ and $S_n$ is even for even $n$. It follows that $\tilde{M}_n \in \mathbb{Z}+\frac{1}{2}$, and so $Y_n \in \mathbb{Z}$, as desired. We next show that if $\overline{S}_{-\infty} = \infty$, then a carrier does not exist. Indeed, if $(M_n)_{n\in\mathbb{Z}}$ satisfies \eqref{pathminf}, then $M_n \ge \tilde{S}_n$ for all $n$. Hence, applying \eqref{pathminf} repeatedly, $M_n \ge \max_{m \le n }\tilde{S}_m$. Since $\overline{S}_{-\infty} = \infty$ implies $\overline{\tilde{S}}_{-\infty} = \infty$, $M_n=\infty$ in this case, and thus a carrier does not exist.\\
(b) Suppose $\overline{S}_{-\infty} < \infty$ and again let $\tilde{M}_n=\max_{m \le n }\tilde{S}_m$. Then, for any $A \ge \overline{\tilde{S}}_{-\infty}$ satisfying $A \in \mathbb{Z}$ if $J$ is even, and $A \in \mathbb{Z}+\frac{1}{2}$ if $J$ is odd, $M_n:=\max\{A,\tilde{M}_n\}$ also satisfies \eqref{pathminf} and $Y_n:=M_n-S_n+\frac{J}{2}$ is a carrier. Hence uniqueness does not hold for any $\eta$.\\
(c) From Lemma \ref{pathlemma}, a carrier $Y$ is canonical if and only if the associated $M$ does not converge in $\mathbb{R}$ as $n \to -\infty$. On the other hand, if $M$ satisfies \eqref{pathminf}, then $M$ is non-decreasing. Hence a carrier $Y$ is canonical if and only if $\lim_{n \to \infty}M_n= -\infty$. Since $M_n \ge \max_{m \le n }\tilde{S}_m$ for any $n$, $\lim_{n \to \infty}M_n= -\infty$ implies $\overline{S}_{-\infty}= -\infty$. Moreover, if $\overline{S}_{-\infty}= -\infty$, then $Y_n:=\tilde{M}_n-S_n+\frac{J}{2}$ is a canonical carrier, where $\tilde{M}_n=\max_{m \le n }\tilde{S}_m$.
(It is further straightforward to check that the only other carriers are of the form described in the proof of (b), and that none of these are canonical.)
\end{proof}

\begin{proof}[Proof of Lemma \ref{unique} in the case $J<K=\infty$] Suppose $\eta \in \mathcal{C}_{J,\infty}^{can}$, and $Y$ is a carrier with $M_n$ satisfying $M_n > \tilde{M}_n:=\max_{m \le n }\tilde{S}_m$ for some $n \in \mathbb{Z}$. Since $M_n=\max\{M_{n-1},\tilde{S}_n\}$ and $\tilde{S}_n < M_n$, it must hold that $M_{n-1}=M_n$. Similarly, we obtain that $M_m=M_n$ for all $m \le n$, and so, by Lemma \ref{pathlemma}, $Y$ is not a canonical carrier. Hence $Y$ is a canonical carrier if and only if $M_n=\tilde{M}_n$ for all $n$, and thus it is unique.
\end{proof}

\begin{proof}[Proof of Lemma \ref{keylemma} in the case $J<K=\infty$]
(a) Suppose $\eta \in \mathcal{C}_{J,\infty}^{\exists}$ and $Y$ is a carrier for $\eta$, but not canonical. By Lemma \ref{pathlemma}, $M_n$ is a constant for $n \le B_Y$. Hence we obtain from Lemma \ref{pitman} that $T^YS_n-T^YS_{n-1}=-(S_n-S_{n-1})$ for $n \le B_Y$, and so $\limsup_{n \to -\infty}T^YS_n \geq \liminf_{n \to -\infty}T^YS_n = - \overline{S}_{-\infty}+C$ for some constant $C\in\mathbb{Z}$. Since $\overline{S}_{-\infty}<\infty$, we have that $\limsup_{n \to -\infty}T^YS_n>-\infty$, and so $T^Y\eta \notin \mathcal{C}_{J,K}^{can}$.\\
(b) Let $(\eta^{(i)})_{i \in \mathbb{N}}$ and $(Y^{(i)})_{i \in \mathbb{N}}$ satisfy the assumptions of the lemma, so that in particular $Y^{(1)}=Y$ is not canonical. By Lemma \ref{ch3}, $\overline{S}_{-\infty} <\infty$. Moreover, if $\underline{S}_{-\infty}=-\infty$, then arguing as in (a) allows us to deduce that $\limsup_{n \to -\infty}T^YS_n=-\underline{S}_{-\infty}+C=\infty$, which yields in turn that $T^Y\eta \notin \mathcal{C}_{J,\infty}^{\exists}$; this contradicts the assumption that $Y^{(2)}$ is a carrier for $\eta^{(2)}$. Hence $\underline{S}_{-\infty} >-\infty$, and so there exists a $B_{\eta} \in \mathbb{Z}$ such that $\sup_{m \le n}\tilde{S}_m=\overline{\tilde{S}}_{-\infty}$ and $\inf_{m \le n}\tilde{S}_m=\underline{\tilde{S}}_{-\infty}$ for all  $n\le B_{\eta}$, where $\overline{\tilde{S}}_{-\infty},\underline{\tilde{S}}_{-\infty}\in\mathbb{R}$. Now, since $M_n$ satisfies \eqref{pathminf}, then it must be the case that $M_n=Y_n+S_n-\frac{J}{2}$ is constant for $n \le B_{\eta}$. Recalling \eqref{iff}, it follows that $B_Y \ge B_{\eta}$. Moreover, by Lemma \ref{pitman}, $S^{(2)}:=T^YS$, the path encoding of $\eta^{(2)}=T^Y\eta^{(1)}$ also satisfies $\sup_{m \le n}\tilde{S}^{(2)}_m=\overline{\tilde{S}^{(2)}}_{-\infty}$ and $\inf_{m \le n}\tilde{S}^{(2)}_m=\underline{\tilde{S}^{(2)}}_{-\infty}$ for all $n \le B_{\eta}$, where $\overline{\tilde{S}^{(2)}}_{-\infty}, \underline{\tilde{S}^{(2)}}_{-\infty}\in\mathbb{R}$. Arguing as before, it must be the case that $B_{Y^{(2)}} \ge B_{\eta}$. Repeating the same argument, we conclude that $\inf_{i}B_{Y^{(i)}} \ge B_{\eta} > -\infty$.
\end{proof}

We complete the subsection by identifying an explicit subset of $\mathcal{C}^{inv}_{J,K}$, which is natural to consider for suitably homogeneous random configurations, and also present an example to complete the discussion of the remark following Theorem \ref{mainthm1}.

\begin{crl}\label{invsubs} For $J<K=\infty$, $\{\eta \in \mathcal{C}_{J,\infty}:\: \exists \lim_{n \to \pm\infty}\frac{S_n}{n}\in(0,\infty)\}\subseteq\mathcal{C}_{J,\infty}^{inv}$.
\end{crl}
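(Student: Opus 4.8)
The plan is to show that the set $\mathcal{A}:=\{\eta\in\mathcal{C}_{J,\infty}:\ \exists\,\lim_{n\to\pm\infty}S_n/n\in(0,\infty)\}$ satisfies $\mathcal{A}\subseteq\mathcal{C}^{rev}_{J,\infty}$ and is invariant under both $T$ and $T^{-1}$; the inclusion $\mathcal{A}\subseteq\mathcal{C}^{inv}_{J,\infty}$ then follows immediately from the definition \eqref{cinvdef}. Write $c_+:=\lim_{n\to+\infty}S_n/n$ and $c_-:=\lim_{n\to-\infty}S_n/n$, both in $(0,\infty)$ (the argument is unaffected if one insists that $c_+=c_-$). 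The two immediate consequences I will use repeatedly are that $S_n\to-\infty$ as $n\to-\infty$ and $S_n\to+\infty$ as $n\to+\infty$.

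First I would check that $\mathcal{A}\subseteq\mathcal{C}^{rev}_{J,\infty}$ using the characterisation $\mathcal{C}^{can}_{J,\infty}=\{\eta:\overline S_{-\infty}=-\infty\}$ from Proposition \ref{ch3}(c). For $\eta\in\mathcal{A}$ we have $\overline S_{-\infty}=\limsup_{n\to-\infty}S_n=-\infty$, so $\eta\in\mathcal{C}^{can}_{J,\infty}$. For $R\eta$, a one-line computation from \eqref{rdef} gives that its path encoding is $S^R_n=-S_{-n}$, whence $\overline{S^R}_{-\infty}=\limsup_{n\to-\infty}(-S_{-n})=-\liminf_{m\to+\infty}S_m=-\infty$, so $R\eta\in\mathcal{C}^{can}_{J,\infty}$ as well; thus $\eta\in\mathcal{C}^{rev}_{J,\infty}$ by \eqref{crevdef}. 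The same reflection identity shows that $R\mathcal{A}=\mathcal{A}$: the slopes of $S^R$ at $\pm\infty$ are $c_\mp$, which again lie in $(0,\infty)$.

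The heart of the argument is $T$-invariance. Since $\eta\in\mathcal{C}^{can}_{J,\infty}$, its canonical carrier is the one whose associated path (in the sense of Lemma \ref{pathlemma}) is the running past-maximum $M_n=\max_{m\le n}\tilde S_m$, and by Lemma \ref{pitman} the path encoding of $T\eta$ is $TS=2M-S-2M_0$. It therefore suffices to show that $M_n/n\to c_\pm$ as $n\to\pm\infty$, for then $TS_n/n\to 2c_\pm-c_\pm=c_\pm\in(0,\infty)$, i.e.\ $T\eta\in\mathcal{A}$. As $n\to+\infty$ this is routine: $M_n\ge\tilde S_n$ gives the lower bound $c_+$, and bounding $\tilde S_m\le(c_++\varepsilon)m$ for large $m$ together with the constant $\sup_{m\le0}\tilde S_m$ gives the matching upper bound. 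The delicate case, and the step I expect to be the main obstacle, is $n\to-\infty$: here $\tilde S_m\sim c_- m$ has \emph{positive} slope, so it is increasing in $m$ up to fluctuations and its past-maximum is attained near the right endpoint $m=n$. Making this rigorous amounts to the sandwich
\[
(c_-+\varepsilon)n\ \le\ M_n\ \le\ (c_--\varepsilon)n,
\]
valid for all sufficiently negative $n$ (the lower bound from $M_n\ge\tilde S_n$, the upper bound because every $m\le n$ satisfies $\tilde S_m\le(c_--\varepsilon)m\le(c_--\varepsilon)n$); dividing by $n<0$ flips the inequalities and yields $M_n/n\to c_-$. Care with the direction of these inequalities, given the sign of $n$, is the only genuinely error-prone point.

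Finally, combining $R\mathcal{A}=\mathcal{A}$ with $T$-invariance gives $T^{-1}$-invariance, since $T^{-1}=RTR$ on $\mathcal{C}^{rev}_{J,\infty}$ by Proposition \ref{tinvprp} (and $\mathcal{A}\subseteq\mathcal{C}^{rev}_{J,\infty}$ ensures $T^{-1}$ is defined throughout $\mathcal{A}$). An induction on $t$ then shows $T^t\eta\in\mathcal{A}\subseteq\mathcal{C}^{rev}_{J,\infty}$ for every $t\in\mathbb{Z}$, which is exactly the condition defining $\mathcal{C}^{inv}_{J,\infty}$ in \eqref{cinvdef}. Hence $\mathcal{A}\subseteq\mathcal{C}^{inv}_{J,\infty}$, as required.
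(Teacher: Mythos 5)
Your proof is correct and follows essentially the same route as the paper's: Proposition \ref{ch3} gives $\eta,R\eta\in\mathcal{C}^{can}_{J,\infty}$, the key step $M_n/n\to c_\pm$ combined with Lemma \ref{pitman} shows the slope condition is preserved by $T$, and the reflection identity $T^{-1}=RTR$ (Proposition \ref{tinvprp}) together with $R\mathcal{A}=\mathcal{A}$ handles negative times, after which one iterates. The only difference is that you spell out the sandwich argument for $M_n/n\to c_-$ as $n\to-\infty$, which the paper asserts without detail; your treatment of the sign-flipped inequalities there is correct.
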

\begin{proof} If $\lim_{n \to \pm\infty}\frac{S_n}{n}=c_{\pm}\in(0,\infty)$, then Proposition \ref{ch3} gives that both $\eta$ and $R\eta$, which has path encoding $RS=(-S_{-n})_{n\in\mathbb{Z}}$, are in $\mathcal{C}^{can}_{J,\infty}$, and so $\eta\in\mathcal{C}^{rev}_{J,K}$. Moreover, from the assumption, we deduce that $\tilde{M}_n:=\sup_{m \le n}\tilde{S}_m$ also satisfies $\lim_{n \to \pm\infty}\frac{\tilde{M}_n}{n}=c_{\pm}$. This implies
\[\lim_{n \to \pm\infty}\frac{TS_n}{n}=\lim_{n \to \pm\infty}\frac{2\tilde{M}_n-S_n-2\tilde{M}_0}{n}=c_{\pm},\]
and hence $T\eta\in \mathcal{C}^{rev}_{J,K}$. (Recall from the proof of Proposition \ref{ch3} that $\tilde{M}$ is the process satisfying \eqref{pathminf} that corresponds to the canonical carrier.) Proceeding similarly with $RS$ in place of $S$, we find that, writing $T^{-1}S$ as the path encoding of $T^{-1}\eta$,
\[\lim_{n \to \pm\infty}\frac{T^{-1}S_n}{n}=\lim_{n \to \pm\infty}\frac{RTRS_n}{n}=c_{\pm},\]
and so $T^{-1}\eta\in \mathcal{C}^{rev}_{J,K}$. Iterating these arguments yields that $T^t\eta\in \mathcal{C}^{rev}_{J,K}$ for all $t\in\mathbb{Z}$. Thus the proof is complete.
\end{proof}

\newcommand{\BA}{\circle{10}}
\newcommand{\BB}{\circle*{10}}
\begin{exa}\label{notbijection} Consider the following configuration $\eta\in\mathcal{C}_{1,\infty}$:
\begin{center}
\dots\:\:\:{\BA\BB\BB\BB\BA\BA\BA\BA\BA\BA\BB\BB\BA\BA\BA\BB\BA\BA\BA\BA\BA\BA\BA\BA\BA}\dots,
\end{center}
where for each $n\in\mathbb{N}$ a string of particles of length $n$ is placed in the interval $\{-(2n+1)(n-1),\dots,-2n(n-1)\}$, and all other sites are vacant. It is elementary to check that $\frac{S_n}{n}\rightarrow 1$ as $n\rightarrow\infty$, and $\frac{S_n}{n}\rightarrow \frac{1}{2}$ as $n\rightarrow-\infty$, and so Lemma \ref{invsubs} yields that $\eta\in\mathcal{C}^{inv}_{1,\infty}$. It is further possible to check that the current sequence $((T^tW)_0)_{t\in\mathbb{Z}}$ is given by $(\dots,0,0,0,W_0=1,0,2,0,3,0,4,\dots)$, and the image of this configuration under $R$ is clearly an element of $\mathcal{C}^0_{\infty,1}\cap\mathcal{C}^{0,alt}_{\infty,1}$. Hence, by Proposition \ref{ch1} and Lemma \ref{12345}, $((T^tW)_0)_{t\in\mathbb{Z}}\notin \mathcal{C}^{rev}_{\infty,1}=\mathcal{C}^{inv}_{\infty,1}$. To construct a similar example of $\eta\in\mathcal{C}^{inv}_{J,\infty}$ for $J<\infty$ with $\mathcal{D}_{J,\infty}(\eta)\not\in \mathcal{C}_{\infty,J}^{inv}$, simply replace the individual particles with boxes filled to their capacity $J$, and leave all other sites vacant; in this case the current is given by $(\dots,0,0,0,W_0=J,0,2J,0,3J,0,4J,\dots)$, and the same argument applies.
\end{exa}

\begin{rem}
We note that $\mathcal{C}^{inv}_{1,\infty}$ is the same set as $\mathcal{S}_{sub-critical}$ introduced in \cite{CKST}.
\end{rem}

\subsubsection{Case 3(b): $J<K<\infty$}\label{case3b}

We now come to the final case, which is when $J<K<\infty$. Towards describing the sets $\mathcal{C}_{J,K}^{\exists}$, $\mathcal{C}_{J,K}^{\exists !}$ and $\mathcal{C}_{J,K}^{can}$, we first show that whenever the two-point running average of the path encoding $\tilde{S}$ fluctuates more than $K-J$, the value of the process $M$ (as described at \eqref{pathm}) can be determined from $\tilde{S}$ uniquely; this is because the carrier sees greater than or equal to $K$ more empty boxes than balls, or vice versa, over the relevant part of the configuration, which means that it essentially empties or fills itself.

\begin{lmm}\label{fluctuation}
Suppose $J<K <\infty$. Let $\eta \in \mathcal{C}_{J,K}^{\exists}$, $Y$ be a carrier for $\eta$, and $M$ be the process given by $M_n=Y_n+S_n-\frac{J}{2}$. If $|\tilde{S}_N -\tilde{S}_n| > K-J$ for some $n<N$, and $|\tilde{S}_m-\tilde{S}_n|\le K-J$ for all $m\in\{n,n+1,\dots,N-1\}$, then the following hold:\\
(a) $M_N \neq M_{N-1}$;\\
(b) if $\tilde{S}_N -\tilde{S}_n > K-J$, then $M_N=\tilde{S}_N$;\\
(c) if $\tilde{S}_N -\tilde{S}_n <-(K-J)$, then $M_N=\tilde{S}_N+K-J$.
\end{lmm}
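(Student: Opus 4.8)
The plan is to work entirely with the process $M$ and its defining recursion \eqref{pathm}, which is available here since $Y$ is a carrier and so Lemma \ref{pathlemma} applies. The first thing I would record is the ``band'' bound that is immediate from \eqref{pathm}: for every $m$ one has $\tilde{S}_m \le M_m \le \tilde{S}_m + K - J$, because $M_m$ is obtained by clamping $M_{m-1}$ into the interval $[\tilde{S}_m, \tilde{S}_m + K - J]$ (recall $K>J$). This alone is not enough, however: it only gives $M_{N-1} \le \tilde{S}_{N-1} + K - J \le \tilde{S}_n + 2(K-J)$, which does not separate $M_{N-1}$ from $\tilde{S}_N$. The crux is therefore a sharper invariant valid on the stretch where $\tilde{S}$ stays within $K-J$ of $\tilde{S}_n$.

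I would first treat case (b), where $\tilde{S}_N - \tilde{S}_n > K-J$. The key claim is that $M_m \le \tilde{S}_n + K - J$ for all $m \in \{n, n+1, \dots, N-1\}$, proved by induction on $m$. For the base case $m=n$, the bound $M_n \le \tilde{S}_n + K - J$ is immediate from the outer minimum in \eqref{pathm}. For the inductive step I would use that \eqref{pathm} yields $M_m \le \max\{M_{m-1}, \tilde{S}_m\}$; by the inductive hypothesis $M_{m-1} \le \tilde{S}_n + K - J$, and by the interval hypothesis $\tilde{S}_m \le \tilde{S}_n + K - J$ (valid as $m \le N-1$), so the maximum, and hence $M_m$, is at most $\tilde{S}_n + K - J$. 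Applying this at $m = N-1$ gives $M_{N-1} \le \tilde{S}_n + K - J < \tilde{S}_N$; then the recursion at step $N$ reduces to $M_N = \min\{\tilde{S}_N, \tilde{S}_N + K - J\} = \tilde{S}_N$, which is (b). Since also $M_{N-1} < \tilde{S}_N = M_N$, this simultaneously establishes (a) in this case.

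For case (c), where $\tilde{S}_N - \tilde{S}_n < -(K-J)$, I would run the mirror-image argument: the relevant invariant is now $M_m \ge \tilde{S}_n$ for all $m \in \{n, \dots, N-1\}$, proved by the symmetric induction using $M_m \ge \min\{M_{m-1}, \tilde{S}_m + K - J\}$ together with $\tilde{S}_m \ge \tilde{S}_n - (K-J)$ from the interval hypothesis. This gives $M_{N-1} \ge \tilde{S}_n > \tilde{S}_N + K - J$, so step $N$ is a downward clamp and $M_N = \tilde{S}_N + K - J < \tilde{S}_n \le M_{N-1}$, yielding (c) and (a) in this case. I note that (c) can alternatively be deduced from (b) via the involution $M_m \mapsto -M_m$, $\tilde{S}_m \mapsto -\tilde{S}_m - (K-J)$, which preserves the recursion \eqref{pathm} and interchanges the two cases; but writing out the symmetric induction directly is just as short.

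The main obstacle to flag is the insufficiency of the band bound: since $M_{N-1}$ depends on the entire history of the configuration to the left of $n$, its value cannot be read off from $\tilde{S}$ near $N$ alone, and the band bound confines $M_{N-1}$ only to an interval of width $2(K-J)$. The substance of the lemma is that a stretch on which $\tilde{S}$ varies by at most $K-J$ re-confines $M$ to the single-site band at $n$ regardless of that history --- the physical statement that on a nearly balanced stretch the carrier forgets its past loading. What makes the induction close cleanly is the elementary but crucial observation that \eqref{pathm} supplies the one-sided estimates $M_m \le \max\{M_{m-1}, \tilde{S}_m\}$ and $M_m \ge \min\{M_{m-1}, \tilde{S}_m + K-J\}$, which propagate the refined bounds with no case analysis; recognising that these, rather than the two-sided band bound, are the right tool is the step that requires care.
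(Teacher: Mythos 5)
Your proof is correct and follows essentially the same route as the paper's: both establish the band bound $\tilde{S}_m \le M_m \le \tilde{S}_m + K-J$, then propagate a one-sided bound on $M_m$ across the stretch $\{n,\dots,N-1\}$ via the estimate $M_m \le \max\{M_{m-1},\tilde{S}_m\}$ (the paper uses the invariant $M_m < \tilde{S}_N$, you use the equivalent, marginally tighter $M_m \le \tilde{S}_n + K-J$), and conclude that the recursion clamps at step $N$, with the mirror-image argument handling case (c). The only difference is cosmetic: the paper leaves the symmetric case to the reader, whereas you write out its invariant explicitly.
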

\begin{proof} First observe that \eqref{pathm} implies $\tilde{S}_n \le M_n\le \tilde{S}_n+K-J$ for all $n \in \mathbb{Z}$.  Suppose $n <N$, $\tilde{S}_N -\tilde{S}_n > K-J$ and $|\tilde{S}_m-\tilde{S}_n|\le K-J$ for all $m\in\{n,n+1,\dots,N-1\}$, then $\tilde{S}_m < \tilde{S}_N$ for all $m$ in the latter range, and also $M_n \le \tilde{S}_n + K-J <\tilde{S}_N$. Hence, since $M_{n+1} \le \max\{M_n, \tilde{S}_{n+1}\}$, if $n+1 <N$, then $M_{n+1} < \tilde{S}_N$. Recursively, we deduce that $M_{m} < \tilde{S}_N$ for any $m\in\{n,n+1,\dots,N-1\}$. It follows that $M_N=\tilde{S}_N$, and so $M_N \neq M_{N-1}$. The case when $\tilde{S}_N -\tilde{S}_n <-(K-J)$ can be dealt with in a similar way, and thus we establish (a). Note that the proofs of part (b) and (c) are contained in the argument already given.
\end{proof}

\begin{prp}\label{ch4}
Suppose $J<K < \infty$. It then holds that:\\
(a) $\mathcal{C}_{J,K}^{\exists}= \mathcal{C}_{J,K}$;\\
(b) $\mathcal{C}_{J,K}^{\exists !}= \{\eta \in \mathcal{C}_{J,K}:\:\overline{\tilde{S}}_{-\infty}=\infty\mbox{ or } \underline{\tilde{S}}_{-\infty} = -\infty\mbox{ or }\overline{\tilde{S}}_{-\infty} \ge \underline{\tilde{S}}_{-\infty}+K-J\}$;\\
(c)  $\mathcal{C}_{J,K}^{can}=\{\eta \in \mathcal{C}_{J,K}:\:\overline{\tilde{S}}_{-\infty}=\infty\mbox{ or } \underline{\tilde{S}}_{-\infty} = -\infty\mbox{ or }\overline{\tilde{S}}_{-\infty} > \underline{\tilde{S}}_{-\infty} + K-J \}$,\\
where we recall the notation $\overline{\tilde{S}}_{-\infty}$ and $\underline{\tilde{S}}_{-\infty}$ from below \eqref{overlineS}.
\end{prp}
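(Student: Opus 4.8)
The entire proposition is most naturally attacked through the path-encoding reformulation of Lemma \ref{pathlemma}: a carrier $Y$ for $\eta$ corresponds bijectively to a solution $M$ of the recursion \eqref{pathm}, subject to the band constraint $\tilde{S}_n\le M_n\le \tilde{S}_n+K-J$ (noted in the proof of Lemma \ref{fluctuation}), and $Y$ is canonical precisely when $M$ fails to converge in $\mathbb{R}$ as $n\to-\infty$. Since $K<\infty$, this band has finite width $K-J$, which is exactly what makes all three sets describable. For part (a) I would construct a carrier by a monotone limit: for each $N$ let $M^{(N)}$ solve \eqref{pathm} forward from the floor value $M^{(N)}_N:=\tilde{S}_N$; since the recursion map $x\mapsto \min\{\max\{x,\tilde{S}_n\},\tilde{S}_n+K-J\}$ is monotone non-decreasing, $(M^{(N)}_n)_{N\le n}$ increases as $N\downarrow-\infty$ and is bounded above by $\tilde{S}_n+K-J$, hence converges to a carrier $M^-$ (the minimal carrier); a ceiling-started version gives the maximal carrier $M^+$. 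This yields $\mathcal{C}_{J,K}^{\exists}=\mathcal{C}_{J,K}$, and reduces (b) to the statement ``$M^-=M^+$'' and (c) to ``some carrier fails to converge''.

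For the ``generic'' regimes I would invoke Lemma \ref{fluctuation} directly. When $\overline{\tilde{S}}_{-\infty}=\infty$ or $\underline{\tilde{S}}_{-\infty}=-\infty$, the band forces $M_n$ to be unbounded, so no carrier converges and every carrier is canonical; moreover the unboundedness produces fluctuations of $\tilde{S}$ exceeding $K-J$ infinitely often as $n\to-\infty$, and Lemma \ref{fluctuation} pins $M_n$ at those times, giving $M^-=M^+$. When $\overline{\tilde{S}}_{-\infty},\underline{\tilde{S}}_{-\infty}$ are finite with $\overline{\tilde{S}}_{-\infty}-\underline{\tilde{S}}_{-\infty}>K-J$, the discreteness of $\tilde{S}$ means it returns within distance less than one of both $\overline{\tilde{S}}_{-\infty}$ and $\underline{\tilde{S}}_{-\infty}$ infinitely often, so there are fluctuations $>K-J$ infinitely often; Lemma \ref{fluctuation}(a)--(c) then pins $M_n=\tilde{S}_n$ after up-swings and $M_n=\tilde{S}_n+K-J$ after down-swings, giving uniqueness, while these pinned values approach the two distinct levels $\overline{\tilde{S}}_{-\infty}$ and $\underline{\tilde{S}}_{-\infty}+K-J$, so $M$ does not converge and is canonical. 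These two cases supply the strict parts of (b) and (c).

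The delicate work, and the main obstacle, is the small-range case $\overline{\tilde{S}}_{-\infty}-\underline{\tilde{S}}_{-\infty}<K-J$ and the boundary case $\overline{\tilde{S}}_{-\infty}-\underline{\tilde{S}}_{-\infty}=K-J$, where Lemma \ref{fluctuation} (a strict statement) gives nothing and one must analyse the recursion by hand via the reflected walk $U_n:=M_n-\tilde{S}_n$, which satisfies $U_n=\min\{\max\{U_{n-1}+(\tilde{S}_{n-1}-\tilde{S}_n),0\},K-J\}$ on $\{0,1,\dots,K-J\}$. In the small-range case, the admissible constant values $A$ obeying $\tilde{S}_n\le A\le \tilde{S}_n+K-J$ for all small $n$ form the interval $[\overline{\tilde{S}}_{-\infty},\underline{\tilde{S}}_{-\infty}+K-J]$, whose endpoints are of $\tilde{S}$-type and whose (integer) length is positive, hence at least one; since the length is here $\ge 1$ there are at least two such values, each giving a distinct carrier $M\equiv A$ in the tail, so uniqueness fails. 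To show no carrier is canonical I would argue that $U$ cannot touch both barriers $0$ and $K-J$ infinitely often when the range of $\tilde{S}$ is $<K-J$: a floor visit forces $M_n=\tilde{S}_n\le\overline{\tilde{S}}_{-\infty}$ while a ceiling visit forces $M_n\ge \underline{\tilde{S}}_{-\infty}+K-J>\overline{\tilde{S}}_{-\infty}$, and passing between these requires $\tilde{S}$ to move by $K-J$, which exceeds its total oscillation; consequently $M$ eventually interacts with at most one barrier and is therefore eventually monotone in $n$, hence convergent.

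Finally, for the boundary case $\overline{\tilde{S}}_{-\infty}=\underline{\tilde{S}}_{-\infty}+K-J=:A$, I would prove that the constant $A$ is the unique carrier by a propagation argument: if $M_n>A$ for some $n$, then because $\tilde{S}_n\le\overline{\tilde{S}}_{-\infty}=A$ the recursion forces $M_{n-1}>A$, so $M_m>A$ for all $m\le n$, contradicting the fact that at the (infinitely many) troughs $M$ is capped by $\tilde{S}_n+K-J=A$; the symmetric argument using peaks gives $M_n\ge A$, whence $M\equiv A$. This carrier converges, so the boundary lies in $\mathcal{C}_{J,K}^{\exists!}$ but not in $\mathcal{C}_{J,K}^{can}$, which is exactly what reconciles the inequality ``$\ge$'' in (b) with the strict ``$>$'' in (c). Assembling the four regimes then yields the stated descriptions of $\mathcal{C}_{J,K}^{\exists!}$ and $\mathcal{C}_{J,K}^{can}$.
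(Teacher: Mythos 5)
Your architecture coincides to a large extent with the paper's: the paper also works entirely through the path-encoding of Lemma \ref{pathlemma} and the band constraint $\tilde{S}_n\le M_n\le \tilde{S}_n+K-J$, and it splits into exactly your regimes, namely (i) $\overline{\tilde{S}}_{-\infty}=\infty$ or $\underline{\tilde{S}}_{-\infty}=-\infty$ or $\overline{\tilde{S}}_{-\infty}>\underline{\tilde{S}}_{-\infty}+K-J$ (unique carrier, canonical, via the pinning Lemma \ref{fluctuation}), (ii) $\overline{\tilde{S}}_{-\infty}=\underline{\tilde{S}}_{-\infty}+K-J\in\mathbb{R}$ (unique carrier, not canonical, via a peak/trough propagation argument essentially identical to yours), and (iii) $\overline{\tilde{S}}_{-\infty}<\underline{\tilde{S}}_{-\infty}+K-J$ (multiple carriers, none canonical). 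Where you genuinely differ: for part (a) the paper constructs a carrier separately in each case (pinned values in case (i), constants $\overline{\tilde{S}}_{-\infty}+C$ in the tail in cases (ii)--(iii)), whereas your monotone-limit construction of minimal and maximal carriers $M^{\mp}$ settles $\mathcal{C}_{J,K}^{\exists}=\mathcal{C}_{J,K}$ uniformly and recasts uniqueness as $M^-=M^+$; this is a nice uniformisation, provided you record that the recursion map is continuous and that $M-\tilde{S}$ stays integer-valued, so that the limit is again a carrier with values in $\{0,\dots,K\}$. Also, in the small-range case (iii) the paper exhibits the carriers as the eventually-constant solutions and observes none is canonical, while you show directly that any carrier eventually interacts with at most one barrier, hence is eventually monotone and (by discreteness and boundedness) convergent; both are correct, and yours avoids classifying all carriers. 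Your boundary-case argument is the paper's argument.

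One step is not justified as written, though it is harmless. In the regime where $\overline{\tilde{S}}_{-\infty},\underline{\tilde{S}}_{-\infty}$ are finite with gap exceeding $K-J$, you deduce non-convergence from the claim that the pinned values ``approach the two distinct levels $\overline{\tilde{S}}_{-\infty}$ and $\underline{\tilde{S}}_{-\infty}+K-J$''. This does not follow from Lemma \ref{fluctuation}: an up-swing ending at $N$ only gives $M_N=\tilde{S}_N\in(\underline{\tilde{S}}_{-\infty}+K-J,\overline{\tilde{S}}_{-\infty}]$ and a down-swing only gives $M_N=\tilde{S}_N+K-J\in[\underline{\tilde{S}}_{-\infty}+K-J,\overline{\tilde{S}}_{-\infty})$, and these windows overlap, so the pinned values need not cluster at the two extreme levels (swings need not start from troughs or peaks). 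The conclusion is rescued by tools you already cite: Lemma \ref{fluctuation}(a) gives $M_N\neq M_{N-1}$ at pin times, which occur at times tending to $-\infty$, and since $M-\tilde{S}$ and the increments of $\tilde{S}$ are integers, $|M_N-M_{N-1}|\ge 1$ there, so $M$ cannot converge (this is precisely how the paper argues); alternatively, the band constraint alone gives $\limsup_{n\to-\infty}M_n\ge\overline{\tilde{S}}_{-\infty}>\underline{\tilde{S}}_{-\infty}+K-J\ge\liminf_{n\to-\infty}M_n$. With that one-line repair, your proof is complete and establishes all three parts of the proposition.
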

\begin{proof}
We consider three cases separately: (i) $\overline{\tilde{S}}_{-\infty}=\infty$ or $\underline{\tilde{S}}_{-\infty} = -\infty$ or $\overline{\tilde{S}}_{-\infty} > \underline{\tilde{S}}_{-\infty} + K-J $; (ii) $\overline{\tilde{S}}_{-\infty} = \underline{\tilde{S}}_{-\infty} + K-J \in \mathbb{R}$; (iii) $\overline{\tilde{S}}_{-\infty} < \underline{\tilde{S}}_{-\infty} + K-J$. Specifically, we will show that: in case (i), there exists a unique carrier which is canonical; in case (ii), there exists a unique carrier which is not canonical; and in case (iii), there are multiple carriers where any of them is not canonical. From this, the result follows.

Suppose (i) holds. There then exists a decreasing divergent sequence $(N_i)_{i\in\mathbb{N}}$ of integers such that $|\tilde{S}_{N_{i}} -\tilde{S}_{N_{i+1}}| > K-J$ and also $|\tilde{S}_m-\tilde{S}_{N_i}|\le K-J$ for all $m\in\{N_{i+1},N_{i+1}+1,\dots,N_{i}-1\}$. Now, define a process $M$ by setting $M_{N_i}:=\tilde{S}_{N_i}$ if $\tilde{S}_{N_i}-\tilde{S}_{N_{i+1}}>K-J$ and $M_{N_i}:=\tilde{S}_{N_i}+K-J$ if $\tilde{S}_{N_i}-\tilde{S}_{N_{i+1}}<-(K-J)$, and then defining $M_n$ for $n\neq N_i$ from \eqref{pathm} recursively. Arguing as in the proof of Lemma \ref{fluctuation}, it follows that $M$ satisfies \eqref{pathm} everywhere. Since $\tilde{S}_n \le M_n \le \tilde{S}_n +K-J$ and $M-\tilde{S}$ takes integer values, it readily follows that $Y:=M-S+\frac{J}{2}$ takes values in $\{0,1,\dots,K\}$, and hence is a carrier for $\eta$. Lemma \ref{fluctuation} further tells us that this carrier is unique. Finally, applying Lemma \ref{fluctuation} again yields $M_{N_i} \neq M_{N_i-1}$, which means $M_n$ does not converge to a constant as $n \to -\infty$. Hence, by Lemma \ref{pathlemma}, the carrier is canonical.

Next, suppose (ii) holds. There then exists $B_{\eta} \in \mathbb{Z}$ such that $\sup_{m \le n}\tilde{S}_m=\overline{\tilde{S}}_{-\infty}=\underline{\tilde{S}}_{-\infty}+K-J$ and $\inf_{m \le n}\tilde{S}_m=\underline{\tilde{S}}_{-\infty}$ for all $n \le B_{\eta}$. Define $M_n:=\overline{\tilde{S}}_{-\infty}$ for $n \le B_{\eta}$, and by \eqref{pathm} for $n> B_{\eta}$ recursively. Since $\tilde{S}_n \le M_n\leq \tilde{S}_n+K-J$ for $n \le B_{\eta}$, $M$ satisfies \eqref{pathm}, and so there exists a carrier $Y$. Moreover, by Lemma \ref{pathlemma}, this is not canonical. If there exists another carrier, then $M_n \neq \overline{\tilde{S}}_{-\infty}$ for some $n \le B_{\eta}$. Suppose $M_n > \overline{\tilde{S}}_{-\infty}$. Since $\overline{\tilde{S}}_{-\infty}=\underline{\tilde{S}}_{-\infty}+K-J$, there exists an $N < n$ such that $M_N\le S_N+K-J=\underline{\tilde{S}}_{-\infty}+K-J=\overline{\tilde{S}}_{-\infty}$. Thus, since $M$ satisfies \eqref{pathm}, $M_{m} \le \max\{M_{m-1},\tilde{S}_m\} \le \overline{\tilde{S}}_{-\infty}$ for $m\in\{N+1,N+2,\dots,n\}$. In particular, we have that $M_n \le \overline{\tilde{S}}_{-\infty}$, which contradicts the assumption. In the same way, we can show that $M_n <\overline{\tilde{S}}_{-\infty}$ for some $n \le B_{\eta}$ yields a contradiction. Hence the carrier is unique.

Finally, suppose (iii) holds. Then,  there exists $B_{\eta} \in \mathbb{Z}$ such that $\sup_{m \le n}\tilde{S}_m=\overline{\tilde{S}}_{-\infty}$ and $\inf_{m \le n}\tilde{S}_m=\underline{\tilde{S}}_{-\infty}$ for all $n \le B_{\eta}$. For any $C \in \{0,1,\dots,\underline{\tilde{S}}_{-\infty} + K-J-\overline{\tilde{S}}_{-\infty}\}$, define $M_n=\overline{\tilde{S}}_{-\infty}+C$ for $n \le B_{\eta}$, and by \eqref{pathm} for $n> B_{\eta}$ recursively. Then, as in case (ii), each such $M$ satisfies \eqref{pathm} and is associated with a carrier. Hence, there exists multiple carriers and none are canonical. (It is also possible to check, as in case (ii), that there are no other carriers.)
\end{proof}

\begin{proof}[Proof of Lemma \ref{unique} in the case $J<K<\infty$]
Since $\mathcal{C}_{J,K}^{can} \subseteq \mathcal{C}_{J,K}^{\exists !}$, this is clear.
\end{proof}

\begin{proof}[Proof of Lemma \ref{keylemma} in the case $J<K<\infty$]
(a) Suppose $\eta \in \mathcal{C}_{J,K}^{\exists}$ and $Y$ is a carrier for $\eta$ but not a canonical carrier. This implies case (ii) or case (iii) from the proof of Proposition \ref{ch4} hold, namely $\overline{\tilde{S}}_{-\infty}, \underline{\tilde{S}}_{-\infty} \in \mathbb{R}$ and $\overline{\tilde{S}}_{-\infty} \le \underline{\tilde{S}}_{-\infty}+K-J$. Since, by Lemma \ref{pathlemma}, $M_n$ is a constant for $n \le B_Y$, Lemma \ref{pitman} implies $T^YS_n-T^YS_{n-1}=-(S_n-S_{n-1})$ for $n \le B_Y$. It follows that $\limsup_{n \to -\infty}\widetilde{T^YS}_n = - \underline{\tilde{S}}_{-\infty}+C$ and $\liminf_{n \to -\infty}\widetilde{T^YS}_n = - \overline{\tilde{S}}_{-\infty}+C$ for some constant $C$. Thus $\limsup_{n \to -\infty}\widetilde{T^YS}_n\in\mathbb{R}$, $\liminf_{n \to -\infty}\widetilde{T^YS}_n \in \mathbb{R}$ and
$\limsup_{n \to -\infty}\widetilde{T^YS}_n \le \liminf_{n \to -\infty}\widetilde{T^YS}_n+K-J$. Therefore, by Proposition \ref{ch4}, $T^Y\eta \notin \mathcal{C}_{J,K}^{can}$.\\
(b) Let $(\eta^{(i)})_{i \in \mathbb{N}}$ and $(Y^{(i)})_{i \in \mathbb{N}}$ satisfy the assumptions of the lemma, so that in particular $Y^{(1)}=Y$ is not canonical. Then, as in the argument for (a), $\overline{S}_{-\infty}, \underline{S}_{-\infty} \in \mathbb{R}$, and so there exists $B_{\eta} \in \mathbb{Z}$ such that $\sup_{m \le n}\tilde{S}_m=\overline{\tilde{S}}_{-\infty}$ and $\inf_{m \le n}\tilde{S}_m=\underline{\tilde{S}}_{-\infty}$ for all $n \le B_{\eta}$. Moreover, by the proof of Proposition \ref{ch4}, $M_n=Y_n+S_n-\frac{J}{2}$ is constant for $n \le B_{\eta}$, and so $B_Y \ge B_{\eta}$ (recall \eqref{iff}). Now, by Lemma \ref{pitman}, $S^{(2)}:=T^YS$, the path encoding of $\eta^{(2)}=T^Y\eta^{(1)}$, also satisfies $\sup_{m \le n}\tilde{S}^{(2)}_m=\overline{\tilde{S}^{(2)}}_{-\infty}$ and $\inf_{m \le n}\tilde{S}^{(2)}_m=\underline{\tilde{S}^{(2)}}_{-\infty}$ for all $n\le B_{\eta}$. In particular, $B_{Y^{(2)}} \ge B_{\eta}$. Repeating the same argument, we conclude that $\inf_{i}B_{Y^{(i)}} \ge B_{\eta} > -\infty$, as required.
\end{proof}

We complete the section by describing an explicit subset of $\mathcal{C}^{inv}_{J,K}$, which, similarly to \eqref{invsubs}, is natural for homogeneous random configurations.

\begin{crl}\label{invsubs2} For $J<K<\infty$, $\{\eta \in \mathcal{C}_{J,K}:\:\limsup_{n \to \pm \infty}|S_n|=\infty\}\subseteq\mathcal{C}_{J,K}^{inv}$.
\end{crl}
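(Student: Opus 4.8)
The plan is to mirror the proof of Corollary \ref{invsubs}, replacing the linear-growth criterion there by the present unboundedness criterion and Proposition \ref{ch3} by Proposition \ref{ch4}. Write $\mathcal{A}:=\{\eta\in\mathcal{C}_{J,K}:\:\limsup_{n\to\pm\infty}|S_n|=\infty\}$ for the set in question. I will show that $\mathcal{A}\subseteq\mathcal{C}^{rev}_{J,K}$ and that $\mathcal{A}$ is invariant under both $T$ and $R$. Since these maps are everywhere defined on $\mathcal{A}$ and $T^{-1}=RTR$ by \eqref{tinv}, it then follows that $T^t\eta\in\mathcal{A}\subseteq\mathcal{C}^{rev}_{J,K}$ for every $t\in\mathbb{Z}$, which is precisely the statement that $\eta\in\mathcal{C}^{inv}_{J,K}$.

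First I would check $\mathcal{A}\subseteq\mathcal{C}^{can}_{J,K}$. If $\limsup_{n\to-\infty}|S_n|=\infty$, then along a subsequence $n_k\to-\infty$ we have $|S_{n_k}|\to\infty$, so either $\overline{S}_{-\infty}=\infty$ or $\underline{S}_{-\infty}=-\infty$; since $|S_n-\tilde{S}_n|\le\frac{J}{2}$, this forces $\overline{\tilde{S}}_{-\infty}=\infty$ or $\underline{\tilde{S}}_{-\infty}=-\infty$, which is one of the disjuncts in the characterisation of $\mathcal{C}^{can}_{J,K}$ given in Proposition \ref{ch4}(c). Hence $\eta\in\mathcal{C}^{can}_{J,K}$. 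Because $R\eta$ has path encoding $RS=(-S_{-n})_{n\in\mathbb{Z}}$, the condition $\limsup_{n\to+\infty}|S_n|=\infty$ is exactly $\limsup_{n\to-\infty}|(RS)_n|=\infty$, so the same argument yields $R\eta\in\mathcal{C}^{can}_{J,K}$. Thus $\mathcal{A}\subseteq\mathcal{C}^{rev}_{J,K}$, and moreover $\mathcal{A}$ is manifestly invariant under $R$, as the two limsup conditions simply interchange.

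Next I would establish the $T$-invariance of $\mathcal{A}$. For $\eta\in\mathcal{A}$ with canonical carrier and associated process $M$ from Lemma \ref{pathlemma}, the defining relation \eqref{pathm} gives the two-sided bound $\tilde{S}_n\le M_n\le\tilde{S}_n+K-J$ for every $n$. By Lemma \ref{pitman}, the path encoding of $T\eta$ is $TS_n=2M_n-S_n-2M_0$, and using $S_{n-1}=2\tilde{S}_n-S_n$ one computes
\[TS_n-S_{n-1}=2(M_n-\tilde{S}_n)-2M_0\in\left[-2M_0,\,2(K-J)-2M_0\right],\]
so that $TS_n-S_{n-1}$ is bounded uniformly in $n$. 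Consequently $\limsup_{n\to\pm\infty}|TS_n|=\infty$ if and only if $\limsup_{n\to\pm\infty}|S_n|=\infty$, i.e.\ $T\eta\in\mathcal{A}$.

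Finally I would assemble the pieces. Since $\mathcal{A}\subseteq\mathcal{C}^{rev}_{J,K}$, the inverse $T^{-1}=RTR$ is defined on $\mathcal{A}$, and as $\mathcal{A}$ is invariant under both $R$ and $T$ it is invariant under $T^{-1}$ as well; by induction $T^t\eta\in\mathcal{A}$ for all $t\in\mathbb{Z}$. Each such $T^t\eta$ lies in $\mathcal{C}^{rev}_{J,K}$ by the first step, whence $\eta\in\mathcal{C}^{inv}_{J,K}$ by the definition \eqref{cinvdef}. I expect the only point requiring genuine care to be the $T$-invariance step: specifically, extracting the uniform bound on $M_n-\tilde{S}_n$ from \eqref{pathm} and checking that the additive constants produced by Lemma \ref{pitman} do not affect the limsup, rather than any conceptual difficulty. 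Everything else is a direct application of Proposition \ref{ch4}(c) together with the $R$-symmetry of the criterion.
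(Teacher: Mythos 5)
Your proposal is correct and follows essentially the same route as the paper: membership of $\mathcal{C}^{rev}_{J,K}$ via Proposition \ref{ch4}, preservation of the unboundedness criterion under $T$ via Lemma \ref{pitman} together with the bound $\tilde{S}_n\le M_n\le\tilde{S}_n+K-J$ from \eqref{pathm}, and then the $R$-symmetry/iteration argument borrowed from Corollary \ref{invsubs}. The only (immaterial) difference is that you compare $TS_n$ with $S_{n-1}$, whereas the paper bounds $|M_n-S_n|\le K-\frac{J}{2}$ and compares $TS_n$ directly with $S_n$.
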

\begin{proof} If $\limsup_{n \to \pm \infty}|S_n|=\infty$, then Proposition \ref{ch4} readily yields that $\eta\in\mathcal{C}^{rev}_{J,K}$. Moreover, from \eqref{pathm} and the definition of $\tilde{S}$ we know that $|M_n-S_n| \le |M_n-\tilde{S}_n|+|\tilde{S}_n-S_n| \le K-\frac{J}{2}$ for any $n$. Hence, if $\limsup_{n \to \pm \infty}|S_n|=\infty$, then $\lim_{n \to \pm\infty}|TS_n|=\lim_{n \to \pm\infty}|2M_n-S_n-2M_0|=\infty$, and it follows that $T\eta\in\mathcal{C}^{rev}_{J,K}$. The remainder of the proof is identical to that of Corollary \ref{invsubs}.
\end{proof}

\section{Duality between invariant properties of probability measures}\label{probdualsec}

With the deterministic preparations in place, we are now ready to study probability measures on configurations. In particular, the main aim of this section is to prove Theorem \ref{dualthm}. We also give a lemma that shows how independence between the two sides of the configuration transfers into a corresponding property for the current sequence (see Lemma \ref{indep}). We recall the definitions of $\mathcal{P}_{J,K}$, $\mathcal{P}_{J,K}^{rev}$, $\mathcal{P}_{J,K}^{inv}$, $\tilde{\mathcal{P}}_{J,K}^{inv}$ from above the statement of Theorem \ref{dualthm}, as well as the maps $\mathcal{D}_{J,K}$ and $\mathcal{D}^P_{J,K}$ from \eqref{dualitymap} and \eqref{djkpdef}, respectively. We start with a simple lemma that shows if the dual measure of a measure $P_{J,\infty} \in \mathcal{P}_{J,\infty}^{inv}$ is spatially stationary, then $P_{J,\infty} \in \tilde{\mathcal{P}}_{J,\infty}^{inv}$, which will be important in allowing us to appeal to the bijectivity of $\mathcal{D}^P_{J,\infty}$ on this smaller set.

\begin{lmm}\label{pinvlmm}
If $P_{J,\infty} \in \mathcal{P}_{J,\infty}^{inv}$, and $P_{\infty,J}:= \mathcal{D}^P_{J,\infty}(P_{J,\infty})$ satisfies $P_{\infty,J}\circ \theta^{-1}=P_{\infty,J}$, then $P_{J,\infty} \in \tilde{\mathcal{P}}_{J,\infty}^{inv}$.
\end{lmm}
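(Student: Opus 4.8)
The plan is to show that $P_{J,\infty}$, which by hypothesis is supported on $\mathcal{C}^{inv}_{J,\infty}$, in fact gives full mass to the smaller set $\tilde{\mathcal{C}}^{inv}_{J,\infty}$; equivalently, that the ``bad set'' $\mathcal{C}^{inv}_{J,\infty}\setminus\tilde{\mathcal{C}}^{inv}_{J,\infty}$ is $P_{J,\infty}$-null. The starting point is Lemma \ref{lmmB}, which bounds this set: every $\eta$ in it satisfies $B_{\mathcal{D}_{J,\infty}(\eta)}=-\infty$ or $B_{R\mathcal{D}_{J,\infty}(\eta)}=-\infty$. Hence it suffices to prove that both of these events are $P_{J,\infty}$-null.

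To do this I would transfer the problem to the dual measure. By the definition \eqref{djkpdef} of $\mathcal{D}^P_{J,\infty}$ together with the hypothesis $P_{\infty,J}\circ\theta^{-1}=P_{\infty,J}$, the law of the configuration $\zeta:=\mathcal{D}_{J,\infty}(\eta)$ under $P_{J,\infty}$ is exactly $P_{\infty,J}$: the shift built into $\mathcal{D}^P_{J,\infty}$ is absorbed by the assumed spatial stationarity, and in any case the events $\{B_{\cdot}=-\infty\}$ are shift-invariant. Thus it remains to show $P_{\infty,J}(B_\zeta=-\infty)=0$ and $P_{\infty,J}(B_{R\zeta}=-\infty)=0$.

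Next I would unpack $B_\zeta=-\infty$ in the present regime, where the dual model has box capacity $\infty$ and carrier capacity $J$, applying the definitions introduced around \eqref{beta} with box capacity $\infty$. In that case the upper constraint in $B^{alt}_{c,i}$ is vacuous, so $B^{alt}_{c,i}>-\infty$ holds precisely when the coordinates of one fixed parity are eventually (as $n\to-\infty$) bounded below by $c$; an elementary check, comparing the infimum over $(c,i)$ to the most constraining admissible level, then shows that $B_\zeta=-\infty$ forces $\liminf_{n\to-\infty}\zeta_n=\infty$ along at least one parity class. Likewise, since $(R\zeta)_n=\zeta_{1-n}$, the event $B_{R\zeta}=-\infty$ forces $\liminf_{n\to+\infty}\zeta_n=\infty$ along a parity class. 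So the two events to be controlled each express that the finite-valued sequence $\zeta$ diverges to $\infty$ in one spatial direction along a fixed parity.

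The main point, and the only real input beyond this bookkeeping, is that a stationary $\mathbb{Z}_+$-valued sequence cannot do this. Here I would invoke Poincaré recurrence for the measure-preserving map $\theta^{-2}$ (which preserves $P_{\infty,J}$ since $\theta$ does): for each $k$ with $P_{\infty,J}(\zeta_0=k)>0$, almost every $\zeta$ with $\zeta_0=k$ satisfies $\zeta_{-2n}=k$ for infinitely many $n$, so the even-parity $\liminf$ as $n\to-\infty$ is at most $k<\infty$; summing over $k$, and using that $\zeta_0$ is almost surely finite because $\zeta\in\mathcal{C}_{\infty,J}$, gives $\liminf_{n\to-\infty}\zeta_{2n}<\infty$ almost surely, with the analogous argument applied to $\{\zeta_{-1}=k\}$ handling the odd parity. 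Running the same recurrence argument with $\theta^{2}$ controls the $n\to+\infty$ direction and hence $B_{R\zeta}=-\infty$. Combining these yields $P_{\infty,J}(B_\zeta=-\infty)=P_{\infty,J}(B_{R\zeta}=-\infty)=0$, so $P_{J,\infty}$ is null on the bad set, i.e.\ $P_{J,\infty}\in\tilde{\mathcal{P}}^{inv}_{J,\infty}$. I expect the recurrence step to be the crux; everything else is the reduction via Lemma \ref{lmmB} and the stationarity-driven identification of the law of $\zeta$ with $P_{\infty,J}$.
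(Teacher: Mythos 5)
Your proof is correct, and its skeleton coincides with the paper's: both reduce the claim, via Lemma \ref{lmmB}, to showing that the events $\{B_{\mathcal{D}_{J,\infty}(\eta)}=-\infty\}$ and $\{B_{R\mathcal{D}_{J,\infty}(\eta)}=-\infty\}$ are $P_{J,\infty}$-null, and both then exploit the spatial stationarity of the dual law (writing $\zeta=\mathcal{D}_{J,\infty}(\eta)$, the hypothesis $P_{\infty,J}\circ\theta^{-1}=P_{\infty,J}$ identifies the law of $\zeta$ with $P_{\infty,J}$, exactly as you say). Where you genuinely differ is in how stationarity is brought to bear. The paper argues at the level of the random variables $B_{c,i}^{alt}$: under a $\theta$-invariant law each $B_{c,i}^{alt}$ has a distribution invariant under translation by $2$ (since $B_{c,i,\theta^2\zeta}^{alt}=B_{c,i,\zeta}^{alt}-2$), hence takes values in $\{\pm\infty\}$ almost surely; as $B_\zeta=-\infty$ forces some $B_{c,i}^{alt}$ to be finite, the events \eqref{p11} and \eqref{p12} are null, giving the contradiction. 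You instead translate $B_\zeta=-\infty$ into a concrete statement about the configuration --- correctly noting that in the dual regime (box capacity $\infty$, carrier capacity $J$) the upper constraints in $B_{c,i}^{alt}$ are vacuous, so that $B_\zeta=-\infty$ holds exactly when $\liminf_{n\to-\infty}\zeta_n=\infty$ along a parity class --- and then exclude this by Poincar\'e recurrence for $\theta^{\mp 2}$ applied to the sets $\{\zeta_0=k\}$, $\{\zeta_{-1}=k\}$. Both finishing moves are elementary consequences of measure preservation: the paper's is a little shorter because it never needs to interpret $B_\zeta=-\infty$; yours is more transparent, isolating the probabilistic content as the statement that a stationary $\mathbb{Z}_+$-valued sequence cannot drift to $+\infty$ in one spatial direction. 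A minor observation: the paper's events \eqref{p11}--\eqref{p12} also carry the clause that $B_{q,i}>-\infty$ for some $q,i$ (recording non-canonicity of the dual carrier), but, as your argument makes clear, this extra clause is not needed to reach the conclusion.
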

\begin{proof} Suppose $\eta$ has distribution given by $P_{J, \infty}$, and assume $P_{\infty,J}\circ \theta^{-1}=P_{\infty,J}$ but $P_{J,\infty}(\tilde{\mathcal{C}}_{J,\infty}^{inv}) <1$. By Proposition \ref{ch1} and Lemma \ref{lmmB}, either
\begin{equation}\label{p11}
P_{J,\infty}\hspace{-2pt}\left(B_{\mathcal{D}_{J,\infty}(\eta)} =-\infty, \exists q \in \left\{0,1,\dots, \lfloor\tfrac{J}{2}\rfloor\right\}, i \in \{0,1\}\mbox{ such that }B_{q,i,\mathcal{D}_{J,\infty}(\eta)} > -\infty\right)
\end{equation}
or
\begin{equation}\label{p12}
P_{J,\infty}\hspace{-2pt}\left(B_{R\mathcal{D}_{J,\infty}(\eta)} =-\infty, \exists q \in \left\{0,1,\dots, \lfloor\tfrac{J}{2}\rfloor\right\}, i \in \{0,1\}\mbox{ such that }B_{q,i,R\mathcal{D}_{J,\infty}(\eta)} > -\infty\right)
\end{equation}
is strictly positive, where we recall the definitions of $B_{\eta}, B_{q,i,\eta}$ and $B_{c,i,\eta}$ from Subsection \ref{case1}. Now, since $P_{\infty,J}\circ \theta^{-1}=P_{\infty,J}$, we have that $P_{\infty,J}(B_{c,i,\eta}^{alt} \in \{\infty,-\infty\})=1$ for all $c \in \mathbb{Z}_+$ and $i \in \{0,1\}$. Moreover, since $P_{\infty,J}\circ R \circ \theta^{-1}=P_{\infty,J}\circ R$, it must also be the case that $P_{\infty,J}(B_{c,i,R\eta}^{alt} \in \{\infty,-\infty\})=1$ for all $c \in \mathbb{Z}_+$ and $i \in \{0,1\}$. Hence the probabilities at \eqref{p11} and \eqref{p12} must both be equal to 0, but this is a contradiction.
\end{proof}

\begin{proof}[Proof of Theorem \ref{dualthm}] (a) By assumption if $\eta\sim P_{J,K}$, then $T_{J,K}\eta\sim P_{J,K}$, and so $T\eta\in \mathcal{C}^{rev}_{J,K}$, $P_{J,K}$-a.s. Iterating this and appealing to countability yields $T_{J,K}^t\eta\in \mathcal{C}^{rev}_{J,K}$ for all $t\geq 0$, $P_{J,K}$-a.s. Moreover, from Proposition \ref{tinvprp}, we also obtain that $T_{J,K}^{-1}\eta\sim P_{J,K}$, and so we can argue as before to extend the previous conclusion to $T_{J,K}^t\eta\in \mathcal{C}^{rev}_{J,K}$ for all $t\in\mathbb{Z}$, $P_{J,K}$-a.s. Thus we have established $P_{J,K}\in\mathcal{P}^{inv}_{J,K}$. Now, recall from \eqref{djkident} that $\mathcal{D}_{J,K}\circ T_{J,K}=\theta\circ\mathcal{D}_{J,K}$ on $\mathcal{C}^{inv}_{J,K}$. Thus, defining $P_{K,J}:= P_{J,K} \circ \mathcal{D}_{J,K}^{-1}\circ\theta$, we find that
\[P_{K,J}\circ \theta^{-1}=P_{J,K}\circ \mathcal{D}_{J,K}^{-1}=P_{J,K}\circ T_{J,K}^{-1}\circ \mathcal{D}_{J,K}^{-1}\circ\theta=P_{J,K}\circ \mathcal{D}_{J,K}^{-1}\circ\theta=P_{K,J},\]
and so we can apply Lemma \ref{pinvlmm} to deduce that $P_{J,K}\in\tilde{\mathcal{P}}^{inv}_{J,K}$. (NB. The latter lemma deals with the only case in which $\tilde{\mathcal{P}}^{inv}_{J,K}$ is not equal to ${\mathcal{P}}^{inv}_{J,K}$.)

(b) We already established the `only if' part of the claim in the proof of part (a), and so we need to prove the converse. Moreover, we note that if $P_{K,J}\circ\theta^{-1}=P_{K,J}$, then Lemma \ref{pinvlmm} gives us that $P_{J,K}\in\tilde{\mathcal{P}}^{inv}_{J,K}$. Hence we also have that $P_{K,J}\in\tilde{\mathcal{P}}_{K,J}^{inv}$ and $P_{J,K}=P_{K,J}\circ\mathcal{D}_{K,J}^{-1}\circ\theta$ (see comment below \eqref{djkpdef}). It follows that
\begin{align*}
P_{J,K}\circ T_{J,K}^{-1}&=P_{J,K}\circ \mathcal{D}_{J,K}^{-1}\circ\theta^{-1}\circ \mathcal{D}_{J,K}=P_{K,J}\circ\theta^{-2}\circ \mathcal{D}_{J,K}\\
&=P_{K,J}\circ\theta^{-1}\circ \mathcal{D}_{J,K}
=P_{K,J}\circ\mathcal{D}_{J,K}^{-1}\circ\theta
=P_{J,K},
\end{align*}
where we have applied \eqref{djkident}, the definition of $P_{K,J}$, the invariance of $P_{K,J}$ under $\theta$, \eqref{dualinv}, and the identity $P_{J,K}=P_{K,J}\circ\mathcal{D}_{K,J}^{-1}\circ\theta$, respectively.

(c) Again, under the invariance of either side, then we know that both $P_{J,K}\in\tilde{\mathcal{P}}^{inv}_{J,K}$ and $P_{K,J}\in\tilde{\mathcal{P}}^{inv}_{K,J}$, and so we only need to deal with the underlying spaces $\tilde{\mathcal{C}}_{J,K}^{inv}$ and $\tilde{\mathcal{C}}_{K,J}^{inv}$. First suppose that $T_{J,K}$ is ergodic for $P_{J,K}$. To check that $\theta$ is ergodic for $P_{K,J}$, we are required to check that if a measurable subset $A\subseteq\tilde{\mathcal{C}}^{inv}_{K,J}$ satisfies $\theta^{-1}(A)=A$, then $P_{K,J}(A)\in\{0,1\}$. To this end, note that
\[T_{J,K}^{-1}\circ\mathcal{D}_{J,K}^{-1}\circ\theta(A)=
\mathcal{D}_{J,K}^{-1}\circ\theta^{-1}\circ\theta(A)=
\mathcal{D}_{J,K}^{-1}\circ\theta\circ\theta^{-1}(A)=
\mathcal{D}_{J,K}^{-1}\circ\theta(A),\]
i.e.\ $\mathcal{D}_{J,K}^{-1}\circ\theta(A)$ is invariant for $T_{J,K}$, and hence the ergodicity of $T_{J,K}$ for $P_{J,K}$ implies $P_{K,J}(A)=P_{J,K}(\mathcal{D}_{J,K}^{-1}\circ\theta(A))\in\{0,1\}$. Similarly, if $\theta$ is ergodic for $T_{K,J}$, and a measurable subset $A\subseteq\tilde{\mathcal{C}}^{inv}_{J,K}$ satisfies $T_{J,K}^{-1}(A)=A$, then
\begin{eqnarray*}
\theta^{-1}\circ\mathcal{D}_{K,J}^{-1}\circ\theta(A)&=&\theta^{-2}\circ\mathcal{D}_{J,K}(A)=\theta^{-1}\circ\mathcal{D}_{J,K}\circ T_{J,K}^{-1}(A)\\
&=&\theta^{-1}\circ\mathcal{D}_{J,K}(A)
=\mathcal{D}_{K,J}^{-1}\circ\theta(A),
\end{eqnarray*}
which implies $P_{J,K}(A)=P_{K,J}(\mathcal{D}_{K,J}^{-1}\circ\theta(A))\in\{0,1\}$, and this completes the proof.
\end{proof}

We now give a lemma that will be useful for studying i.i.d.\ measures, as we do in the next section.

\begin{lmm}\label{indep}
Suppose that $\eta$ is a random configuration with distribution whose support is contained within $\mathcal{C}_{J,K}^{inv}$. If $(\eta_n)_{n \le 0}$ and $(\eta_n)_{n \ge 1}$ are independent, then so are $(T^tW_0)_{t \ge 0}$ and $(T^tW_0)_{t \le -1}$.
\end{lmm}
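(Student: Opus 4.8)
The plan is to show that the ``future current'' $((T^{t}W)_0)_{t\ge 0}$ is a measurable function of the left half $(\eta_n)_{n\le 0}$ alone, while the ``past current'' $((T^{t}W)_0)_{t\le -1}$ is a measurable function of the right half $(\eta_n)_{n\ge 1}$ alone; the claimed independence then follows immediately from the assumed independence of the two halves. Throughout I would write $\xi_t:=(T^{t}W)_0$, where $T^{t}W$ denotes the canonical carrier of $T^{t}\eta$; since the support of the law of $\eta$ lies in $\mathcal{C}^{inv}_{J,K}$, each $T^{t}\eta$ lies in $\mathcal{C}^{rev}_{J,K}\subseteq\mathcal{C}^{can}_{J,K}$, so all of these quantities are well-defined for every $t\in\mathbb{Z}$.

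First I would establish the \emph{causality claim}: for every configuration whose forward orbit lies in $\mathcal{C}^{can}_{J,K}$, and for all $t\ge 0$ and $m\le 0$, the quantity $(T^{t}\eta)_m$ is a function of $(\eta_n)_{n\le 0}$. This goes by induction on $t$, the base case $t=0$ being trivial. For the inductive step one writes $(T^{t+1}\eta)_m=F^{(1)}_{J,K}((T^{t}\eta)_m,(T^{t}W)_{m-1})$; by Lemma \ref{unique}, the carrier value $(T^{t}W)_{m-1}$ is a function of $((T^{t}\eta)_j)_{j\le m-1}$, and since $m-1\le 0$ every such index satisfies $j\le 0$, so the inductive hypothesis applies to each factor. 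Combining this with a further application of Lemma \ref{unique}, namely that $\xi_t=(T^{t}W)_0$ is a function of $((T^{t}\eta)_m)_{m\le 0}$, shows that $(\xi_t)_{t\ge 0}$ is $\sigma((\eta_n)_{n\le 0})$-measurable.

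The second ingredient is a reversal identity relating the current of $\eta$ to that of its spatial reflection $R\eta$. Observe first that $R\eta\in\mathcal{C}^{inv}_{J,K}$: using $T^{t}R=RT^{-t}$ (which follows by iterating $TR=RT^{-1}$) together with the $R$-invariance of $\mathcal{C}^{rev}_{J,K}$, one has $T^{t}(R\eta)=R(T^{-t}\eta)\in\mathcal{C}^{rev}_{J,K}$ for all $t$. Writing $\hat{\eta}:=R\eta$ and $\hat{\xi}_s:=(T^{s}\hat{W})_0$ for its current, I would invoke the computation from the proof of Proposition \ref{tinvprp}: for any $\zeta\in\mathcal{C}^{rev}_{J,K}$ with canonical carrier $U$, the reflected sequence $(U_{-n})_n$ is the canonical carrier of $RT\zeta$. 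Taking $\zeta=T^{-s-1}\eta$, so that $RT\zeta=RT^{-s}\eta=T^{s}\hat{\eta}$ and $U=T^{-s-1}W$, the canonical carrier of $T^{s}\hat{\eta}$ is $((T^{-s-1}W)_{-n})_n$; evaluating at position $0$ gives $\hat{\xi}_s=(T^{-s-1}W)_0=\xi_{-s-1}$. As $s$ runs over $\{0,1,2,\dots\}$ the index $-s-1$ runs over $\{-1,-2,\dots\}$, so $(\hat{\xi}_s)_{s\ge 0}=(\xi_t)_{t\le -1}$.

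Finally, applying the causality claim of the second paragraph to $\hat{\eta}=R\eta\in\mathcal{C}^{inv}_{J,K}$ shows that $(\hat{\xi}_s)_{s\ge 0}$ is a function of $(\hat{\eta}_n)_{n\le 0}=(\eta_{1-n})_{n\le 0}=(\eta_j)_{j\ge 1}$; that is, $(\xi_t)_{t\le -1}$ is $\sigma((\eta_n)_{n\ge 1})$-measurable. Since $(\eta_n)_{n\le 0}$ and $(\eta_n)_{n\ge 1}$ are independent, the two currents $(\xi_t)_{t\ge 0}$ and $(\xi_t)_{t\le -1}$, being functions of these respective independent halves, are independent, as required. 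I expect the main obstacle to be the bookkeeping of the reversal identity in the third paragraph: one must check that the reflection supplied by Proposition \ref{tinvprp} correctly converts the backward current of $\eta$ into the forward current of $R\eta$, and that the index shift $t\mapsto -s-1$ lines up exactly with the cut between $\{t\ge 0\}$ and $\{t\le -1\}$. By contrast, the causality induction is routine once Lemma \ref{unique} is in hand.
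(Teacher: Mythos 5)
Your proof is correct and follows essentially the same route as the paper's: forward measurability of $((T^tW)_0)_{t\ge 0}$ via iterated application of Lemma \ref{unique} and the local update rule, and then the carrier-reversal identity from the proof of Proposition \ref{tinvprp} to identify $((T^tW)_0)_{t\le -1}$ with the forward current of $R\eta$, hence measurable with respect to $(\eta_n)_{n\ge 1}$. The only differences are cosmetic: you formalise the iteration as an induction and derive the reversal identity starting from $\zeta=T^{-s-1}\eta$ rather than from the carrier of $R\eta$, but the key facts invoked and the structure of the argument coincide with the paper's.
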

\begin{proof} We have from Lemma \ref{unique} that $(W_n)_{n\leq 0}$ is $(\eta_n)_{n\leq 0}$ measurable, and hence from \eqref{carrierdef}, we also have that $((T\eta)_n)_{n\leq 0}$ is $(\eta_n)_{n\leq 0}$ measurable. Iterating this yields that $(T^tW_0)_{t \ge 0}$ is $(\eta_n)_{n\leq 0}$ measurable. Conversely, write $V$ for the canonical carrier associated with $R\eta$. We then similarly have that $(T^tV_0)_{t \ge 0}$ is $(R\eta_n)_{n\leq 0}$ measurable, where we use the notation $T^tV$ for the canonical carrier of $T^tR\eta=RT^{-t}\eta$, with the equality here being a consequence of \eqref{tinv}. Now, using the notation of the proof of Proposition \ref{tinvprp}, we have that the reversed carrier $\overline{T^tV}$ is the canonical carrier for the configuration $T^{-1}R^2T^{-t}\eta=T^{-(t+1)}\eta$. In particular, it follows that $(T^tW_0)_{t \le -1}=(T^{-(t+1)}V_0)_{t \le -1}$ is $(\eta_{1-n})_{n\leq 0}$ measurable. The result follows.
\end{proof}

As the last result of this section, we prove the claim in the introduction about non-canonical carriers leading to trivial dynamics when $J<K$. To this end, let us suppose we have a procedure for choosing a unique carrier $\hat{W}$ for each $\eta \in \mathcal{C}_{J,K}^{\exists}$ which extends the definition of $W$, i.e.\ $\hat{W}=W$ on $\mathcal{C}_{J,K}^{can}$. (In what follows, we assume this choice is measurable with respect to the underlying probability space.) We can then define the generalized dynamics $\hat{T}$ and the reversible set $\hat{\mathcal{C}}_{J,K}^{rev}$ by setting $\hat{T}\eta  =T^{\hat{W}}\eta$ for all $\eta \in \mathcal{C}_{J,K}^{\exists}$, and $\hat{\mathcal{C}}_{J,K}^{rev} :=\{ \eta \in \mathcal{C}_{J,K}^{\exists}:\:R\eta \in \mathcal{C}_{J,K}^{\exists},\: \hat{T}R\hat{T}R\eta=R\hat{T}R\hat{T}\eta =\eta\}$. We then have the following result.

\begin{prp}\label{p33} Suppose $J <K$, and $\eta$ is a random configuration with distribution supported on $\hat{\mathcal{C}}_{J,K}^{rev} \backslash\mathcal{C}_{J,K}^{rev}$. If the distribution of $\eta$ is invariant under $\hat{T}$, namely $\hat{T}\eta \buildrel{d}\over{=}\eta$, then $\hat{T}\eta =\sigma_J\eta$, almost-surely.
\end{prp}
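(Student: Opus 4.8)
The plan is to show that $B_{\hat{W},\eta}=+\infty$ almost-surely, where $\hat W$ is the selected carrier for $\eta$ and $B_{\hat W,\eta}$ is the essential boundary of \eqref{essbound}; by \eqref{condeq0} this is precisely the assertion that $\hat{T}\eta_n=J-\eta_n=\sigma_J\eta_n$ for every $n$. First I would reduce to the case $\eta\notin\mathcal{C}^{can}_{J,K}$ almost-surely. Writing $\mu$ for the law of $\eta$, the reflected law $\mu\circ R^{-1}$ is again $\hat{T}$-invariant (since $\hat{T}^{-1}=R\hat{T}R$ by \eqref{tinv}) and is supported on the same, $R$-invariant, set $\hat{\mathcal{C}}^{rev}_{J,K}\setminus\mathcal{C}^{rev}_{J,K}$. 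Moreover $A:=\{\eta\notin\mathcal{C}^{can}_{J,K}\}$ is almost-surely $\hat{T}$-invariant: Lemma \ref{keylemma}(a) gives $\hat{T}(A)\subseteq A$, and this together with $\mu\circ\hat{T}^{-1}=\mu$ forces $\hat{T}^{-1}(A)=A$ up to a null set. Decomposing $\mu$ according to $A$ and its complement, on $A$ one argues directly, while on $A^c\cap(\hat{\mathcal{C}}^{rev}_{J,K}\setminus\mathcal{C}^{rev}_{J,K})\subseteq\{R\eta\notin\mathcal{C}^{can}_{J,K}\}$ one applies the $A$-case to the reflected law to obtain $\hat{T}(R\eta)=\sigma_J(R\eta)$, equivalently $\hat{T}^{-1}\eta=\sigma_J\eta$ (using $R\sigma_J R=\sigma_J$); substituting $\hat{T}\eta$ for $\eta$, which is legitimate by invariance, and using that $\sigma_J$ is an involution then yields $\hat{T}\eta=\sigma_J\eta$. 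Hence it suffices to treat $\eta\notin\mathcal{C}^{can}_{J,K}$.

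In that case no canonical carrier exists, so $\hat{W}$ is non-canonical and $B^{(0)}:=B_{\hat{W},\eta}>-\infty$. By Lemma \ref{keylemma}(a) every forward iterate $\hat{T}^i\eta$ again lies outside $\mathcal{C}^{can}_{J,K}$, so its selected carrier $\hat{W}^{(i)}$ is non-canonical, and Lemma \ref{keylemma}(b) supplies a uniform lower bound $\inf_i B^{(i)}\ge B_\eta>-\infty$ for $B^{(i)}:=B_{\hat{W}^{(i)},\hat{T}^i\eta}$. Consequently, on $(-\infty,B_\eta]$ all iterates obey $\hat{T}^{i+1}\eta_n=\sigma_J(\hat{T}^i\eta_n)$, i.e.\ the orbit flip-flops with period two there.

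The heart of the argument is a monotonicity statement: $B^{(i+1)}\ge B^{(i)}$ always, with $B^{(i+1)}\ge B^{(i)}+1$ on $\{B^{(i)}<\infty\}$. Granting it, stationarity gives $B^{(i)}\overset{d}{=}B^{(0)}$, so for each integer $t$ the inclusion $\{B^{(1)}\le t\}\subseteq\{B^{(0)}\le t\}$ has equal-probability sides, whence $P(B^{(0)}\le t<B^{(1)})=0$; since $\{B^{(0)}=t\}\subseteq\{B^{(0)}\le t<B^{(1)}\}$ by the strict increase, we get $P(B^{(0)}=t)=0$ for all $t$, and therefore $B^{(0)}=+\infty$ almost-surely and $\hat{T}\eta=\sigma_J\eta$. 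To prove the monotonicity I would pass to the path encoding: by Lemma \ref{pathlemma} the boundary equals $\sup\{N:M_n\text{ is constant for }n\le N\}$, and by Lemma \ref{pitman} one step of the dynamics reflects the encoding. With $N=B^{(i)}$, the process $M$ is constant on $(-\infty,N]$, so $\tilde{S}$ stays in a band of width $K-J$ there and leaves it at $N+1$; a short computation using \eqref{pathm} shows that after reflection the two-point average of the updated path lands exactly on the opposite edge of the corresponding band, so the constant stretch of the reflected path extends at least one further step, giving $B^{(i+1)}\ge B^{(i)}+1$.

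The step I expect to be the main obstacle is precisely this monotonicity in the regimes where the carrier is not unique. The edge-of-band computation is transparent when $\overline{\tilde{S}}_{-\infty}=\underline{\tilde{S}}_{-\infty}+(K-J)$, the case of a unique non-canonical carrier in Proposition \ref{ch4}; but when $\overline{\tilde{S}}_{-\infty}<\underline{\tilde{S}}_{-\infty}+(K-J)$, and throughout the case $K=\infty$ of Proposition \ref{ch3}, there is a one-parameter family of carriers and the essential boundary genuinely depends on the measurable selection $\hat{W}$. One must then rule out selections for which the reflected constant stretch fails to grow; I expect this to require invoking the reversibility constraint $\hat{T}R\hat{T}R\eta=\eta$ to pin down the level of the selected carrier consistently along the orbit, using the explicit descriptions of all carriers from Propositions \ref{ch3} and \ref{ch4} (together with the empty-box-ball duality \eqref{spaceball} to transfer to the $\sigma_J$-image). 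Verifying that $B(\hat{T}\xi)=\infty$ whenever $B(\xi)=\infty$, which is needed for the non-strict monotonicity at $+\infty$, falls under the same analysis.
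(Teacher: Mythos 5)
Your skeleton (reduction to the event $\{\eta\notin\mathcal{C}^{can}_{J,K}\}$ via reflection, and the endgame where stationarity of $B^{(i)}$ plus strict monotonicity forces $B^{(0)}=+\infty$) is sound, but the claim everything rests on --- the deterministic monotonicity $B^{(i+1)}\ge B^{(i)}+1$ --- is not just unproven, it is false for an arbitrary measurable selection $\hat{W}$, which is exactly the generality the proposition demands. The essential boundary is a property of the \emph{selected carrier}, not of the configuration. In the path-encoding picture, a non-canonical carrier for $\eta$ corresponds to a choice of band $[c-(K-J),c]$ containing the stabilised range of $\tilde{S}$, and $B^{(0)}+1$ is the first exit time of $\tilde{S}$ from the chosen band; on the constant stretch, the carriers of $\hat{T}\eta$ correspond (after reflecting) to bands $[d,d+K-J]$ for $\tilde{S}$ drawn from the very same family, and the selection at $\hat{T}\eta$ is free to pick $d<c-(K-J)$ whenever $c$ is not the lowest admissible level, in which case the band condition for the successor carrier fails at or before $B^{(0)}+1$, giving $B^{(1)}\le B^{(0)}$, and possibly $B^{(1)}<B^{(0)}$. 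Nothing in the hypotheses couples the two selections: Lemma \ref{keylemma}(b) supplies only the uniform lower bound $B_\eta$, and the reversibility constraint you hope to exploit ties $\hat{W}(\eta)$ to $\hat{W}(R\hat{T}\eta)$, not to $\hat{W}(\hat{T}\eta)$, which is what your monotonicity would need. So this is a genuine gap, not a technical loose end, and your own flagging of it is accurate.

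The paper's proof avoids comparing selected carriers at successive times altogether, which is why it goes through. It passes to the dual configuration $(\hat{T}^t\hat{W}_n)_{t\in\mathbb{Z}}$: Lemma \ref{keylemma}(b) gives the band condition $J\le\hat{T}^t\eta_{n+1}+\hat{T}^t\hat{W}_n\le K$ for all $n\le N$ and $t\ge0$; since $\eta\mapsto(\hat{T}^t\hat{W}_n)_{t\in\mathbb{Z}}$ intertwines $\hat{T}$ with the shift $\theta$, the invariance $\hat{T}\eta\buildrel{d}\over{=}\eta$ makes the dual law shift-invariant, and the dichotomy $B^{alt}_{c,i}\in\{+\infty,-\infty\}$ a.s.\ (as in the proof of Lemma \ref{pinvlmm}) upgrades the band condition from $t\ge0$ to all $t\in\mathbb{Z}$; finally Lemma \ref{keylemma}(a), applied in the dual BBS($K$,$J$) system --- where moving from $n$ to $n+1$ is one step of the dual dynamics --- propagates non-canonicity, hence the band condition, to every $n\in\mathbb{Z}$, which is precisely $\hat{T}\eta=\sigma_J\eta$. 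The quantities the paper feeds into the invariance argument ($B^{alt}_{c,i}$ of the dual configuration) are intrinsic to the configuration, so no control of the selection beyond measurability is ever required; if you want to salvage your scheme, that is the feature your selection-dependent boundary $B_{\hat{W},\eta}$ is missing.
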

\begin{proof}
If $\hat{T}\eta \buildrel{d}\over{=} \eta$, then we can define $\hat{T}^t\eta$ for all $t \ge 0$, almost-surely. Also, since $\eta$ is supported on $\hat{\mathcal{C}}_{J,K}^{rev}$, $\hat{T}^t \eta$ is well-defined for all $t \le 0$, almost-surely, we where write $\hat{T}^{-1}\eta:=R\hat{T}R\eta$ and $\hat{T}^{t-1}\eta:=\hat{T}^{-1}\hat{T}^t\eta$. (Cf.\ the proof of Theorem \ref{dualthm}(a).) Furthermore, we will denote $\hat{W}(\hat{T}^{t}\eta)$ (which might be a canonical carrier for $\hat{T}^t\eta$) by $\hat{T}^t\hat{W}$. Now, since $\eta$ is supported on $\hat{\mathcal{C}}_{J,K}^{rev} \backslash \mathcal{C}_{J,K}^{rev}$, it holds that, almost-surely, either $\eta \notin \mathcal{C}_{J,K}^{can}$ or $R\eta \notin \mathcal{C}_{J,K}^{can}$. If $\eta \notin \mathcal{C}_{J,K}^{can}$, then by Lemma \ref{keylemma}(b) for $J <K$, $ \inf_{t \ge 0} B_{\hat{T}^t\hat{W}} >-\infty$. Hence there exists an $N \in \mathbb{Z}$ such that $ J \le \hat{T}^t \eta_{n+1} + \hat{T}^t\hat{W}_n \le K$ for all $n \le N$ and $t \ge 0$. It follows that, for each $n \le N$, there exists $q \in \{0,1,\dots, \lfloor\frac{J}{2}\rfloor\}$ and $i \in \{0,1\}$ such that $B_{q,i,R(\hat{T}^t\hat{W}_n)_{t \in \mathbb{Z}}} >-\infty$, and this implies in turn that $B_{R(\hat{T}^t\hat{W}_n)_{t \in \mathbb{Z}}}>\infty$. Since $\hat{T}\eta \buildrel{d}\over{=} \eta$, we can appeal to shift invariance as in the proof of Lemma \ref{pinvlmm} to conclude from this that $B_{R(\hat{T}^t\hat{W}_n)_{t \in \mathbb{Z}}}=\infty$, which yields $J \le \hat{T}^t \eta_n + \hat{T}^t\hat{W}_{n-1} \le K$ for all $n \le N$ and $t \in \mathbb{Z}$. Thus we find that $(\hat{T}^{t+1}\eta_{n+1})_{t \in \mathbb{Z}}$ is a carrier for $(\hat{T}^t\hat{W}_n)_{t \in \mathbb{Z}}$, but not a canonical one for $n \le N$. By Lemma \ref{keylemma}(a), we can extend this to the conclusion that $(\hat{T}^t\hat{W}_n)_{t \in \mathbb{Z}} \notin \mathcal{C}_{K,J}^{can}$ for all $n \in \mathbb{Z}$. So, by the same argument, $J \le \hat{T}^t \eta_n + \hat{T}^t\hat{W}_{n-1} \le K$ for all $n \in \mathbb{Z}$ and $t \in \mathbb{Z}$, which tells us that $\hat{T}\eta_n=J-\eta_n$ for all $n$. The same argument applies if $R\eta \notin \mathcal{C}_{J,K}^{can}$, and thus we complete the proof.
\end{proof}

\section{I.i.d.\ invariant configurations}\label{iidsec}

In this section we take up the study of i.i.d.\ measures, and in particular tackle the question of which of these are invariant for the BBS($J$,$K$) dynamics. As well as proving the characterisation of invariance in terms of detailed balance that was stated as Theorem \ref{iidthm}, and the result that invariant i.i.d.\ measures are also ergodic for the BBS($J$,$K$) transformation of Corollary \ref{iidcor}, we present our complete characterisation of invariant i.i.d.\ measures in Theorem \ref{iidclass}. We recall from the introduction the space of probability measures on $\{0,1,\dots,J\}$, that is $\mathcal{M}_{J,K}$, the subsets $\mathcal{M}_{J,K}^{rev}$ and $\mathcal{M}_{J,K}^{inv}$ from \eqref{mrevdef} and \eqref{minvdef}, respectively, the duality map $\mathcal{D}^{\mu}_{J,K}$ from \eqref{dmudef}, and the quantity $r(\mu_{J,K})$ from \eqref{rjkdef}. As the first result of the section, we give a description of $\mathcal{M}_{J,K}^{rev}$, and show that it is equal to $\mathcal{M}^{inv}_{J,K}$. For this, we introduce notation for the mean of a measure, by setting $m(\mu_{J,K}):= \sum_{a=0}^{J}a\mu_{J,K}(a)$.

\begin{prp}\label{mrevprp} It holds that
\[\mathcal{M}_{J,K}^{rev}=\left\{
                            \begin{array}{ll}
                            \left\{\mu_{J,K} \in \mathcal{M}_{J,K}:\:2r(\mu_{J,K})<K\right\}, & \hbox{if $J>K$;} \\
                            \mathcal{M}_{J,K}, & \hbox{if $J=K$;} \\
                            \left\{\mu_{J,K} \in \mathcal{M}_{J,K}:\:2m(\mu_{J,K}) < J\right\}, & \hbox{if $J<K=\infty$;}\\
                            \left\{\mu_{J,K} \in \mathcal{M}_{J,K}:\:2r(\mu_{J,K}) <J\right\}, & \hbox{if $J<K<\infty$.}
                            \end{array}
                          \right.\]
Moreover, $\mathcal{M}_{J,K}^{rev}=\mathcal{M}_{J,K}^{inv}$.
\end{prp}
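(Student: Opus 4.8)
The plan is to treat the two assertions in turn, reducing everything to the deterministic characterisations of $\mathcal{C}^{can}_{J,K}$ obtained in Propositions \ref{ch1}--\ref{ch4}. The first observation is that, for a product measure, $\mu_{J,K}\in\mathcal{M}^{rev}_{J,K}$ if and only if $\mu_{J,K}^{\otimes\mathbb{Z}}(\mathcal{C}^{can}_{J,K})=1$. Indeed, from \eqref{crevdef} we have $\mathcal{C}^{rev}_{J,K}=\mathcal{C}^{can}_{J,K}\cap R^{-1}\mathcal{C}^{can}_{J,K}$, and since reversing an i.i.d.\ sequence leaves its law unchanged we have $\mu_{J,K}^{\otimes\mathbb{Z}}(R\eta\in\mathcal{C}^{can}_{J,K})=\mu_{J,K}^{\otimes\mathbb{Z}}(\eta\in\mathcal{C}^{can}_{J,K})$, so both defining events have full measure exactly when $\mathcal{C}^{can}_{J,K}$ does. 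Moreover, membership of $\mathcal{C}^{can}_{J,K}$ is a left-tail event (the existence of a carrier being a `tail' problem, as noted after the definition of $\mathcal{C}^{\exists!}_{J,K}$), so by Kolmogorov's zero--one law $\mu_{J,K}^{\otimes\mathbb{Z}}(\mathcal{C}^{can}_{J,K})\in\{0,1\}$; it therefore suffices to decide, in each of the four regimes, whether this probability is $0$ or $1$.

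For $J=K$ the claim is immediate from Proposition \ref{ch2}, which gives $\mathcal{C}^{can}_{J,K}=\mathcal{C}_{J,K}$. For $J>K$, I would first note that $(\min\{\eta_n,J-\eta_n\})_{n}$ is an i.i.d.\ sequence, so its $\liminf$ as $n\to-\infty$ equals its essential infimum $r(\mu_{J,K})$ almost surely; hence $\eta\in\mathcal{C}^{r(\mu_{J,K})}_{J,K}$ a.s.\ (and $\mathcal{C}^\infty_{J,K}$ is null). By Proposition \ref{ch1}(c), $\eta\in\mathcal{C}^{can}_{J,K}$ a.s.\ is then equivalent to $r(\mu_{J,K})\le\lfloor K/2\rfloor$ together with $\mu_{J,K}^{\otimes\mathbb{Z}}(\mathcal{C}^{r(\mu_{J,K}),alt}_{J,K})=0$. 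A Borel--Cantelli argument applied to the independent events appearing in \eqref{cralt} shows that the latter probability is $1$ precisely when $\mu_{J,K}([K-r,J])=\mu_{J,K}([0,J-K+r])=1$ (with $r=r(\mu_{J,K})$), and a short computation confirms that this, as well as the possibility $r(\mu_{J,K})>\lfloor K/2\rfloor$, is incompatible with $2r(\mu_{J,K})<K$ but forced when $2r(\mu_{J,K})\ge K$ (using Lemma \ref{le2r} at the boundary $K=2r$). This yields $\mathcal{M}^{rev}_{J,K}=\{\mu_{J,K}:\,2r(\mu_{J,K})<K\}$; the case $J=\infty$ is included on noting that $r(\mu_{J,K})=\underline{r}(\mu_{J,K})$ there.

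For $J<K$ I would pass to the path encoding $S$, which under $\mu_{J,K}^{\otimes\mathbb{Z}}$ is a random walk with i.i.d.\ increments $J-2\eta_n$ of mean $J-2m(\mu_{J,K})$. When $K=\infty$, Proposition \ref{ch3}(c) characterises $\mathcal{C}^{can}_{J,\infty}$ by $\overline{S}_{-\infty}=-\infty$; the strong law gives $S_n/n\to J-2m(\mu_{J,K})$ as $n\to-\infty$, so $\overline{S}_{-\infty}=-\infty$ holds a.s.\ exactly when $2m(\mu_{J,K})<J$, the remaining cases giving $\overline{S}_{-\infty}\neq-\infty$ a.s.\ (the walk drifts to $+\infty$, oscillates, or is constant) so that the condition fails. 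When $K<\infty$, Proposition \ref{ch4}(c) requires $\overline{\tilde{S}}_{-\infty}=\infty$, $\underline{\tilde{S}}_{-\infty}=-\infty$, or $\overline{\tilde{S}}_{-\infty}>\underline{\tilde{S}}_{-\infty}+K-J$; a non-degenerate integer random walk either has non-zero drift (so $|S_n|\to\infty$) or is recurrent and oscillates (with $\limsup=+\infty$ and $\liminf=-\infty$), so one of the first two disjuncts holds a.s., whereas the only degenerate walk is the constant one arising from $\mu_{J,K}=\delta_{J/2}$, for which all three disjuncts fail. Since $2r(\mu_{J,K})\ge J$ forces $\mu_{J,K}=\delta_{J/2}$ (as $\min\{a,J-a\}\le J/2$, with equality only at $a=J/2$), this gives $\mathcal{M}^{rev}_{J,K}=\{\mu_{J,K}:\,2r(\mu_{J,K})<J\}$.

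Finally, for $\mathcal{M}^{rev}_{J,K}=\mathcal{M}^{inv}_{J,K}$ the inclusion $\mathcal{M}^{inv}_{J,K}\subseteq\mathcal{M}^{rev}_{J,K}$ is immediate, since $\mathcal{C}^{inv}_{J,K}\subseteq\mathcal{C}^{rev}_{J,K}$ by taking $t=0$ in \eqref{cinvdef}. For the reverse inclusion, when $J\ge K$ I would invoke $\mathcal{C}^{rev}_{J,K}=\mathcal{C}^{inv}_{J,K}$ (Lemma \ref{12345} and Proposition \ref{ch2}); when $J<K$ I would instead check that the i.i.d.\ measures identified above are supported on the explicit invariant subsets of Corollaries \ref{invsubs} and \ref{invsubs2}. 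Concretely, for $K=\infty$ and $2m(\mu_{J,K})<J$ the strong law gives $\lim_{n\to\pm\infty}S_n/n=J-2m(\mu_{J,K})\in(0,\infty)$, placing $\eta$ in the set of Corollary \ref{invsubs}; and for $K<\infty$ and $\mu_{J,K}\neq\delta_{J/2}$ the non-degeneracy of $S$ gives $\limsup_{n\to\pm\infty}|S_n|=\infty$, placing $\eta$ in the set of Corollary \ref{invsubs2}. The step I expect to require the most care is the $J>K$ analysis of $\mathcal{C}^{r,alt}_{J,K}$: isolating the correct parity-dependent events, applying Borel--Cantelli cleanly, and verifying the elementary but slightly delicate equivalence between the support conditions and $2r(\mu_{J,K})\ge K$; the random-walk arguments for $J<K$ are comparatively routine.
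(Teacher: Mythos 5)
Your proposal is correct and follows essentially the same route as the paper's proof: the same four-case decomposition reduced to Propositions \ref{ch1}--\ref{ch4}, the Borel--Cantelli/support analysis of $\mathcal{C}^{r,alt}_{J,K}$ together with Lemma \ref{le2r} for $J>K$, random-walk drift and oscillation arguments for $J<K$, and the identification $\mathcal{M}^{rev}_{J,K}=\mathcal{M}^{inv}_{J,K}$ via Lemma \ref{12345}, Proposition \ref{ch2} and Corollaries \ref{invsubs}, \ref{invsubs2}. The only additions (the Kolmogorov zero--one law framing and the explicit drift/recurrence dichotomy) are harmless refinements of steps the paper carries out directly.
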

\begin{proof} In the case $J>K$, note that $\mu_{J,K}^{\otimes \mathbb{Z}}(\mathcal{C}_{J,K}^{r(\mu_{J,K})})=1$ (where we recall the notation $\mathcal{C}^r_{J,K}$ from \eqref{crdef}). Hence, if $2r(\mu_{J,K}) \ge {K}$, then, from Lemma \ref{le2r}, $\mu_{J,K}^{\otimes \mathbb{Z}}(\mathcal{C}_{J,K}^{can})=0$, and so $\mu_{J,K} \notin \mathcal{M}_{J,K}^{rev}$. Conversely, if $2r(\mu_{J,K}) < {K}$, then $K-r(\mu_{J,K}) >r(\mu_{J,K})$ and, if $J<\infty$, $J-K+r(\mu_{J,K}) <J-r(\mu_{J,K})$. Hence at least one of $\mu_{J,K}([K-r(\mu_{J,K}),J])$ or $\mu_{J,K}([0,J-K+r(\mu_{J,K})])$ is strictly less than 1. It follows that,
\begin{eqnarray*}
\lefteqn{\mu_{J,K}^{\otimes \mathbb{Z}}\left(\mathcal{C}_{J,K}^{r(\mu_{J,K}),alt}\right)}\\
& \le&  \mu_{J,K}^{\otimes \mathbb{Z}}\left(\cup_{n \in \mathbb{Z}, i\in\{0,1\}}\cap_{m \le n}\{\eta_{2m+i} \ge K-r(\mu_{J,K}), \eta_{2m+1+i} \le J-K+r(\mu_{J,K})\}\right)=0,
\end{eqnarray*}
where $\mathcal{C}_{J,K}^{r,alt}$ was defined at \eqref{cralt}. Therefore $\mu_{J,K}^{\otimes \mathbb{Z}}(\mathcal{C}_{J,K}^{r(\mu_{J,K})} \cap( \mathcal{C}_{J,K}^{r(\mu_{J,K}),alt})^c)=1$, and Proposition \ref{ch1} thus yields $\mu_{J,K}^{\otimes \mathbb{Z}}(\mathcal{C}_{J,K}^{can})=1$. Since the distribution of $\mu_{J,K}^{\otimes \mathbb{Z}}$ is invariant under the reflection $R$, we further obtain that $\mu_{J,K}^{\otimes \mathbb{Z}}(\mathcal{C}_{J,K}^{rev})=1$. Moreover, from Lemma \ref{12345}, $\mu_{J,K}^{\otimes \mathbb{Z}}(\mathcal{C}_{J,K}^{inv})=1$. For $J=K$, the claim is obvious from Proposition \ref{ch2}. Next, suppose that $J<K=\infty$. Note that, under $\mu_{J,K}^{\otimes \mathbb{Z}}$, $\mathbf{E}(S_n-S_{n-1})=\mathbf{E}(J-2\eta_n)=J-2m(\mu_{J,K})$ and $(S_n-S_{n-1})_{n\in\mathbb{Z}}$ is i.i.d. In particular, $S$  is a random walk, and $\limsup_{n \to -\infty}S_n =-\infty$ with $\mu_{J,K}^{\otimes \mathbb{Z}}$-probability one if and only if $J-2m(\mu_{J,K})>0$. Hence Proposition \ref{ch3} and the invariance of the distribution of $\mu_{J,K}^{\otimes \mathbb{Z}}$ under the reflection $R$ allow us to conclude that $\mu_{J,K} \in \mathcal{M}_{J,K}^{rev}$ if and only if $2m(\mu_{J,K}) < J$. Moreover, if $\mu_{J,K} \in \mathcal{M}_{J,K}^{rev}$, then $\lim_{n \to \pm \infty}\frac{S_n}{n}=J-2m(\mu_{J,K}) >0$, and Corollary \ref{invsubs} yields $\mu_{J,K} \in \mathcal{M}_{J,K}^{inv}$. Finally, suppose $J <K<\infty$. Since $S$ is a random walk, if the distribution of $S_n-S_{n-1}=J-2\eta_n$ is not a delta measure on $0$, then $\limsup_{n \to \pm \infty}|S_n|=\infty$. Hence if $2r(\mu_{J,K}) <J$, then Corollary \ref{invsubs2} gives us that $\mu_{J,K} \in \mathcal{M}_{J,K}^{inv}$, and in particular $\mu_{J,K} \in \mathcal{M}_{J,K}^{rev}$. On the other hand, if $2r(\mu_{J,K}) =J$, then $\mu_{J,K}$ is a delta measure on $\frac{J}{2}$, and so $\limsup_{n \to -\infty}\tilde{S}_n=\liminf_{n \to -\infty}\tilde{S}_n=0$, $\mu_{J,K}^{\otimes \mathbb{Z}}$-almost-surely. From Proposition \ref{ch4}, it follows that $\mu_{J,K}^{\otimes \mathbb{Z}}(\mathcal{C}_{J,K}^{can})=0$, which means that $\mu_{J,K} \notin \mathcal{M}_{J,K}^{rev}$.
\end{proof}

In our next result, we relate invariance under $T_{J,K}$ with the detailed balance equation \eqref{dbalance}. In particular, we show that, when the dual measure considered in the latter equation is given by the distribution of the current, the two conditions are equivalent. We moreover obtain that the dual product measure is invariant under $T_{K,J}$.

\begin{prp}\label{dbalance1}
Fix $J,K\in\mathbb{N}\cup\{\infty\}$.  Let $\mu_{J,K}\in\mathcal{M}^{rev}_{J,K}$, and define $\mu_{K,J}:=\mathcal{D}^\mu_{J,K}(\mu_{J,K})$. It is then the case that
\begin{eqnarray}
\lefteqn{\mu_{J,K}^{\otimes \mathbb{Z}} \circ T_{J,K}^{-1} = \mu_{J,K}^{\otimes \mathbb{Z}}}\label{line1}\\
&\Leftrightarrow & \mu_{J,K}\times\mu_{K,J}\circ F_{J,K}^{-1}=\mu_{J,K}\times\mu_{K,J}\label{line2}\\
&\Rightarrow & \mu_{K,J}\in\mathcal{M}^{rev}_{K,J}\mbox{ and }\mu_{K,J}^{\otimes \mathbb{Z}} \circ T_{K,J}^{-1} = \mu_{K,J}^{\otimes \mathbb{Z}}.\label{line3}
\end{eqnarray}
\end{prp}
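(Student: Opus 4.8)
The plan is to run everything off the two structural identities recorded before the statement: $((T\eta)_n,W_n)=F_{J,K}(\eta_n,W_{n-1})$, and, by \eqref{prodmeasure} together with Lemma \ref{unique}, that $(\eta_n,W_{n-1})\sim\mu_{J,K}\times\mu_{K,J}$ for every $n$ whenever $\eta\sim\mu_{J,K}^{\otimes\mathbb{Z}}$. Consequently the law of the output pair $((T\eta)_n,W_n)$ is exactly the pushforward $\mu_{J,K}\times\mu_{K,J}\circ F_{J,K}^{-1}$, so \eqref{line2} is precisely the assertion that this single-site output pair is again the product of the original marginals. For the implication \eqref{line1}$\Rightarrow$\eqref{line2} I would assume $T_{J,K}$-invariance and observe that $RT\eta$ again has law $\mu_{J,K}^{\otimes\mathbb{Z}}$ (product measures are $R$-invariant), with canonical carrier $\bar W_n:=W_{-n}$ by the computation in the proof of Proposition \ref{tinvprp}. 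Applying \eqref{prodmeasure} to the configuration $RT\eta$ gives $((RT\eta)_n,\bar W_{n-1})\sim\mu_{J,K}\times\mu_{K,J}$; unwinding $(RT\eta)_n=(T\eta)_{1-n}$ and $\bar W_{n-1}=W_{1-n}$ turns this into $((T\eta)_m,W_m)\sim\mu_{J,K}\times\mu_{K,J}$, which, being the pushforward under $F_{J,K}$, is exactly \eqref{line2}.

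The converse \eqref{line2}$\Rightarrow$\eqref{line1} is the substantive step, and I would prove it by a Burke-type induction. The claim to establish is that, for all $a\le b$, the variables $((T\eta)_m)_{a\le m\le b}$ are i.i.d.\ $\mu_{J,K}$ and jointly independent of $W_b$. The base case $b=a$ is the single-site identity: $((T\eta)_a,W_a)=F_{J,K}(\eta_a,W_{a-1})$ has law $\mu_{J,K}\times\mu_{K,J}$ by \eqref{line2}. For the inductive step, the point is that $\eta_b$ is independent of $(\eta_j)_{j\le b-1}$, hence (via Lemma \ref{unique}) of the whole pair $\big(((T\eta)_m)_{a\le m\le b-1},W_{b-1}\big)$; since $((T\eta)_b,W_b)$ is a function of $(\eta_b,W_{b-1})$ alone, and $(\eta_b,W_{b-1})\sim\mu_{J,K}\times\mu_{K,J}$ is carried to itself by $F_{J,K}$ under \eqref{line2}, the new output pair is independent of the earlier outputs and has the product law. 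Letting $a\to-\infty$ and $b\to\infty$ yields $T\eta\sim\mu_{J,K}^{\otimes\mathbb{Z}}$, i.e.\ \eqref{line1}.

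For \eqref{line2}$\Rightarrow$\eqref{line3} I would reuse the independence just established. Writing the current as $\xi_t:=(T^tW)_0$, the Burke claim applied to the (now $T_{J,K}$-invariant) configuration $T^t\eta$ shows that $((T^{t+1}\eta)_m)_{m\le 0}$ is independent of $(T^tW)_0=\xi_t$; and since, by Lemma \ref{unique}, each $\xi_s$ with $s>t$ is a measurable function of $((T^{t+1}\eta)_m)_{m\le 0}$, we get that $\xi_t$ is independent of $(\xi_s)_{s>t}$ for every $t$. As each $\xi_t\sim\mu_{K,J}=\mathcal{D}^\mu_{J,K}(\mu_{J,K})$ (recall \eqref{dmudef}), this forces the current to be i.i.d.\ $\mu_{K,J}$, i.e.\ $P_{K,J}:=\mathcal{D}^P_{J,K}(\mu_{J,K}^{\otimes\mathbb{Z}})=\mu_{K,J}^{\otimes\mathbb{Z}}$. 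Both assertions of \eqref{line3} then follow from Theorem \ref{dualthm}: part (a) places $P_{K,J}$ in $\tilde{\mathcal{P}}^{inv}_{K,J}$, whose members are supported on $\tilde{\mathcal{C}}^{inv}_{K,J}\subseteq\mathcal{C}^{rev}_{K,J}$, so $\mu_{K,J}\in\mathcal{M}^{rev}_{K,J}$; and part (b), applied with the roles of $J$ and $K$ interchanged and with $\mathcal{D}^P_{K,J}(P_{K,J})=\mu_{J,K}^{\otimes\mathbb{Z}}$, converts the trivial $\theta$-stationarity of $\mu_{J,K}^{\otimes\mathbb{Z}}$ into $\mu_{K,J}^{\otimes\mathbb{Z}}\circ T_{K,J}^{-1}=\mu_{K,J}^{\otimes\mathbb{Z}}$.

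I expect the main obstacle to be the Burke-type induction: propagating the joint independence of the outputs from the carrier, and in particular justifying the measurability facts from Lemma \ref{unique} that allow $\eta_b$ to be peeled off as fresh independent randomness in the inductive step, and that allow the future current $(\xi_s)_{s>t}$ to be written through the left half of $T^{t+1}\eta$ in the proof of \eqref{line3}. By comparison, the index bookkeeping in the reversed-carrier step of \eqref{line1}$\Rightarrow$\eqref{line2} and in the current identification is routine once the relations from Proposition \ref{tinvprp} and Theorem \ref{mainthm1} are in hand; notably, this route avoids any direct support analysis of $\mu_{K,J}$, since the $\mathcal{M}^{rev}_{K,J}$-membership and the $T_{K,J}$-invariance are both extracted from Theorem \ref{dualthm} rather than from Proposition \ref{mrevprp}.
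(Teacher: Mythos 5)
Your proposal is correct and follows essentially the same route as the paper's own proof: the reversed-carrier argument via Proposition \ref{tinvprp} together with \eqref{prodmeasure} for \eqref{line1}$\Rightarrow$\eqref{line2}, propagation of independence through the measurability facts of Lemma \ref{unique} for \eqref{line2}$\Rightarrow$\eqref{line1}, and Theorem \ref{dualthm} combined with an identification of the law of the current for \eqref{line3}. The only cosmetic differences are that your Burke-type induction is an unrolled version of the paper's one-step observation that $T_{J,K}\eta_n$ is independent of $\sigma(W_n,(\eta_m)_{m\ge n+1})$, and that you re-derive the independence structure of the current $((T^tW)_0)_{t\in\mathbb{Z}}$ directly instead of citing Lemma \ref{indep} together with $\theta$-stationarity.
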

\begin{proof} We start by showing that \eqref{line1} implies \eqref{line3}. Suppose $\mu_{J,K}\in\mathcal{M}^{rev}_{J,K}$ and that $\mu_{J,K}^{\otimes \mathbb{Z}} \circ T_{J,K}^{-1} = \mu_{J,K}^{\otimes \mathbb{Z}}$. From Theorem \ref{dualthm}(a), we  then have that $P_{K,J}:=\mathcal{D}^P_{J,K}(\mu_{J,K}^{\otimes \mathbb{Z}})$ is well-defined and satisfies $P_{K,J} \in \tilde{\mathcal{P}}_{K,J}^{inv}$. Moreover, since $\mu_{J,K}^{\otimes \mathbb{Z}} \circ T_{J,K}^{-1} = \mu_{J,K}^{\otimes \mathbb{Z}}$ and $\mu_{J,K}^{\otimes \mathbb{Z}} \circ \theta^{-1} = \mu_{J,K}^{\otimes \mathbb{Z}}$, Theorem \ref{dualthm} (b) yields that $P_{K,J}  \circ T_{K,J}^{-1} =  P_{K,J}$ and $P_{K,J}  \circ \theta^{-1} =  P_{K,J}$ both hold. Hence to conclude \eqref{line3}, it is sufficient to show that $P_{K,J} = \mu_{K,J}^{\otimes \mathbb{Z}}$. By definition and stationarity under $\theta$, $P_{K,J}$ is the distribution of $(T_{J,K}^tW_0)_{t \in \mathbb{Z}}$ and $T_{J,K}^tW_0\sim \mu_{K,J}$ for all $t \in \mathbb{Z}$. Moreover, from Lemma \ref{indep}, $(T_{J,K}^tW_0)_{t \ge0}$ and $(T_{J,K}^tW_0)_{t \le -1}$ are independent, and together with stationarity, this gives that $(T_{J,K}^tW_0)_{t \in \mathbb{Z}}$ is an independent sequence. This confirms the implication.

We next show that \eqref{line1} implies \eqref{line2}. Recall from \eqref{prodmeasure} that the joint distribution of $(\eta_1,W_0)$ is $ \mu_{J,K}\times\mu_{K,J}$. Assuming that $\mu_{J,K}^{\otimes \mathbb{Z}} \circ T_{J,K}^{-1} = \mu_{J,K}^{\otimes \mathbb{Z}}$ and also appealing to the stationarity of $\mu_{J,K}^{\otimes \mathbb{Z}}$ under the spatial shift, we find that $T_{J,K}\eta_1\sim\mu_{J,K}$ and $W_1 \sim \mu_{K,J}$. Since $F_{J,K}(\eta_1,W_0)=(T_{J,K}\eta_1,W_1)$, to establish \eqref{line2} it is enough to show that $T_{J,K}\eta_1$ and $W_1$ are independent. Now, as $\mu_{J,K}^{\otimes \mathbb{Z}}(\mathcal{C}_{J,K}^{rev})=1$, we can argue as in the proof of Proposition \ref{tinvprp} to show that $(W_{-n})_{n\in\mathbb{Z}}$ is the canonical carrier of $RT_{J,K}\eta$, and thus $W_1$ is measurable with respect to $(T_{J,K}\eta_n)_{n \ge2}$. Given that, under \eqref{line1}, $(T_{J,K}\eta_n)_{n \ge2}$ and $T_{J,K}\eta_1$ are independent, we obtain that $W_1$ and $T_{J,K}\eta_1$ are independent, as desired.

Finally, we show \eqref{line2} implies \eqref{line1}. To this end, suppose that \eqref{line2} holds. Under $\mu_{J,K}^{\otimes \mathbb{Z}}$, $(W_n)_{n\in\mathbb{Z}}$ is a stationary process and $W_0 \sim \mu_{K,J}$, so $W_n \sim \mu_{K,J}$ for all $n$. Hence, for any $n$, the joint distribution of $(\eta_n,W_{n-1})$ is $ \mu_{J,K}\times\mu_{K,J}$. Moreover, the detailed balance equation \eqref{line2} shows that, for all $n$: $T_{J,K}\eta_n \sim \mu_{J,K}$, and $T_{J,K}\eta_n$ and $W_n$ are independent. Moreover, the latter independence implies in turn that $T_{J,K}\eta_n$ and $\sigma(W_n, (\eta_m)_{m \ge n+1})$ are independent. Since $(T_{J,K}\eta_m)_{m \ge n+1}$ is measurable with respect to $\sigma(W_n, (\eta_m)_{m \ge n+1})$, we find that $T_{J,K}\eta_n$ and $(T_{J,K}\eta_m)_{m \ge n+1}$ are independent. Hence we conclude that $(T_{J,K}\eta_n)_{n} \sim \mu_{J,K}^{\otimes \mathbb{Z}}$, and so $\mu_{J,K}^{\otimes \mathbb{Z}} \circ T_{J,K}^{-1} = \mu_{J,K}^{\otimes \mathbb{Z}}$.
\end{proof}

\begin{proof}[Proof of Corollary \ref{iidcor}] As in the proof of Proposition \ref{dbalance1}, we have under the assumptions of the corollary that $\mathcal{D}^P_{J,K}(\mu_{J,K}^{\otimes\mathbb{Z}})=\mu_{K,J}^{\otimes\mathbb{Z}}$. In particular, this implies that $\mathcal{D}^P_{J,K}(\mu_{J,K}^{\otimes\mathbb{Z}})$ is invariant and ergodic under $\theta$. Hence Theorem \ref{dualthm}(c) yields the result.
\end{proof}

Whilst the previous proposition gives some insight into the role of the detailed balance equation, it is slightly unsatisfactory in that it depends on the dual measure being assumed to be the distribution of the current. As was stated in Theorem \ref{iidthm}, we can avoid this assumption when we include instead conditions that link the support of the original and dual measures under consideration. We now prove the latter version of the result, which will play a crucial role in characterizing all invariant measures of product form.

\begin{proof}[Proof of Theorem \ref{iidthm}]
The claim is obvious for the case $J=K$, so we assume $J\neq K$. First suppose $\mu_{J,K}^{\otimes \mathbb{Z}} \circ T_{J,K}^{-1} = \mu_{J,K}^{\otimes \mathbb{Z}}$; we then have from Theorem \ref{dualthm}(a) that $\mu_{J,K}^{\otimes \mathbb{Z}}\in\tilde{\mathcal{P}}_{J,K}^{inv}$. From Proposition \ref{dbalance1}, to prove the `only if' direction of the first part of the theorem, we only need to show that if $\mu_{K,J} := \mathcal{D}^{\mu}_{J,K}(\mu_{J,K})$, then $r(\mu_{J,K})=r(\mu_{K,J})$, and also $\underline{r}(\mu_{J,K})=\underline{r}(\mu_{K,J})$ when $\max\{J,K\}=\infty$. If $J>K$, then since $\mu_{J,K}^{\otimes \mathbb{Z}}(\mathcal{C}_{J,K}^{r(J,K)})=1$, we have from Lemma \ref{le2r} that $K>2r(\mu_{J,K})$. Consequently we can appeal to Lemma \ref{>2r}, and in particular \eqref{equalsr} and \eqref{equalsr2} applied to the canonical carrier imply the desired conclusion. Next, suppose $J<K$. In this case, Proposition \ref{dbalance1} yields that $\mu_{K,J}^{\otimes \mathbb{Z}} \circ T_{K,J}^{-1} = \mu_{K,J}^{\otimes \mathbb{Z}}$. Moreover, from the proof of Proposition \ref{dbalance1}, we have that $\mathcal{D}^{P}_{J,K}(\mu_{J,K}^{\otimes \mathbb{Z}})=\mu_{K,J}^{\otimes \mathbb{Z}}$, and since the latter map is a bijection between $\tilde{\mathcal{P}}_{J,K}^{inv}$ and $\tilde{\mathcal{P}}_{K,J}^{inv}$ with inverse $\mathcal{D}^{P}_{K,J}$, it follows that $\mu_{J,K}^{\otimes \mathbb{Z}}=\mathcal{D}^{P}_{K,J}(\mu_{K,J}^{\otimes \mathbb{Z}})=\mathcal{D}^{\mu}_{K,J}(\mu_{K,J})^{\otimes \mathbb{Z}}$. This implies in turn that $\mu_{J,K}= \mathcal{D}^{\mu}_{K,J}(\mu_{K,J})$. Thus we can apply the result for the case $J>K$ to complete this part of the proof.

We now prove the `if' direction. From Proposition \ref{dbalance1}, to establish this, and the remaining claims of the theorem, it is enough to show that $\mathcal{D}^{\mu}_{J,K}(\mu_{J,K})=\mu_{K,J}$. Under $\mu_{J,K}^{\otimes \mathbb{Z}}$, $(W_n)_{n\in\mathbb{Z}}$ is a stationary Markov process with the transition probabilities
\begin{equation}\label{wnprobs}
\mathbf{P}\left(W_n=b\:\vline\:W_{n-1}=a\right)=\mu_{J,K}\left(\{x:\:F^{(2)}(x,a)=b\}\right),\qquad\forall a,b\in\{0,1,\dots,K\}.
\end{equation}
By the detailed balance equation and the involutive property of $F_{J,K}$, it is straightforward to check from this expression that $\mu_{K,J}$ is a reversible measure for this Markov process. To prove $W_0 \sim \mu_{K,J}$, we show that if $r:=r(\mu_{J,K})=r(\mu_{K,J})$, and also $\underline{r}(\mu_{J,K})=\underline{r}(\mu_{K,J})$ when $K=\infty$, then $W_0$ is concentrated on the subset $\{r,r+1,\dots,K-r\}$, and the Markov process with the state space $\{r,r+1,\dots,K-r\}$ and the transition probabilities \eqref{wnprobs} has at most one stationary measure. If $J>K$, then since $W$ is a canonical carrier, $W_n$ is concentrated on $\{r,r+1,\dots,K-r\}$ (recall Lemmas \ref{le2r} and \ref{>2r}). Moreover, $\mathbf{P}(W_n=r\:|\:W_{n-1}=a) \ge \mu_{J,K}(r)$ and $\mathbf{P}(W_n=K-r\:|\:W_{n-1}=a)\ge\mu_{J,K}(J-r)$ for all $a \in \{r,r+1,\dots,K-r\}$. Hence either $\mathbf{P}(W_n=r\:|\:W_{n-1}=a)>0$ for all $a \in \{r,r+1,\dots,K-r\}$, or $\mathbf{P}(W_n=K-r\:|\:W_{n-1}=a)>0$ for all $a \in \{r,r+1,\dots,K-r\}$, and so the stationary measure is unique if it exists. We next consider the case $J <K$. When $K=\infty$, recall from the proof of Proposition \ref{ch3} that $W_n=M_n-S_n+\frac{J}{2}$, where $M_n=\max_{m \le n}\tilde{S}_m$. Hence $W_n \ge \tilde{S}_n-S_n+\frac{J}{2}=\eta_n$, which implies $W_n$ is concentrated on $\{r,r+1,\dots\}$. Moreover, since $r=r(\mu_{K,J})=\underline{r}(\mu_{K,J})=\underline{r}(\mu_{J,K})$, it holds that $\mu_{J,K}^{\otimes \mathbb{Z}} (\eta_{0}=\eta_1=\dots=\eta_N=r)>0$ for any $N \in \mathbb{N}$. By Proposition \ref{mrevprp} we also know that $r <\frac{J}{2}$, and so for each $a \in \{r,r+1,\dots,\}$ there exists an $N\in \mathbb{N}$ such that
\begin{eqnarray*}
\mathbf{P}\left(W_{n+N}=r\:|\:W_n=a\right) &\ge &\mathbf{P}\left(\eta_{n+1}=\dots=\eta_{n+N}=r, \eta_{n+N}=W_{n+N}|W_n=a\right)\\
&= & \mathbf{P}\left(\eta_{n+1}=\dots=\eta_{n+N}=r,\tilde{S}_{n+N}=M_{n+N}|W_n=a\right)\\
&= & \mathbf{P}\left(\eta_{n+1}=\dots=\eta_{n+N}=r|W_n=a\right) >0.
\end{eqnarray*}
Hence there is at most one stationary measure. For the case $K<\infty$, one can check from \eqref{pathm} that any carrier $Y$ satisfies $Y_n \in \{r,\dots,K-r\}$ for all $n\in\mathbb{Z}$, $\mu_{J,K}^{\otimes \mathbb{Z}}$-almost-surely. However, in this case $\mathcal{C}_{J,K}^{can} \subseteq \mathcal{C}_{J,K}^{\exists !}$ holds by Proposition \ref{ch4}, and so $W \in \{r,\dots,K-r\}$. Uniqueness of the stationary measure is shown similarly to the case $K=\infty$.
\end{proof}

Having this characterization of the collection of invariant measures $\mathcal{I}_{J,K}^{inv}$ (as defined at \eqref{iinvdef}), we now work towards its explicit description. Since for the case $J=K$, $\mathcal{M}_{J,K}=\mathcal{I}_{J,K}^{inv}$ holds, we will focus on the case $J \neq K$. Our next step is to apply the reducibility property for $F_{J,K}$ from \eqref{reducibility} (and the empty box-ball duality of \eqref{spaceball}) to obtain a reducibility property for measures. In particular, we will show that if a measure $\mu_{J,K} \in \mathcal{M}_{J,K}$ satisfies, together with a dual measure, the detailed balance equation and has support bounded away from 0 and $J$ (in the sense that $r(\mu_{J,K})>0$), then the detailed balance equation is also satisfied by a pair of corresponding dual measures on smaller state spaces. To state the result precisely, we need some further notation. For any $\mu_{J,K} \in \mathcal{M}_{J,K}$, we define $\tilde{\mu}_{J,K} \in \mathcal{M}_{J-2r(\mu_{J,K}),K-2r(\mu_{J,K})}$ by setting
\[\tilde{\mu}_{J,K}(a)=\left\{
\begin{array}{ll}
  \mu_{J,K}\left(a+r(\mu_{J,K})\right), & \mbox{if $r(\mu_{J,K})=\underline{r}(\mu_{J,K})$},\\
  \mu_{J,K}\left(\sigma_J(a+r(\mu_{J,K}))\right), & \mbox{otherwise}.
\end{array}
\right.
\]
Note that if we define $\mathcal{E}_r:\mathcal{M}_{J,K} \to \mathcal{M}_{J+2r,K+2r}$ for $r \in \mathbb{Z}_+$ by setting
\[\mathcal{E}_r(\mu)(a)=\left\{\begin{array}{ll}
                              \mu(a-r), & \mbox{if }a \in \{ r, \dots,J+r\},\\
                              0, & \mbox{otherwise},\end{array}\right.\]
then $\mu_{J,K}=\mathcal{E}_{r(\mu_{J,K})}(\tilde{\mu}_{J,K})$ if $r(\mu_{J,K})=\underline{r}(\mu_{J,K})$, else $\mu_{J,K}=\sigma_J\mathcal{E}_{r(\mu_{J,K})}(\tilde{\mu}_{J,K})$. We moreover observe that $\tilde{\mu}_{J,K}(0)>0$, and hence the following lemma will allow us to concentrate on measures satisfying the latter property when looking for solutions to the detailed balance equation.

\begin{lmm}\label{reducible} Suppose $J \neq K$ and $\mu_{J,K} \in \mathcal{M}_{J,K}^{rev}$. If  $(\tilde{J},\tilde{K})=(J-2r(\mu_{J,K}),K-2r(\mu_{J,K}))$, then $\min\{\tilde{J},\tilde{K}\} \ge 1$.  Moreover, the following two conditions are equivalent:\\
(a) there exists $\mu_{K,J} \in \mathcal{M}_{K,J}$ such that the detailed balance equation \eqref{dbalance} holds, $r(\mu_{J,K})=r(\mu_{K,J})$, and also $\underline{r}(\mu_{J,K})=\underline{r}(\mu_{K,J})$ when $\max\{J,K\}=\infty$;\\
(b) there exists $\tilde{\mu}_{K,J}'  \in \mathcal{M}_{\tilde{K},\tilde{J}}$ such that
\begin{equation}\label{dbother}
\tilde{\mu}_{J,K} \times \tilde{\mu}_{K,J}'  \circ F_{\tilde{J},\tilde{K}}^{-1} = \tilde{\mu}_{J,K} \times \tilde{\mu}_{K,J}'
\end{equation}
holds, $r(\tilde{\mu}_{K,J}')=0$, and also $\underline{r}(\mu_{J,K})-r(\mu_{J,K})=\underline{r}(\tilde{\mu}_{K,J}')=0$ when $\max\{J,K\}=\infty$.
\end{lmm}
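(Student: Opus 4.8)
The plan is to prove the dimension bound first, and then obtain the equivalence of (a) and (b) by transporting the detailed balance equation through two conjugacies of $F_{J,K}$: the shift-down reduction supplied by \eqref{reducibility}, and, where required, the reflection supplied by the empty box-ball duality \eqref{spaceball}. Throughout I write $r=r(\mu_{J,K})$ and recall from \eqref{rjkdef} that $r(\mu_{J,K})=r$ forces $\mu_{J,K}$ to be supported on $\{r,\dots,J-r\}$.

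First I would establish $\min\{\tilde{J},\tilde{K}\}\geq 1$ by a short case analysis using Proposition \ref{mrevprp}. When $J>K$ (including $J=\infty$) the proposition gives $2r<K$, so $\tilde{K}=K-2r\geq 1$ and $\tilde{J}=J-2r>\tilde{K}$; when $J<K<\infty$ it gives $2r<J$, so $\tilde{J}\geq 1$ and $\tilde{K}>\tilde{J}$; and when $J<K=\infty$ it gives $2m(\mu_{J,K})<J$, so that $2r\leq 2\underline{r}(\mu_{J,K})\leq 2m(\mu_{J,K})<J$ (using $r\leq\underline{r}\leq m$, both inequalities being immediate from the definitions), whence $\tilde{J}\geq 1$. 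In every case $\min\{\tilde{J},\tilde{K}\}=\min\{J,K\}-2r\geq 1$, and since $\tilde{J}-\tilde{K}=J-K\neq 0$ the reduced model is again of the type $\tilde{J}\neq\tilde{K}$.

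For the equivalence, the key point is that by \eqref{reducibility} the involution $F_{J,K}$ preserves the inner box $\{r,\dots,J-r\}\times\{r,\dots,K-r\}$ and is conjugate there, via the shift $(a,b)\mapsto(a-r,b-r)$, to $F_{\tilde{J},\tilde{K}}$ on $\{0,\dots,\tilde{J}\}\times\{0,\dots,\tilde{K}\}$; this conjugacy is a bijection, so pushforwards of products are products. I would distinguish two situations. If $r=\underline{r}(\mu_{J,K})$, then given a dual $\mu_{K,J}$ as in (a), both $\mu_{J,K}$ and $\mu_{K,J}$ are supported on their inner intervals (the latter because $r(\mu_{K,J})=r$), so pushing $\mu_{J,K}\times\mu_{K,J}$ forward by the shift transfers the invariance \eqref{dbalance} to \eqref{dbother} with $\tilde{\mu}_{K,J}'(b):=\mu_{K,J}(b+r)$; conversely shifting a solution of \eqref{dbother} back up by $r$ recovers a dual $\mu_{K,J}$ solving \eqref{dbalance}. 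If instead $r\neq\underline{r}(\mu_{J,K})$, then the minimum $r$ is attained only via $J-a=r$, so $J<\infty$ and the largest support point of $\mu_{J,K}$ is exactly $J-r$; I would first apply $(\sigma_J\times\sigma_K)$, which by \eqref{spaceball} sends the $F_{J,K}$-invariant measure $\mu_{J,K}\times\mu_{K,J}$ to the $F_{J,K}$-invariant measure $a\mapsto\mu_{J,K}(J-a)$ times $b\mapsto\mu_{K,J}(K-b)$. The reflected first marginal now has $r=\underline{r}$ (its top point $J-r$ becomes its bottom point $r$), so the shift reduction applies as before, producing $\tilde{\mu}_{J,K}$ — which by its very definition before the lemma already incorporates this reflection — paired with $\tilde{\mu}_{K,J}'(b):=\mu_{K,J}(K-b-r)$. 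In both situations one checks $r(\tilde{\mu}_{K,J}')=0$ (the value $r$ is attained at an endpoint of $\{r,\dots,K-r\}$, which maps to an endpoint of $\{0,\dots,\tilde{K}\}$), and the reverse construction yields $r(\mu_{K,J})=r$; these are routine to verify once the supports are tracked.

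The main obstacle is the $\underline{r}$-conditions attached to the regime $\max\{J,K\}=\infty$, precisely because there the reflection step is unavailable (\eqref{spaceball} requires $J,K<\infty$). The resolution is that in this regime reflection is never actually needed. If $J=\infty$ then $\sigma_\infty\equiv\infty$ forces $r(\mu_{\infty,K})=\underline{r}(\mu_{\infty,K})$ automatically, and the dual $\mu_{K,\infty}$ supplied by (a) must then also satisfy $r=\underline{r}$, so the pure shift reduction applies and the bookkeeping $\underline{r}(\tilde{\mu}_{K,J}')=\underline{r}(\mu_{K,J})-r=0$ matches the stated condition. If $J<K=\infty$, then any candidate dual $\mu_{\infty,J}$ lives on $\mathbb{Z}_{+}$ and hence satisfies $r(\mu_{\infty,J})=\underline{r}(\mu_{\infty,J})$, so the equalities $r(\mu_{J,\infty})=r(\mu_{\infty,J})$ and $\underline{r}(\mu_{J,\infty})=\underline{r}(\mu_{\infty,J})$ demanded by (a) force $r(\mu_{J,\infty})=\underline{r}(\mu_{J,\infty})$ — exactly the equality demanded by (b). Thus when $r\neq\underline{r}(\mu_{J,K})$ and $\max\{J,K\}=\infty$ both (a) and (b) fail and the equivalence holds vacuously, whereas when $r=\underline{r}(\mu_{J,K})$ the shift reduction applies directly with the $\underline{r}$-bookkeeping as above. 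I expect the genuine difficulty to be this careful coordination of the shift and reflection conjugacies with the two support statistics $r$ and $\underline{r}$, and the argument that infinite capacities never require the reflection that is then unavailable.
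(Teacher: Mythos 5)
Your proposal is correct and follows essentially the same route as the paper's proof: the bound $\min\{\tilde{J},\tilde{K}\}\ge 1$ via Proposition \ref{mrevprp} (with the extra step $r(\mu_{J,K})\le m(\mu_{J,K})$ in the case $J<K=\infty$), and the equivalence by transporting the detailed balance equation through the shift conjugacy \eqref{reducibility} together with the involution \eqref{involution} and, when $r(\mu_{J,K})\neq\underline{r}(\mu_{J,K})$, the reflection \eqref{spaceball} — exactly the paper's two-branch definition of $\tilde{\mu}_{K,J}'$. Your explicit argument that the regime $\max\{J,K\}=\infty$ forces $r=\underline{r}$ (so the reflection, unavailable there, is never needed and the case $r\neq\underline{r}$ is vacuous) is the same observation the paper compresses into ``$\underline{r}(\mu_{K,J})=r(\mu_{K,J})$ easily follows from the assumptions,'' just spelled out more carefully.
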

\begin{proof} If $J>K$, then $K > 2r(\mu_{J,K})$ and if $J<K$, then $J > 2r(\mu_{J,K})$ (by Proposition \ref{mrevprp}). Hence $\min\{\tilde{J},\tilde{K}\} \ge 1$. Suppose there exists $\mu_{K,J} \in \mathcal{M}_{K,J}$ such that condition (a) is met. Defining  $\tilde{\mu}_{K,J}'(a):=\mu_{K,J}(a+r(\mu_{J,K}))$ when $r(\mu_{J,K})=\underline{r}(\mu_{J,K})$, and $\tilde{\mu}_{K,J}'(a):=\mu_{K,J}(\sigma_K(a+r(\mu_{J,K})))$ otherwise, we obtain the measure required for (b) to hold. Indeed, checking \eqref{dbother} is an elementary application of \eqref{involution}, \eqref{reducibility} and \eqref{spaceball} (cf.\ Proposition \ref{bsdual}). Moreover, since $r(\mu_{J,K})=r(\mu_{K,J})$, we have that $\tilde{\mu}_{K,J}'(\{0,\tilde{K}\})=\mu_{K,J}(\{r(\mu_{K,J}),{K}-r(\mu_{K,J})\})>0$, and so $r(\tilde{\mu}_{K,J}')=0$. Also, if $J=\infty$ or $K=\infty$, then we use that $\underline{r}(\mu_{K,J})=r(\mu_{K,J})$ (which easily follows from the assumptions) to check that $\tilde{\mu}_{K,J}'(0)=\mu_{K,J}(r(\mu_{K,J}))>0$, and thus $\underline{r}(\tilde{\mu}_{K,J}')=0$. The opposite direction is similarly clear.
\end{proof}

In our next result, we describe solutions of the detailed balance equation when $\mu_{J,K}$ and $\mu_{K,J}$ have full support. For this purpose, we first introduce a variation of the geometric distribution that will arise naturally in the result.

\begin{dfn} For $N\in\mathbb{Z}_+\cup\{\infty\}$, $\alpha>0$, $\beta>0$ and $m\in\mathbb{N}$, we say $X$ has \emph{scaled truncated bipartite geometric distribution} with parameters $N$, $1-\alpha$, $\beta$ and $m$ if
\[\mathbf{P}\left(X=mx\right)=C_{N,1-\alpha,\beta,m}\alpha^{x}\beta^{\iota(x)},\qquad x\in\{0,1,\dots,N\},\]
where $\iota(2x)=0, \iota(2x+1)=1$ and $C_{N,1-\alpha,\beta,m}$ is a normalising constant; in this case we write $X\sim \mathrm{stbGeo}(N,1-\alpha,\beta,m)$. Note that, if $N=\infty$, then we require that $\alpha<1$ for the distribution to be defined. We observe that $\mathrm{stbGeo}(N,1-\alpha,1,1)$ is simply the distribution of the usual parameter $1-\alpha$ geometric distribution conditioned to take a value in $\{0,1,\dots,N\}$.
\end{dfn}

\begin{lmm}\label{geolmm}
Suppose $J \neq K$. Two measures $\mu_{J,K}\in\mathcal{M}_{J,K}$ and $\mu_{K,J} \in \mathcal{M}_{K,J}$ satisfy the detailed balance equation, i.e.\ \eqref{dbalance} holds, and they both have full support, if and only if $\mu_{J,K}$ and $\mu_{K,J}$ are given by $\mathrm{stbGeo}(J,1-\alpha,\beta,1)$ and $\mathrm{stbGeo}(K,1-\alpha,\beta,1)$, respectively, where one of the following conditions is satisfied:\\
(i) $J,K \in \mathbb{N}\cup\{\infty\}$, $\alpha\in(0,1)$, $\beta=1$, \\
(ii) $J,K\in 2\mathbb{N}\cup\{\infty\}$, $\alpha\in(0,1)$, $\beta\in(0,\infty)\backslash\{1\}$, \\
(iii) $J,K \in \mathbb{N}$, $\alpha\geq 1$, $\beta=1$, \\
(iv) $J,K\in 2\mathbb{N}$, $\alpha\geq1$, $\beta\in(0,\infty)\backslash\{1\}$.
\end{lmm}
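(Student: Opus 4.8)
Since $F_{J,K}$ is an involution by \eqref{involution}, it is in particular a bijection of $\{0,\dots,J\}\times\{0,\dots,K\}$, so the detailed balance equation \eqref{dbalance} is equivalent to the pointwise identity
\[
\mu_{J,K}\bigl(F^{(1)}_{J,K}(a,b)\bigr)\,\mu_{K,J}\bigl(F^{(2)}_{J,K}(a,b)\bigr)=\mu_{J,K}(a)\,\mu_{K,J}(b),\qquad\forall (a,b).
\]
By the configuration-carrier duality \eqref{dualityrel}, interchanging the roles of $(J,K)$ and of $(\mu_{J,K},\mu_{K,J})$ leaves this system invariant, and the asserted $\mathrm{stbGeo}$ forms are symmetric under $J\leftrightarrow K$ (with common $\alpha,\beta$); hence the plan is to assume without loss of generality that $J<K$. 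Under the full support hypothesis the log-densities $f:=\log\mu_{J,K}$ and $g:=\log\mu_{K,J}$ are finite on $\{0,\dots,J\}$ and $\{0,\dots,K\}$ respectively, and I would turn the three regimes of $F_{J,K}$ recorded in Figure \ref{diagrams} into additive functional equations for $f$ and $g$ and solve them. (Full support also forces the step size to be $m=1$.)

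Concretely, I would record the relations coming from Cases 1, 2(a) and 3: $f(a)+g(b)=f(b)+g(a)$ when $a+b\le J$; $f(a)+g(b)=f(J-a)+g(b+2a-J)$ when $J\le a+b\le K$; and $f(a)+g(b)=f(b+J-K)+g(a+K-J)$ when $a+b\ge K$. Setting $b=0$ in the first gives $g=f+\gamma$ on $\{0,\dots,J\}$ for a constant $\gamma$. Differencing the second in $b$ yields $\Delta g(b)=\Delta g(b+2a-J)$ for the increments $\Delta g(b):=g(b+1)-g(b)$; letting $a$ range over $\{0,\dots,J\}$, the admissible shifts $2a-J$ include $2$ when $J$ is even and $1$ when $J$ is odd, so $\Delta g$ is $2$-periodic in general and constant when $J$ is odd. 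Thus $g$, and hence via $g=f+\gamma$ also $f$, has the bipartite affine form $\log C+x\log\alpha+\iota(x)\log\beta$, i.e.\ the $\mathrm{stbGeo}(\cdot,1-\alpha,\beta,1)$ shape with common $\alpha,\beta$, with $\beta=1$ forced whenever $J$ is odd. Substituting this form into the Case 3 relation, the linear terms cancel automatically while the parity terms agree only if $\iota(b+J-K)+\iota(a+K-J)=\iota(a)+\iota(b)$ for all admissible $(a,b)$; this holds identically when $J\equiv K\pmod 2$ but fails, unless $\beta=1$, otherwise. Combined with the previous step, this shows $\beta\ne1$ requires both $J$ and $K$ even. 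Finally, normalisability of the (possibly infinite) geometric sums forces $\alpha<1$ exactly when $\max\{J,K\}=\infty$, which separates (i)–(ii) from (iii)–(iv). The converse is a direct substitution: given the stated forms one verifies the displayed pointwise identity in each regime of Figure \ref{diagrams}, these being precisely the computations above run in reverse.

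The main obstacle I anticipate is the careful bookkeeping of ranges: the increment relations from Cases 2(a) and 3 are valid only on the sub-ranges of $b$ for which both $(a,b)$ and the shifted argument lie in the relevant regime, so establishing that $\Delta g$ is genuinely $2$-periodic (or constant) across all of $\{0,\dots,K-1\}$, including near the two boundaries, requires chaining the Case 1, 2(a) and 3 relations and using the overlap $g=f+\gamma$ on $\{0,\dots,J\}$ to control the low end. Particular care is also needed when $K=\infty$, where Case 3 is vacuous: there the parity constraint on $K$ disappears (consistent with $K\in 2\mathbb{N}\cup\{\infty\}$ in (ii)), and the only extra restriction is $\alpha<1$ from convergence of $\sum_x\alpha^x\beta^{\iota(x)}$.
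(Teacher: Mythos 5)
Your overall route is the same as the paper's (which also assumes $J<K$, derives your relation $g=f+\gamma$ on $\{0,\dots,J\}$ as \eqref{munu}, and works with the ratios $\ell_a:=\mu_{K,J}(a+1)/\mu_{K,J}(a)$, the multiplicative form of your $\Delta g$), so everything hinges on your periodicity step -- and there is a genuine gap there. The shift relations you extract from Case 2(a) hold only on windows of $K-J$ consecutive values of $b$, as you yourself note, and the conclusion that ``$\Delta g$ is $2$-periodic in general and constant when $J$ is odd'' does not follow from the mere availability of the shifts $2$ or $1$. The problem is worst when $K=J+1$: there every window is a single point, and the full set of Case 1 and Case 2(a) constraints is equivalent to proportionality of the two measures on $\{0,\dots,J\}$ together with the reflection identity $\Delta g(x)=\Delta g(J-x)$, which does not force the geometric form. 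For example, with $J=3$, $K=4$, the measures $\mu_{J,K}\propto(1,2,10,50)$ and $\mu_{K,J}\propto(1,2,10,50,100)$ satisfy every Case 1 and Case 2(a) instance of \eqref{dbalance} but are not of $\mathrm{stbGeo}$ form; they are excluded only by Case 3. Hence in the boundary cases Case 3 is needed to establish the form itself, whereas your outline uses it only as an a posteriori parity check on $\beta$ after the bipartite form is assumed to hold on all of $\{0,\dots,K\}$.

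This missing work is precisely what occupies most of the paper's proof: for $K-J\ge 2$ it combines the reflection $\ell_a=\ell_{J-a}$ with the period-$J$ relation $\ell_c=\ell_{J+c}$ (your shift $s=J$) to propagate $2$-periodicity to the whole of $\{0,\dots,K-3\}$; when $J$ is even and $K$ is odd and finite it invokes a specific Case 3 instance ($a=2$, $b=K-1$ in \eqref{ee1}) to force $\ell_0=\ell_1$; and for $K-J=1$ it uses the Case 3 relations to show $\ell_a=\ell_{a-1}$ for $a\ge K/2$ before combining with reflection. Your remaining steps -- the reduction to $J<K$ via \eqref{dualityrel}, the parity obstruction forcing $\beta=1$ when $J\not\equiv K\pmod 2$ once the bipartite form is in hand, normalisability forcing $\alpha<1$ when $K=\infty$, and the converse by direct substitution -- are all correct and agree with the paper, but the chaining/boundary issue you flag and defer is the heart of the proof, and the mechanism you propose for it provably fails in at least one case.
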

\begin{proof}
It is elementary to check that if $\mu_{J,K}=\mathrm{stbGeo}(J,1-\alpha,\beta,1)$ and $\mu_{K,J}=\mathrm{stbGeo}(K,1-\alpha,\beta,1)$ with one of the conditions (i)-(iv) holding, then the detailed balance equation holds. Hence it remains to establish the converse. Without loss of generality, we assume $J <K$. To simplify notation, set $\mu:=\mu_{J,K}$ and $\nu:=\mu_{K,J}$. By the detailed balance equation we have
\begin{equation}\label{munu}
\frac{\mu(a)}{\mu(0)}=\frac{\nu(a)}{\nu(0)}
\end{equation}
for all $0 \le a \le \min\{J,K\}$. Moreover, under the condition \eqref{munu}, the detailed balance equation is equivalent to: for all $a\in\{0,1,\dots,J\}$, $b\in\{0,1,\dots,K\}$,
\begin{equation}\label{ee1}
\nu(a)\nu(b)=\left\{
               \begin{array}{ll}
                 \nu(J-a)\nu(2a+b-J), & \hbox{when }J\leq a+b\leq K,\\
                 \nu(b+J-K)\nu(a+K-J), & \hbox{when }a+b\geq K.
               \end{array}
             \right.
\end{equation}
From \eqref{ee1}, it holds that $\nu(J-a)\nu(a+c)=\nu(a)\nu(J-a+c)$ for all $a\in\{0,1,\dots,J\}$ and $c\in\{0,1,\dots,K-J\}$. Therefore, letting $\ell_a:=\frac{\nu(a+1)}{\nu(a)}$, we have, for all $a\in\{0,1,\dots,J\}$ and $c\in\{1,\dots,K-J\}$,
\begin{equation*}
\prod_{b=a}^{a+c-1}\ell_b=\prod_{b=J-a}^{J-a+c-1}\ell_b.
\end{equation*}
In particular, by considering $c=1$, we have $\ell_a=\ell_{J-a}$ for $a\in\{0,1,\dots,J\}$. Moreover, by induction for $c$, we obtain that
\begin{equation}\label{eec}
\ell_{a+c-1}=\ell_{J-a+c-1}
\end{equation}
for all $a\in\{0,1,\dots,J\}$ and $c\in\{1,\dots,K-J\}$. We will apply this relation to deduce the result, noting that to complete the proof it will be sufficient to establish that $\ell_a=\ell_{a+2}$ for all $a \in \{0,1,\dots, K-3\}$, and also, if $J \notin 2\mathbb{N}$ or $K \notin 2\mathbb{N}\cup\{\infty\}$, then $\ell_0=\ell_1$. To prove the first part, let $a=0$ in \eqref{eec} to see that $\ell_{c}=\ell_{J+c}$ for all $c \in \{0,1,\dots,K-J-1\}$. Thus $(\ell_a)_{a=0}^{K-1}$ is periodic with period $J$. Now, suppose $K-J \ge 2$. Applying \eqref{eec} for $c=1$ and $c=2$ then yields $\ell_a=\ell_{J-a}=\ell_{J-(a+1)+1}=\ell_{a+2}$ for any $a \in \{0,1,\dots, J-1\}$. Combining with the periodicity, it follows that $\ell_a=\ell_{a+2}$ for all $a \in \{0,1,\dots, K-3\}$. Moreover, if $J \notin 2\mathbb{N}$, then $\ell_0=\ell_J$ (which follows from \eqref{eec} with $a=0$ and $c=1$) implies $\ell_0=\ell_1$. And, if $J \in 2\mathbb{N}$ and $K \notin 2\mathbb{N} \cup \{\infty\}$, then we can apply \eqref{ee1} to show $\ell_0=\ell_1$. Indeed, letting $a=2$, $b=K-1$ in \eqref{ee1}, we deduce that
\[\prod_{a=0}^{K-J-1}\ell_{a+2}=\frac{\nu(K-J+2)}{\nu(2)}=\frac{\nu(K-1)}{\nu(J-1)}=\prod_{a=0}^{K-J-1}\ell_{a+J-1}.\]
Since the left-hand expression is given by $\ell_0\ell_1\ell_0\ell_1\dots\ell_0$ and the right-hand expression by $\ell_1\ell_0\ell_1\ell_0\dots\ell_1$ (with the same number of terms in both), this implies $\ell_0=\ell_1$, as desired. Finally, we consider the case $K-J=1$. In this case, \eqref{ee1} implies $\ell_a=\ell_{b-1}$ for any $a \in \{0,1,\dots,J\}$ and $b \in \{1,\dots, K\}$ satisfying $a+b \ge K$. In particular, for $a \ge \frac{K}{2}=\frac{J}{2}+\frac{1}{2}$, $\ell_a=\ell_{a-1}$, and so $\ell_a$ is constant for $a \ge \frac{J}{2}-\frac{1}{2}$. Moreover, applying \eqref{eec} with $c=1$ yields that $\ell_a=\ell_{J-a}$ for all $a \in \{0,1,\dots, J\}$, and we thus conclude that $\ell_0=\ell_1=\dots=\ell_K$, which completes the proof.
\end{proof}

We next prove a straightforward generalisation of the preceding lemma.

\begin{lmm}\label{sgeolmm}
Suppose $J \neq K$. Two measures $\mu_{J,K}\in\mathcal{M}_{J,K}$ and $\mu_{K,J} \in \mathcal{M}_{K,J}$ satisfy the detailed balance equation, i.e.\ \eqref{dbalance} holds, and there exists an $m \in \mathbb{N}$ such that $J,K \in m\mathbb{N} \cup \{\infty\}$ and
\begin{equation}\label{suppm}
\left\{a:\:\mu_{J,K}(a)> 0\right\}=m\mathbb{Z}_+ \cap \{0,1,\dots,J\},\quad\left\{a:\:\mu_{K,J}(a)> 0\right\}=m\mathbb{Z}_+ \cap \{0,1,\dots,K\},
\end{equation}
if and only if $\mu_{J,K}$ and $\mu_{K,J}$ are given by $\mathrm{stbGeo}(\frac{J}{m},1-\alpha,\beta,m)$ and $\mathrm{stbGeo}(\frac{K}{m},1-\alpha,\beta,m)$, respectively, where one of the following conditions is satisfied:\\
(i) $J,K \in m\mathbb{N}\cup\{\infty\}$, $\alpha\in(0,1)$, $\beta=1$, \\
(ii) $J,K\in 2m\mathbb{N}\cup\{\infty\}$, $\alpha\in(0,1)$, $\beta\in(0,\infty)\backslash\{1\}$, \\
(iii) $J,K \in m\mathbb{N}$, $\alpha\geq 1$, $\beta=1$, \\
(iv) $J,K\in 2m\mathbb{N}$, $\alpha\geq1$, $\beta\in(0,\infty)\backslash\{1\}$.
\end{lmm}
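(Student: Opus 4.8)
The plan is to reduce the statement to the case $m=1$ already settled in Lemma \ref{geolmm}, by exploiting a simple \emph{multiplicative} scaling property of the update map $F_{J,K}$, which is the counterpart of the additive reducibility property \eqref{reducibility}. First I would record the observation that, for any $m\in\mathbb{N}$ and $J',K'\in\mathbb{N}\cup\{\infty\}$,
\[
F_{mJ',mK'}(ma',mb')=\left(mF^{(1)}_{J',K'}(a',b'),\,mF^{(2)}_{J',K'}(a',b')\right),\qquad (a',b')\in\{0,\dots,J'\}\times\{0,\dots,K'\},
\]
which is immediate from the definition \eqref{fdef}, since each minimum appearing there commutes with multiplication by the positive integer $m$. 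In particular, writing $L:=(m\mathbb{Z}_+)^2\cap(\{0,\dots,J\}\times\{0,\dots,K\})$ and $J'=J/m$, $K'=K/m$, the involution $F_{J,K}$ maps $L$ into itself, and (being an involution) therefore restricts to a bijection of $L$ onto itself which, under the coordinatewise division-by-$m$ identification of $L$ with $\{0,\dots,J'\}\times\{0,\dots,K'\}$, coincides exactly with $F_{J',K'}$.

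Next I would introduce the rescaled measures $\mu'_{J',K'}\in\mathcal{M}_{J',K'}$ and $\mu'_{K',J'}\in\mathcal{M}_{K',J'}$ given by $\mu'_{J',K'}(a'):=\mu_{J,K}(ma')$ and $\mu'_{K',J'}(b'):=\mu_{K,J}(mb')$. The support hypothesis \eqref{suppm} says precisely that $\mu_{J,K}$ and $\mu_{K,J}$ are supported on $L$ with no mass off it, which translates into $\mu'_{J',K'}$ and $\mu'_{K',J'}$ having full support on $\{0,\dots,J'\}$ and $\{0,\dots,K'\}$. I would then verify that the detailed balance equation \eqref{dbalance} for $(\mu_{J,K},\mu_{K,J})$ relative to $F_{J,K}$ is equivalent to the detailed balance equation for $(\mu'_{J',K'},\mu'_{K',J'})$ relative to $F_{J',K'}$. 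Since $F_{J,K}$ is an involution, \eqref{dbalance} is the family of scalar identities $\mu_{J,K}(F^{(1)}_{J,K}(x,y))\,\mu_{K,J}(F^{(2)}_{J,K}(x,y))=\mu_{J,K}(x)\,\mu_{K,J}(y)$ indexed by $(x,y)$. For $(x,y)\in L$ these become, via the scaling property and the rescaling of the measures, exactly the detailed balance identities for $(\mu'_{J',K'},\mu'_{K',J'})$; for $(x,y)\notin L$ the right-hand side vanishes by the support assumption, and, because $F_{J,K}$ permutes $L$ and hence permutes its complement, $F_{J,K}(x,y)$ also lies off $L$, so at least one factor on the left-hand side vanishes as well. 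Thus the two detailed balance equations are equivalent.

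With this equivalence in hand, Lemma \ref{geolmm} applied to $(J',K')$ shows that $(\mu'_{J',K'},\mu'_{K',J'})$ satisfy detailed balance with full support if and only if they equal $\mathrm{stbGeo}(J',1-\alpha,\beta,1)$ and $\mathrm{stbGeo}(K',1-\alpha,\beta,1)$ under one of the conditions (i)--(iv) there. Undoing the rescaling, $\mu_{J,K}(ma')=\mu'_{J',K'}(a')=C\alpha^{a'}\beta^{\iota(a')}$ is precisely the assertion $\mu_{J,K}=\mathrm{stbGeo}(\tfrac{J}{m},1-\alpha,\beta,m)$, and similarly for $\mu_{K,J}$. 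Finally I would match the parameter constraints, using $J'\in\mathbb{N}\cup\{\infty\}\Leftrightarrow J\in m\mathbb{N}\cup\{\infty\}$ and $J'\in2\mathbb{N}\cup\{\infty\}\Leftrightarrow J\in2m\mathbb{N}\cup\{\infty\}$ (and the corresponding finiteness versions for (iii), (iv)), so that the conditions of Lemma \ref{geolmm} on $(J',K')$ transcribe directly into the asserted conditions on $(J,K)$. I do not anticipate a genuine obstacle: essentially everything is a transcription of Lemma \ref{geolmm} through the scaling bijection, and the only point needing a little care is the bookkeeping of the off-lattice detailed balance equations in the second step, where the involutivity of $F_{J,K}$ (so that it permutes the complement of $L$) is what makes the argument go through.
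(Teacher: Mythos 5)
Your proposal is correct and takes essentially the same approach as the paper's proof: rescale the measures by $m$ via $\mu'_{J,K}(a):=\mu_{J,K}(ma)$, use the scaling identity $F^{(i)}_{J,K}(ma,mb)=mF^{(i)}_{J/m,K/m}(a,b)$ to transfer the detailed balance equation, and invoke Lemma \ref{geolmm} for the resulting full-support measures. The only difference is that you spell out the equivalence of the two detailed balance equations at points off the sublattice $m\mathbb{Z}_+\times m\mathbb{Z}_+$ (using that the involution $F_{J,K}$ permutes that sublattice and hence its complement), a bookkeeping step the paper dismisses as obvious.
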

\begin{proof} As in the proof of Lemma \ref{geolmm}, it is simple to check the `if' direction of the result, and so it remains to check the converse, i.e.\ that if the detailed balance equation and \eqref{suppm} hold, then the measures have the required form. For this, let $\tilde{J}=\frac{J}{m}, \tilde{K}=\frac{K}{m}$, and define ${\mu}'_{J,K} \in \mathcal{M}_{\tilde{J},\tilde{K}}$ and ${\mu}'_{K,J} \in \mathcal{M}_{\tilde{K},\tilde{J}}$ by setting ${\mu}'_{J,K}(a)=\mu_{J,K}(ma)$ and ${\mu}'_{K,J}(a)=\mu_{K,J}(ma)$. Since $F^{(i)}_{J,K}(ma,mb)=mF^{(i)}_{\tilde{J},\tilde{K}}(a,b)$ for $(a,b)\in\{0,1,\dots,\tilde{J}\}\times \{0,1,\dots,\tilde{K}\}$ and $i=1,2$, it is obvious that the ${\mu}'_{J,K}$ and ${\mu}'_{K,J}$ satisfy the detailed balance equation. It is further clear by construction that both ${\mu}'_{J,K}$ and ${\mu}'_{K,J}$ have full support, and so, by Lemma \ref{geolmm}, ${\mu}'_{J,K}=\mathrm{stbGeo}(\tilde{J},1-\alpha,\beta,1)$ and ${\mu}'_{K,J}=\mathrm{stbGeo}(\tilde{K},1-\alpha,\beta,1)$, where the parameters satisfy the appropriate constraints. Hence ${\mu}_{J,K}=\mathrm{stbGeo}(\tilde{J},1-\alpha,\beta,m)$ and ${\mu}_{K,J}=\mathrm{stbGeo}(\tilde{K},1-\alpha,\beta,m)$, with one of the conditions (i)-(iv) being met.
\end{proof}

As a further step, we continue to weaken our assumption on the support of $\mu_{J,K}$.

\begin{thm}\label{m0thm} Suppose $J \neq K$. If $\mu_{J,K}\in\mathcal{M}_{J,K}$ satisfies $\mu_{J,K}(0)>0$, then there exists a measure $\mu_{K,J} \in \mathcal{M}_{K,J}$ such that the detailed balance equation is satisfied, i.e.\ \eqref{dbalance} holds, if and only if one of the following holds:\\
(a) $\mu_{J,K}$ is supported on $\{0,\dots, \lfloor\frac{\min\{J,K\}}{2}\rfloor\}$, in this case
\[\mu_{K,J}(a)=\left\{\begin{array}{ll}
                        \mu_{J,K}(a), & \mbox{for all }a \le \lfloor\frac{\min\{J,K\}}{2}\rfloor,\\
                        0, & \mbox{otherwise;}\end{array}\right.\]
(b)  $\mu_{J,K}$ and $\mu_{K,J}$ are given by $\mathrm{stbGeo}(\frac{J}{m},1-\alpha,\beta,m)$ and $\mathrm{stbGeo}(\frac{K}{m},1-\alpha,\beta,m)$, respectively, where one of the following conditions is satisfied:\\
(i) $J,K \in m\mathbb{N}\cup\{\infty\}$, $\alpha\in(0,1)$, $\beta=1$, $m\in\mathbb{N}$,\\
(ii) $J,K\in 2m\mathbb{N}\cup\{\infty\}$, $\alpha\in(0,1)$, $\beta\in(0,\infty)\backslash\{1\}$, $m\in\mathbb{N}$,\\
(iii) $J,K \in m\mathbb{N}$, $\alpha\geq 1$, $\beta=1$, $m\in\mathbb{N}$,\\
(iv) $J,K\in 2m\mathbb{N}$, $\alpha\geq1$, $\beta\in(0,\infty)\backslash\{1\}$, $m\in\mathbb{N}$.\\
Hence, in any case, $\mu_{K,J}$ also satisfies $\mu_{K,J}(0) >0$.
\end{thm}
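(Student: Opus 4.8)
The plan is to treat the two directions separately, the ``if'' direction being a direct verification and the ``only if'' direction carrying the real content. Throughout I would exploit the symmetry of the detailed balance equation \eqref{dbalance} under interchanging $(J,K,\mu_{J,K})\leftrightarrow(K,J,\mu_{K,J})$, which follows from the configuration--carrier duality \eqref{dualityrel}: since $\pi\circ F_{J,K}=F_{K,J}\circ\pi$ and $F_{J,K}$ is an involution \eqref{involution}, the measure $\mu_{J,K}\times\mu_{K,J}$ is $F_{J,K}$-invariant if and only if $\mu_{K,J}\times\mu_{J,K}$ is $F_{K,J}$-invariant. Because $F_{J,K}$ is a bijection, \eqref{dbalance} is equivalent to the pointwise identity $\mu_{J,K}(a)\mu_{K,J}(b)=\mu_{J,K}(F^{(1)}_{J,K}(a,b))\mu_{K,J}(F^{(2)}_{J,K}(a,b))$ for all $(a,b)$, which is the form I shall use.

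First I would record two preliminary facts. Writing $\nu:=\mu_{K,J}$ and letting $b^*:=\min\{b:\nu(b)>0\}$, the identity $F^{(2)}_{J,K}(0,b)=\max\{0,b-J\}$ (read off from \eqref{fdef}) together with the pointwise balance relation forces $\nu(\max\{0,b^*-J\})>0$; minimality of $b^*$ then rules out $b^*>J$, so $b^*\le J$ and hence $\nu(0)>0$. This holds for either ordering of $J,K$, so I may now assume without loss of generality that $J<K$ and reduce the balance relation to one for the single, larger-alphabet measure $\nu$: the Case~1 entries of Figure \ref{diagrams} give $F_{J,K}(a,b)=(b,a)$ when $a+b\le J$, whence (taking $b=0$) $\mu_{J,K}(a)/\mu_{J,K}(0)=\nu(a)/\nu(0)$ for $0\le a\le J$, i.e.\ the proportionality \eqref{munu}. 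Since $F^{(1)}_{J,K}(a,b)$ always lies in $\{0,\dots,J\}$, substituting this proportionality into the balance relation cancels the common factor $\mu_{J,K}(0)/\nu(0)$ and yields exactly the single-measure identity \eqref{ee1} for $\nu$ on $\{0,\dots,K\}$.

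With \eqref{ee1} in hand I would argue the following dichotomy. If every element of the support of $\nu$ lies in $\{0,\dots,\lfloor J/2\rfloor\}$, then so does the support of $\mu_{J,K}$ (the supports agree on $\{0,\dots,J\}$ by \eqref{munu}), every relevant pair satisfies $a+b\le J$, and $F_{J,K}$ acts as the swap; the proportionality then forces $\mu_{J,K}=\nu$ after normalisation, which is precisely alternative~(a). Otherwise the support of $\nu$ contains a point exceeding $\lfloor J/2\rfloor$, and I claim the support must then be a complete arithmetic progression $m\mathbb{Z}_+\cap\{0,\dots,K\}$ with $m:=\min\{b>0:\nu(b)>0\}$ dividing both $J$ and $K$; granting this, Lemma \ref{sgeolmm} identifies $\nu$ (and hence $\mu_{J,K}$) as a scaled truncated bipartite geometric distribution satisfying one of (i)--(iv), which is alternative~(b). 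The ``if'' direction is then immediate: alternative~(a) makes $\mu_{J,K}\times\mu_{J,K}$ swap-invariant on $\{0,\dots,\lfloor J/2\rfloor\}^2$, while alternative~(b) is covered by the ``if'' part of Lemma \ref{sgeolmm}; and $\mu_{K,J}(0)>0$ holds in both cases by inspection of the formulas.

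The \emph{main obstacle} is the support-rigidity claim in the second branch of the dichotomy, since Lemmas \ref{geolmm} and \ref{sgeolmm} presuppose (respectively) full support or support equal to a prescribed progression, whereas here the support shape must be derived. I would establish it in stages from \eqref{ee1}: using $F_{J,K}(0,b)=(J,b-J)$ for $b>J$ to force $J$ into the support once some support point exceeds $J$, then using the Case~2(a) relation $\nu(0)\nu(b)=\nu(J)\nu(b-J)$ (from \eqref{ee1} with $a=0$) together with the Case~3 relation to propagate positivity both downward and upward, thereby showing that $K$ lies in the support, that the support is closed under the steps needed to fill the full $m$-spaced progression without gaps, and that $m\mid J$ and $m\mid K$. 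This propagation argument, carefully tracking the three regimes of Figure \ref{diagrams}, is the technical heart of the proof; once the support is pinned down, the reduction to Lemma \ref{sgeolmm} is routine.
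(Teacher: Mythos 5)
Your pipeline is the same as the paper's: verify the ``if'' direction directly, prove $\mu_{K,J}(0)>0$, reduce by the duality symmetry to $J<K$, use the swap case of $F_{J,K}$ to obtain the proportionality \eqref{munu}, rewrite detailed balance as the single-measure identity \eqref{ee1} for $\nu:=\mu_{K,J}$, split according to whether the support is confined to $\{0,\dots,\lfloor J/2\rfloor\}$, and otherwise pin down the support so that Lemma \ref{sgeolmm} applies. The steps you actually execute (the pointwise form of \eqref{dbalance}, the minimal-support-point argument for $\nu(0)>0$, the derivation of \eqref{munu} and \eqref{ee1}, and alternative (a)) are correct. The problem is that the support-rigidity step, which you rightly identify as the heart of the matter, is only sketched, and the sketch as written would fail.

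Concretely, writing $\mu:=\mu_{J,K}$: your mechanism for forcing $J$ into the support --- balance at $(0,b)$ with $b>J$, giving $\mu(0)\nu(b)=\mu(J)\nu(b-J)$ --- only fires when $\nu$ has a support point strictly above $J$. But your ``otherwise'' case only guarantees a support point above $\lfloor J/2\rfloor$: a priori the maximum $M$ of the support of $\mu$ could satisfy $J/2<M<J$ with $\nu$ having no mass above $J$ at all (note that $\nu(J)\propto\mu(J)=0$ then makes the relation $\nu(0)\nu(b)=\nu(J)\nu(b-J)$ yield only $\nu(b)=0$ for $b\ge J$, which is consistent rather than contradictory), and then your propagation never starts. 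The paper rules this sub-case out by a separate argument you do not have: evaluate \eqref{ee1} at the pair $(M,M)$ and split into $2M\le K$ (forcing $\nu(3M-J)>0$, hence $3M-J\le M$, contradicting $2M>J$) and $2M>K$ (forcing $\nu(M+K-J)>0$ with $M+K-J>M$, a contradiction), thereby concluding $M=J$. A second omission: Lemma \ref{sgeolmm} requires the supports to \emph{equal} $m\mathbb{Z}_+\cap\{0,\dots,J\}$ and $m\mathbb{Z}_+\cap\{0,\dots,K\}$, so besides filling in all multiples of $m$ (your ``propagation'') you must also exclude non-multiples; the paper's argument for this, taking $\tilde m$ to be the least support point not in $m\mathbb{Z}_+$ and deriving contradictions separately for $\tilde m>2m$ and $\tilde m<2m$ via chained identities such as $\nu(J-m)\nu(\tilde m)\nu(2m)/\nu(m)=\nu(2m-\tilde m)\nu(J-2m+2\tilde m)$, is genuinely delicate and is not subsumed by ``carefully tracking the three regimes''. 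Until these two points are supplied, the ``only if'' direction is not proved.
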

\begin{proof} It is elementary to check that if either (a) or (b) hold, then so does \eqref{dbalance}. Hence it remains to show that if \eqref{dbalance} holds for some $\mu_{K,J} \in \mathcal{M}_{K,J}$, then either (a) or (b) hold. To simplify the notation, let $\mu=\mu_{J,K}$ and $\nu=\mu_{K,J}$. First we prove that $\nu(0)>0$. If this is not the case, then for any $0\leq a \le \min\{J,K\}$, we have that $\mu(0)\nu(a)=\mu(a)\nu(0)=0$, and so $\nu(a)=0$. Hence if $J > K$, then $\nu(\{0,1,\dots,K\})=0$, which contradicts the assumption that $\nu$ is a probability measure. On the other hand, if $J <K$, then there must exist $J<a\le K$ such that $\nu(a)>0$ and $\nu(b)=0$ for all $0\le b <a$. For this choice of $a$, we have that
$0<\mu(0)\nu(a)=\mu(J)\nu(a-J)=0$, which is a contradiction. Thus we obtain in either case that $\nu(0)>0$.

Now, the conclusion of the previous paragraph implies that \eqref{munu} holds. Moreover, without loss of generality, we can assume $J <K$. Given the relation \eqref{munu} and the latter assumption, we recall from the proof of Lemma \ref{geolmm} that the detailed balance equation is equivalent to \eqref{ee1} holding. Suppose for the moment that $\mu(\{0,1,\dots, \lfloor\frac{J}{2}\rfloor\})=1$. From \eqref{munu}, we must therefore have that $\nu(\{\lfloor \frac{J}{2}\rfloor +1,\dots,J\})=0$. Moreover, if $\nu(a)>0$ for some $a\in\{J+1,\dots,K\}$, then \eqref{ee1} yields $0<\mu(0)\nu(a)=\mu(J)\nu(a-J)=0$, which is a contradiction. Hence it is the case that $\nu(a)=\mu(a)$ on $\{0,1,\dots, \lfloor\frac{J}{2}\rfloor\}$, i.e.\ (b) holds.

On the other hand, suppose that $\mu(\{0,1,\dots, \lfloor\frac{J}{2}\rfloor\}) <1$. Let $M$ be the maximum of the support of $\mu$; note that $2M >J$ by assumption. We will next show that it must be the case that $M=J$. If $M <J$, then for any $J \le a \le K$ we have from \eqref{ee1} that $\nu(0)\nu(a)=\nu(J)\nu(a-J)=0$, and so $\nu(a)=0$. Hence $\nu(\{0,1,\dots,M\})=1$. Moreover, if $2M \le K$, then $0<\nu(M)\nu(M)=\nu(J-M)\nu(3M-J)$,
and so $3M-J \le M$. However the latter inequality contradicts $2M >J$. On the other hand, if $2M >K$, then applying \eqref{ee1} again gives $0<\nu(M)\nu(M)=\nu(M+J-K)\nu(M+K-J)=0$, since $M+K-J>M$, and this is again a contradiction. Hence we conclude that $M=J$.

Next, let $m$ be the minimum element of $\{1,\dots,J\}$ such that $\mu(m)>0$. We will show that $m$ is a factor of both $J$ and $K$. If $K<m+J$, then \eqref{ee1} and the result that $M=J$ implies $0<\nu(J)\nu(m)=\nu(m+J-K)\nu(K)$, but $0<m+J-K<m$, and so this contradicts the choice of $m$. Hence $J\le m+J\le K$, and so $0<\nu(m)\nu(J)=\nu(J-m)\nu(2m)$. In particular, we have that both $\nu(2m)>0$ and $\nu(J-m)>0$. Now, the choice of $m$ implies that either $J-m=0$ or $J-m \ge m$. If the latter holds, then $J \ge 2m$ and since $J \le m+J \le K$, $0<\nu(2m)\nu(J-m)=\nu(J-2m)\nu(3m)$. Thus, in the same way, we see that either $J-2m=0$ or $J -2m \ge m$ hold. Repeating these steps inductively, we can conclude that $J=\ell m$ for some $\ell \in \mathbb{N}$, and moreover that $\nu(am)>0$ for $a=0,1,2,\dots, \ell$. Similarly, for any $a=0,1,2,\dots, \ell$, if $am+J \le K$ holds, then $0<\nu(J)\nu(am)=\nu(0)\nu(am+J)$, and so $\nu(am+J)>0$. Iterating the same argument, we find that for $a \in \mathbb{N}$ satisfying $am \le K$, it holds that $\nu(am)>0$. For $K <\infty$, let $p\in \mathbb{Z}_+$ be such that $J+pm \le K$ and $J+(p+1)m>K$, then, since $(p+1)m \le K$, $0<\nu(J)\nu((p+1)m)=\nu(J+(p+1)m-K)\nu(K)$. By the choice of $p$, we have that $0<J+(p+1)m-K \le m$. Hence, by the choice of $m$ and that $\nu(J+(p+1)m-K)>0$, it must hold that $J+(p+1)m-K = m$. This yields $K=\ell' m$ where $\ell'=\ell+p$.

To complete the proof, it remains to show that both $\mu$ and $\nu$ are supported on multiples of $m$ only, as we can then apply Lemma \ref{sgeolmm}. To do this, we will suppose that $\tilde{m}:=\min\{a\in\{0,1,\dots,K\}:\:\nu(a)>0,\:a\not\in m\mathbb{Z}_+\}$ exists and derive a contradiction. By construction, $\tilde{m} >m$. Since $J=\ell m$ and $K=\ell'  m$, we can repeat the same argument as above with $m$ replaced by $\tilde{m}$ to show that $J=\tilde{\ell}\tilde{m}$ and $K=\tilde{\ell}'\tilde{m}$. Hence $J+\tilde{m} \le K$. If $\tilde{m} >2m$, then since $J \le J+\tilde{m}-m \le K$, $0<\nu(J-m)\nu(\tilde{m})=\nu(m)\nu(J+\tilde{m}-2m)$. Since $J \le J+\tilde{m}-2m \le K$, it follows that $0<\nu(0)\nu(J+\tilde{m}-2m)=\nu(J)\nu(\tilde{m}-2m)$. In particular, this implies that $\nu(\tilde{m}-2m)>0$, and since $0<\tilde{m}-2m<\tilde{m}$, this is a contradiction with the choice of $\tilde{m}$. Next, suppose $\tilde{m} < 2m$. It then holds that
\[0<\nu(J-m)\nu(\tilde{m})\nu(2m)/\nu(m)=\nu(J+\tilde{m}-2m)\nu(2m)=\nu(2m-\tilde{m})\nu(J-2m+2\tilde{m}),\]
which implies $\nu(2m-\tilde{m})>0$. Since $0<2m-\tilde{m}<\tilde{m}$, this is a contradiction with the choice of $\tilde{m}$. Therefore we conclude that $\tilde{m}$ does not exist, and so the support of $\mu$ is $\{0,m,2m,\dots,\ell m=J \}$ and the support of $\nu$ is $\{0,m,2m,\dots,\ell' m=K \}$.
\end{proof}

\begin{rem}
Under the measures described in part (a) of the last theorem, it almost-surely holds that $T\eta=\theta^{-1}\eta$. In conjunction with the fact that in the case $J=K$ any measure $\mu_{J,K}$ yields the same dynamics, we see that only the measures of part (b) give non-trivial dynamics in the class $\mathcal{I}_{J,K}^{inv}\cap\{\mu_{J,K}(0)>0\}$.
\end{rem}

We are nearly ready to state the main conclusion of the section, which describes all i.i.d.\ measures that are invariant for the BBS($J$,$K$) dynamics. The result follows from the previous theorem, together with manipulations that undo earlier reductions made using the reducibility property \eqref{reducibility} and empty box-ball duality \eqref{spaceball}. Towards stating the theorem, let $\mathcal{I}_{J,K}^0$ be the set of probability measures characterized in Theorem \ref{m0thm}, namely the collection of $\mu_{J,K}$ satisfying (a) or (b) of the latter result. For $J<\infty$, we also define (slightly abusing notation) $\sigma_{J}(\mathcal{I}_{J,K}^0):=\{\mu_{J,K}\circ\sigma_J:\:\mu_{J,K}\in\mathcal{I}_{J,K}^0\}$, and set
\[\mathcal{I}_{J,K}=\left\{
\begin{array}{ll}
  \mathcal{I}_{J,K}^0\cup\sigma_{J}\left(\mathcal{I}_{J,K}^0\right), & \mbox{if }\max\{J,K\}<\infty,\\
  \mathcal{I}_{J,K}^0, & \mbox{otherwise}.
\end{array}\right.\]
And, we recall from \eqref{iinvdef} that the principal set of interest is denoted $\mathcal{I}_{J,K}^{inv}$. NB. We only define the dynamics on $\mathcal{C}^{can}_{J,K}$, and clearly $\mathcal{M}^{rev}_{J,K}$ is equal to the subset of $\mu_{J,K}\in \mathcal{M}_{J,K}$ such that $\mu_{J,K}^{\otimes \mathbb{Z}}(\mathcal{C}^{can}_{J,K})=1$. Hence $\mathcal{I}_{J,K}^{inv}$ indeed represents all i.i.d.\ measures that are invariant under the operation $T_{J,K}$. We give just one further preparatory lemma.

\begin{lmm}\label{classlmm}
Suppose $J \neq K$ and $\mu \in \mathcal{M}^{rev}_{J,K}$. It then holds that $\mu_{J,K} \in \mathcal{I}^{inv}_{J,K}$ if and only if $\tilde{\mu}_{J,K} \in \mathcal{I}^{0}_{\tilde{J},\tilde{K}}$, where $(\tilde{J},\tilde{K})=(J-2r(\mu_{J,K}),K-2r(\mu_{J,K}))$, and also $r(\mu_{J,K})=\underline{r}(\mu_{J,K})$ when $\max\{J,K\}=\infty$.
\end{lmm}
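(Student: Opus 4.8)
The plan is to chain together the three main results already established for i.i.d.\ invariant measures, namely Theorem \ref{iidthm}, Lemma \ref{reducible}, and Theorem \ref{m0thm}, carefully tracking the auxiliary conditions on $r$ and $\underline{r}$ as we pass through the reduction. First I would note that, since $\mu_{J,K}\in\mathcal{M}^{rev}_{J,K}$, Theorem \ref{iidthm} tells us that $\mu_{J,K}\in\mathcal{I}^{inv}_{J,K}$ if and only if there exists $\mu_{K,J}\in\mathcal{M}_{K,J}$ satisfying the detailed balance equation \eqref{dbalance} together with $r(\mu_{J,K})=r(\mu_{K,J})$ and, when $\max\{J,K\}=\infty$, also $\underline{r}(\mu_{J,K})=\underline{r}(\mu_{K,J})$. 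This is precisely condition (a) of Lemma \ref{reducible}, so that lemma (recalling that $J\neq K$ forces $\tilde{J}\neq\tilde{K}$ and $\min\{\tilde{J},\tilde{K}\}\geq 1$) lets us replace it by the equivalent condition (b): there exists $\tilde{\mu}'_{K,J}\in\mathcal{M}_{\tilde{K},\tilde{J}}$ satisfying the reduced detailed balance equation \eqref{dbother}, with $r(\tilde{\mu}'_{K,J})=0$, and, when $\max\{J,K\}=\infty$, additionally $\underline{r}(\mu_{J,K})-r(\mu_{J,K})=\underline{r}(\tilde{\mu}'_{K,J})=0$.

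The second step is to recognise that the condition ``there exists $\tilde{\mu}'_{K,J}$ satisfying \eqref{dbother}'' is exactly the membership statement $\tilde{\mu}_{J,K}\in\mathcal{I}^0_{\tilde{J},\tilde{K}}$. Indeed, $\tilde{\mu}_{J,K}$ satisfies $\tilde{\mu}_{J,K}(0)>0$ by construction, so Theorem \ref{m0thm} (applied in dimensions $\tilde{J}\neq\tilde{K}$) asserts that a dual measure solving \eqref{dbother} exists if and only if $\tilde{\mu}_{J,K}$ falls into class (a) or class (b) of that theorem, i.e.\ if and only if $\tilde{\mu}_{J,K}\in\mathcal{I}^0_{\tilde{J},\tilde{K}}$. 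Crucially, the same theorem shows that \emph{any} such dual necessarily satisfies $\tilde{\mu}'_{K,J}(0)>0$ (this is the first claim established in its proof), which forces $r(\tilde{\mu}'_{K,J})=\underline{r}(\tilde{\mu}'_{K,J})=0$ automatically. Hence the extra support conditions $r(\tilde{\mu}'_{K,J})=0$ and $\underline{r}(\tilde{\mu}'_{K,J})=0$ appearing in Lemma \ref{reducible}(b) carry no information beyond the solvability of \eqref{dbother} itself.

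Putting these together, I would conclude by distinguishing the two cases. When $\max\{J,K\}<\infty$, condition (b) of Lemma \ref{reducible} reduces, by the previous paragraph, to the single requirement that \eqref{dbother} be solvable, equivalently $\tilde{\mu}_{J,K}\in\mathcal{I}^0_{\tilde{J},\tilde{K}}$, which is the claimed equivalence with no side condition on $r$ and $\underline{r}$. When $\max\{J,K\}=\infty$, the only surviving non-automatic part of condition (b) is the equality $\underline{r}(\mu_{J,K})-r(\mu_{J,K})=0$, i.e.\ $r(\mu_{J,K})=\underline{r}(\mu_{J,K})$; combined with the solvability of \eqref{dbother}, this gives that $\mu_{J,K}\in\mathcal{I}^{inv}_{J,K}$ if and only if $\tilde{\mu}_{J,K}\in\mathcal{I}^0_{\tilde{J},\tilde{K}}$ and $r(\mu_{J,K})=\underline{r}(\mu_{J,K})$, as required. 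I expect the main point needing care to be precisely this bookkeeping of the $r$/$\underline{r}$ conditions across the reduction: the substance of the lemma is that all of the support-matching constraints either become vacuous for the dual (via the positivity $\tilde{\mu}'_{K,J}(0)>0$ from Theorem \ref{m0thm}) or collapse to the single stated condition $r(\mu_{J,K})=\underline{r}(\mu_{J,K})$ in the infinite-capacity case, and verifying this collapse explicitly—rather than citing a black-box equivalence—is where the argument must be assembled with attention.
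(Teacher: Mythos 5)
Your proposal is correct and follows essentially the same route as the paper's own proof: chaining Theorem \ref{iidthm}, Lemma \ref{reducible}, and Theorem \ref{m0thm}, with the key observation that any dual solving \eqref{dbother} automatically satisfies $\tilde{\mu}'_{K,J}(0)>0$ (hence $r(\tilde{\mu}'_{K,J})=\underline{r}(\tilde{\mu}'_{K,J})=0$), so that only the condition $r(\mu_{J,K})=\underline{r}(\mu_{J,K})$ survives in the infinite-capacity case. Your version merely spells out the bookkeeping that the paper's terser proof leaves implicit.
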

\begin{proof}
From Theorem \ref{iidthm}, Lemma \ref{reducible} and Theorem \ref{m0thm}, $\mu_{J,K} \in \mathcal{I}^{inv}_{J,K}$ implies $\tilde{\mu}_{J,K} \in \mathcal{I}^{0}_{\tilde{J},\tilde{K}}$, and also $r(\mu_{J,K})=\underline{r}(\mu_{J,K})$ when $\max\{J,K\}=\infty$. Conversely, assuming the latter conditions, from Theorem \ref{m0thm}, we find there exists $\tilde{\mu}_{K,J}'$ such that \eqref{dbother} holds and $\tilde{\mu}_{K,J}'(0)>0$. Thus from Theorem \ref{iidthm} and Lemma \ref{reducible} we obtain $\mu_{J,K} \in \mathcal{I}^{inv}_{J,K}$.
\end{proof}

\begin{thm}\label{iidclass}
It holds for $J=K$ that $\mathcal{I}_{J,K}^{inv}= \mathcal{M}_{J,K}$, and for $J \neq K$ that
\[\mathcal{I}_{J,K}^{inv}= \bigcup_{r=0}^{\lfloor\frac{\min\{J,K\}}{2}\rfloor} \mathcal{E}_r\left(\mathcal{I}_{J-2r,K-2r}\right),\]
with the convention that $\mathcal{I}_{0,K}=\mathcal{I}_{J,0}=\emptyset$.
\end{thm}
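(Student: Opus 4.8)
The plan is to reduce the entire statement to Lemma \ref{classlmm}, which already translates membership in $\mathcal{I}^{inv}_{J,K}$ into membership of the reduced measure $\tilde{\mu}_{J,K}$ in the core class $\mathcal{I}^0_{\tilde{J},\tilde{K}}$, where $(\tilde{J},\tilde{K})=(J-2r(\mu_{J,K}),K-2r(\mu_{J,K}))$, together with the constraint $r(\mu_{J,K})=\underline{r}(\mu_{J,K})$ in the cases where $\max\{J,K\}=\infty$. The case $J=K$ is immediate: by Proposition \ref{ch2} the dynamics is the shift $\theta^{-1}$, under which every i.i.d.\ law is trivially invariant, and Proposition \ref{mrevprp} gives $\mathcal{M}^{rev}_{J,K}=\mathcal{M}_{J,K}$, so $\mathcal{I}^{inv}_{J,K}=\mathcal{M}_{J,K}$.

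For $J\neq K$, I would first record two auxiliary facts. The first is the elementary identity $\sigma_J\circ\mathcal{E}_r=\mathcal{E}_r\circ\sigma_{\tilde{J}}$ on $\mathcal{M}_{\tilde{J},\tilde{K}}$, which follows directly from the definitions of $\mathcal{E}_r$ and $\sigma_J$. The second is that, when $J,K<\infty$, the set $\mathcal{I}^{inv}_{J,K}$ is closed under $\mu\mapsto\mu\circ\sigma_J$: indeed, the empty box-ball duality \eqref{spaceball} (equivalently Proposition \ref{bsdual}) shows that $T_{J,K}$ commutes with $\sigma_J$ at the level of configurations and that $\mathcal{C}^{rev}_{J,K}$, $\mathcal{C}^{inv}_{J,K}$ are $\sigma_J$-invariant, whence the invariance of $\mu^{\otimes\mathbb{Z}}$ under $T_{J,K}$ transfers to $(\sigma_J\mu)^{\otimes\mathbb{Z}}$. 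With these in hand, the inclusion $\mathcal{I}^{inv}_{J,K}\subseteq\bigcup_r\mathcal{E}_r(\mathcal{I}_{J-2r,K-2r})$ runs as follows. Given $\mu\in\mathcal{I}^{inv}_{J,K}$, set $r=r(\mu)$; Proposition \ref{mrevprp} forces $2r<\min\{J,K\}$ (in the case $J<K=\infty$ via $r\leq\underline{r}(\mu)\leq m(\mu)<J/2$), so $r\in\{0,\dots,\lfloor\min\{J,K\}/2\rfloor\}$ and $\tilde{J},\tilde{K}\geq 1$. Lemma \ref{classlmm} then gives $\tilde{\mu}_{J,K}\in\mathcal{I}^0_{\tilde{J},\tilde{K}}$. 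If $r=\underline{r}(\mu)$ then $\mu=\mathcal{E}_r(\tilde{\mu}_{J,K})\in\mathcal{E}_r(\mathcal{I}_{\tilde{J},\tilde{K}})$; otherwise (which forces $J,K<\infty$) one has $\mu=\sigma_J\mathcal{E}_r(\tilde{\mu}_{J,K})=\mathcal{E}_r(\sigma_{\tilde{J}}\tilde{\mu}_{J,K})$, and $\sigma_{\tilde{J}}\tilde{\mu}_{J,K}\in\sigma_{\tilde{J}}(\mathcal{I}^0_{\tilde{J},\tilde{K}})\subseteq\mathcal{I}_{\tilde{J},\tilde{K}}$ by the definition of $\mathcal{I}_{\tilde{J},\tilde{K}}$.

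For the reverse inclusion I would take $\mu=\mathcal{E}_r(\nu)$ with $\nu\in\mathcal{I}_{\tilde{J},\tilde{K}}$, the term being nonempty so that $\tilde{J},\tilde{K}\geq 1$ and hence $2r\leq\min\{J,K\}-1$. If $\nu\in\mathcal{I}^0_{\tilde{J},\tilde{K}}$, then Theorem \ref{m0thm} gives $\nu(0)>0$, so $\underline{r}(\nu)=r(\nu)=0$, and a short computation yields $\underline{r}(\mu)=r(\mu)=r$ with $\tilde{\mu}_{J,K}=\nu\in\mathcal{I}^0_{\tilde{J},\tilde{K}}$. After checking $\mu\in\mathcal{M}^{rev}_{J,K}$ (via Proposition \ref{mrevprp}), Lemma \ref{classlmm} gives $\mu\in\mathcal{I}^{inv}_{J,K}$. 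If instead $\nu=\sigma_{\tilde{J}}\nu'$ with $\nu'\in\mathcal{I}^0_{\tilde{J},\tilde{K}}$ (which occurs only when $J,K<\infty$), then $\mu=\mathcal{E}_r(\sigma_{\tilde{J}}\nu')=\sigma_J\mathcal{E}_r(\nu')$, and since $\mathcal{E}_r(\nu')\in\mathcal{I}^{inv}_{J,K}$ by the previous subcase, the $\sigma_J$-closure established above yields $\mu\in\mathcal{I}^{inv}_{J,K}$.

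The main obstacle I anticipate is not the combinatorial bookkeeping but the verification that the measures built in the reverse inclusion genuinely lie in $\mathcal{M}^{rev}_{J,K}$. In the finite cases $J>K$ and $J<K<\infty$ this follows at once from $2r<\min\{J,K\}$, but the case $J<K=\infty$ requires the strict mean inequality $2m(\nu)<\tilde{J}$ for every $\nu\in\mathcal{I}^0_{\tilde{J},\infty}$. For the measures of type (a) in Theorem \ref{m0thm} this is immediate from $\nu(0)>0$ and the support being contained in $\{0,\dots,\lfloor\tilde{J}/2\rfloor\}$; for the scaled truncated bipartite geometric measures of type (b), where $\tilde{K}=\infty$ forces $\alpha\in(0,1)$, it needs a short symmetry argument pairing the mass at $x$ with that at $\tilde{J}/m-x$ and using $\alpha<1$ (the factor $\beta$ cancels because $\tilde{J}/m$ is even whenever $\beta\neq 1$). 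This establishes $\mathcal{I}^0_{\tilde{J},\infty}\subseteq\mathcal{M}^{rev}_{\tilde{J},\infty}$ and closes the remaining gap.
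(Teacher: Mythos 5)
Your proposal is correct and follows essentially the same route as the paper: the case $J=K$ is handled via Propositions \ref{ch2} and \ref{mrevprp}, and for $J\neq K$ everything is funnelled through Lemma \ref{classlmm} together with Proposition \ref{mrevprp} and the identities $\mu=\mathcal{E}_r(\tilde{\mu}_{J,K})$ (when $r(\mu)=\underline{r}(\mu)$) or $\mu=\sigma_J\mathcal{E}_r(\tilde{\mu}_{J,K})=\mathcal{E}_r(\sigma_{\tilde{J}}\tilde{\mu}_{J,K})$ otherwise. The only difference is one of explicitness: you spell out the commutation $\sigma_J\circ\mathcal{E}_r=\mathcal{E}_r\circ\sigma_{\tilde{J}}$, the $\sigma_J$-closure of $\mathcal{I}^{inv}_{J,K}$ via Proposition \ref{bsdual}, and---most usefully---the verification that $\mathcal{E}_r(\nu)\in\mathcal{M}^{rev}_{J,K}$ before Lemma \ref{classlmm} can be applied in the reverse inclusion (in particular the mean bound $2m(\nu)<\tilde{J}$ for $J<K=\infty$, proved by your pairing argument), checks which the paper's one-sentence deduction leaves implicit and which you carry out correctly.
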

\begin{proof} It is enough to show that, in the case $J \neq K$,
\[\mathcal{I}_{J,K}^{inv} \cap \left\{\mu_{J,K} \in \mathcal{M}_{J,K}:\: r(\mu_{J,K})=r\right\}=
\left\{
  \begin{array}{ll}
    \emptyset, & \hbox{when }r >  {\lfloor\frac{\min\{J,K\}}{2}\rfloor},\\
    \mathcal{E}_r\left(\mathcal{I}_{J-2r,K-2r}\right), & \hbox{when }0 \le r \le  {\lfloor\frac{\min\{J,K\}}{2}\rfloor}.
  \end{array}
\right.\]
Since $J \neq K$ and $r >  {\lfloor\frac{\min\{J,K\}}{2}\rfloor}$ implies $J >K$ and $K \le 2r $ or $J<K$ and $J \le 2r$, from Proposition \ref{mrevprp}, $\mathcal{M}_{J,K}^{rev} \cap \{\mu_{J,K} \in \mathcal{M}_{J,K}:\:r(\mu_{J,K})=r\}=\emptyset$. On the other hand, if $r \le {\lfloor\frac{\min\{J,K\}}{2}\rfloor}$, then $\tilde{J}:=J-2r \ge 0$ and $\tilde{K}:=K-2r \ge 0$. If one of them is $0$, then, by the same reasoning as for the case $ r > {\lfloor\frac{\min\{J,K\}}{2}\rfloor}$, it must hold that $\mathcal{M}_{J,K}^{rev} \cap \{\mu_{J,K} \in \mathcal{M}_{J,K}:\: r(\mu_{J,K})=r\}=\emptyset$. If $\min\{\tilde{J}, \tilde{K}\} >0$, then since $\mu_{J,K}=\mathcal{E}_{r(\mu_{J,K})}(\tilde{\mu}_{J,K})$ if $r(\mu_{J,K})=\underline{r}(\mu_{J,K})$, and $\mu_{J,K}=\sigma_J\mathcal{E}_{r(\mu_{J,K})}(\tilde{\mu}_{J,K})$ otherwise, we deduce from Lemma \ref{classlmm} that $\mathcal{I}_{J,K}^{inv} \cap \left\{\mu_{J,K} \in \mathcal{M}_{J,K}:\: r(\mu_{J,K})=r\right\}=\mathcal{E}_r\left(\mathcal{I}_{J-2r,K-2r}\right)$.
\end{proof}

\section{Speed of tagged particle}\label{speedsec}

In this section, we prove Theorem \ref{speedthm}. The argument is an adaptation of the proof of the relevant part of \cite[Theorem 3.38]{CKST}.

\begin{proof}[Proof of Theorem \ref{speedthm}.] At time $t$, there are precisely $\sum_{s=0}^{t-1}(T^sW)_0$ particles between the tagged particle and the origin. Since, by Lemma \ref{indep}, $((T^tW)_0)_{t\in\mathbb{Z}}$ is an i.i.d.\ sequence, the latter sum satisfies
\[t^{-1}\sum_{s=0}^{t-1}(T^sW)_0\rightarrow \int W_0(\eta)\mu_{J,K}^{\otimes \mathbb{Z}}(d\eta)=m_{K,J},\]
almost-surely with respect to $\mu_{J,K}^{\otimes \mathbb{Z}}$. Thus, given any $\varepsilon>0$,  $\mu_{J,K}^{\otimes \mathbb{Z}}$-a.s.\ for large $t$,
\begin{equation}\label{aaaa}
X_{J,K}(t)\left\{
\begin{array}{l}
  \leq \inf\left\{n\geq1:\:\sum_{m=1}^{n}(T^t\eta)_m\geq t\left(m_{K,J}+\varepsilon\right)\right\},\\
  \geq \inf\left\{n\geq1:\:\sum_{m=1}^{n}(T^t\eta)_m\geq t\left(m_{K,J}-\varepsilon\right)\right\}.
\end{array}\right.
\end{equation}
Now, from Theorem \ref{iidclass}, we know that when $J\neq K$ it is the case that $\sum_xx^4\mu_{J,K}(x)<\infty$, and so we can apply a standard fourth moment estimate to deduce that, for any $c,\varepsilon\in(0,\infty)$, there exists a $C<\infty$ such that
\[\mu_{J,K}^{\otimes \mathbb{Z}}\left(\left|\sum_{m=1}^{ct}(T^t\eta)_m-ct m_{J,K}\right|>\varepsilon t\right)\leq Ct^{-2},\qquad \forall t\geq 1.\]
Hence, by Borel-Cantelli and countability, we obtain that $t^{-1}\sum_{m=1}^{ct}(T^t\eta)_m\rightarrow cm_{J,K}$ for all rational $c>0$, $\mu_{J,K}^{\otimes \mathbb{Z}}$-a.s. Combining this with \eqref{aaaa} yields the desired limit.
\end{proof}

\providecommand{\bysame}{\leavevmode\hbox to3em{\hrulefill}\thinspace}
\providecommand{\MR}{\relax\ifhmode\unskip\space\fi MR }
% \MRhref is called by the amsart/book/proc definition of \MR.
\providecommand{\MRhref}[2]{%
  \href{http://www.ams.org/mathscinet-getitem?mr=#1}{#2}
}
\providecommand{\href}[2]{#2}

\end{document}